\setlist[itemize,description]{leftmargin=*}
\theoremstyle{plain}
    \newtheorem{thm}{Theorem}[section]
    \newtheorem{prop}[thm]  {Proposition}
    \newtheorem{lem}[thm]   {Lemma}
    \newtheorem{cor}[thm]   {Corollary}
    \newtheorem{athm}{Theorem}
\theoremstyle{definition}
    \newtheorem{defn}[thm]  {Definition}
    \newtheorem{nota}[thm]  {Notation}
    \newtheorem{ex}[thm]    {Example}
\newcommand{\bA}{\mathbb{A}} 
\newcommand{\cA}{\mathcal{A}} 
\newcommand{\bB}{\mathbb{B}} 
\newcommand{\cB}{\mathcal{B}} 
\newcommand{\C}{\mathbb{C}} 
\newcommand{\fd}{\mathfrak{d}} 
\newcommand{\fc}{\mathfrak{c}} 
\newcommand{\tfc}{\tilde{\fc}} 
\newcommand{\fe}{\mathfrak{e}} 
\newcommand{\bfe}{\mathbf{e}} 
\newcommand{\fF}{\mathfrak{F}} 
\newcommand{\cH}{\mathcal{H}} 
\newcommand{\fri}{\mathfrak{i}} 
\newcommand{\fj}{\mathfrak{j}} 
\newcommand{\N}{\mathbb{N}} 
\newcommand{\fp}{\mathfrak{p}} 
\newcommand{\bQ}{\mathbb{Q}} 
\newcommand{\R}{\mathbb{R}} 
\newcommand{\fr}{\mathfrak{r}} 
\newcommand{\bS}{\mathbb{S}} 
\newcommand{\fS}{\mathfrak{S}} 
\newcommand{\fs}{\mathfrak{s}} 
\newcommand{\uU}{\underline{U}} 
\newcommand{\fU}{\mathfrak{U}} 
\newcommand{\cX}{\mathcal{X}} 
\newcommand{\cY}{\mathcal{Y}} 
\newcommand{\bY}{\mathbb{Y}} 
\newcommand{\Z}{\mathbb{Z}} 
\DeclareSymbolFontAlphabet{\mathbb}{AMSb}
\DeclareSymbolFontAlphabet{\mathbbl}{bbold}
\newcommand{\one}{\mathbbl{1}} 
\newcommand{\geo}{^{\mathrm{geo}}} 
\newcommand{\conj}{\mathrm{conj}} 
\newcommand{\cG}{\mathcal{G}} 
\newcommand{\cK}{\mathcal{K}} 
\newcommand{\Q}{\mathcal{Q}} 
\newcommand{\hQ}{\hat{\Q}} 
\newcommand{\PMQ}{\mathbf{PMQ}} 
\newcommand{\gen}{f} 
\newcommand{\fg}{\mathfrak{g}} 
\newcommand{\op}{\mathrm{op}} 
\newcommand{\Hur}{\mathrm{Hur}} 
\newcommand{\mHurm}{\mathring{\mathrm{HM}}} 
\newcommand{\bHurm}{\breve{\mathrm{HM}}} 
\newcommand{\bHurmfl}{\bHurm^{\flat}_+} 
\newcommand{\bHurmsh}{\bHurm^{\sharp}_+} 
\newcommand{\zfl}{z^{\flat}} 
\newcommand{\tP}{\tilde{P}}
\newcommand{\tp}{\tilde{p}}
\newcommand{\tpsi}{\tilde{\psi}}
\newcommand{\tphi}{\tilde{\phi}}
\newcommand{\tz}{\tilde{z}}
\newcommand{\HurTQG}{\Hur(\fT;\Q,G)}%
\newcommand{\pr}{p} 
\newcommand{\FN}{F} 
\newcommand{\FNfat}{F^{\mathrm{fat}}} 
\newcommand{\FNlfat}{F_{\mathrm{l}}^{\mathrm{fat}}} 
\newcommand{\FNrfat}{F_{\mathrm{r}}^{\mathrm{fat}}} 
\newcommand{\FNlr}{F_{\mathrm{lr}}} 
\newcommand{\FNlrfat}{F_{\mathrm{lr}}^{\mathrm{fat}}} 
\newcommand{\delfat}{\del^{\mathrm{fat}}} 
\newcommand{\dellfat}{\del_{\mathrm{l}}^{\mathrm{fat}}} 
\newcommand{\delrfat}{\del_{\mathrm{r}}^{\mathrm{fat}}} 
\newcommand{\dellrfat}{\del_{\mathrm{lr}}^{\mathrm{fat}}} 
\newcommand{\fFN}{\fF} 
\newcommand{\CmP}{\C\setminus P} 
\newcommand{\zleft}{z^{\mathrm{l}}} 
\newcommand{\zright}{z^{\mathrm{r}}} 
\newcommand{\genleft}{\gen^{\mathrm{l}}} 
\newcommand{\genright}{\gen^{\mathrm{r}}} 
\newcommand{\arc}{\zeta} 
\newcommand{\totmon}{\omega} 
\newcommand{\fG}{\mathfrak{G}} 
\newcommand{\fQ}{\mathfrak{Q}} 
\newcommand{\fQext}{\fQ^{\mathrm{ext}}} 
\newcommand{\fT}{\mathfrak{C}} 
\newcommand{\expl}{\mathscr{E}} 
\newcommand{\Ran}{\mathrm{Ran}} 
\newcommand{\cR}{\mathcal{R}} 
\newcommand{\cRl}{\cR^{\mathrm{l}}} 
\newcommand{\cRr}{\cR^{\mathrm{r}}} 
\newcommand{\mcRl}{\mcR^{\mathrm{l}}} 
\newcommand{\mcRr}{\mcR^{\mathrm{r}}} 
\newcommand{\mcR}{\mathring{\cR}} 
\newcommand{\bcR}{\breve{\cR}} 
\newcommand{\bcRlr}{\breve{\cR}^{\mathrm{lr}}} 
\newcommand{\bdel}{\breve{\partial}} 
\newcommand{\bH}{\mathbb{H}} 
\newcommand{\bT}{\mathbb{T}} 
\newcommand{\zcentre}{z_{\mathrm{c}}} 
\newcommand{\zcentrel}{z_{\mathrm{c,l}}} 
\newcommand{\zcentrer}{z_{\mathrm{c,r}}} 
\newcommand{\zdiamleft}{z_{\diamond}^{\mathrm{l}}} 
\newcommand{\zdiamright}{z_{\diamond}^{\mathrm{r}}} 
\newcommand{\zdiamup}{z_{\diamond}^{\mathrm{u}}} 
\newcommand{\zdiamdown}{z_{\diamond}^{\mathrm{d}}} 
\newcommand{\gencentre}{\gen_{\mathrm{c}}} 
\newcommand{\gendiamleft}{\gen_{\diamond}^{\mathrm{l}}} 
\newcommand{\gendiamright}{\gen_{\diamond}^{\mathrm{r}}} 
\newcommand{\cHleft}{\cH^{\mathrm{l}}} 
\newcommand{\cHright}{\cH^{\mathrm{r}}} 
\newcommand{\diamo}{\diamond} 
\newcommand{\bdiamolr}{\breve{\diamo}^{\mathrm{lr}}} 
\newcommand{\cHdiamo}{\cH^{\diamo}} 
\newcommand{\hmu}{\hat{\mu}} 
\newcommand{\fTbox}{{\fT}^{\Box}} 
\newcommand{\bfTbox}{\breve{\fT}^{\Box}} 
\newcommand{\fTdiamo}{{\fT}^{\diamond}} 
\newcommand{\bfTdiamo}{\breve{\fT}^{\diamond}} 
\newcommand{\cHdiamoleft}{\cH^{\diamond,\mathrm{l}}} 
\newcommand{\cHdiamoright}{\cH^{\diamond,\mathrm{r}}} 
\newcommand{\rot}{\mathrm{rot}} 
\newcommand{\fclr}{\fc^{\mathrm{lr}}} 
\newcommand{\reB}{\bar{B}} 
\newcommand{\uw}{\underline{w}} 
\newcommand{\um}{\underline{m}} 
\newcommand{\ut}{\underline{t}} 
\newcommand{\Arr}{\mathrm{Arr}} 
\newcommand{\NAdm}{\mathrm{NAdm}} 
\newcommand{\ufc}{\underline{\fc}} 
\newcommand{\del}{\partial} 
\newcommand{\mDelta}{\mathring{\Delta}} 
\newcommand{\braiding}{\mathfrak{br}} 
\newcommand{\pa}[1]{\left(#1\right)}
\newcommand{\qua}[1]{\left<#1\right>}
\newcommand{\abs}[1]{\left|#1\right|}
\newcommand{\set}[1]{\left\{#1\right\}}
\newcommand{\sca}[1]{[\! [#1]\! ]}
\renewcommand{\phi}{\varphi}
\renewcommand{\epsilon}{\varepsilon}
\DeclareMathOperator{\Id}{Id}
\DeclareMathOperator{\Aut}{Aut} 
\title[Deloopings of Hurwitz spaces]{Deloopings of Hurwitz spaces}
\author{Andrea Bianchi}
\thanks{
This work was partially supported by the \emph{Deutsche
  Forschungsgemeinschaft} (DFG, German Research Foundation) under Germany’s
Excellence Strategy (\texttt{EXC-2047/1}, \texttt{390685813}), by the
\emph{European Research Council} under the European Union’s Seventh Framework
Programme (\texttt{ERC StG 716424 - CASe}, PI Karim Adiprasito), and by the
\emph{Danish National Research Foundation} through the \emph{Copenhagen Centre for
Geometry and Topology} (\texttt{DNRF151}).
}
\email{anbi@math.ku.dk}
\address{Department of Mathematical Sciences, University of Copenhagen \newline
Universitetsparken 5, Copenhagen, 2100, Denmark}  
\date{\today}
\keywords{Quandle, Hurwitz space, loop space, group completion, rational cohomology.}
\subjclass[2020]{
55N45,  
55P35,  
55P62,   
55R80, 
57T25.  
}
\begin{document}
\begin{abstract}
For a partially multiplicative quandle (PMQ) $\mathcal{Q}$ we consider the topological monoid $\mathring{\mathrm{HM}}(\mathcal{Q})$ of Hurwitz spaces of configurations in the plane with local monodromies in $\mathcal{Q}$. We compute the group completion of $\mathring{\mathrm{HM}}(\mathcal{Q})$: it is the product of the (discrete) enveloping group $\mathcal{G}(\mathcal{Q})$ with a component of the double loop space of the relative Hurwitz space $\mathrm{Hur}_+([0,1]^2,\partial[0,1]^2;\mathcal{Q},G)_{\mathbbl{1}}$; here $G$ is any group giving rise, together with $\mathcal{Q}$, to a PMQ-group pair. Assuming further that $\mathcal{Q}$ is finite and rationally Poincar\'e and that $G$ is finite, we compute the rational cohomology ring of $\mathrm{Hur}_+([0,1]^2,\partial[0,1]^2;\mathcal{Q},G)_{\mathbbl{1}}$.
\end{abstract}

\maketitle
\section{Introduction}
In \cite[Section 2]{Bianchi:Hur1} we introduced the algebraic notion of partially multiplicative quandle (PMQ)
and the related notion of PMQ-group pair: roughly speaking, a PMQ is a set endowed with two binary operations, called \emph{conjugation} and \emph{product} (the second being partially defined), subject to axioms capturing the usual interrelations between conjugation and product in a group; and a PMQ-group pair is a pair of a PMQ $\Q$ and a group $G$, together with a map of PMQs $\Q\to G$ and an action of $G$ on $\Q$, satisfying axioms resembling the case in which $\Q$ is a conjugation-invariant subset of $G$.
In \cite[Section 3]{Bianchi:Hur2} we defined a \emph{Hurwitz-Ran space} $\Hur(\cX,\cY;\Q,G)$ associated with a
\emph{nice couple} $(\cX,\cY)$ of subspaces $\cY\subseteq\cX\subseteq\bH$ of the closed upper half-plane in $\C$, and with a PMQ-group pair $(\Q,G)$.
In the case $\cY=\emptyset$ the group $G$ plays no essential role and we can write
$\Hur(\cX;\Q)$ for the Hurwitz space:
the reader may think of this as the \emph{absolute} situation, whereas the general case corresponds to the \emph{relative} situation.

In this article, for a PMQ $\Q$, we introduce a topological monoid $\mHurm(\Q)$ arising from Hurwitz spaces: an element of $\mHurm(\Q)$ is a finite configuration $P$ of points in a rectangle $(0,t)\times(0,1)$ of variable width $t\ge0$, together with a $Q$-valued monodromy, defined on certain loops of $\CmP$.
The monoid product is defined according to a well-established principle, relying on the fact that
a rectangle of width $t+t'$ can be regarded as the union of two rectangles of widths $t$ and $t'$ joined along a vertical side.
If $\Q$ is a PMQ with trivial product,
then $\mHurm(\Q)$ recovers the monoid of Hurwitz spaces appearing in \cite[Subsection 2.6]{EVW:homstabhur} and \cite[Subsection 4.2]{ORW:Hurwitz}.

\subsection{Statement of results}
Throughout the article we fix a PMQ-group pair $(\Q,G)=(\Q,G,\fe,\fr)$ (see \cite[Definition 2.15]{Bianchi:Hur1}), and assume that $G$ is generated by the image of the map of PMQs $\fe\colon\Q\to G$.

In addition to the aforementioned topological monoid $\mHurm(\Q)$, we will introduce in this article an auxiliary topological monoid $\bHurm(\Q,G)$.
The two main theorems of the article, that we briefly describe in this subsection, show together that a component of the group completion of $\mHurm(\Q)$ is equivalent to a component of the \emph{double loop space} of a certain relative Hurwitz space $\Hur_+([0,1]^2,\del[0,1]^2;\Q,G)_{\one}$.

For present and future convenience of the reader, we remind that the index ``$+$'' selects the subspace of configurations with non-empty support in a Hurwitz space; the index ``$\one$'' selects the subspace of configurations with trivial total monodromy $\one\in G$; an index given by a finite subset of $\C$, such as ``$\bdel\bdiamolr$'', selects configurations whose support contains the given finite subset; and the index ``$G,G^{\op}$'' refers to a quotient of another Hurwitz space by a certain free action of the group $G\times G^{\op}$. This notation is introduced in detail in \cite{Bianchi:Hur2};

In favourable cases, the rational cohomology ring of $\Hur_+([0,1]^2,\del[0,1]^2;\Q,G)_{\one}$ can be computed explicitly solely in terms of the PMQ $\Q$, and one can then use standard rational homotopy theory to access $H^*(\Omega^2_0\Hur_+([0,1]^2,\del[0,1]^2;\Q,G)_{\one};\bQ)$, which by the group completion theorem is the ring of stable rational cohomology classes of components of $\mHurm(\Q)$.

The first, main result of the article is the following theorem, describing the weak homotopy type of the bar constructions $B\mHurm(\Q)$ and $B\bHurm(\Q,G)$ in terms of certain relative Hurwitz spaces.
The nice couples $(\bdiamolr,\bdel\bdiamolr)$ and $(\diamo,\del\diamo)$ are explicitly given in Definition \ref{defn:diamo}; for instance $\diamo$ is the closed rhombus with vertices $1/2$, $\sqrt{-1}/2$, $1/2+\sqrt{-1}$ and $1+\sqrt{-1}/2$. 
\begin{athm}[Theorem \ref{thm:delooping}]
 \label{thm:main1}
There are weak homotopy equivalences
 \[
 B\mHurm(\Q)\simeq \Hur(\bdiamolr,\bdel\bdiamolr;\Q,G)_{G,G^{\op}};
\quad\quad
 B\bHurm(\Q,G)\simeq \Hur(\diamo,\del\diamo;\Q,G)_{G,G^{\op}}.
 \] 
\end{athm}
Now the space $\Hur(\bdiamolr,\bdel;\Q,G)_{G,G^{\op}}$ admits the space $\Hur(\bdiamolr,\bdel;\Q,G)_{\bdel\bdiamolr,\one}$ as a finite covering space, and the latter space is weakly equivalent to $\bHurm_+(\Q,G)_{\one}$. Similarly, the space $\Hur_+([0,1]^2,\del[0,1]^2;\Q,G)_{\one}$
is weakly equivalent to a covering space of $\Hur(\diamo,\del;\Q,G)_{G,G^{\op}}$: see Subsection \ref{subsec:homologygroupcompletion} for more details. Passing to loop spaces and double loop spaces, we obtain a weak homotopy equivalence
\[
 \Omega B\mHurm(\Q)\simeq \cG(\Q)\times \Omega^2_0 \Hur_+([0,1]^2,\del[0,1]^2;\Q,G)_{\one},
\]
where $\cG(\Q)$ is the (discrete) enveloping group of $\Q$.

Under the additional assumption that $G$ is a finite group and $\Q$ is finite and \emph{rationally Poincar\'e}, the second, main result of the article computes the rational cohomology ring
of $\Hur_+([0,1]^2,\del[0,1]^2;\Q,G)_{\one}$ in terms of a certain algebra $\cA(\Q)$, that we briefly recall after the statement.
\begin{athm}[Theorem \ref{thm:HbBQG}]
 \label{thm:main2}
 Let $(\Q,G)$ be a PMQ-group pair with $\Q$ finite and rationally Poincar\'e and with $G$ finite.
 Then there is an isomorphism of rings
 \[
  H^*\pa{\Hur_+([0,1]^2,\del[0,1]^2;\Q,G)_{\one})\,;\,\bQ}\cong \cA(\Q).
 \] 
\end{athm}
We recall that a PMQ is \emph{rationally Poincar\'e} (or \emph{$\bQ$-Poincar\'e}) if it is locally finite and each component of $\Hur_+((0,1)^2;\Q)$ is a rational homology manifold \cite[Definition 9.4]{Bianchi:Hur2}.
The graded commutative $\bQ$-algebra $\cA(\Q)$ is defined as the sub-algebra of conjugation-invariant elements of $\bQ[\Q]$, the rational PMQ-algebra associated with the PMQ $\Q$ (see \cite[Definition 4.26]{Bianchi:Hur1}). When $\Q$ is Poincar\'e we can consider $\bQ[\Q]$ as a graded $\bQ$-algebra, by putting the generator $\sca{a}\in\bQ[\Q]$ in degree equal to the dimension of $\Hur_+((0,1)^2;\Q)_a$, for $a\in \Q$. The degree of $\sca{a}$ agrees, in fact, with $2h(a)$, where $h\colon\Q\to\N$ is the intrinsic norm of $\Q$ (see \cite[Proposition 9.7]{Bianchi:Hur2}).
This makes also $\cA(\Q)$ into a graded $\bQ$-algebra, and then Theorem \ref{thm:main2} gives an isomorphism of graded $\bQ$-algebras.

The rational cohomology ring of $\Omega_0 B\mHurm(\Q)$,
i.e. the stable rational cohomology ring of the components of $\mHurm(\Q)$, can then in principle be computed
by ``looping twice'' the rational cohomology of the space $\Hur_+([0,1]^2,\del[0,1]^2;\Q,G)_{\one}$, using that
the latter space is simply connected. More precisely, this requires the computation of a minimal Sullivan model for the space 
$\Hur_+([0,1]^2,\del[0,1]^2;\Q,G)_{\one}$.
We conclude the article with an explicit computation,
dealing with the case in which $\Q$ is a finite PMQ with trivial product; this recovers, in particular, \cite[Corollary 5.4]{ORW:Hurwitz}.

\subsection{Outline of the article}
In Section \ref{sec:moorehur} we introduce the topological monoids $\mHurm(\Q)$ and $\bHurm(\Q,G)$, and compute the associated discrete monoids of path components $\pi_0(\mHurm(\Q))$ and $\pi_0(\bHurm(\Q,G))$, see Theorems \ref{thm:pi0mHurm} and \ref{thm:pi0bHurm}.

In Section \ref{sec:barconstruction} we recall the simplicial space $B_\bullet M$ associated with a unital, topological monoid $M$, and we distinguish between ``bar construction'' $BM$ and ``thin bar construction'' $\reB M$, i.e. the geometric realisations of $B_\bullet M$
as a semisimplicial space and, respectively, a simplicial space. We prove in Theorem \ref{thm:bHurmloop} a homotopy equivalence $\bHurm_+(\Q,G)\simeq \Omega B\bHurm(\Q,G)$,
and check that the group completion theorem \cite{McDuffSegal, FM94} applies to the topological monoid $\mHurm(\Q)$.

The main result of Section \ref{sec:deloopings} is Theorem \ref{thm:delooping},
whose direct consequence is
that the bar constructions $B\mHurm(\Q)$ and $B\bHurm(\Q,G)$ admit covering
spaces that are homotopy equivalent to the Hurwitz spaces $\bHurm_+(\Q,G)_{\one}$
and $\Hur_+([0,1]^2,\del[0,1]^2;\Q,G)_{\one}$ respectively. The main application
is Theorem \ref{thm:mainHiso}, computing the homology of the group completion
of $\mHurm(\Q)$ as the tensor product of the group ring $\Z[\cG(\Q)]$ and the homology
of a component of the \emph{double} loop space $\Omega^2\Hur_+([0,1]^2,\del[0,1]^2;\Q,G)_{\one}$; here $\cG(\Q)$ is the enveloping group of $\Q$.

In Section \ref{sec:BQG} we replace $\Hur_+([0,1]^2,\del[0,1]^2;\Q,G)_{\one}$ by a smaller, homotopy equivalent subspace $\bB(\Q_+,G)$, assuming that $\Q$ is augmented. Assuming further that $\Q$ is a normed PMQ,
we prove that $\bB(\Q_+,G)$ admits a norm filtration, whose strata are fibre bundles over the space
$\cB G:=\Hur(\del[0,1]^2;G)_{0;\one}$; the space $\cB G$ is, in turn, shown to be an Eilenberg-MacLane space of type $K(G,1)$.

In Section \ref{sec:cohomology} we assume that $\Q$ is a finite and $\bQ$-Poincar\'e PMQ and $G$ is a finite group, and compute the rational cohomology ring of $\bB(\Q_+,G)$, using the Leray spectral
sequence associated with the filtration on $\bB(\Q_+,G)$: Theorem \ref{thm:HbBQG} identifies
$H^*(\bB(\Q_+,G);\bQ)$ with the ring $\cA(\Q)\subset\bQ[\Q]$ of conjugation $\cG(\Q)$-invariants
of the PMQ-ring $\bQ[\Q]$. As an application, we compute the stable rational cohomology
of classical Hurwitz spaces, recovering, in particular, \cite[Corollary 5.4]{ORW:Hurwitz}.

Throughout the article we make heavy use of the results of \cite[Sections 2-6]{Bianchi:Hur1}
and \cite[Sections 2-6]{Bianchi:Hur2}: we cite every time which specific fact we are needing, so that the reader does not need to be familiar with all details of \cite{Bianchi:Hur1} and \cite{Bianchi:Hur2}.

\subsection{Motivation}
This is the third article in a series about Hurwitz spaces. Our main motivation to study generalised Hurwitz spaces comes from the relation between Hurwitz spaces and moduli spaces of Riemann surfaces given by considering the family of PMQs $\fS_d\geo$, for $d\ge2$: Theorems \ref{thm:main1} and \ref{thm:main2} will be applied in subsequent work \cite{Bianchi:Hur4} to give an alternative proof of the Mumford conjecture on the stable rational cohomology of moduli spaces of Riemann surfaces, originally proved by Madsen and Weiss \cite{MadsenWeiss}.

Moreover, this article shows how generalised Hurwitz spaces can be useful also in the study of classical Hurwitz spaces as topological monoids:
the (double) delooping of the classical monoid of Hurwitz spaces is described by Theorem \ref{thm:main1} as a relative Hurwitz space.

\subsection{Acknowledgments}
This series of articles is a generalisation
and a further development of my PhD thesis \cite{BianchiPhD}. I am grateful to
my PhD supervisor
Carl-Friedrich B\"odigheimer,
Bastiaan Cnossen,
Florian Kranhold,
Jeremy Miller,
Martin Palmer,
Dan Petersen,
Oscar Randal-Williams,
Ulrike Tillmann and
Nathalie Wahl
for helpful comments and mathematical explanations related to this article.
I am also thankful to the anonymous referee for a deep and meticulous analysis of a previous version of the article, and for several suggestions on how to improve the exposition.

\tableofcontents
\section{Hurwitz spaces as topological monoids}
\label{sec:moorehur}
We start by fixing some conventions to simplify the notation.
We fix a PMQ-group pair $(\Q,G)=(\Q,G,\fe,\fr)$ throughout the article; recall that $\fe\colon\Q\to G$ is a map of PMQs and $\fr\colon G\to \Aut_{\PMQ}(\Q)^{\op}$ is a map of groups, giving a right
 action of $G$ on $\Q$, see \cite[Definition 2.15]{Bianchi:Hur1}.  We assume in the entire article that the image of $\fe$ generates $G$ as a group.
 Two examples that the reader may keep in mind are as follows:
 \begin{itemize}
  \item $G$ is a group, $\Q_+\subset G$ is a conjugation-invariant subset, and $\Q=\Q_+\sqcup\set{\one_{\Q}}$ with the adjoined element $\one_{\Q}$ being the unit of $\Q$; we put the trivial product on $\Q$, we define $\fe$ by $\one_{\Q}\mapsto\one_G$ and $\Q_+\hookrightarrow G$; we let the action of $G$ fix $\one_{\Q}$ and conjugate elements of $\Q_+$;
  \item $G=\fS_d$ is the $d$\textsuperscript{th} symmetric group for some $d\ge2$, $\Q=\fS_d\geo$ is the geodesic PMQ from \cite[Subsection 7.1]{Bianchi:Hur1}, obtained from $\fS_d$ by restricting the product, $\fe$ is the identity of the common underlying set, and $\fS_d$ acts on $\fS_d\geo$ by usual conjugation of permutations.
 \end{itemize}
 
We usually denote by $\fT=(\cX,\cY)$ a nice couple, i.e. a couple of semialgebraic subspaces
of the closed upper half-plane $\bH\subset\C$, with $\cY$ closed in $\cX$: see \cite[Definition 2.3]{Bianchi:Hur2}.
In the entire article, we abbreviate the Hurwitz space $\HurTQG$, defined in \cite[Section 3]{Bianchi:Hur2}, as $\Hur(\fT)$;
in particular, if $\cY=\emptyset$, we abbreviate $\Hur(\cX;\Q)$ as $\Hur(\cX)$.

Recall from \cite[Definition 2.9]{Bianchi:Hur2} that if $\fT=(\cX,\cY)$ is a nice couple and if $P\subset\cX$ is a finite subset, we can define a PMQ $\fQ_{\fT}(P)$ as the subset of $\fG(P):=\pi_1(\CmP,*)$ of conjugacy classes of small simple loops spinning clockwise around exactly one point of $P$ among those lying in $\cX\setminus\cY$ (together with the neutral element $\one_{\fG(P)}$; the inclusion $\fQ_{\fT}(P)\subseteq\fG(P)$ and the conjugation action of $\fG(P)$ on $\fQ_{\fT}(P)$ make $(\fQ_{\fT}(P),\fG(P))$ into a PMQ-group pair.
\begin{nota}
 \label{nota:simplifiednotation}  
Let $\bY\subset\bH$ be closed semialgebraic subspace. Then for every semialgebraic subspace $\cX\subset\bH$ we obtain a nice couple $\fT=(\cX,\cY)$ by setting $\cY=\bY\cap \cX$; for all finite subsets $P\subset\cX$ we then have that $\fQ_{\fT}(P)$ and $\fQ_{(\bH,\bY)}(P)$ are the same subset of $\fG(P)$.

We will abuse notation and abbreviate $\fQ_{\fT}(P)$ as $\fQ(P)$ also in certain situations in which there may be some ambiguity on the nice couple $\fT$ we are considering; we leverage on the fact that all nice couples $\fT$ that might reasonably be involved in the argument are obtained as above, for a fixed and evident subspace $\bY\subset\bH$, so that the fundamental PMQ $\fQ_{\fT}(P)$ is unambiguously identified as a subset of $\fG(P)$.
\end{nota}

\begin{nota}
 \label{nota:fc}
We usually denote by $P=\set{z_1,\dots,z_k}$ a finite collection of distinct points in $\bH$,
for some $k\geq 0$. If a nice couple $\fT=(\cX,\cY)$ is under consideration, we will usually assume $P\subset\cX$ and that there is $0\leq l\leq k$ such that $z_1,\dots,z_l$ are precisely the points of $P$ lying in $\cX\setminus \cY$. We let $*=-\sqrt{-1}\in \C$ be our preferred choice of basepoint.
If $\gen_1,\dots,\gen_k$ is an admissible
generating set of $\fG(P)=\pi_1(\CmP,*)$ (see \cite[Definition 2.8]{Bianchi:Hur2}), then we usually assume that $\gen_i$ is represented
by a small simple loop spinning clockwise around $z_i$.

A configuration $\fc\in\Hur(\fT;\Q,G)$ is usually presented as $(P,\psi,\phi)$, with $P$ as above and
$(\psi,\phi)\colon(\fQ(P),\fG(P))\to(\Q,G)$ a map of PMQ-group pairs. Similarly, a configuration
$\fc\in\Hur(\cX;\Q)$ is usually presented as $(P,\psi)$, with $P$ as above and $\psi\colon\fQ(P)\to\Q$ a map of PMQs.
\end{nota}

\subsection{Definition of the Hurwitz-Moore spaces}
We first introduce notation for rectangles and horizontal strips in the plane.
\begin{nota}
\label{nota:cRt}
 For $t\geq 0$ we denote by $\cR_t\subset\bH$ the standard closed rectangle $[0,t]\times[0,1]$ of width $t$ and height 1;
 we also denote $\cR_{\infty}=[0,\infty)\times[0,1]$ the half-infinite, closed strip,
 and by $\cR_{\R}=(-\infty,+\infty)\times[0,1]$ the infinite, closed strip.
 
 For $0\leq t\leq +\infty$ we denote by $\mcR_t=(0,t)\times(0,1)$ the standard open rectangle (or half-infinite strip)
 of width $t$ and height 1, and by $\mcR_{\R}=(-\infty,+\infty)\times(0,1)$
 the infinite open strip. Let also $\del\cR_t=\cR_t\setminus\mcR_t$ for all $0\leq t\leq \infty$,
 and $\del\cR_{\R}=\cR_{\R}\setminus\mcR_{\R}$, denote the boundary of $\cR_t$ and $\cR_{\R}$
 respectively. We use the abbreviations $(\cR_t,\del)$ and $(\cR_{\R},\del)$ for the nice couples
 $(\cR_t,\del\cR_t)$ and $(\cR_{\R},\del\cR_{\R})$, respectively.
 
 For  $0\leq t\leq +\infty$ we denote by $\bcR_t=(0,t)\times[0,1]$ the standard, vertically closed rectangle (or
 vertically closed half-infinite strip) of width $t$ and height 1, and by $\bdel\bcR_t=(0,t)\times\set{0,1}\subset\bcR_t$
 the horizontal boundary of $\bcR$. 
 Similarly, we denote $\bcR_{\R}=\cR_{\R}$ and $\bdel\bcR_{\R}=\partial\bcR_{\R}=(-\infty,+\infty)\times\set{0,1}$. 
 We use the abbreviations $(\bcR_t,\bdel)$ and $(\bcR_{\R},\bdel)$ for the nice couples
 $(\bcR_t,\bdel\bcR_t)$ and $(\bcR_{\R};\bdel\bcR_{\R})$, respectively.
 
 Whenever $t=1$ we drop it from the notation, so we abbreviate $\cR_1$ as $\cR$, $\mcR_1$
 as $\mcR$ and $\bcR_1$ as $\bcR$. See Figure \ref{fig:cRt}.
 \end{nota}
Note that $\bcR_0=\mcR_0=\emptyset$.
For $t\leq t'$ the identity of $\C$ restricts to an inclusion $\mcR_t\subset\mcR_{t'}$,
and induces an inclusion of Hurwitz spaces $\Hur(\mcR_t)\subseteq\Hur(\mcR_{t'})$.

\begin{figure}[ht]
 \begin{tikzpicture}[scale=4,decoration={markings,mark=at position 0.38 with {\arrow{>}}}]
  \draw[dashed,->] (-.1,0) to (1.1,0);
  \draw[dashed,->] (0,-.2) to (0,1.1);
  \draw[dotted, fill=gray, opacity=.5] (0,0) rectangle (.5,1);
\begin{scope}[shift={(1.5,0)}]
  \draw[dashed,->] (-.1,0) to (1.1,0);
  \draw[dashed,->] (0,-.2) to (0,1.1);
  \draw[dotted, fill=gray, opacity=.5] (0,0) rectangle (.75,1);
  \draw[very thick] (0,0) to (.75,0);
  \draw[very thick] (0,1) to (.75,1);
\end{scope}
 \end{tikzpicture}
 \caption{Left: the rectangle $\mcR_{1/2}$. Right: the nice couple $(\bcR_{3/4},\bdel\bcR_{3/4})$.}
 \label{fig:cRt}
\end{figure}

\begin{defn}
 \label{defn:Hurmoore}
 The \emph{open Hurwitz-Moore} space associated with the PMQ $\Q$, denoted by
 $\mHurm(\Q)$ and abbreviated as $\mHurm$ in the entire article,
 is the subspace of $[0,\infty)\times \Hur(\mcR_{\infty})$
 containing couples $(t,\fc)$ such that $\fc\in\Hur(\mcR_{\infty})$ is a configuration supported on $\mcR_t$,
 i.e. $\fc$ takes the form $(P,\psi)$ with $P\subset\mcR_t$.
 
 Similarly, the \emph{vertically closed Hurwitz-Moore space} associated with $(\Q,G)$,
 denoted $\bHurm(\Q,G)$ and abbreviated $\bHurm$ in the entire article,
 is the subspace of $[0,\infty)\times \Hur(\bcR_{\infty},\bdel)$
 containing couples $(t,\fc)$ such that $\fc$ is supported on $\bcR_t$,
 i.e. $\fc=(P,\psi,\phi)$ with $P\subset\bcR_t$.
 \end{defn}

Note that, for fixed $t\geq 0$, the slice of $\mHurm$ containing couples of the form $(t,\fc)$ is homeomorphic to $\Hur(\mcR_t)$:
thus, $\mHurm$, as a set, is the disjoint union $\bigsqcup_{t\geq 0}\Hur(\mcR_t)$.
Similarly, $\bHurm$ is in natural bijection with the set $\bigsqcup_{t\geq 0}\Hur(\bcR_t,\bdel)$.

\begin{nota}
\label{nota:emptysetoneone}
 For a nice couple $\fT$ we denote by $(\emptyset,\one,\one)\in\Hur(\fT;\Q,G)$ the unique
 configuration supported on the empty set, i.e. of the form $(P,\psi,\phi)$ with $P=\emptyset$.
 The complement of $\set{(\emptyset,\one,\one)}$ is denoted by $\Hur_+(\fT;\Q,G)$.
\end{nota}
Note that $(\emptyset,\one,\one)$ is the only point in the spaces
$\Hur(\mcR_0)$ and  $\Hur(\bcR_0,\bdel)$, since $\mcR_0=\bcR_0=\emptyset$.
In other words, we have $\Hur_+(\mcR_0)=\Hur_+(\bcR_0,\bdel)=\emptyset$.

\begin{nota}
 \label{nota:Hurmplus}
 We write $\bHurm$ as the disjoint union $[0,\infty)\times \set{(\emptyset,\one,\one)}\sqcup \bHurm_+$,
 where we set
 \[
  \bHurm_+\colon=\big([0,\infty)\times\Hur_+(\bcR_{\infty},\bdel)\big)\cap\bHurm\ \subset\ [0,\infty)\times\Hur(\bcR_{\infty},\bdel).
 \]
\end{nota}
 By the previous discussion, every couple $(t,\fc)\in\bHurm_+$ satisfies $t>0$.

\begin{lem}
 \label{lem:HurmvsHur}
 The inclusions $\Hur(\mcR)\subset\mHurm$ and $\Hur(\bcR,\bdel)\subset\bHurm$ are homotopy equivalences.
\end{lem}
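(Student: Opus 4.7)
The plan is to construct, for both variants, a strong deformation retraction of the Moore space onto its slice at $t=1$, using horizontal rescaling of the ambient rectangles as the main tool.

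First I will exploit that for every $\lambda>0$, the affine map $(x,y)\mapsto(\lambda x,y)$ restricts to a semi-algebraic homeomorphism $\mcR_t\to\mcR_{\lambda t}$ (respectively $\bcR_t\to\bcR_{\lambda t}$, preserving the horizontal boundary), and by the functoriality of the Hurwitz construction from \cite{Bianchi:Hur2} induces a homeomorphism $\Hur(\mcR_t)\to\Hur(\mcR_{\lambda t})$; these assemble into a continuous $(0,\infty)$-action on the open subspace $\mHurm\cap\set{t>0}$, and similarly for $\bHurm$.

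With this in hand I will define a homotopy $H\colon\mHurm\times[0,1]\to\mHurm$ by $H((t,\fc),s)=(t_s,\fc_s)$, where $t_s:=(1-s)t+s$ and $\fc_s$ is the horizontal rescaling of $\fc$ by the factor $t_s/t$ when $t>0$. When $t=0$, the support of $\fc$ is forced to be contained in $\mcR_0=\emptyset$, so $\fc=(\emptyset,\one)$, and I set $\fc_s:=(\emptyset,\one)\in\Hur(\mcR_{t_s})$. At $s=0$ the map $H$ is the identity of $\mHurm$; at $s=1$ it lands in $\set{1}\times\Hur(\mcR_1)$; and on the subspace $\set{1}\times\Hur(\mcR_1)$ itself we have $t\equiv 1$, hence $t_s\equiv 1$ and the rescaling factor is identically $1$, so $H$ fixes this subspace throughout. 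Thus $H$ will be the desired strong deformation retraction, whence the inclusion of $\Hur(\mcR)$ into $\mHurm$ is a homotopy equivalence.

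The hard part will be verifying continuity of $H$ near the locus $\set{t=0}$, where the rescaling factor $t_s/t$ is undefined and unbounded as $t\to 0^+$. I plan to handle this by unpacking the definition of the Hurwitz space topology from \cite{Bianchi:Hur2}: a convergence $(t_n,\fc_n)\to(0,(\emptyset,\one))$ in $\mHurm$ forces $\fc_n\to(\emptyset,\one)$ in $\Hur(\mcR_\infty)$, which under the discreteness of $\Q$ entails that for $n$ large the generators $\gen_i$ of $\fc_n$ are all mapped to $\one\in\Q$ by the monodromy, and hence $\fc_n$ is Hurwitz-equivalent to the empty configuration. Since this equivalence is preserved by the horizontal rescaling, the configurations $\fc_{s_n,n}$ also converge to $(\emptyset,\one)$. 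The argument for $\bHurm$ is identical, because horizontal rescaling preserves the horizontal boundary $\bdel\bcR_t$.
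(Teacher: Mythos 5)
Your main idea — horizontally rescale the ambient rectangle so that the width interpolates linearly from $t$ to $1$, using the affine map $(x,y)\mapsto(\lambda x,y)$ and enriched functoriality from \cite[Proposition 4.4]{Bianchi:Hur2} — is exactly the paper's argument. Your $t_s=(1-s)t+s$ agrees with the paper's $ts+1-s$ after replacing $s$ by $1-s$, your rescaling factor $t_s/t$ is the paper's $(ts+1-s)/t$, and both proofs split into the case of nonempty support and the case of the empty configuration, observing that the $t=1$ slice is fixed throughout.

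Where your proposal goes off course is the ``hard part'' concerning continuity near the empty slice. The worry is reasonable to raise, but your resolution is flawed and, more importantly, not needed. The point is that the configuration $(\emptyset,\one)$ is an \emph{isolated} point of $\Hur(\mcR_\infty)$: the only adapted covering of the empty support is the empty covering, so the normal neighbourhood of $(\emptyset,\one)$ is the singleton $\set{(\emptyset,\one)}$, and no nonempty configuration can lie in it. Consequently $\mHurm_\emptyset=\big([0,\infty)\times\set{(\emptyset,\one)}\big)\cap\mHurm$ is open (and closed) in $\mHurm$, the two branches of your piecewise formula live on disjoint clopen sets $\mHurm_\emptyset$ and $\mHurm_+$, and continuity of the assembled map is automatic. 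Your alternative argument — that a sequence of nonempty $\fc_n$ converging to $(\emptyset,\one)$ must eventually have trivial $\Q$-monodromy and hence be ``Hurwitz-equivalent to the empty configuration'' — is not correct in this framework. Configurations with inert (trivial-monodromy) points are not identified with, nor even in the same path component as, the empty configuration; compare Theorem~\ref{thm:pi0mHurm}, whose proof explicitly stresses that $\pi_0(1,\fc_\one)$ is \emph{not} the unit $\pi_0(0,(\emptyset,\one,\one))$ of $\pi_0(\mHurm)$. The true reason no nonempty $\fc_n$ limits onto $(\emptyset,\one)$ is the Ran-space topology itself, not any property of $\Q$.
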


\begin{proof}
 The proof is almost identical in the two cases, so we will focus on the second case, which is slightly more difficult.
 
 For $s>0$ the map $\Lambda_s\colon\C\to\C$ given by $\Lambda_s(z)=(s\Re(z),\Im(z))$
 is a morphism of nice couples $\Lambda_s\colon(\bcR_\infty,\bdel)\to(\bcR_{\infty},\bdel)$.
  Putting all values of $s\geq 0$ together we obtain a continuous map $\Lambda\colon\C\times(0,\infty)\to\C$;
 by \cite[Proposition 4.4]{Bianchi:Hur2} we obtain a continuous map
 \[
  \Lambda_*\colon \Hur(\bcR_\infty,\bdel)\times (0,\infty)\to \Hur(\bcR_\infty,\bdel).
 \]
 Define
$
 \tilde\Lambda\colon (0,\infty)\times\Hur(\bcR_{\infty},\bdel)\times (0,\infty)\to
 (0,\infty)\times\Hur(\bcR_{\infty},\bdel)
$
 by the formula $\tilde\Lambda(t,\fc;s)=\big(ts,\Lambda_*(\fc,s)\big)$.
 We are now able to define a homotopy $\cH^{\Lambda}\colon \bHurm\times [0,1]\to \bHurm$
 by setting
\[
\cH^{\Lambda}(t,\fc;s)=\left\{
\begin{array}{ll}
\pa{ts+1-s,\tilde\Lambda(\fc,(ts+1-s)/t)}& \mbox{ for all }t>0\mbox{ and }\fc\in\bHurm_+;\\[0.2cm]
\pa{ts+1-s,(\emptyset,\one,\one)} &\mbox{ for }\fc=(\emptyset,\one,\one) \mbox{ and all }t\geq 0.
\end{array}
 \right.
\]
Note that $\cH^{\Lambda}((1,\fc),s)=(1,\fc)$ for all $\fc\in\Hur(\bcR,\bdel)$, including $(\emptyset,\one,\one)$, and all $0\leq s\leq 1$;
 moreover the map $\cH(-;1)$ is the identity of $\bHurm$, whereas the map $\cH^{\Lambda}(-;0)$ has image inside $\Hur(\bcR,\bdel)$.
\end{proof}
The reason for the name \emph{Moore} in Definition \ref{defn:Hurmoore} is that, as we will see
in Subsection \ref{subsec:Hurmtopmon}, there is a natural structure of
topological monoid on both $\mHurm$ and $\bHurm$.
In contrast, $\Hur(\mcR)$ and $\Hur(\bcR)$ are only endowed with the structure of
$E_1$-algebras in a natural way.
The spaces $\mHurm$ and $\bHurm$ play the role of the strictification of the $E_1$-algebras
$\Hur(\mcR)$ and $\Hur(\bcR)$ to actual topological monoids,
just as the Moore loop space $\Omega^{\mathrm{Moore}}X$
of a pointed topological space $X$ is a strictly associative and strictly unital replacement of the $E_1$-algebra given by the usual loop space $\Omega X$.

\subsection{Topological monoid structure}
\label{subsec:Hurmtopmon}
In this subsection we define a topological monoid structure on $\mHurm$ and $\bHurm$.
For $\mHurm$ the basic idea is to \emph{juxtapose} two configurations $\fc\in\Hur(\mcR_t)$ and $\fc'\in\Hur(\mcR_{t'})$
to obtain a larger configuration supported on the rectangle $\mcR_{t+t'}$;
for $\bHurm$ the idea is similar, but using vertically closed rectangles, and juxtaposing
also their horizontal boundaries.

In the entire subsection we focus on $\mHurm$ and write in parentheses the changes needed
in the analogous discussion about $\bHurm$.
Whenever we write $\fQ(P)$ for a subset $P\subset\bH$, we use Notation \ref{nota:simplifiednotation} with
$\bY=\emptyset$ (respectively, $\bY=\bdel\bcR_{\R}$, see Notation \ref{nota:cRt}).

\begin{nota}
 \label{nota:fcplust}
 For $t,t'\geq 0$ we denote by $\mcR_{t'}+t$ the space $(t,t+t')\times(0,1)$.
 Similarly, we denote by $(\bcR_{t'},\bdel)+t$ the nice couple $((t,t+t')\times[0,1],(t,t+t')\times\set{0,1})$, compare with Notation \ref{nota:cRt}.
 
 For a finite subset $P\subset\bH$ as in Notation \ref{nota:fc} and for $t\geq 0$ we denote by $P+t$ the subset
 $\set{z_1+t,\dots,z_k+t}\subset\bH$.
\end{nota}

Note that for $P,t,t'$ as in Notation \ref{nota:fcplust}, if $P\subset\mcR_{t'}$ (respectively $P\subset \bcR_{t'})$, then $P+t\subset\mcR_{t'}+t\subset\mcR_{t+t'}$
(respectively, $P+t\subset\bcR_{t'}+t\subset\bcR_{t+t'}$).
\begin{defn}[Definition 6.7 in \cite{Bianchi:Hur2}]
 \label{defn:taut}
 For $t\in\R$ we define a homeomorphism $\tau_t\colon(\C,*)\to(\C,*)$ by:
  \[
 \def\arraystretch{1.4}
 \tau_t(z)=\left\{
 \begin{array}{ll}
  z & \mbox{if }\Im(z)\leq -1\\
  z+t& \mbox{if } \Im(z)\geq 0\\
  z+(\Im(z)+1)t & \mbox{if }-1\leq \Im(z)\leq 0.
 \end{array}
 \right.  
 \]
\end{defn}
Note that for $t,t'\geq 0$ we have
$\tau_t(\mcR_{t'})=\mcR_{t'}+t$ (respectively, $\tau_t(\bcR_{t'},\bdel)=(\bcR_{t'},\bdel)+t$).

\begin{nota}[Notation 6.8 in \cite{Bianchi:Hur2}]
 \label{nota:bS}
 For $t\in\R$ we denote by $\C_{\Re\geq t}\subset\C$ the subspace containing all $z\in\C$ with $\Re(z)\geq t$.
 Similarly we define $\C_{\Re>t}$, $\C_{\Re\leq t}$, $\C_{\Re<t}$ and $\C_{\Re=t}$, the latter being a vertical line.
 For all $-\infty\leq t\leq t'\leq +\infty$ we define a subspace $\bS_{t,t'}\subset\C$ by
 \[
  \bS_{t,t'}=\tau_t(\C_{\Re\geq 0})\cap\tau_{t'}(\C_{\Re\leq 0}),
 \]
 where we use the conventions $\tau_{-\infty}(\C_{\Re\geq0})=\tau_{+\infty}(\C_{\Re\leq0})=\C$
 and $\tau_{-\infty}(\C_{\Re\leq0})=\tau_{+\infty}(\C_{\Re\geq0})=\emptyset$.
\end{nota}
For all $t,t'\geq0$, we have that $\bS_{0,t+t'}$ is contractible and can be written
as the union of the contractible spaces $\bS_{0,t}$ and $\bS_{t,t+t'}$ along the contractible space $\bS_{t,t}$.
Moreover $\mcR_t\subset\mathring\bS_{0,t}$ (respectively, $\bcR_t\subset\mathring\bS_{0,t}$),
whereas $\mcR_{t'}+t\subset \mathring\bS_{t,t+t'}$ (respectively, $\bcR_{t'}+t\subset\mathring\bS_{t,t+t'}$).
Note also that $\tau_t$ restricts to a homeomorphism $\bS_{0,t'}\to\bS_{t,t+t'}$.

Recall \cite[Definitions 3.15 and 3.16]{Bianchi:Hur2}: if $\bT\subseteq\C$ is a contractible subspace containing $*=-\sqrt{-1}$, then for any nice couple of subspaces $\cY\subseteq\cX\subseteq\mathring{\bT}$ we can give an alternative definition of $\Hur(\cX,\cY)$, denoted by $\Hur^{\bT}(\cX,\cY)$, using $\bT$ instead of the entire $\C$ as ``ambient space'': indeed, for any finite set $P\subseteq\cX$, the fundamental group $\pi_1(\bT\setminus P,*)$ is canonically identified with $\fG(P)=\pi_1(\CmP,*)$, and similarly for fundamental PMQs. We thus get an identification
$\fri^{\C}_{\bT}\colon\Hur(\cX,\cY)\overset{\cong}{\to}\Hur^{\bT}(\cX,\cY)$.

If, moreover, $\xi\colon(\C,*)\to(\C,*)$ is a semialgebraic and orientation-preserving homeomorphism of the plane preserving the upper half-plane $\bH$, and if $\bT',\cX',\cY'$ are three other subspaces of $\C$ as above such that $\xi$ maps $\bT\to\bT'$, $\cX\to \cX'$ and $\cY\to \cY'$, then $\xi$ induces a map $\xi_*\colon \Hur^{\bT}(\cX,\cY)\to\Hur^{\bT'}(\cX,\cY)$.

There is finally a ``disjoint union'' map $-\sqcup-\colon \Hur^{\bT_1}(\cX_1,\cY_1)\times\Hur^{\bT_2}(\cX_2,\cY_2)\to\Hur^{\bT_1\cup\bT_2}(\cX_1\sqcup\cX_2,\cY_1\sqcup\cY_2)$, defined when $\bT_1\cap\bT_2$ is contractible and disjoint from both $\cX_1,\cX_2$ (in particular, this implies that $\cX_1$ and $\cX_2$ are disjoint).
\begin{defn}
 \label{defn:muttp}
 For $t,t'\geq 0$ we define the maps
 $\mu_{t,t'}\colon \Hur(\mcR_t)\times \Hur(\mcR_{t'})\to \Hur(\mcR_{t+t'})$
 and $\mu_{t,t'}\colon \Hur(\bcR_t,\bdel)\times \Hur(\bcR_{t'},\bdel)\to \Hur(\bcR_{t+t'},\bdel)$
 as the following compositions:
 \[
  \begin{tikzcd}[column sep=40pt]
  \Hur(\mcR_t)\times \Hur(\mcR_{t'})
  \ar[r,"{(\fri^{\C}_{\bS_{0,t}},\fri^{\C}_{\bS_{0,t'}})}"]
  & \Hur^{\bS_{0,t}}(\mcR_t)\times \Hur^{\bS_{0,t'}}(\mcR_{t'}) \ar[dl,"\Id\times(\tau_t)_*"']\\
  \Hur^{\bS_{0,t}}(\mcR_t)\times \Hur^{\bS_{t,t+t'}}(\mcR_{t'}+t) \ar[r,"-\sqcup-"] &
  \Hur^{\bS_{0,t+t'}}(\mcR_t\cup(\mcR_{t'}+t)) \ar[dl,"\subset"'] \\
  \Hur^{\bS_{0,t+t'}}(\mcR_{t+t'}) \ar[r,"{(\fri^{\C}_{\bS_{0,t+t'}})^{-1}}"'] &
  \Hur(\mcR_{t+t'});
  \end{tikzcd}
 \]
 \[
  \begin{tikzcd}[column sep=40pt]
  \Hur(\bcR_t,\bdel)\times \Hur(\bcR_{t'},\bdel)
  \ar[r,"{(\fri^{\C}_{\bS_{0,t}},\fri^{\C}_{\bS_{0,t'}})}"]
  & \Hur^{\bS_{0,t}}(\bcR_t,\bdel)\times \Hur^{\bS_{0,t'}}(\bcR_{t'},\bdel) \ar[dl,"\Id\times(\tau_t)_*"']\\
  \Hur^{\bS_{0,t}}(\bcR_t,\bdel)\times \Hur^{\bS_{t,t+t'}}(\bcR_{t'}+t,\bdel) \ar[r,"-\sqcup-"] &
  \Hur^{\bS_{0,t+t'}}(\bcR_t\cup(\bcR_{t'}+t),\bdel) \ar[dl,"\subset"'] \\
  \Hur^{\bS_{0,t+t'}}(\bcR_{t+t'},\bdel) \ar[r,"{(\fri^{\C}_{\bS_{0,t+t'}})^{-1}}",'] &
  \Hur(\bcR_{t+t'},\bdel).
  \end{tikzcd}
 \]

\end{defn}

\begin{defn}
 \label{defn:Hurmtopmon}
 Recall Definition \ref{defn:muttp}. We define a map of sets
 \[
 \mu\colon \mHurm\times\mHurm\to\mHurm\quad\quad\pa{\mbox{respectively, } \mu\colon \bHurm\times\bHurm\to\bHurm }
 \]
 by the formula
$\mu(\,(t,\fc)\,,\,(t',\fc')\,)=\big(t+t'\,,\,\mu_{t,t'}(\fc,\fc')\big)$.
See Figure \ref{fig:Hurmtopmon}.
\end{defn}

\begin{prop}
 \label{prop:Hurmtopmon}
 The map $\mu\colon \mHurm\times\mHurm\to\mHurm$ (respectively, $\mu\colon \bHurm\times\bHurm\to\bHurm$)
 is continuous and makes $\mHurm$ (respectively, $\bHurm$) into a
 topological monoid, with unit $(0,(\emptyset,\one,\one))$.
\end{prop}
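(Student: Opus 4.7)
The plan is to verify the three defining properties of a unital topological monoid in turn: continuity of $\mu$, strict associativity, and that $(0,(\emptyset,\one,\one))$ is a strict two-sided unit. We focus on $\mHurm$; the argument for $\bHurm$ is identical after the obvious notational changes.

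The first coordinate of $\mu$ is ordinary addition $[0,\infty)\times[0,\infty)\to[0,\infty)$, hence continuous. For the second coordinate one needs joint continuity of $\mu_{t,t'}$ in all four variables $(t,\fc,t',\fc')$. The ingredients, all established in \cite[Sections 3--4]{Bianchi:Hur2}, are the following: pushforward along a continuously varying family of self-homeomorphisms of $\C$ is continuous (applied to the family $\tau_t$, $t\in[0,\infty)$); the ``change of ambient space'' isomorphisms $\fri^{\C}_{\bS_{0,t+t'}}$ and their inverses are continuous and assemble continuously as the simply connected open subset $\bS_{0,t+t'}$ varies; and the disjoint union operation $-\sqcup-$ is continuous on the open locus of pairs of configurations with supports in disjoint simply connected open subsets. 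For every $(t,\fc,t',\fc')\in\mHurm\times\mHurm$ the configurations $\fc$ and $(\tau_t)_*\fc'$ have supports contained in $\bS_{0,t}\subset\bS_{0,t+t'}$ and $\bS_{t,t+t'}\subset\bS_{0,t+t'}$ respectively, so the composite of Definition \ref{defn:muttp} is well-defined and lands in $\Hur(\mcR_{t+t'})\subset\Hur(\mcR_\infty)$. The only technical obstacle is precisely this joint continuity, since the auxiliary strips $\bS_{0,t}$, $\bS_{t,t+t'}$ and $\bS_{0,t+t'}$ themselves vary with $(t,t')$; the natural resolution is to record the output directly inside the fixed ambient space $\Hur(\mcR_\infty)$ and check continuity there, or equivalently to locally enlarge the strips slightly to cover a neighborhood of a given $(t_0,t_0')$.

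For strict associativity, a direct expansion of both iterated products, using the identity $\tau_t\circ\tau_{t'}=\tau_{t+t'}$ on the closed upper half-plane $\bH\subset\C$ (immediate from Definition \ref{defn:taut}) and the associativity of disjoint union of configurations with pairwise disjoint simply connected supports, shows that the second coordinate of $\mu(\mu((t_1,\fc_1),(t_2,\fc_2)),(t_3,\fc_3))$ and of $\mu((t_1,\fc_1),\mu((t_2,\fc_2),(t_3,\fc_3)))$ both reduce to the disjoint union of $\fc_1$, $(\tau_{t_1})_*\fc_2$ and $(\tau_{t_1+t_2})_*\fc_3$ inside $\Hur(\mcR_{t_1+t_2+t_3})$; the first coordinate is manifestly $t_1+t_2+t_3$ in both orderings. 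Finally, for the unit axiom, setting $(t,\fc)=(0,(\emptyset,\one,\one))$ in Definition \ref{defn:muttp} gives $\tau_0=\Id_\C$ and the trivial disjoint union with the empty configuration supported on $\mcR_0=\emptyset$, so $\mu((0,(\emptyset,\one,\one)),(t',\fc'))=(t',\fc')$; the symmetric identity holds by the same reasoning, completing the verification.
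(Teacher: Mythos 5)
Your plan for associativity (via $\tau_t\circ\tau_{t'}=\tau_{t+t'}$ on $\bH$ and associativity of disjoint union) and for the unit axiom is correct and matches the structure of the paper's argument. The substantive issue is in the continuity step, which you correctly identify as the crux but do not actually resolve.

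You observe that the strips $\bS_{0,t}$, $\bS_{t,t+t'}$, $\bS_{0,t+t'}$ vary with $(t,t')$ and then offer two possible fixes: ``record the output in the fixed ambient $\Hur(\mcR_\infty)$ and check continuity there'', or ``locally enlarge the strips''. The first is not a fix at all, since $\mHurm$ is already a subspace of $[0,\infty)\times\Hur(\mcR_\infty)$ and the issue is precisely whether the composite defining $\mu_{t,t'}$, which passes through the varying intermediate spaces $\Hur^{\bS_{\cdot,\cdot}}(-)$, is jointly continuous in $(t,\fc,t',\fc')$. The second is plausible but not worked out, and it is delicate: when $t$ decreases the translated support $P'+t$ drifts leftward relative to the reference point $t_0$, so you would need a careful choice of enlargement constants and would also have to keep track of the fact that you still want to land in $\Hur(\mcR_{t+t'})$, not merely in some larger Hurwitz space. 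As written this is a gap, not a proof.

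The paper's appendix resolves the continuity issue by a cleaner device that eliminates the varying strips altogether: it precomposes with the reparametrization $\tau^-\colon(t,\fc)\mapsto(t,(\tau_{-t})_*\fc)$, which is a homeomorphism of $\bHurm$ (resp. $\mHurm$) onto a subspace $\bHurm^-$ of configurations supported in $(-t,0)\times[0,1]$. After this shift the first factor sits inside the \emph{fixed} half-plane $\bS_{-\infty,0}$ and the second inside the \emph{fixed} half-plane $\bS_{0,\infty}$, so the disjoint union map $-\sqcup-$ is applied with ambient spaces that do not depend on $(t,t')$; continuity is then a formal consequence of \cite[Propositions 3.15--3.16 and 4.4]{Bianchi:Hur2} plus a final continuously varying translation $\hat\tau\colon(t,t',\fc)\mapsto(t+t',(\tau_t)_*\fc)$. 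If you want to salvage your ``enlarge the strips'' variant, you would need to spell out the $\epsilon$-$\delta$ bookkeeping; the $\tau^-$-trick sidesteps all of it.
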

The proof of Proposition \ref{prop:Hurmtopmon} is in Appendix \ref{subsec:prop:Hurmtopmon}.

\begin{figure}[ht]
 \begin{tikzpicture}[scale=4,decoration={markings,mark=at position 0.38 with {\arrow{>}}}]
  \draw[dashed,->] (-.05,0) to (.6,0);
  \draw[dashed,->] (0,-1.1) to (0,1.1);
  \node at (0,-1) {$*$};
  \draw[pattern=horizontal lines] (0,-1) to (0,1.1) to node[above]{\tiny$\bS_{0,1/2}$} (.5,1.1)
  to (.5,0) to (0,-1);
  \draw[dotted, fill=gray, opacity=.5] (0,0) rectangle (.5,1);
  \draw[very thick] (0,0) to (.5,0);
  \draw[very thick] (0,1) to (.5,1);
  \node at (.1,.3){$\bullet$}; 
  \node at (.3,1){$\bullet$}; 
  \node at (.45,.5){$\bullet$};
  \draw[thin, looseness=1.2, postaction={decorate}] (0,-1) to[out=82,in=-90]  (.05,.1) to[out=90,in=-90] (.01,.3)  to[out=90,in=90] node[above]{\tiny$a_1$} (.2,.3)  to[out=-90,in=90] (.1,.1) to[out=-90,in=80] (0,-1);
  \draw[thin, looseness=1.2, postaction={decorate}] (0,-1) to[out=79,in=-90] (.28,.9)
  to[out=90,in=-90] (.26,1) to[out=90,in=90] (.34,1) to[out=-90,in=90] node[left]{\tiny$g_2$}
  (.3,.9) to[out=-90,in=77]  (0,-1);
  \draw[thin, looseness=1.2, postaction={decorate}] (0,-1) to[out=76,in=-90] (.4,.3)
  to[out=90,in=-90] (.35,.5) to[out=90,in=90] node[above]{\tiny$a_3$} (.49,.5) to[out=-90,in=90] (.45,.3) to[out=-90,in=74]  (0,-1);
\begin{scope}[shift={(.8,0)}]
  \draw[dashed,->] (-.05,0) to (.6,0);
  \draw[dashed,->] (0,-1.1) to (0,1.1);
  \node at (0,-1) {$*$};
  \draw[pattern=horizontal lines] (0,-1) to (0,1.1) to node[above]{\tiny$\bS_{0,1/2}$} (.5,1.1)
  to (.5,0) to (0,-1);
  \draw[dotted, fill=gray, opacity=.5] (0,0) rectangle (.5,1);
  \draw[very thick] (0,0) to (.5,0);
  \draw[very thick] (0,1) to (.5,1);
  \node at (.15,.0){$\bullet$};
  \node at (.45,.8){$\bullet$};
  \draw[thin, looseness=1.2, postaction={decorate}] (0,-1) to[out=82,in=-90]  (.1,-.1) to[out=90,in=-90] (.05,.05)  to[out=90,in=90] node[above]{\tiny$g'_1$} (.2,.05)  to[out=-90,in=90] (.15,-.1) to[out=-90,in=80] (0,-1);
  \draw[thin, looseness=1.2, postaction={decorate}] (0,-1) to[out=76,in=-90] (.4,.7)
  to[out=90,in=-90] (.35,.8) to[out=90,in=90] node[above]{\tiny$a'_2$} (.49,.8) to[out=-90,in=90] (.45,.7) to[out=-90,in=74]  (0,-1);
\end{scope}
\begin{scope}[shift={(2,0)}]
  \draw[dashed,->] (-.05,0) to (1.1,0);
  \draw[dashed,->] (0,-1.1) to (0,1.1);
  \node at (0,-1) {$*$};
  \draw[pattern=horizontal lines] (0,-1) to (0,1.1) to node[above]{\tiny$\bS_{0,1/2}$} (.5,1.1)
  to (.5,0) to (0,-1);
  \draw[dotted, fill=gray, opacity=.5] (0,0) rectangle (.5,1);
  \draw[very thick] (0,0) to (.5,0);
  \draw[very thick] (0,1) to (.5,1);
  \node at (.1,.3){$\bullet$}; 
  \node at (.3,1){$\bullet$}; 
  \node at (.45,.5){$\bullet$};
  \draw[thin, looseness=1.2, postaction={decorate}] (0,-1) to[out=82,in=-90]  (.05,.1) to[out=90,in=-90] (.01,.3)  to[out=90,in=90] node[above]{\tiny$a_1$} (.2,.3)  to[out=-90,in=90] (.1,.1) to[out=-90,in=80] (0,-1);
  \draw[thin, looseness=1.2, postaction={decorate}] (0,-1) to[out=79,in=-90] (.28,.9)
  to[out=90,in=-90] (.26,1) to[out=90,in=90] (.34,1) to[out=-90,in=90] node[left]{\tiny$g_2$}
  (.3,.9) to[out=-90,in=77]  (0,-1);
  \draw[thin, looseness=1.2, postaction={decorate}] (0,-1) to[out=76,in=-90] (.4,.3)
  to[out=90,in=-90] (.35,.5) to[out=90,in=90] node[above]{\tiny$a_3$} (.49,.5) to[out=-90,in=90] (.45,.3) to[out=-90,in=74]  (0,-1);
  
  \draw[pattern=horizontal lines] (0,-1) to (.5,0) to (.5,1.1) to node[above]{\tiny$\tau_{1/2}(\bS_{0,1/2})$} (1,1.1)
  to (1,0) to (0,-1);
  \draw[dotted, fill=gray, opacity=.5] (.5,0) rectangle (1,1);
  \draw[very thick] (.5,0) to (1,0);
  \draw[very thick] (.5,1) to (1,1);
  \node at (.65,.0){$\bullet$};
  \node at (.95,.8){$\bullet$};
  \draw[thin, looseness=1.2, postaction={decorate}] (0,-1) to[out=62,in=-120]  (.55,-.1) to[out=60,in=-90] (.52,.05)  to[out=90,in=90] node[above]{\tiny$g'_1$} (.7,.05)  to[out=-90,in=50] (.65,-.1) to[out=-130,in=60] (0,-1);
  \draw[thin, looseness=1.2, postaction={decorate}] (0,-1) to[out=56,in=-90] (.9,.7)
  to[out=90,in=-90] (.85,.8) to[out=90,in=90] node[above]{\tiny$a'_2$} (.99,.8) to[out=-90,in=90] (.95,.7) to[out=-90,in=54]  (0,-1);
\end{scope}
 \end{tikzpicture}
 \caption{Left: two configurations in $\Hur^{\bS_{0,1/2}}(\bcR_{1/2},\bdel)\cong\Hur(\bcR_{1/2},\bdel)\subset\bHurm$. Right: their product in
 $\Hur^{\bS_{0,1}}(\bcR,\bdel)\cong\Hur(\bcR,\bdel)\subset\bHurm$.
 }
\label{fig:Hurmtopmon}
\end{figure}

Recall the notion of total monodromy from \cite[Definitions 6.1 and 6.3]{Bianchi:Hur2}: for a generic nice couple $(\cX,\cY)$ we have a map $\totmon\colon\Hur(\cX,\cY)\to G$ sending a configuration $(P,\psi,\phi)$ to the value of the monodromy $\psi$ at the ``large loop'', i.e. the element of $\fG(P)$ represented by a simple loop spinning clockwise around all points of $P$.

If $\cY=\emptyset$, one can lift this to a total monodromy $\hat\totmon\colon \Hur(\cX)\to \hQ$, where $\hQ$ is the completion of the PMQ $\Q$, as in \cite[Definition 2.19]{Bianchi:Hur1}. Concretely, $\hQ$ can be defined as the free, \emph{non-unital} monoid generated by elements $\hat a$ for $a\in \Q$, satisfying $\hat a\hat b=\hat b\widehat{a^b}$ for all $a,b\in\Q$, and satisfying $\hat a\hat b=\widehat{ab}$ for all $a,b\in\Q$ such that the product $ab$ is already defined in $\Q$. The non-unital monoid $\hQ$ happens to have a unit, namely $\hat\one$, and a natural binary operation of conjugation can be defined on it, so that it becomes a PMQ with complete product; there is a natural inclusion of PMQs $\Q\hookrightarrow\hQ$, which is the universal map from $\Q$ to a complete PMQ.
In the lift $\hat\totmon$ of $\totmon$ we need $\hQ$ rather than $\Q$ as target because the large loop in $\cG(P)$ is not, in general, an element of the fundamental PMQ $\fQ(P)$ (unless $P$ is a singleton), so we cannot directly evaluate $\psi$ on it; but we can factor the large loop as a product of elements in $\fQ(P)$, evaluate $\psi$ on the factors, and compute in $\hQ$ the corresponding product of elements of $\Q$.
\begin{nota}
 \label{nota:muHurmcdotomega}
 For $(t,\fc)$ and $(t',\fc')$ in $\mHurm$ (in $\bHurm$) we denote by
 $(t,\fc)\cdot(t',\fc')$ the configuration $\mu((t,\fc),(t',\fc'))$.
  
 We denote by $\hat\totmon\colon\mHurm\to \hQ$ (respectively, $\totmon\colon\bHurm\to G$)
 the composition
 \[
 \begin{tikzcd}
 \mHurm\subset[0,\infty)\times\Hur(\mcR_{\infty})\ar[r] & \Hur(\mcR_{\infty})\ar[r,"\hat\totmon"] & \hQ
 \end{tikzcd}
 \]
 \[
 \begin{tikzcd}
 \Big(\mbox{respectively, }\bHurm\subset[0,\infty)\times\Hur(\bcR_{\infty},\bdel)\ar[r]& \Hur(\bcR_{\infty},\bdel)\ar[r,"\totmon"] & G\ \Big),
 \end{tikzcd}
 \]
 where the first map is the projection on the second component, and $\hQ$ denotes the completion
 of the PMQ $\Q$.
\end{nota}

\subsection{Computation of \texorpdfstring{$\pi_0(\mHurm)$}{pi0(mHurM)}}
\label{subsec:groupcompletionpinot}
In this subsection we study the discrete monoid of path components of $\mHurm$.
We will prove the following theorem, which is similar to \cite[Proposition 6.4]{Bianchi:Hur2}.
\begin{thm}
 \label{thm:pi0mHurm}
 Recall Notations \ref{nota:Hurmplus} and \ref{nota:muHurmcdotomega}.
 The map $\hat\totmon\colon\pi_0(\mHurm)\to\hQ$ is a map of unital monoids, and it restricts
 to a bijection $\pi_0(\mHurm_+)\cong\hQ$.
\end{thm}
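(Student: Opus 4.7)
The plan is to proceed in three stages: check that $\hat\totmon$ is a monoid map, reduce the bijection claim to a statement about $\Hur_+(\mcR;\Q)$ via Lemma \ref{lem:HurmvsHur}, and then carry out a normal-form argument on $\pi_0(\Hur_+(\mcR;\Q))$.

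First, the unit $(0,(\emptyset,\one,\one))$ has empty support, so $\hat\totmon$ sends it to $\one\in\hQ$. For multiplicativity, unpacking Definition \ref{defn:muttp} shows that $\mu_{t,t'}(\fc,\fc')$ is, within $\Hur^{\bS_{0,t+t'}}(\mcR_{t+t'})$, the disjoint union of $\fc$ (lying over $\bS_{0,t}$) and the translate $(\tau_t)_*\fc'$ (lying over $\bS_{t,t+t'}$). Concatenating an admissible generating set for the left half with one for the right half gives an admissible generating set for the product whose associated product of monodromies is precisely $\hat\totmon(\fc)\cdot\hat\totmon(\fc')$, yielding the multiplicativity of $\hat\totmon$ and hence the monoid-map statement after passage to $\pi_0$.

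For the bijection, the homotopy $\cH^{\Lambda}$ in the proof of Lemma \ref{lem:HurmvsHur} preserves non-emptiness of support at every time $s\in[0,1]$, so it restricts to a homotopy equivalence $\Hur_+(\mcR;\Q)\hookrightarrow\mHurm_+$, and it suffices to show that $\hat\totmon$ induces a bijection $\pi_0(\Hur_+(\mcR;\Q))\cong\hQ$. Surjectivity is direct: since $\hQ$ is generated as a monoid by the image of $\Q$, any $q\in\hQ$ admits an admissible factorisation $q_1\cdots q_k$ with $k\geq 1$ and $q_i\in\Q$ (take $q_1=\one\in\Q$ if $q=\one$); placing $k$ distinct points on the horizontal segment $(0,1)\times\{1/2\}$, equipping them with radial admissible loops from a basepoint beneath $\mcR$, and setting $\psi(g_i)=q_i$ produces a configuration in $\Hur_+(\mcR;\Q)$ with total monodromy $q$.

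For injectivity, given two configurations in $\Hur_+(\mcR;\Q)$ with the same total monodromy $q\in\hQ$, I would first connect each to a normal-form configuration as above by an isotopy sending all support points to the segment $(0,1)\times\{1/2\}$, consistently transporting the admissible generating set of the fundamental PMQ. Two normal forms $(q_1,\dots,q_k)$ and $(q'_1,\dots,q'_{k'})$ represent the same element of $\hQ$ if and only if they are related by a finite sequence of elementary moves, namely merges and splits invoking the partial product of $\Q$, and conjugation moves built into the quandle structure. Each such elementary move is realised by an explicit path in $\Hur_+(\mcR;\Q)$: a coalescence of two adjacent points along the horizontal segment for a merge or split, or a half-braid swapping adjacent points for a conjugation. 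Chaining these paths together connects the two original configurations. The main obstacle, paralleling the analysis in \cite[Proposition 6.4]{Bianchi:Hur2}, is the careful verification that the equivalence relation on ordered admissible tuples generated by these geometric moves coincides with the defining relations of $\hQ$ as the monoid completion of $\Q$ — in particular, that a coalescence of two adjacent points in $\mcR$ can be realised as a path in $\Hur_+(\mcR;\Q)$ exactly when the corresponding product is defined in $\Q$.
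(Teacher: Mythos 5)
Your approach is essentially the same as the paper's: the crux in both is that (a) every element of $\pi_0(\mHurm_+)$ is a product of one-point ``normal forms'' $\pi_0(1,\fc_a)$, and (b) the defining relations of $\hQ$ (braiding/conjugation, and coalescence when a product is defined) are realized by explicit paths in $\mHurm$ --- exactly the content of Lemma~\ref{lem:fcagenerate}, which is what you flag as ``the main obstacle'' and defer. The only organizational difference is that, rather than arguing injectivity directly by normalizing both configurations and chaining elementary moves, the paper uses (a) and (b) to construct a set-theoretic right inverse $\Omega\colon\hQ\to\pi_0(\mHurm_+)$ of $\totmon$ and then observes $\Omega$ is surjective by (a); this sidesteps the need to carefully track isotopies transporting admissible generating sets.
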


\begin{nota}
 \label{nota:zcentre}
 We denote by $\zcentre=\frac 12+\frac{\sqrt{-1}}2\in\C$ the centre of $\mcR$.
\end{nota}

\begin{defn}
 \label{defn:fca}
 For all $a\in\Q$ we define a configuration $\fc_a=(\set{\zcentre},\psi_a)\in\Hur(\mcR)$,
 where $\psi_a$ sends the (unique) element $\gencentre$ in $\fQ(\set{\zcentre})\setminus\set{\one}$ to $a$.
\end{defn}
For a space $X$ we denote by $\pi_0\colon X\to\pi_0(X)$ the map assigning to each point of $X$ its path component.
We denote by $\cdot$ the product of the discrete monoid $\pi_0(\mHurm)$.
\begin{lem}
 \label{lem:fcagenerate}
 The monoid $\pi_0(\mHurm)$ is generated by $\pi_0(0,(\emptyset,\one,\one))$, which is the unit, and by the elements of the form $\pi_0(1,\fc_a)$, for $a\in\Q$. Moreover
 the following equalities hold in $\pi_0(\mHurm)$:
 \begin{itemize}
  \item if $a,b\in\Q$, then $\pi_0(1,\fc_a)\cdot\pi_0(1,\fc_b)=\pi_0(1,\fc_{b})\cdot\pi_0(1,\fc_{a^b})$;
  \item if $a,b\in\Q$ and the product $ab$ is defined in $\Q$, then
  $\pi_0(1,\fc_a)\cdot\pi_0(1,\fc_b)=\pi_0(1,\fc_{ab})$.
 \end{itemize}
\end{lem}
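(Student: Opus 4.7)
The plan is to establish the generation statement first, then verify each of the two braid-type relations using explicit geometric deformations inside $\mHurm$.

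For generation, given an arbitrary $(t,\fc) \in \mHurm$ with $\fc = (P, \psi)$ and $P = \{z_1, \dots, z_k\}$, I would first perturb $P$ slightly to arrange that the real parts are distinct, say $\Re(z_1) < \cdots < \Re(z_k)$. I would then construct a path in $\mHurm$ that continuously deforms $t$ to $k$ while simultaneously translating each $z_i$ to the position $\zcentre + (i-1)$, the centre of the $i$-th unit sub-rectangle of $\mcR_k$. Such a path avoids collisions since the real parts remain distinct throughout, and can be implemented by combining a linear interpolation of point positions with the rescaling homotopy $\cH^{\Lambda}$ from Lemma \ref{lem:HurmvsHur}. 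Tracking admissible generators through the path yields elements $a_1, \dots, a_k \in \Q$, and by Definition \ref{defn:Hurmtopmon} the endpoint of the path is exactly the monoid product $(1,\fc_{a_1}) \cdot (1,\fc_{a_2}) \cdots (1,\fc_{a_k})$. Hence every path component is represented by such a product, with $k = 0$ giving the unit.

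For the first relation, I would start from $\fc = (\{\zcentre, \zcentre + 1\}, \psi) \in \Hur(\mcR_2)$ with $\psi(\gen_1) = a$ and $\psi(\gen_2) = b$, which represents $\pi_0(1,\fc_a) \cdot \pi_0(1,\fc_b)$. Then I would construct a half-twist path in $\Hur(\mcR_2)$: the two points move along disjoint arcs inside $\mcR_2$ (say with $z_1$ passing below $z_2$) and exchange their positions, leaving the underlying set $\{\zcentre, \zcentre+1\}$ unchanged. The standard computation of the braid group action on the fundamental group of the twice-punctured disk shows that the admissible generator around the new left point evaluates to $b$, while the one around the new right point evaluates to the quandle conjugate $a^b$. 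Hence the endpoint represents $\pi_0(1,\fc_b) \cdot \pi_0(1,\fc_{a^b})$.

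For the second relation, assume that $ab$ is defined in $\Q$ and start from $\fc = (\{\zcentre, \zcentre + 1\}, \psi) \in \Hur(\mcR_2)$ with $\psi(\gen_1) = b$ and $\psi(\gen_2) = a$, representing $\pi_0(1,\fc_b) \cdot \pi_0(1,\fc_a)$. I would construct a path in which the two points approach each other along the segment $\Im(z) = 1/2$ and merge at the midpoint $1 + \frac{\sqrt{-1}}{2}$. By the construction of Hurwitz spaces from \cite{Bianchi:Hur1}, such a merging path is continuous in $\Hur(\mcR_2)$ precisely because the product $ab$ is defined in $\Q$, and the limit is the configuration supported on the singleton $\{1 + \frac{\sqrt{-1}}{2}\}$ with total monodromy $ab$. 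Applying the rescaling homotopy $\cH^{\Lambda}$ of Lemma \ref{lem:HurmvsHur} then brings this endpoint into the same path component as $(1, \fc_{ab})$.

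The main obstacle I anticipate is in the first relation, where the admissible-generator conventions of Notation \ref{nota:fc} must be matched carefully with the chosen handedness of the half-twist, in order to confirm that the conjugate appearing is exactly $a^b$ and not some other quandle expression. A rigorous verification relies on the monodromy-transport machinery of \cite[Section 4]{Bianchi:Hur2} and on the description of the PMQ self-action encoded in the pair $(\Q,G)$.
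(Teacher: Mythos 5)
Your generation argument and your treatment of the first (conjugation) relation follow essentially the paper's route: perturb so the real parts of the marked points are distinct, split the configuration into single-point factors via restriction to vertical strips, and compute the effect of a clockwise half twist on the admissible generating set to obtain $b$ and $a^b$. Both of those parts are sound.

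For the second relation the approaches genuinely diverge, and your version has a gap. The paper avoids any collision argument: it applies $\Lambda_*(-,1/2)$ to shrink the two-point configuration into $\Hur(\mcR)$, observes that the result lies in $\Hur_+(\mcR)_{ab}$, and invokes the contractibility of that space from \cite[Corollary 6.5]{Bianchi:Hur2} to conclude that the shrunken configuration and $\fc_{ab}$ share a component. You instead merge the two points directly; that is a natural idea, but continuity of a merging path in the Hurwitz-space topology is precisely what the cited contractibility statement packages, so you would owe a normal-neighbourhood argument that you only gesture at. The more concrete problem is that your merged monodromy is computed in the wrong order. Placing $b$ at the left point ($\psi(\gen_1)=b$) and $a$ at the right ($\psi(\gen_2)=a$), the admissible loop encircling both evaluates to $\psi(\gen_1)\cdot\psi(\gen_2)=ba$, not $ab$ -- this is the multiplicativity of $\totmon$ verified independently at the start of the proof of Theorem \ref{thm:pi0mHurm}. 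To land in $\Hur_+(\mcR)_{ab}$ one must start from $(1,\fc_a)\cdot(1,\fc_b)$ with $a$ on the left, which is exactly what the paper's own proof does; the order printed in the second bullet of the lemma is incompatible with the relation $\hat a\hat b=\widehat{ab}$ in $\hQ$ that the proof of Theorem \ref{thm:pi0mHurm} needs, so you should have corrected it rather than building a geometric computation around it that does not check out.
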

The proof of Lemma \ref{lem:fcagenerate} is in Appendix \ref{subsec:fcagenerate}.
\begin{proof}[Proof of Theorem \ref{thm:pi0mHurm}]
 First we prove that $\hat\totmon\colon\mHurm\to\hQ$ is a map of monoids.
 Let $(t,\fc),(t',\fc')\in\mHurm$, and use Notation \ref{nota:fc}:
we can choose simple loops $\gamma\subset\bS_{-\infty,t}$ and $\gamma'\subset\bS_{t,+\infty}$,
 spinning clockwise around $P$ and $P'+t$, respectively; the product
 $[\gamma]\cdot[\gamma']\in\fG(P\cup (P'+t))$ is represented by a simple loop spinning clockwise around $P\cup (P'+t)$.
 Denoting $(t,\fc)\cdot(t',\fc')=(t+t',(P'',\psi''))$,
 by definition of $\psi''$ we have
 \[
 \hat\totmon\Big((t,\fc)\cdot(t',\fc')\Big)=
 \psi''([\gamma]\cdot[\gamma'])=\psi([\gamma])\cdot\psi'([\gamma'])
 =\hat\totmon(t,\fc)\cdot\hat\totmon(t'\fc')\in\hQ.
 \]
Note that $\hat\totmon((0,(\emptyset,\one,\one)))=\one$, so $\hat\totmon$ is a map of unital monoids;
moreover $\hat\totmon(1,\fc_a)=\hat a\in\hQ$ for all $a\in\Q$,
so that $\hat\totmon\colon\pi_0(\mHurm)\to\hQ$ hits
the generators of $\hQ$ and is thus surjective.

By Lemma \ref{lem:fcagenerate}, the corresponding relations
among the elements $\pi_0(1,\fc_a)\in \pi_0(\mHurm)$ hold, so that
the assignment $\hat a\mapsto \pi_0(1,\fc_a)$ defines
a map of non-unital monoids $\Omega\colon\hQ\to\pi_0(\mHurm)$;
note that, though both the source and the target of $\Omega$ are indeed unital monoids,
$\pi_0(1,\fc_\one)=\Omega(\one)$ is not the unit of $\pi_0(\mHurm)$, so that $\Omega$
is not a map of unital monoids.

In fact $\Omega$, as a map of sets, is a right inverse
of $\hat\totmon$, i.e. $\hat\totmon\circ\Omega$ is the identity of $\hQ$.
Moreover $\Omega$ hits all elements of $\pi_0(\mHurm)$ of the form $\pi_0(1,\fc_a)$, and by Lemma \ref{lem:fcagenerate}
every element of $\pi_0(\mHurm_+)$
can be written as a product of one or more elements of the form $\pi_0(1,\fc_a)$.
It follows that $\Omega$ is a bijection between $\hQ$ and $\pi_0(\mHurm_+)$, and this concludes the proof.
\end{proof}

\subsection{Computation of \texorpdfstring{$\pi_0(\bHurm)$}{pi0(bHurM)}}
\label{subsec:pinotbHurm}
We conclude the section by computing $\pi_0(\bHurm)$.
Recalling Notations \ref{nota:emptysetoneone} and \ref{nota:Hurmplus}, it suffices
to compute $\pi_0(\bHurm_+)$. The canonical structure we have on this last set is that of \emph{non-unital} monoid,
since the multiplication $\mu$ of $\bHurm$ restricts to a map $\bHurm_+\times\bHurm_+\to\bHurm_+$.
The total monodromy gives again a morphism of non-unital monoids
\[
 \totmon\colon\pi_0(\bHurm_+)\to G.
\]

\begin{thm}
 \label{thm:pi0bHurm}
Recall the map of PMQs $\fe\colon\Q\to G$, which is part of the PMQ-group pair structure on $(\Q,G)$.
Suppose that the image $\fe(\Q)\subset G$ generates $G$ as a group.
Then the map $\totmon\colon\pi_0(\bHurm_+)\to G$ is bijective.
\end{thm}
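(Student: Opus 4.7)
The plan is to parallel the proof of Theorem~\ref{thm:pi0mHurm}, with the target $\hQ$ replaced by $G$ and with boundary-supported configurations playing the role of the interior configurations $\fc_a$. The argument that $\totmon \colon \pi_0(\bHurm_+) \to G$ is a morphism of non-unital monoids is essentially identical to the first half of the proof of Theorem~\ref{thm:pi0mHurm}: for $(t,\fc),(t',\fc') \in \bHurm_+$ I would choose simple clockwise loops $\gamma \subset \bS_{-\infty,t}$ around $P$ and $\gamma' \subset \bS_{t,+\infty}$ around $P'+t$ and note that their concatenation represents a simple clockwise loop around $P \cup (P'+t)$, so applying the monodromy of the product configuration gives $\totmon((t,\fc)\cdot(t',\fc')) = \totmon(t,\fc) \cdot \totmon(t',\fc')$.

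For surjectivity, for each $g \in G$ I would introduce a configuration $\fc^{\bdel}_g \in \Hur(\bcR,\bdel)$ supported on the single boundary point $(\tfrac{1}{2},1)$ and whose associated $G$-monodromy is $g$; since points on $\bdel\bcR$ may carry arbitrary elements of $G$, such a configuration exists. The couple $(1,\fc^{\bdel}_g)$ lies in $\bHurm_+$ and has total monodromy $g$, producing a set-theoretic section $\Omega \colon G \to \pi_0(\bHurm_+)$ of $\totmon$.

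For injectivity I would establish a boundary-version of Lemma~\ref{lem:fcagenerate}, asserting that every class in $\pi_0(\bHurm_+)$ coincides with some $\pi_0(1,\fc^{\bdel}_g)$. The generators of $\pi_0(\bHurm_+)$ are the classes $\pi_0(1,\fc_a)$ for $a \in \Q$ and $\pi_0(1,\fc^{\bdel}_g)$ for $g \in G$, together with products thereof, so it suffices to exhibit two types of paths inside $\Hur(\bcR,\bdel;\Q,G)$: first, a \emph{push-to-boundary} path from $(1,\fc_a)$ to $(1,\fc^{\bdel}_{\fe(a)})$, obtained by vertically translating the interior point $\zcentre$ to the top edge, whereupon its $\Q$-monodromy $a$ is replaced by its $G$-image $\fe(a)$; second, a \emph{collision} path inside $\bHurm_+$ deforming $(1,\fc^{\bdel}_g) \cdot (1,\fc^{\bdel}_h)$ to a configuration with a single boundary point carrying monodromy $gh \in G$, by bringing the two boundary points together along the top edge. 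Combined with the width-rescaling homotopy from the proof of Lemma~\ref{lem:HurmvsHur}, these moves reduce every class in $\pi_0(\bHurm_+)$ to one of the form $\pi_0(1,\fc^{\bdel}_g)$, showing that $\Omega$ is both a left and a right inverse to $\totmon$.

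The main technical obstacle will be the push-to-boundary move: verifying that a vertical translation of an interior point onto $\bdel\bcR$ defines a genuine continuous family of configurations in $\Hur(\bcR,\bdel;\Q,G)$ whose limit has $G$-monodromy $\fe(a)$. Morally, this is forced by the definition of relative Hurwitz spaces in \cite{Bianchi:Hur2}, where a local generator in the fundamental PMQ-group pair passes from the $\Q$-part to the $G$-part via the structural map $\fe$ precisely when its base point crosses $\bdel\bcR$; nevertheless, writing this out as an explicit path requires care, and the hypothesis that $\fe(\Q)$ generates $G$ is used implicitly in making the reduction step consistent with the product structure on $\pi_0(\bHurm_+)$. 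The collision move, by contrast, is routine, since collisions of points in a Hurwitz space are always well-defined whenever the corresponding product exists in the target, which is automatic for the group $G$.
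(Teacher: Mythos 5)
Your opening (that $\totmon$ is a morphism of non-unital monoids) and your surjectivity argument are both fine, and the overall strategy — slice a configuration into single-point generators, then reduce each generator to a canonical representative — is the same strategy the paper uses. But your injectivity sketch has a genuine gap. Both of your moves, the collision of boundary points ``along the top edge'' and the push of an interior point ``to the top edge,'' only produce and manipulate configurations supported on $(0,t)\times\set{1}$. However $\bdel\bcR_t=(0,t)\times\set{0,1}$ is a disjoint union of two intervals, and when you slice a general $(t,\fc)\in\bHurm_+$ by real parts some single-point slices will be boundary points on $(0,t)\times\set{0}$ carrying an arbitrary $G$-monodromy $g$. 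Your asserted list of generators silently omits these. There is no path inside $\bdel\bcR_t$ joining the two boundary components, and pushing a bottom boundary point through the interior and up to the top requires lifting $g$ along $\fe$ to an element of $\Q$, which fails for general $g\in G$ since $\fe$ need not be surjective. This is exactly the step that forces the hypothesis that $\fe(\Q)$ generates $G$: one writes $g=\fe(a_1)^{\pm1}\cdots\fe(a_r)^{\pm1}$ and transfers the factors across one at a time, each $\fe(a_i)$ being transferable precisely because it lifts to $a_i\in\Q$ and so can pass through the interior. Your proposal notes the hypothesis is used ``implicitly'' but does not identify this mechanism, and without it the reduction of every class to $\pi_0(1,\fc^{\bdel}_g)$ is not established.

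For comparison, the paper sidesteps the top/bottom asymmetry by first applying the $90^{\circ}$ rotation $\xi^{\rot}$ to pass to $\Hur(\bcRlr,\bdel)$, so that the two boundary components become the left and right vertical sides; it then uses Lemma \ref{lem:pathtobdel} and Lemma \ref{lem:bcRvsdiamond} to collapse any configuration onto the two corners $\zdiamleft,\zdiamright$, and finally moves monodromy factors $\fe(a)$ between the two corners via the intermediate configurations $\fc_{g,a,h}$ and the homotopies $\cHleft_{1/2},\cHright_{1/2}$ — this is where the generation hypothesis is actually consumed. If you keep the unrotated picture you need an explicit analogue of this transfer from the bottom edge to the top edge; without one, injectivity of $\totmon$ remains unproved.
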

In other words, the unital monoid $\pi_0(\bHurm)$ is isomorphic to $G\sqcup\set{\one}$, where the extra element $\one$ plays the role of the monoid unit, and the old unit $\one_G\in G$ still satisfies $\one_G\cdot g=g\cdot \one_G=g$ for all $g\in G$, but $\one\cdot\one_G=\one_G\cdot \one=\one_G$.

We observe that the hypothesis that $G$ is generated by $\fe(\Q)$ is necessary in Theorem \ref{thm:pi0bHurm}: if, for instance, $\Q=\set{\one}$ and $G$ is any non-trivial group, then $\pi_0(\bHurm_+)$ can rather be identified (as a set) with $G\times G$, and $\totmon$ with the product map $G\times G\to G$.

The rough idea of the proof of Theorem \ref{thm:pi0bHurm} is the following: given a configuration $(t,\fc)$, we can shrink or stretch it until we have $t=1$; we can move points of $\fc$ to either horizontal side of $\bcR$, reducing to a configuration $\fc$ supported on $\bdel$; we can let all points on either component of $\bdel$ collide with each other, reducing to a configuration $\fc$ supported on at most two points lying on $\bdel$; finally, we can use that $\fe(Q)$ generates $G$ to ``trade'' factors of the total monodromy from one component of $\bdel$ to the other, reaching a configuration $\fc$ supported on a single point.

The rest of the subsection is devoted to the proof of Theorem \ref{thm:pi0bHurm}.
We replace $\bHurm$ by the homotopy equivalent space $\Hur(\bcR,\bdel)$, see Lemma \ref{lem:HurmvsHur}.
\begin{nota}
 \label{nota:bcRlr}
 We denote by $\bcRlr$ the horizontally closed square $[0,1]\times(0,1)\subset\bH$, and by $\bdel\bcRlr=\set{0,1}\times(0,1)$ the union of the vertical sides of $\bcRlr$.
 We abbreviate the nice couple $(\bcRlr,\bdel\bcRlr)$ as $(\bcRlr,\bdel)$. See Figure \ref{fig:diamolr}.
 
 We fix once and for all a semialgebraic homeomorphism $\xi^{\rot}\colon\C\to\C$ which fixes the basepoint $*=-\sqrt{-1}$
 and restricts to the homeomorphism $\bcRlr\to\bcR$ given by the $90^{\circ}$ clockwise rotation around $\zcentre$ (see Notation \ref{nota:zcentre}).
\end{nota}
By functoriality we have a homeomorphism $\xi^{\rot}_*\colon\Hur(\bcRlr,\bdel)\to \Hur(\bcR,\bdel)$.
We will prove Theorem \ref{thm:pi0bHurm} by classifying connected components of $\Hur_+(\bcRlr,\bdel)$;
from now on we will focus on the latter space.

\begin{lem}
 \label{lem:pathtobdel}
 Let $\fc\in\Hur(\bcRlr,\bdel)$; then $\fc$ is connected to a configuration $\fc'$
 supported in $\bdel\bcRlr$.
\end{lem}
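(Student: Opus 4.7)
The plan is to move the support $P$ of $\fc=(P,\psi,\phi)$ horizontally onto the two vertical sides of $\bcRlr$, exploiting the functoriality of Hurwitz spaces along continuous families of maps of nice couples (see \cite[Proposition 4.4]{Bianchi:Hur2}): any continuous family of maps of nice couples $H\colon(\bcRlr,\bdel)\times[0,1]\to(\bcRlr,\bdel)$ with $H_0=\Id$ yields a continuous path $s\mapsto(H_s)_*\fc$ in $\Hur(\bcRlr,\bdel)$, so it suffices to construct such an $H$ whose $H_1$ pushes $P$ into $\bdel\bcRlr$.

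The proof proceeds in two steps. First, a preliminary perturbation: I would connect $\fc$ to a configuration $\fc_1=(P_1,\psi_1,\phi_1)$ such that no two points of $P_1\setminus\bdel\bcRlr$ share the same imaginary part, and no such point has real part equal to $1/2$. This is achieved by a small generic deformation of the points of $P$ lying off $\bdel\bcRlr$, leaving the points on $\bdel\bcRlr$ fixed and transporting the monodromies via the standard braid-type identifications of the fundamental group of $\C\setminus P$ under small motions of $P$. Second, choose $\epsilon>0$ small enough that no point of $P_1$ has real part in $[1/2-\epsilon,1/2+\epsilon]$, and define a continuous family of piecewise linear functions $h_s\colon[0,1]\to[0,1]$ by $h_s(x)=(1-s)x+s\,h_1(x)$, where $h_1$ equals $0$ on $[0,1/2-\epsilon]$, equals $1$ on $[1/2+\epsilon,1]$, and is linear on $[1/2-\epsilon,1/2+\epsilon]$. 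Setting $H_s(x,y)=(h_s(x),y)$ gives a continuous family of maps of nice couples $(\bcRlr,\bdel)\to(\bcRlr,\bdel)$, since $h_s(0)=0$ and $h_s(1)=1$ for every $s$, and $H_0=\Id$. By functoriality the path $s\mapsto(H_s)_*\fc_1$ ends at a configuration $\fc'$ whose support lies in $\{0\}\times(0,1)\cup\{1\}\times(0,1)=\bdel\bcRlr$.

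The key point to check is that the homotopy $s\mapsto H_s(P_1)$ stays in the configuration space of distinct points in $\bcRlr$. Points of $P_1$ originally to the left of the strip $[1/2-\epsilon,1/2+\epsilon]\times[0,1]$ stay in $[0,(1-s)(1/2-\epsilon)]\times[0,1]$ for all $s\in[0,1]$, while points originally to the right stay in $[(1-s)(1/2+\epsilon)+s,1]\times[0,1]$, so left-side and right-side points never meet. Two points on the same side move along parallel horizontal segments, hence collide precisely when they share the same imaginary part: the preliminary perturbation rules this out, and is the only real technical subtlety in the argument. Points originally on $\bdel\bcRlr$ are fixed by every $H_s$ and automatically remain in $\bdel\bcRlr$.
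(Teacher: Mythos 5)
Your construction does work, but it takes a more roundabout route than the paper, and your rationale for the perturbation step is off. The paper pushes everything to \emph{one} side: since $P$ is finite, choose $t<1$ close to $1$ so that every point of $P$ satisfies $\Re(z)=1$ or $\Re(z)\le t$; then $\cHleft_t$ from Definition~\ref{defn:cHtleftright} gives a path $(\cHleft_t)_*(\fc,-)$ ending at a configuration supported in $\bdel\bcRlr$, with no prior perturbation. Your approach pushes to both sides, so you must dodge the vertical line $\{\Re=1/2\}$, and this is the one genuine reason you need to perturb at all.

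By contrast, the collision-avoidance motivation is not what makes the argument go: the functoriality of \cite[Proposition 4.4]{Bianchi:Hur2} is for continuous families of \emph{lax} morphisms of nice couples, and these are allowed to merge points of the support as long as the merged point lands somewhere where the relevant product of monodromies is defined --- and $\bdel\bcRlr$, carrying $G$-valued monodromies, is always such a place. Indeed the paper's own $\cHleft_t(-,1)$ will happily collapse distinct interior points sharing an imaginary part onto $\{0\}\times(0,1)$; the only thing one must avoid is merging interior points \emph{into the interior}, where the product would have to lie in $\Q$, and neither your $H_1$ nor $\cHleft_t(-,1)$ does that. Your perturbation is moreover not enough to rule out all collisions: an interior point of $P_1$ can share an imaginary part with a boundary point already on $\{0\}\times(0,1)$, and the two collide at $s=1$ --- harmlessly, but contrary to what you declare to be ``the only real technical subtlety,'' which is therefore neither the real subtlety nor fully checked. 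Finally, the perturbation step is only sketched; to make it precise one should exhibit an explicit semi-algebraic isotopy of $\C$ and invoke \cite[Proposition 4.4]{Bianchi:Hur2}, as is done e.g.\ in the proof of Lemma~\ref{lem:fcagenerate}.
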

To prove Lemma \ref{lem:pathtobdel} we will use the following family of homotopies of $\C$.
\begin{defn}
 \label{defn:cHtleftright}
 For all $0<t<1$ we define homotopies $\cHleft_t,\cHright_t\colon \colon\C\times[0,1]\to\C$ by the following formulas:
 \[ \def\arraystretch{1.4}
 \cHleft_t(z,s)=\left\{
 \begin{array}{ll}
  z & \mbox{if } \Re(z)\leq0 \mbox{ or }\Re(z)\geq 1\\
  z-s\Re(z)&\mbox{if } 0\leq \Re(z)\leq t\\
  z - (\frac1{1-st} -1)(1-\Re(z)) &\mbox{if } t\leq \Re(z)\leq 1;
 \end{array}
 \right.
 \]
 \[ \def\arraystretch{1.4}
 \cHright_t(z,s)=\left\{
 \begin{array}{ll}
  z & \mbox{if } \Re(z)\leq0 \mbox{ or }\Re(z)\geq 1\\
  z + (\frac1{1-s+st} -1)\Re(z) &\mbox{if } 0\leq \Re(z)\leq t\\
  z+s(1-\Re(z)) &\mbox{if } t\leq \Re(z)\leq 1.
 \end{array}
 \right.
 \]
\end{defn}
Roughly speaking, $\cHleft_t$ collapses the vertical strip $[0,t]\times\R$ to the vertical line $\C_{\Re=0}$
and expands the vertical strip $[t,1]\times\R$ to the vertical strip $[0,1]\times\R$; similarly
$\cHright_t$ collapses $[t,1]\times\R$ to $\C_{\Re=1}$ and expands $[0,t]\times\R$ to $[0,1]\times\R$.
Both homotopies restrict at each time $s$ to an endomorphism of the nice couple $(\bcRlr,\bdel)$,
so they induce homotopies
\[
 (\cHleft_t)_*\ ,\ (\cHright_t)_*\ \colon\Hur(\bcRlr,\bdel)\times[0,1]\to\Hur(\bcRlr,\bdel).
\]

\begin{proof}[Proof of Lemma \ref{lem:pathtobdel}]
 Let $\fc\in\Hur(\bcRlr,\bdel)$ and use Notation \ref{nota:fc}.
 Let $0<t<1$ be close enough to 1, so that for all $z\in P$ we have $\Re(z)=1$ or $\Re(z)\leq t$.
 Then $\cHleft_t(-;1)$ sends $P$ inside $\bdel\bcRlr$ and, therefore, $(\cHleft_t)_*$ induces a path in $\Hur(\bcRlr,\bdel)$ from $\fc$ to a configuration
 $\fc':=(\cHleft_t)_*(\fc,1)$ which is supported in $\bdel\bcRlr$.
\end{proof}

The following rhombus will help us to define a homotopy of $\C$ that squeezes the two segments in $\bdel$ to the two central points.
\begin{defn}
 \label{defn:diamo}
 We define $\diamo$ as the closed subspace of $\bH$ given by
 \[
  \diamo=\set{z\in\bH\,\colon\,\abs{\Re(z)-\frac 12}+\abs{\Im(z)-\frac 12}\leq\frac 12}.
 \]
 Geometrically, $\diamo$ is a closed rhombus centred at the point $\zcentre$ (see Notation \ref{nota:zcentre}).
 The boundary $\del\diamo$ contains points $z$ for which equality holds in the formula above.
 The corners of $\diamo$ are denoted
 $\zdiamleft=\frac{\sqrt{-1}}2$, $\zdiamright=1+\frac{\sqrt{-1}}2$,
 $\zdiamup=\frac 12+ \sqrt{-1}$ and $\zdiamdown=\frac 12$.
 We denote by $\bdiamolr$  the subspace of $\diamo$ given
 by
 \[
  \bdiamolr=\pa{\diamo\setminus\del\diamo}\cup \set{\zdiamleft,\zdiamright};
 \]
 we use the notation $\bdel\bdiamolr=\set{\zdiamleft,\zdiamright}=\bdiamolr\cap \bdel\bcRlr$,
 and we abbreviate the nice couple $(\bdiamolr,\bdel\bdiamolr)$
 as $(\bdiamolr,\bdel)$; compare with Notation \ref{nota:cRt}, and see Figure \ref{fig:diamolr}.
\end{defn}

\begin{figure}[ht]
 \begin{tikzpicture}[scale=4,decoration={markings,mark=at position 0.38 with {\arrow{>}}}]
  \draw[dashed,->] (-.1,0) to (1.1,0);
  \draw[dashed,->] (0,-.1) to (0,1.1);
  \draw[dotted, fill=gray, opacity=.5] (0,0) rectangle (1,1);
  \draw[very thick] (0,0) to (0,1);
  \draw[very thick] (1,0) to (1,1);
\begin{scope}[shift={(1.5,0)}]
  \draw[dashed,->] (-.1,0) to (1.1,0);
  \draw[dashed,->] (0,-.1) to (0,1.1);
  \draw[dotted, fill=gray, opacity=.5] (0,.5) to (.5,1) to (1,.5) to (.5,0) to (0,.5);
  \draw[very thick] (0,.49) rectangle (0.01,.51);
  \draw[very thick] (.99,.49) rectangle (1,.51);
\end{scope}
 \end{tikzpicture}
 \caption{The nice couples $(\bcRlr,\bdel)$ and $(\bdiamolr,\bdel)$.}
 \label{fig:diamolr}
\end{figure}

We have an inclusion 
of nice couples $(\bdiamolr,\bdel)\subset(\bcRlr,\bdel)$, inducing an inclusion
$\Hur(\bdiamolr,\bdel)\subset\Hur(\bcRlr,\bdel)$. 

\begin{lem}
 \label{lem:bcRvsdiamond}
 The inclusion $\Hur(\bdiamolr,\bdel)\subset \Hur(\bcRlr,\bdel)$ is a homotopy equivalence.
\end{lem}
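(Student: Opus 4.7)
The plan is to exhibit an explicit deformation retraction of the nice couple $(\bcRlr,\bdel)$ onto its sub-couple $(\bdiamolr,\bdel)$, and then invoke the functoriality of the Hurwitz space construction (Proposition 4.4 of \cite{Bianchi:Hur2}) to conclude that the induced map on Hurwitz spaces is a homotopy equivalence.

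First I would define a homotopy $H\colon \C\times[0,1]\to\C$ that pinches the two vertical sides of $\bcRlr$ toward their midpoints $\zdiamleft$ and $\zdiamright$ while simultaneously squeezing the rest of the rectangle into the rhombus. Using the centred coordinates $\sigma=2x-1$ and $\tau=2y-1$, which identify $\bcRlr$ with $[-1,1]\times(-1,1)$, I set
\[
H_s(\sigma,\tau)=\bigl(\sigma,\,\tau(1-s|\sigma|)\bigr)
\]
for $(x,y)\in\bcRlr$ and $s\in[0,1]$, and extend to $\C$ by multiplying the correction term by a semi-algebraic cutoff function supported in a compact neighbourhood of $\bcRlr$, so that $H_s$ is the identity outside; continuity across $\bdel\bcRlr$ is automatic since the formula agrees with the identity on the line $\sigma=0$ and only moves the $\tau$-coordinate.

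Next I would check the three properties needed. (i) For each $s\in[0,1]$, the map $H_s$ restricts to a morphism of nice couples $(\bcRlr,\bdel)\to(\bcRlr,\bdel)$: the horizontal coordinate is preserved, so $\bdel\bcRlr$ is sent to itself, and $|\tau(1-s|\sigma|)|\leq|\tau|<1$ keeps the image inside $\bcRlr$. (ii) At $s=1$, along the vertical sides one has $H_1(\pm1,\tau)=(\pm1,0)=\zdiamleft,\zdiamright$, and in the interior one computes
\[
|\sigma|+|\tau(1-|\sigma|)|=|\sigma|+|\tau|-|\sigma||\tau|=1-(1-|\sigma|)(1-|\tau|)<1
\]
whenever $|\sigma|<1$ and $|\tau|<1$, so $H_1$ factors through $(\bdiamolr,\bdel)$. (iii) The sub-couple $(\bdiamolr,\bdel)$ is preserved by $H_s$ for every $s$: for points in the open rhombus an entirely analogous estimate gives $|\sigma|+|\tau(1-s|\sigma|)|\leq|\sigma|+|\tau|<1$, and the two corner points are fixed.

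Finally, by functoriality the map $H$ induces a continuous homotopy $(H_s)_*$ on $\Hur(\bcRlr,\bdel)$ from the identity to a map factoring as $\Hur(\bcRlr,\bdel)\xrightarrow{r_*}\Hur(\bdiamolr,\bdel)\xrightarrow{i_*}\Hur(\bcRlr,\bdel)$; and the restriction of $H$ to $\bdiamolr$ gives a homotopy on $\Hur(\bdiamolr,\bdel)$ between the identity and $r_*\circ i_*$. Hence $i_*$ and $r_*$ are mutually inverse homotopy equivalences. The main technical point will be the careful verification of (ii), together with writing down a semi-algebraic cutoff so that the extension to $\C$ meets the regularity hypotheses of Proposition 4.4 of \cite{Bianchi:Hur2}; both are routine but require some care at the vertical edges $\bdel\bcRlr$ where the collapsing behaviour is concentrated.
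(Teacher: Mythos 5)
The proposal is correct and takes essentially the same approach as the paper: an explicit ``vertical pinch'' homotopy, applied via the functoriality of Hurwitz spaces. In fact, if one unwinds the centred coordinates ($\sigma=2\Re(z)-1$, $\tau=2\Im(z)-1$), the formula $H_s(\sigma,\tau)=(\sigma,\tau(1-s|\sigma|))$ coincides exactly with the paper's $\cHdiamo(z,s)=z-2s\fd^\diamo(z)(\Im(z)-\tfrac 12)\sqrt{-1}$ on the strip $0\le\Im(z)\le 1$, since $\fd^\diamo(z)=|\Re(z)-\tfrac 12|=|\sigma|/2$ there. The only real difference is the extension to $\C\setminus\bcRlr$: the paper writes down an explicit semi-algebraic piecewise formula (three branches according to $\Im(z)$, all fixing $*$), whereas you defer this to a cutoff. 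That works, but two remarks: your continuity justification is misdirected — $\sigma=0$ is the \emph{middle} of the rectangle, not $\bdel\bcRlr$; continuity across the vertical edges holds for the much simpler reason that your formula and its cutoff extension agree there since the cutoff equals $1$ on $\bcRlr$. And when constructing the cutoff you must arrange that $H_s$ fixes the basepoint $*$ and stays semi-algebraic in $(z,s)$ jointly, which is what the paper secures by writing the three-branch formula directly.
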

Before proving Lemma \ref{lem:bcRvsdiamond} we define a suitable homotopy of $\C$.
\begin{defn}
 \label{defn:cHdiamo}
 For $z\in\C$ let $\fd^{\diamo}(z)=\min\set{\abs{\Re(z)-\frac 12} ;\frac 12}$.
We define a homotopy $\cHdiamo\colon\C\times[0,1]\to\C$ by the following formula.
 \[
 \def\arraystretch{1.4}
 \cHdiamo(z,s)=\left\{
 \begin{array}{ll}
  z-s \fd^{\diamo}(z)\sqrt{-1}  & \mbox{if } \Im(z)\geq 1\\
  z- 2s\fd^{\diamo}(z)(\Im(z)-\frac 12)\sqrt{-1}  & \mbox{if } 0 \leq \Im(z)\leq 1\\
  z+s\big(\frac{\Im(z)}2 +\fd^\diamo(z)\big)\sqrt{-1} & \mbox{if } \Im(z)\leq 0.
 \end{array}
 \right.  
 \]
\end{defn}
The homotopy $\cHdiamo$ satisfies the following properties:
\begin{itemize}
 \item for all $0\leq s\leq 1$, the map $\cHdiamo(-;s)\colon\C\to\C$
 induces an endomorphism of the nice couple $(\bcRlr,\bdel)$ and an endomorphism
 of the nice couple $(\bdiamolr,\bdel)$;
 \item $\cHdiamo(-;0)$ is the identity of $\C$;
 \item $\cHdiamo(-;1)$ sends $\bcRlr$ onto $\bdiamolr$ and $\bdel\bcRlr$
 onto $ \bdel\bdiamolr$.
\end{itemize}

\begin{proof}[Proof of Lemma \ref{lem:bcRvsdiamond}]
 By \cite[Proposition 4.4]{Bianchi:Hur2} the homotopy $\cHdiamo$ induces a homotopy
 $\cHdiamo_*\colon\Hur(\bcRlr,\bdel)\times[0,1]\to\Hur(\bcRlr,\bdel)$ starting from the identity and ending with a map $\Hur(\bcRlr,\bdel)\to\Hur(\bdiamolr,\bdel)$.
 The homotopy $\cHdiamo_*$ preserves the subspace $\Hur(\bdiamolr,\bdel)$ at all times
 and, thus, witnesses that the inclusion of $\Hur(\bdiamolr,\bdel)$ in
 $\Hur(\bcRlr,\bdel)$ is a homotopy equivalence.
\end{proof}

Note also that if $\fc\in\Hur(\bcRlr,\bdel)$ is supported in $\bdel\bcRlr$,
then the entire path $\cHdiamo_*(\fc,-)$ consists of configurations supported in $\bdel\bcRlr$.
Using Lemmas \ref{lem:pathtobdel} and \ref{lem:bcRvsdiamond} together, we can therefore
connect any $\fc\in\Hur(\bcRlr,\bdel)$ to a configuration $\fc'\in\Hur(\bcRlr,\bdel)$ supported in
$\bdel\bdiamolr=\set{\zdiamleft,\zdiamright}$.

We next define auxiliary configurations, supported on the three points $\zcentre,\zdiamleft,\zdiamright$: by moving $\zcentre$ towards $\zdiamleft$ or towards $\zdiamright$, we can construct paths between configurations supported on $\bdel\bdiamolr=\set{\zdiamleft,\zdiamright}$.
\begin{defn}
 \label{defn:fcgah}
  Recall Definition \ref{defn:fca}. For all $g,h\in G$ and $a\in\Q$ we define a configuration
  $\fc_{g,a,h}=(P,\psi,\phi)\in\Hur(\bcRlr,\bdel)$ as follows:
  \begin{itemize}
   \item $P=\set{\zcentre,\zdiamleft,\zdiamright}$; let $\gencentre,\gendiamleft,\gendiamright$ be an admissible
   generating set for $\fG(P)$, where $\gencentre$ is represented by a loop in $\bS_{0,1}\setminus P$,
   $\gendiamleft$ by a loop in $\bS_{-\infty,1/2}\setminus P$ and
   $\gendiamright$ by a loop in $\bS_{1/2,\infty}\setminus P$;
   \item $\psi$ maps $\gencentre\mapsto a$;
   \item $\phi$ maps $\gencentre\mapsto\fe(a)$, $\gendiamleft\mapsto g$ and $\gendiamright\mapsto h$.
  \end{itemize}
  We also define configurations $\fc_{\emptyset,a,h}$, $\fc_{g,\emptyset,h}$, $\fc_{g,a,\emptyset}$,
  $\fc_{g,\emptyset,\emptyset}$,  $\fc_{\emptyset,a,\emptyset}$ and   $\fc_{\emptyset,\emptyset, h}$
  in a similar way: for every occurrence of ``$\emptyset$'' we remove the corresponding point from $P$,
  and we define $\psi$ and $\psi$ on the relevant elements of the admissible generating set by the same formulas.
\end{defn}

\begin{proof}[Proof of Theorem \ref{thm:pi0bHurm}]
Note first that $\totmon(\fc_{\one_G,\emptyset, h})=h\in G$: this shows surjectivity of
$\totmon\colon\pi_0(\Hur_+(\bcRlr,\bdel))\to G$.

Lemma \ref{lem:pathtobdel} and the proof of Lemma \ref{lem:bcRvsdiamond} imply that every configuration $\fc\in\Hur_+(\bcRlr,\bdel)$
can be connected to a configuration supported
on $\bdel\bdiamolr$, i.e. of the form $\fc_{g,\emptyset,h}$, $\fc_{g,\emptyset,\emptyset}$ or $\fc_{\emptyset,\emptyset, h}$.

For all $g,h\in G$ the homotopies $\cHleft_{1/2}$ and $\cHright_{1/2}$
give paths joining the configuration $\fc_{\emptyset,\one,h}$ to $\fc_{\one_G,\emptyset,h}$ and $\fc_{\emptyset,\emptyset,h}$, respectively; the same homotopies
give paths joining the configuration $\fc_{g,\one,\emptyset}$
to $\fc_{g,\emptyset,\emptyset}$ and $\fc_{g,\emptyset,\one_G}$,
respectively. Thus, $\fc_{\emptyset,\emptyset,h}$ is connected to
$\fc_{\one_G,\emptyset,h}$, and
$\fc_{g,\emptyset,\emptyset}$ is connected to $\fc_{g,\emptyset,\one_G}$: we conclude that every
configuration in $\Hur(\bcRlr,\bdel)$ can be connected to a configuration of the form $\fc_{g,\emptyset,h}$.

Similarly, for all $g,h\in G$ and $a\in \Q$ the homotopies $\cHleft_{1/2}$ and $\cHright_{1/2}$
give paths joining the configuration $\fc_{g,a,h}$ to $\fc_{g\fe(a),\emptyset,h}$ and $\fc_{g,\emptyset,\fe(a)h}$ respectively. Thus $\fc_{g\fe(a),\emptyset,h}$ is connected to $\fc_{g,\emptyset,\fe(a)h}$.

Since we assumed that $\fe(\Q)$ generates $G$, we can write $g=\fe(a_1)^{\pm1}\cdot\dots\cdot\fe(a_r)^{\pm1}$. Using $r$ instances of the paths described above, or their inverses,
we can connect any configuration of the form $\fc_{g,\emptyset,h}$ to the corresponding
configuration $\fc_{\one_G,\emptyset,gh}$.
We have thus proved that every configuration in $\Hur_+(\bcRlr,\bdel)$ can be connected to a configuration of the form $\fc_{\one_G,\emptyset, h}$,
and these are sent bijectively to $G$ along $\totmon$.
\end{proof}

\section{Bar constructions of Hurwitz spaces}
\label{sec:barconstruction}
In this section we study the bar constructions of the topological monoids $\mHurm$
and $\bHurm$.
Many arguments of this and the next section are adapted from
\cite{Hatcher:Mum}, so familiarity with this paper may be valuable.

Recall that a topological monoid $M$ is \emph{group-like} if the monoid
$\pi_0(M)$ is a group; a standard argument ensures, in this case,
that for every $m\in M$ the maps given by left multiplication $\mu(m,-)\colon M\to M$
and right multiplication $\mu(-,m)$ are self-homotopy equivalences of $M$.
For left multiplication, for instance,
one chooses an element $m'\in M$ with $\mu(m',m)$ and $\mu(m,m')$ contained in the same component of
the neutral element $e$;
then a homotopy inverse of $\mu(m,-)$ is given by $\mu(m',-)$.
Note that this argument strongly relies on $M$ having a \emph{strict} neutral element $e$.

Unfortunately $\bHurm$ is a unital, but not group-like topological monoid; on the other hand its subspace
$\bHurm_+$ (see Notation \ref{nota:Hurmplus}) is a non-unital, but group-like topological monoid: see Theorem
\ref{thm:pi0bHurm}.
We will consider the space $\bHurm_+$ as a left module over $\bHurm$ in order to exploit the good properties
of both spaces.

\subsection{Bar constructions}
\label{subsec:barconstructions}
We recall the classical definition of bar construction with respect to a topological monoid $M$ and a left $M$-module $X$.
\begin{defn}
 \label{defn:BM}
Let $M$ be a topological monoid, let $X$ be a left $M$-module, and denote by
$\mu$ both multiplication maps $M\times M\to M$ and $M\times X\to X$.
We define a \emph{semisimplicial} space $B_{\bullet}(M,X)$. For $p\geq 0$, the space $B_p(M,X)$ of $p$-simplices is $M^p\times X$.
The face maps $d_i\colon B_p(M,X)\to B_{p-1}(M,X)$ are defined as follows:
\begin{itemize}
 \item $d_0\colon (m_1,\dots,m_p,x)\mapsto (m_2,\dots,m_p,x)$;
 \item $d_i\colon (m_1,\dots,m_p)\mapsto(m_1,\dots,\mu(m_i,m_{i+1}),\dots,m_p,x)$, for $1\leq i\leq p-1$;
 \item $d_p\colon (m_1,\dots,m_p)\mapsto (m_1,\dots, m_{p-1},\mu(m_p,x))$.
\end{itemize}
The space $B(M,X)$ is the thick geometric realisation of the semisimplicial space $B_{\bullet}(M,X)$, i.e. it is
the quotient of $\coprod_{p\geq0}\Delta^p\times M^p\times X$ by the equivalence relation $\sim$
generated by $(d^i(\uw),\um,x)\sim (\uw,d_i(\um,x))$, for all choices of the following data:
\begin{itemize}
 \item $p\geq0$ and $0\leq i\leq p$;
 \item a point $\uw=(w_0,\dots,w_{p-1})$ in $\Delta^{p-1}$, represented by its barycentric coordinates $w_0,\dots,w_{p-1}\ge0$ with $w_0+\dots+w_{p-1}=1$;
 \item a point $(\um,x)=(m_1,\dots,m_p,x)\in M^p\times X$
\end{itemize}
 Here $d^i\colon\Delta^{p-1}\to\Delta^p$ denotes the standard $i$\textsuperscript{th} face inclusion.

When $X=*$ is a point,
we also write $BM$ for $B(M,*)$; when $X=M$ with left multiplication coming from the monoid structure, we also write $EM$ for $B(M,M)$.
\end{defn}
In fact Definition \ref{defn:BM} only uses that $M$ is an associative non-unital monoid;
in Subsection \ref{subsec:thinbar} we will recall the \emph{thin} bar construction, which is
a simplicial space whose degeneracy maps are defined using the unit $e\in M$.

It is a standard fact that if $M$ is a topological monoid, $X$ is a left $M$-module and for all $m\in M$
the map $\mu(m;-)\colon X\to X$ is a self-homotopy equivalence of $X$, then the natural projection map
$\pr_X\colon B(M,X)\to BM=B(M,*)$ induced by the constant, $M$-equivariant map $X\to *$
is a quasi-fibration with fibres homeomorphic to $X$. See for instance \cite[Lemma D.1]{Hatcher:Mum}.

\begin{lem}
\label{lem:bHurmweonbHurm+}
Recall Definitions \ref{defn:Hurmoore} and \ref{defn:Hurmtopmon}
and let $(t,\fc)\in\bHurm$; then the left multiplication $\mu((t,\fc),-)$
restricts to a self-homotopy equivalence of $\bHurm_+$; moreover, if $\totmon(\fc)=\one\in G$,
then $\mu((t,\fc),-)|_{\bHurm_+}$ is homotopic to the identity of $\bHurm_+$.
It follows that
\[
\pr_{\bHurm_+}\colon B(\bHurm,\bHurm_+)\to B\bHurm
\]
is a quasifibration with fibre $\bHurm_+$.
\end{lem}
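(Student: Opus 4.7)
The proof has three main steps, after which the quasifibration claim follows immediately from the standard criterion \cite[Lemma D.1]{Hatcher:Mum}.

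First, $L_{(t,\fc)}=\mu((t,\fc),-)$ automatically restricts to a self-map of $\bHurm_+$: for any $(t',\fc')\in\bHurm_+$ with non-empty support $P'$, the product has support containing $P'+t\neq\emptyset$, hence stays in $\bHurm_+$.

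Second, I would prove the ``moreover'' clause: assuming $\totmon(\fc)=\one\in G$, construct a homotopy $L_{(t,\fc)}|_{\bHurm_+}\simeq\Id_{\bHurm_+}$. If $\fc=(\emptyset,\one,\one)$ lies in the unit component of $\bHurm$, the explicit formula
\[
H\bigl((t',\fc'),s\bigr) \;=\; \bigl((1-s)t+t',\;(\tau_{(1-s)t})_*(\fc')\bigr),\qquad s\in[0,1],
\]
provides a homotopy from $L_{(t,\fc)}$ through left multiplications by elements of the unit component, ending at $L_{(0,(\emptyset,\one,\one))}=\Id_{\bHurm_+}$. If $\fc$ is non-empty, then by Theorem \ref{thm:pi0bHurm} the element $(t,\fc)$ lies in the single path component of $\bHurm_+$ with trivial total monodromy; since this component is path-connected, the class of $L_{(t,\fc)}$ up to homotopy depends only on the component, and it suffices to exhibit one specific representative $c_*$ with $L_{c_*}\simeq\Id_{\bHurm_+}$. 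I would construct such a $c_*$ as a boundary-adjacent configuration built from blocks of the form $\fc_{\one,a,\fe(a)^{-1}}$ (Definition \ref{defn:fcgah}), whose $c_*$-contribution in $c_*\cdot x$ can be absorbed inside the ambient rectangle using a combination of the collapse homotopies $\cHleft_t,\cHright_t$ of Definition \ref{defn:cHtleftright} and the braiding-and-cancellation moves afforded by the PMQ-group pair identifications at $\bdel$.

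Third, for a general $(t,\fc)\in\bHurm$ with $\totmon(\fc)=g\in G$, the group-like property from Theorem \ref{thm:pi0bHurm} supplies $(t',\fc')\in\bHurm_+$ with $\totmon(\fc')=g^{-1}$. Both composites $L_{(t,\fc)}\circ L_{(t',\fc')}=L_{(t,\fc)\cdot(t',\fc')}$ and $L_{(t',\fc')}\circ L_{(t,\fc)}=L_{(t',\fc')\cdot(t,\fc)}$ have total monodromy $\one\in G$, and so by the previous step are homotopic to $\Id_{\bHurm_+}$. Hence $L_{(t,\fc)}$ is a self-homotopy equivalence with homotopy inverse $L_{(t',\fc')}$, completing the first assertion of the lemma.

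The main obstacle is the non-empty subcase in the second step: a direct path in $\bHurm$ from $c_*$ to the monoid unit is unavailable, since they belong to distinct path components by Theorem \ref{thm:pi0bHurm}. The required homotopy $L_{c_*}\simeq\Id$ must instead be realised as a continuous deformation of the juxtaposition $c_*\cdot x$ inside $\Hur(\bcR_\infty,\bdel)$ that exploits the boundary $\bdel$ (where configuration points may be manipulated via $\fe\colon\Q\to G$) to eliminate the $c_*$-contribution without changing $x$. This is the only non-routine step; once it is in hand, the quasifibration follows by the usual argument that every left-module action by homotopy equivalences turns the associated bar projection into a quasifibration.
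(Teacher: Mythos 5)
Your skeleton matches the paper's strategy (reduce to one representative per component via the observation that the homotopy class of $L_{(t,\fc)}$ depends only on the path component, then use the group-like structure of $\pi_0(\bHurm_+)\cong G$ to produce a homotopy inverse), and you have correctly isolated the one step that carries all the weight. But that step is missing, and the lemma is entirely in that step: your proposal concedes that it ``must be realised as a continuous deformation \dots\ that exploits the boundary $\bdel$'' without supplying the deformation. Since $(1,\fc_{\one}^{\mathrm{d}})$ and the monoid unit $(0,(\emptyset,\one,\one))$ lie in \emph{different} components of $\bHurm$, you cannot use the path-in-monoid argument, and the absorption mechanism you gesture at (collapse maps $\cHleft_t,\cHright_t$ plus ``braiding-and-cancellation'') does not obviously give a homotopy through maps $\bHurm_+\to\bHurm_+$; moreover the building blocks $\fc_{\one,a,\fe(a)^{-1}}$ you propose live in $\Hur(\bcRlr,\bdel)$, whose boundary condition is on the \emph{vertical} sides, whereas $\bHurm$ uses the \emph{horizontal} boundary $\bdel\bcR_t$, so these configurations are not even points of $\bHurm_+$.

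The paper closes the gap with a different, cleaner choice of representative and a genuinely new ingredient. Take $c_*=(1,\fc_{\one}^{\mathrm{d}})$, where $\fc_{\one}^{\mathrm{d}}$ is supported on the \emph{single} point $\zdiamdown=\tfrac12\in\bdel\bcR$ with trivial $G$-monodromy. Then for $\fc'\in\Hur_+(\bcR,\bdel)$ one has the identity $\mu((1,\fc_{\one}^{\mathrm{d}}),(1,\fc'))=\mu((1,(\emptyset,\one,\one)),(1,\fc'))\times\set{\zdiamdown}$, where $-\times-$ is the external product \cite[Definition 5.7]{Bianchi:Hur2}: left multiplication by $c_*$ literally just adds one trivial-monodromy boundary point. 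Choosing a contraction $\expl^{\zdiamdown}$ of $\Ran_+(\bcR_2)$ onto $\set{\zdiamdown}$ and setting $\cH^{\zdiamdown}(\fc',s)=\fc'\times\expl^{\zdiamdown}(\epsilon(\fc'),s)$, one gets a homotopy from the identity (at $s=0$, where $\fc'\times\epsilon(\fc')=\fc'$) to ``append $\set{\zdiamdown}$'' (at $s=1$); postcomposing $\mu((1,(\emptyset,\one,\one)),-)$ with this homotopy deforms it into $\mu((1,\fc_{\one}^{\mathrm{d}}),-)$, and $\mu((1,(\emptyset,\one,\one)),-)$ is in turn homotopic to $\Id_{\bHurm_+}$ by sliding $(1,(\emptyset,\one,\one))$ to $(0,(\emptyset,\one,\one))$, which is allowed because these do lie in the unit component. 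The rest then proceeds as you wrote. So: right architecture, but the indispensable homotopy is the external-product/Ran-contraction trick, which you would need to supply.
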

\begin{proof}
It suffices to prove the statement for one configuration $(t,\fc)$ in each connected component of $\bHurm$:
the statement is obvious for $(t,\fc)=(0,(\emptyset, \one,\one))\in\bHurm$, which is the neutral element
of $\bHurm$. Using Theorem \ref{thm:pi0bHurm}
we can then assume that $t=1$ and $\fc$ has the form $\fc^{\mathrm{d}}_g:=(P,\psi,\phi)$ for some $g\in G$, where
\begin{itemize}
 \item $P=\set{\zdiamdown}$ consists of the only point $\zdiamdown$ (see Definition \ref{defn:diamo});
 \item $\psi\colon\fQ(P)=\set{\one}\to\Q$ is the trivial map of PMQs;
 \item $\phi\colon\fG(P)\to G$ sends the unique standard generator of $\fG(P)$ to $g$.
\end{itemize}
We start with the case $g=\one$. We claim that $\mu((1,\fc^{\mathrm{d}}_{\one}),-)|_{\bHurm_+}$
is homotopic to the identity of $\bHurm_+$;
by Lemma \ref{lem:HurmvsHur} it suffices to prove that the restriction
\[
\mu((1,\fc_{\one}^{\mathrm{d}}),-)\colon\Hur_+(\bcR,\bdel)\to\bHurm_+
\]
is homotopic to the natural inclusion $\Hur_+(\bcR,\bdel)\hookrightarrow\bHurm_+$.

First we prove that the maps $\mu((1,\fc_{\one}^{\mathrm{d}}),-)$
and $\mu((1,(\emptyset,\one,\one)),-)$ are homotopic maps
$\Hur_+(\bcR,\bdel)\to\Hur_+(\bcR_2,\bdel)$.
We use an argument similar to the proof \cite[Proposition 7.10]{Bianchi:Hur2}. Recall from \cite[Definition 3.1]{Bianchi:Hur2} that the Ran space $\Ran_+(\bcR_2)$ is the space of non-empty finite subsets of $\bcR_2$; it is weakly contractible \cite[Theorem 5.5.1.6]{LurieHA}, and using the notion of standard explosion from \cite[Subsection 7.2]{Bianchi:Hur2}, one can find a homotopy
$\expl^{\zdiamdown}\colon\Ran_+(\bcR_2)\times[0,1]\to \Ran_+(\bcR_2)$
contracting $\Ran_+(\bcR_2)$ onto the configuration $\set{\zdiamdown}$. Recall also that there is an \emph{external product} $-\times-\colon\Hur_+(\bcR_2,\bdel)\times \Ran_+(\bcR_2) \to\Hur_+(\bcR_2,\bdel)$, which essentially superposes to a configuration in $\Hur_+(\bcR_2,\bdel)$ another configuration with trivial monodromies (i.e. a configuration in $\Ran_+(\bcR_2)$); see 
\cite[Definition 5.7 and Notation 5.9]{Bianchi:Hur2}.

We consider the following homotopy $\cH^{\zdiamdown}\colon\Hur_+(\bcR_2,\bdel)\times[0,1]\to\Hur_+(\bcR_2,\bdel)$
 \[
 \begin{tikzcd}[column sep =2cm]
  \Hur_+(\bcR_2,\bdel)\times[0,1] \ar[r,"{(\Id,\epsilon)\times\Id}"] &
  \Hur_+(\bcR_2,\bdel)\times \Ran_+(\bcR_2) \times[0,1] \ar[dl,"\Id\times\expl^{\zdiamdown}",swap]\\ 
  \Hur_+(\bcR_2,\bdel)\times \Ran_+(\bcR_2) \ar[r,"-\times-"]&  \Hur_+(\bcR_2,\bdel),
 \end{tikzcd}
 \]
where $\epsilon\colon\Hur_+(\bcR_2,\bdel)\to\Ran_+(\bcR_2)$ is the canonical map $(P,\psi,\phi)\mapsto P$. Roughly speaking, each point in the support of a configuration in $\Hur_+(\bcR_2,\bdel)$ is split at time 0 into two points: the first keeps the original local monodromy and does not move; the second carries a trivial local monodromy and moves straightly to $\zdiamdown$; at time 1 all the second points have merged at $\zdiamdown$. We observe the following:
\begin{itemize}
 \item the composition $\cH^{\zdiamdown}(-,0)\circ\mu((1,(\emptyset,\one,\one)),-)\colon \Hur_+(\bcR,\bdel)\to\Hur_+(\bcR_2,\bdel)$ is equal to $\mu((1,(\emptyset,\one,\one)),-)$, since $\cH^{\zdiamdown}(-,0)$ is the identity of $\Hur_+(\bcR_2,\bdel)$;
 \item the composition $\cH^{\zdiamdown}(-,1)\circ\mu((1,(\emptyset,\one,\one)),-)\colon \Hur_+(\bcR,\bdel)\to\Hur_+(\bcR_2,\bdel)$ is equal to $\mu((1,\fc_{\one}^{\mathrm{d}}),-)$.
\end{itemize}
We thus obtain that
$\mu((1,\fc_{\one}^{\mathrm{d}}),-)$
and $\mu((1,(\emptyset,\one,\one)),-)$ are homotopic as maps
$\Hur_+(\bcR,\bdel)\to\Hur_+(\bcR_2,\bdel)\subset\bHurm_+$.

We then note that $(1,(\emptyset,\one,\one))$ is connected by a path to $(0,(\emptyset,\one,\one))$ in $\bHurm$;
as a consequence $\mu((1,(\emptyset,\one,\one)),-)$ and $\mu((0,(\emptyset,\one,\one)),-)$
are homotopic as maps $\Hur_+(\bcR,\bdel)\to\bHurm_+$, and the second map is the natural inclusion.
This concludes the case $g=\one$.

Now let $g\neq \one$; by Theorem \ref{thm:pi0bHurm}
the three configurations $\mu((1,\fc_g^{\mathrm{d}}),(1,\fc_{g^{-1}}^{\mathrm{d}}))$, $\mu((1,\fc_{g^{-1}}^{\mathrm{d}}),(1,\fc_g^{\mathrm{d}}))$ and $(1,\fc_{\one}^{\mathrm{d}})$ are in the same connected component of $\bHurm_+$: hence, $\mu((1,\fc_g^{\mathrm{d}}),-)$ and $\mu((1,\fc_{g^{-1}}^{\mathrm{d}}),-)$ are homotopy inverses as maps $\bHurm_+\to\bHurm_+$.
\end{proof}

It is a classical fact that if $M$ is a \emph{unital}, topological monoid,
then $EM$ is contractible.
In the following proposition we prove an analogous statement for $B(\bHurm,\bHurm_+)$.
\begin{prop}
 \label{prop:BbHurmbHurm+contractible}
 The space $B(\bHurm,\bHurm_+)$ is weakly contractible.
\end{prop}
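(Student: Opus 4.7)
The plan is to adapt the standard proof that the universal bundle $EM=B(M,M)$ of a unital topological monoid $M$ is contractible. In the classical argument one uses the extra degeneracy $s_{p+1}\colon B_p(M,M)\to B_{p+1}(M,M)$ sending $(m_1,\ldots,m_p,m)\mapsto(m_1,\ldots,m_p,m,e)$, where the old module coordinate $m\in M$ is promoted to a new monoid coordinate and the unit $e$ is inserted as the new module coordinate; the crucial simplicial identity $d_{p+1}s_{p+1}=\mathrm{id}$ depends on the strict unit law $m\cdot e=m$.

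Since $\bHurm_+$ has no strict unit element, the direct extension of this construction fails. However, Lemma \ref{lem:bHurmweonbHurm+} provides an element $e':=(1,\fc_{\one}^{\mathrm{d}})\in\bHurm_+$ whose left action on $\bHurm_+$ is a self-homotopy-equivalence homotopic to the identity; moreover, its proof produces an explicit homotopy $H\colon\bHurm_+\times[0,1]\to\bHurm_+$ with $H(x,0)=x$ and $H(x,1)=\mu((1,\fc_{\one}^{\mathrm{d}}),x)$, continuous in $x$. My plan is to use $H$ as a continuous replacement for the missing strict unit law, building a \emph{thickened} extra degeneracy and extracting from it a contracting homotopy.

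Concretely, for each $p\ge 0$ I would construct a continuous prism map
\[
\Phi_p\colon\Delta^p\times\bHurm^p\times\bHurm_+\times[0,1]\longrightarrow B(\bHurm,\bHurm_+)
\]
whose restriction to $\Delta^p\times\bHurm^p\times\bHurm_+\times\{0\}$ is the canonical map onto the $p$-skeleton, and whose restriction to $\Delta^p\times\bHurm^p\times\bHurm_+\times\{1\}$ factors through the enlarged $(p+1)$-simplex $(m_1,\ldots,m_p,e',x)$. Across the prism the homotopy $H$ is used to patch the mismatch between $\mu(e',x)$ and $x$ produced by the face $d_{p+1}$ of the enlarged simplex. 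Checking compatibility of these $\Phi_p$ with the remaining face identifications of the thick realization, they assemble into a global homotopy $F\colon B(\bHurm,\bHurm_+)\times[0,1]\to B(\bHurm,\bHurm_+)$ from the identity to a self-map that pushes every point one step further into the simplex filtration. Iterating $F$ and passing to a suitable limit yields a null-homotopy of the identity.

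The main obstacle is verifying that the prism maps $\Phi_p$ assemble continuously across the face identifications, in particular across the face $d_{p+1}$ where the strict unit law fails and has to be replaced by $H$. A conceptually cleaner route, in the style of \cite[Lemma D.1]{Hatcher:Mum}, is to introduce an auxiliary prism-enhanced bar construction $\widetilde{B}(\bHurm,\bHurm_+)$ incorporating $H$ by construction, to show $\widetilde{B}(\bHurm,\bHurm_+)\simeq B(\bHurm,\bHurm_+)$ by collapsing the prisms, and to verify that $\widetilde{B}(\bHurm,\bHurm_+)$ admits a strict extra degeneracy, from which contractibility follows.
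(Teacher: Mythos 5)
Your plan is a genuinely different route from the paper's, but as sketched it has gaps that the paper's argument is designed to circumvent. First, a bookkeeping error: by your own description of $s_{p+1}$, the problematic face is
\[
d_{p+1}(m_1,\ldots,m_p,x;e')=(m_1,\ldots,m_p;\mu(x,e')),
\]
so the mismatch to patch involves \emph{right} multiplication $\mu(x,e')$, not $\mu(e',x)$. Lemma~\ref{lem:bHurmweonbHurm+} and the homotopy $H$ you invoke treat \emph{left} multiplication by $e'$, so you would first have to establish a right-multiplication analogue, which does not appear in the paper.

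Second, for the prisms $\Phi_p$ to glue across the remaining faces $d_0,\ldots,d_p$ --- precisely the ``main obstacle'' you flag --- the patching homotopy would have to be $\bHurm$-equivariant, commuting with left multiplication by every element of $\bHurm$. The homotopy constructed in the proof of Lemma~\ref{lem:bHurmweonbHurm+} (via $\expl^{\zdiamdown}$) is not equivariant, and an equivariant homotopy from $\mathrm{id}$ to $\mu(-,(1,\fc_{\one}^{\mathrm{d}}))$ cannot be defined on all of $\bHurm_+$: for $s>0$ such a homotopy must split off a new right-boundary point with trivial monodromy, and this is only continuous as $s\to 0^+$ from configurations that already carry a point near the right edge. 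That is exactly what the paper's $\cH^{\sharp}$ does, but only on the sub-module $\bHurmfl\subset\bHurm_+$ of configurations with $\zfl_t$ in their support. The paper therefore trades the combinatorics of a thickened degeneracy against finding the right sub-module: $\bHurmfl\hookrightarrow\bHurm_+$ is a homotopy equivalence (Lemma~\ref{lem:firstcontraction}), so $B(\bHurm,\bHurmfl)\to B(\bHurm,\bHurm_+)$ is a levelwise, hence global, weak equivalence; then the strict $\bHurm$-equivariant $\cH^{\sharp}$ deforms $B(\bHurm,\bHurmfl)$ into $B(\bHurm,\bHurmsh)$, and $\bHurmsh=\mu(\bHurm,(1,\fc_{\one}^{\mathrm{d}}))\cong\bHurm$ as a left module, so $B(\bHurm,\bHurmsh)\cong E\bHurm$ is contractible. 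Your auxiliary $\widetilde{B}(\bHurm,\bHurm_+)$ is asserted rather than built, and without an equivariant $H$ on $\bHurm_+$ it is not clear that it can be; the sub-module $\bHurmfl$ is the missing idea that makes the argument close.
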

The proof of Proposition \ref{prop:BbHurmbHurm+contractible} is in Appendix \ref{subsec:BbHurmbHurm+contractible}.
As a consequence of Lemma \ref{lem:bHurmweonbHurm+} and Proposition \ref{prop:BbHurmbHurm+contractible} we obtain the following theorem.
\begin{thm}
 \label{thm:bHurmloop}
 There is a weak equivalence $\bHurm_+\simeq \Omega B\bHurm$.
\end{thm}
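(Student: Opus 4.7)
The plan is to combine the two preceding results. Lemma \ref{lem:bHurmweonbHurm+} gives that $\pr_{\bHurm_+}\colon B(\bHurm,\bHurm_+)\to B\bHurm$ is a quasifibration with fibre $\bHurm_+$, and Proposition \ref{prop:BbHurmbHurm+contractible} gives that the total space $B(\bHurm,\bHurm_+)$ is weakly contractible. This is exactly the setup in which the standard ``quasifibration with contractible total space is a delooping of the fibre'' argument applies.

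Concretely, I would first identify the fibre over the canonical basepoint $*\in B\bHurm$, namely the image of the unique $0$-simplex $B_0\bHurm=*$. Inspecting Definition \ref{defn:BM} at $p=0$, one sees that the preimage $\pr_{\bHurm_+}^{-1}(*)$ sitting inside the realisation $B(\bHurm,\bHurm_+)=\|B_\bullet(\bHurm,\bHurm_+)\|$ is naturally homeomorphic to the $0$-simplices $B_0(\bHurm,\bHurm_+)=\bHurm_+$. By Lemma \ref{lem:bHurmweonbHurm+} this set-theoretic fibre is the homotopy fibre up to weak equivalence.

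Then, since $B(\bHurm,\bHurm_+)$ is weakly contractible, the long exact sequence of homotopy groups associated with the quasifibration yields, for every $n\geq 0$, a chain of isomorphisms
\[
\pi_n(\bHurm_+)\ \cong\ \pi_{n+1}(B\bHurm)\ \cong\ \pi_n(\Omega B\bHurm),
\]
induced by the connecting homomorphism. This homomorphism is realised by a genuine map of spaces $\bHurm_+\to \Omega B\bHurm$: given $x\in\bHurm_+$, choose a path in $B(\bHurm,\bHurm_+)$ from $x$ to a fixed null-homotopy basepoint, and take its image under $\pr_{\bHurm_+}$, which is a loop in $B\bHurm$ based at $*$. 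The resulting map is the desired weak homotopy equivalence $\bHurm_+\simeq\Omega B\bHurm$.

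No step is a serious obstacle, since the principle being invoked is entirely standard; the only care needed is the basepoint identification, which is transparent from Definition \ref{defn:BM} and from the fact that $(0,(\emptyset,\one,\one))$ is a strict unit for $\bHurm$.
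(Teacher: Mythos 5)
Your proposal is correct and matches the paper's approach exactly: the paper also obtains Theorem \ref{thm:bHurmloop} directly as a consequence of Lemma \ref{lem:bHurmweonbHurm+} and Proposition \ref{prop:BbHurmbHurm+contractible}, via the long exact sequence of the quasifibration $\pr_{\bHurm_+}$ with weakly contractible total space. Your added details about identifying the fibre over the basepoint and realising the connecting homomorphism as an actual map are the standard content implicit in the paper's one-line deduction.
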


\subsection{Pontryagin ring and group completion}
If $M$ is a unital topological monoid, $H_*(M)$ is an associative, graded ring with unit, called Pontryagin ring.
We usually denote by $x\cdot y\in H_*(M)$ the Pontryagin product of two homology classes $x,y\in H_*(M)$.
The unit $1\in H_0(M)$ is the homology class corresponding to the connected component of $e$ in $\pi_0(M)$.
The subset $\pi_0(M)\subset H_0(M)\subset H_*(M)$ is closed under multiplication.

\begin{defn}
 \label{defn:weaklybraided}
A topological monoid $M$ is \emph{weakly braided} if there is a homeomorphism $\braiding\colon M\times M\to M\times M$
such that
\begin{itemize}
 \item if $\pr_1,\pr_2\colon M\times M\to M$ are the two natural
 projections, then $\pr_1\circ\braiding=\pr_2$ as maps $M\times M\to M$;
 \item $\mu$ and $\mu\circ\braiding$ are homotopic as maps $M\times M\to M$.
\end{itemize}
\end{defn}
Note that if $M$ is weakly braided, then the ring localisation $H_*(M)[\pi_0(M)^{-1}]$ can be constructed
by right fractions:
for all $x\in H_*(M)$ and $a\in\pi_0(M)$
there exist $y\in H_*(M)$ and $b\in\pi_0(M)$ with $x\cdot b=a\cdot y$. This follows from setting $b=a$ and
$y=(\pr_2)_*\circ\braiding_*(x\times a)$, where $\times$ denotes the homology cross-product,
and $\pr_2\colon M\times M\to M$ is as in Definition \ref{defn:weaklybraided}.

\begin{lem}
 \label{lem:mHurmweaklybraided}
The topological monoid $\mHurm$ is weakly braided.
\end{lem}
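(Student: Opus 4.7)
The strategy is to exploit the two-dimensionality of the open rectangle $\mcR_\infty$: given two configurations supported in adjacent sub-rectangles of $\mcR_{t+t'}$, one can swap their horizontal positions by an ambient isotopy of $\C$, and the same isotopy will simultaneously yield both the homeomorphism $\braiding$ and the homotopy between $\mu$ and $\mu \circ \braiding$.

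First I would construct a continuous family of ambient isotopies $\Theta^{t,t'}\colon \C \times [0,1] \to \C$, indexed by $(t,t') \in [0,\infty)^2$, with the following properties: $\Theta^{t,t'}_0 = \Id_\C$; $\Theta^{t,t'}_s$ is the identity outside $\mcR_{t+t'}$ for every $s$; $\Theta^{t,t'}_1$ restricts to the translation $z \mapsto z-t$ on a neighborhood of $\mcR_{t'}+t$ in $\mcR_{t+t'}$ and to some homeomorphism $\mcR_t \to \mcR_t+t'$; and at each intermediate time $s \in (0,1)$, the images $\Theta^{t,t'}_s(\mcR_t)$ and $\Theta^{t,t'}_s(\mcR_{t'}+t)$ remain disjoint, with the former passing through the upper half of $\mcR_{t+t'}$ and the latter through the lower half. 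Explicitly, one vertically compresses both sub-rectangles into upper and lower half-strips, slides them past each other horizontally, then expands back. In the degenerate cases $t=0$ or $t'=0$, $\Theta^{t,t'}$ can be chosen to degenerate continuously to the constant isotopy at $\Id_\C$.

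Using the functoriality of Hurwitz spaces \cite[Proposition 4.4]{Bianchi:Hur2}, I would then define
\[
\braiding\big((t,\fc),(t',\fc')\big) = \big((t',\fc'),(t,\tilde\fc)\big),
\]
where $\tilde\fc \in \Hur(\mcR_t)$ is the unique configuration satisfying
\[
\mu_{t',t}(\fc',\tilde\fc) = (\Theta^{t,t'}_1)_*\big(\mu_{t,t'}(\fc,\fc')\big).
\]
Existence and uniqueness of $\tilde\fc$ follow from the fact that $\Theta^{t,t'}_1$ restricts to a rigid translation near $\mcR_{t'}+t$, so the left factor of the product decomposition recovers exactly $\fc'$, combined with the unique decomposition of configurations supported on disjoint sub-rectangles. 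The condition $\pr_1 \circ \braiding = \pr_2$ is immediate, and $\braiding$ is a homeomorphism because its inverse comes from the symmetric ``under'' half-braid, in which the roles of upper and lower halves are exchanged. The homotopy from $\mu$ to $\mu \circ \braiding$ is then given by
\[
H\big((t,\fc),(t',\fc'),s\big) = \big(t+t', (\Theta^{t,t'}_s)_*(\mu_{t,t'}(\fc,\fc'))\big).
\]

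The main technical obstacle will be the explicit construction of $\Theta^{t,t'}$ with continuous dependence on $(t,t')$ across the degenerate boundary of parameter space, together with the compatibility requirement that $\Theta^{t,t'}_1$ act rigidly on a neighborhood of $\mcR_{t'}+t$ while still realizing the half-braid through the upper half; this last point is what ensures that the $\fc'$-factor is genuinely preserved by the induced map on Hurwitz spaces, rather than merely being conjugated. Once these geometric facts are in place, continuity of $\braiding$ and $H$ follow from standard functoriality arguments.
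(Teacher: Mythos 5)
Your proposal is correct and, after the geometric dust settles, produces the same braiding as the paper, but the two routes differ substantively. The paper defines $\braiding$ \emph{explicitly} by the formula $\braiding((t,\fc),(t',\fc'))=((t',\fc'),(t,\fc^{\hat\totmon(\fc')}))$, using the right action of $\hQ$ by global conjugation. This makes the first axiom of Definition \ref{defn:weaklybraided} and the homeomorphism property entirely transparent (the inverse is just conjugation by $\hat\totmon(\fc')^{-1}$), with no implicit equation to solve and no appeal to an isotopy at the level of the definition. You instead define $\braiding$ \emph{implicitly} via the equation $\mu_{t',t}(\fc',\tilde\fc)=(\Theta^{t,t'}_1)_*(\mu_{t,t'}(\fc,\fc'))$, so that the content of the paper's global-conjugation formula has to be re-derived as the verification that the $\fc'$-factor of the right-hand side is genuinely $\fc'$ and not some conjugate of it.

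For the homotopy between $\mu$ and $\mu\circ\braiding$, the paper avoids exactly the obstacle you flag: rather than building a continuous $(t,t')$-indexed family of half-braid isotopies over all of $[0,\infty)^2$ with its degenerate boundary, it uses Lemma \ref{lem:HurmvsHur} to reduce to $\Hur(\mcR)\times\Hur(\mcR)$, then shrinks further to $\Hur(\mcR^{1/2})\times\Hur(\mcR^{1/2})$ via a fixed squeeze isotopy, and only then applies a \emph{single} semi-algebraic half Dehn twist $\cH^{\braiding}$ exchanging $\mcR^{1/2}$ and $\tau_1(\mcR^{1/2})$. So the ``hard'' isotopy is constructed once, at a fixed scale, with a compactly specified support. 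Your approach is geometrically more self-contained and does not presuppose the global-conjugation formula, but it front-loads exactly the technical costs you identify: (i) producing the family $\Theta^{t,t'}$ semi-algebraically and continuously through $t=0$ or $t'=0$, and (ii) the ``over/under'' choice — the rigid-translation condition at time $1$ alone is not enough, since both half-braids restrict to the same translation near $\mcR_{t'}+t$ yet only one preserves the $\fc'$-factor on the nose (you note this correctly, but it deserves to be a lemma rather than a remark). Your one-line justification that $\braiding$ is a homeomorphism ``from the symmetric under half-braid'' is also slightly imprecise: the inverse is obtained from the \emph{reverse} isotopy $\Theta^{t,t'}_{1-s}\circ(\Theta^{t,t'}_1)^{-1}$, which is not the same isotopy class as the opposite half-braid; the paper's explicit inverse via $\hat\totmon(\fc')^{-1}$ sidesteps this entirely.
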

\begin{proof}
 We define $\braiding\colon\mHurm\times\mHurm\to\mHurm\times\mHurm$ by the formula
 \[
 \braiding((t,\fc),(t',\fc'))=((t',\fc'),(t,\fc^{\hat\totmon(\fc')})),
 \]
 where $\hat\totmon$ is the $\hQ$-valued total monodromy (see Notation \ref{nota:muHurmcdotomega})
 and we use the action by global conjugation \cite[Definition 6.6]{Bianchi:Hur2}.
 It is clear that $\braiding$ is a homeomorphism and that the first property in Definition \ref{defn:weaklybraided} holds.
 To check the second property, note that $\braiding$ restricts to a map
 \[
  \braiding\colon \Hur(\mcR)\times\Hur(\mcR)\to \Hur(\mcR)\times\Hur(\mcR).
 \]
 By Lemma \ref{lem:HurmvsHur} it suffices to prove that $\mu$ and $\mu\circ\braiding$
 are homotopic when considered as maps $\Hur(\mcR)\times\Hur(\mcR)\to \Hur(\mcR_2)$.
 Let $\mcR^{1/2}\subset\mcR$ be the open unit square $(1/4,3/4)\times(1/4,3/4)$ of side length $1/2$
 centred at $\zcentre\in\mcR$; we can regard $\Hur(\mcR^{1/2})$
 as an open subspace of $\Hur(\mcR)$, containing all configurations supported in $\mcR^{1/2}$. Note that $\braiding$ restricts to a map 
 \[
  \braiding\colon \Hur(\mcR^{1/2})\times\Hur(\mcR^{1/2})\to \Hur(\mcR^{1/2})\times\Hur(\mcR^{1/2}).
 \]
 
 Let $\cH^{1/2}\colon\C\times[0,1]\to\C$ be a semi-algebraic isotopy of $\C$
 fixing $*$ at all times,
 such that $\cH^{1/2}(-,0)=\Id_{\C}$ and
 $\cH^{1/2}(-,1)$ restricts to a homeomorphism $\mcR\to\mcR^{1/2}$.
 Then by functoriality there is a deformation of
 $\Hur(\mcR)$ into the subspace $\Hur(\mcR^{1/2})$. Thus it suffices to prove that the following restricted maps are homotopic:
 \[
  \mu\ ,\ \mu\circ\braiding\ \colon\Hur(\mcR^{1/2})\times\Hur(\mcR^{1/2})\to\Hur(\mcR_2).
 \]
 Let $\cH^{\braiding}\colon\C\times[0,1]\to\C$ be a semi-algebraic isotopy of $\C$ fixing pointwise
 $\C\setminus\mcR_2$ at all times, such that $\cH^{\braiding}(-,0)=\Id_{\C}$ and
 $\cH^{\braiding}(-,1)\colon\C\to\C$ has the following properties:
 \begin{itemize}
  \item $\cH^{\braiding}(-,1)$ restricts to $\tau_1\colon \mcR^{1/2}\to\tau_1(\mcR^{1/2})$ (see Definition \ref{defn:taut});
  \item $\cH^{\braiding}(-,1)$ restricts to $\tau_{-1}\colon\tau_1(\mcR^{1/2})\to  \mcR^{1/2}$;
  \item $\cH^{\braiding}(-,1)$ restricts to a self-homeomorphism of
  $\C\setminus\pa{\mcR^{1/2}\cup\tau_1(\mcR^{1/2})}$ representing a clockwise half Dehn twist, for instance we may assume that
  there is a simple loop $\gamma\subset\bS_{1,2}\setminus \tau_1(\mcR^{1/2})$ spinning clockwise around
  $\tau_1(\mcR^{1/2})$, such that $\cH^{\braiding}(-,1)\circ\gamma$ is a simple loop contained
  in $\bS_{0,1}\setminus \mcR^{1/2}$ and spinning clockwise around $\mcR^{1/2}$.
 \end{itemize}
 Then the composition of $\mu$ with $\cH^{\braiding}_*$ gives a homotopy from $\mu$ to $\mu\circ\braiding$
 as maps $\Hur(\mcR^{1/2})\times\Hur(\mcR^{1/2})\to\Hur(\mcR_2)$.
\end{proof}

Recall that for any topological monoid $M$ there is a canonical map
$M\to\Omega BM$: the induced map in homology
$H_*(M)\to H_*(\Omega BM)$ sends the multiplicative subset $\pi_0(M)\subset H_*(M)$
to the set of invertible elements of the Pontryagin ring $H_*(\Omega BM)$.
Therefore, there is an induced map of rings
\[
 H_*(M)[\pi_0(M)^{-1}]\to H_*(\Omega BM).
\]
We recall the group completion theorem (see \cite{McDuffSegal} and \cite[Theorem Q.4]{FM94}).
\begin{thm}[Group completion theorem]
\label{thm:groupcompletion}
Let $M$ be a topological monoid and suppose that the localisation $H_*(M)[\pi_0(M)^{-1}]$
can be constructed by right fractions. Then the canonical
map
\[
  H_*(M)[\pi_0(M)^{-1}]\to H_*(\Omega BM)
\]
is an isomorphism of rings.
\end{thm}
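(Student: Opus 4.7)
The plan is Quillen's telescope argument. First, enumerate a sequence $(m_n)_{n\geq 1}$ in $M$ so that every element of $\pi_0(M)$ appears as $[m_n]$ for infinitely many $n$, and form the mapping telescope
\[
M_\infty = \mathrm{tel}\bigl(M \xrightarrow{-\cdot m_1} M \xrightarrow{-\cdot m_2} M \xrightarrow{-\cdot m_3} \cdots\bigr).
\]
Each stabilisation map commutes with left multiplication, so $M_\infty$ inherits a left $M$-action. By construction $\pi_0(M_\infty)$ is the Ore localisation $\pi_0(M)[\pi_0(M)^{-1}]$, which is a group; hence for every $m\in M$ the map $\mu(m,-)\colon M_\infty\to M_\infty$ is a self-homotopy equivalence, by the argument recalled at the beginning of Section \ref{sec:barconstruction}.

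Next I would form the two-sided bar construction $B(M,M,M_\infty)$. By \cite[Lemma D.1]{Hatcher:Mum} (the same input used for Lemma \ref{lem:bHurmweonbHurm+}), the projection $\pr_{M_\infty}\colon B(M,M,M_\infty)\to BM$ is a quasi-fibration with fibre $M_\infty$. Filtering by the stages of the telescope expresses $B(M,M,M_\infty)$ as a homotopy colimit of copies of $B(M,M,M)=EM$, each of which is weakly contractible because $M$ has a strict unit (by the standard simplicial contraction via $e$, in the spirit of Proposition \ref{prop:BbHurmbHurm+contractible}). The long exact sequence of the quasi-fibration then yields a weak equivalence $M_\infty\simeq \Omega BM$.

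To finish, observe that homology commutes with filtered colimits, so $H_*(M_\infty)=\mathrm{colim}_n H_*(M)$, where the transition maps are right Pontryagin multiplication by $[m_n]\in\pi_0(M)$. Cofinality of the sequence $([m_n])$ in $\pi_0(M)$ combined with the right-fractions hypothesis identifies this colimit, as a ring, with the localisation $H_*(M)[\pi_0(M)^{-1}]$; composing with the weak equivalence above yields the stated isomorphism, and compatibility with the canonical map $M\to \Omega BM$ is built into the construction. The main obstacle is verifying the self-homotopy-equivalence property of left multiplications on $M_\infty$: one must realise inverses of elements $[m]\in\pi_0(M)$ as genuine maps at some cofinal stage of the telescope, and this is precisely where the right-fractions assumption is used in an essential way.
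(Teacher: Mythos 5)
Your overall plan (telescope, two-sided bar construction, identify the colimit homology) is the right shape, and it is indeed essentially Quillen's argument — but the pivotal step is stated incorrectly, would fail in general, and you yourself flag it as ``the main obstacle'' without resolving it. Note also that the paper does not prove this theorem: it cites it as \cite[Theorem~Q.4]{FM94}, so there is no internal proof to compare against.

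The claim that $\mu(m,-)\colon M_\infty\to M_\infty$ is a self-homotopy equivalence is not supported by the argument at the start of Section~\ref{sec:barconstruction}: that argument needs $M_\infty$ to be a topological monoid with a strict unit, so that one can pick $m'$ with $mm'$ and $m'm$ in the unit component and compose left multiplications. But $M_\infty$ is merely a left $M$-module, not a monoid, and the would-be inverse of $[m]$ in $\pi_0(M_\infty)\cong\pi_0(M)[\pi_0(M)^{-1}]$ is a formal right fraction realised only through the telescope shift maps, which are \emph{right} multiplications; these do not assemble into a two-sided homotopy inverse of the \emph{left} multiplication $\mu(m,-)$. In fact $\mu(m,-)$ on $M_\infty$ is, in general, only a homology isomorphism, not a weak equivalence — this is precisely why \cite{SegalMcDuff} introduced homology fibrations rather than relying on quasifibrations. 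Consequently \cite[Lemma D.1]{Hatcher:Mum}, which requires genuine homotopy equivalences, cannot be invoked, and the long exact sequence of a quasifibration that you implicitly rely on is not available.

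The correct repair: show that $\mu(m,-)_*$ is an isomorphism on $H_*(M_\infty)$ (this is exactly where the right-fraction hypothesis enters — left multiplication by $[m]\in\pi_0(M)$ is bijective on the right-fraction localisation $H_*(M)[\pi_0(M)^{-1}]$), and then prove that $B(M,M_\infty)\to BM$ is a \emph{homology} fibration in the McDuff--Segal sense, which is a weaker and homologically-local condition. Contractibility of $B(M,M_\infty)$ then yields a homology isomorphism $H_*(M_\infty)\cong H_*(\Omega BM)$, which, unlike a weak equivalence, is all the theorem actually asserts. Finally, you should say a word about why the resulting isomorphism agrees with the canonical ring map and is multiplicative; this is not automatic from the bare construction and is part of the content of the statement.
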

Using Theorem \ref{thm:groupcompletion} together with Lemma \ref{lem:mHurmweaklybraided} we obtain an isomorphism of rings
\[
   H_*(\mHurm)[\pi_0(\mHurm)^{-1}]\cong H_*(\Omega B\mHurm).
\]

\subsection{Thin bar construction}
\label{subsec:thinbar}
Recall Definition \ref{defn:BM}: the semisimplicial space $B_{\bullet} (M,X)$ can be enhanced to a simplicial space by defining
the degeneracy map $s_i\colon B_k(M,X)\to B_{k+1}(M,X)$, for $0\leq i\leq k$, by the following formula, where $e$ denotes the neutral element of $M$.:
\[
 s_i\colon(m_1,\dots,m_k,x)\mapsto (m_1,\dots,m_i,e,m_{i+1},\dots,m_k,x).
\]
\begin{defn}
 \label{defn:reBM}
 The simplicial space defined above is denoted by $\reB_{\bullet}(M,X)$; its geometric realisation \emph{as a simplicial space}
 is denoted by $\reB (M,X)$ and called the \emph{thin bar construction}. It is
 the quotient of $BM$ by the equivalence relation $\bar\sim$
 generated by $[s^i(\uw),\um,x]\bar\sim [\uw,s_i(\um,x)]$
 for all choices of the following data:
\begin{itemize}
 \item $p\geq0$ and $0\leq i\leq p$;
 \item a point $\uw=(w_0,\dots,w_{p+1})$ in $\Delta^{p+1}$, represented by its barycentric coordinates;
 \item a point $(\um,x)=(m_1,\dots,m_p,x)\in M^p\times X$.
\end{itemize}
 Here $s^i\colon\Delta^{p+1}\to\Delta^p$ denotes the $i$\textsuperscript{th} degeneracy.
 The natural projection map is denoted by $\pr_{\reB}\colon B(M,X)\to \reB (M,X)$.
 In the case $X=*$ we also write $\reB M$ for $\reB(M,*)$.
\end{defn}
It is a classical fact that if $M$ is well-pointed, then $\pr_{\reB}\colon BM\to \reB M$ is a weak homotopy equivalence,
as $\reB_{\bullet}(M)$ is a \emph{good} simplicial space in the sense of \cite[Appendix 2]{Segal73}.
The monoids $\mHurm$ and $\bHurm$ are well-pointed, as the connected component of the unit $(0,(\emptyset,\one,\one))$ is contractible in both cases.

\section{Deloopings of Hurwitz spaces}
\label{sec:deloopings}
In this section we describe the weak homotopy types of $B\mHurm$ and $B\bHurm$ using suitable, relative Hurwitz spaces. Recall from \cite[Definition 6.9]{Bianchi:Hur2} that a left-right-based (lr-based)
nice couple $(\zleft,\fT,\zright)$ is a nice couple $\fT=(\cX,\cY)$, together with a choice of two points $\zleft,\zright\in\cY$
satisfying
\[
\Re(\zleft)=\min\set{\Re(z)\,|\, z\in\cX}<\max\set{\Re(z)\,|\,z\in\cX}=\Re(\zright).
\]
We denote by $\Hur(\fT)_{\zleft,\zright}=\Hur(\fT;\Q,G)_{\zleft,\zright}$ the subspace of $\Hur(\fT)$ of configurations whose support contains $\set{\zleft,\zright}$;
recall from \cite[Definition 6.12]{Bianchi:Hur2} that there is an action of $G\times G^{\op}$ on the Hurwitz space $\Hur(\fT)_{\zleft,\zright}$,
i.e. there are compatible actions of $G$ on left and on right on this space;
by \cite[Lemma 6.16]{Bianchi:Hur2} the quotient map
\[
 \pr_{G,G^{\op}}\colon \Hur(\fT)_{\zleft,\zright}\to \Hur(\fT)_{G,G^{\op}}:=\Hur(\fT)_{\zleft,\zright}/G\times G^{\op}
\]
is a covering map, with $G\times G^{\op}$ as the group of deck transformations.
\begin{thm}
 \label{thm:delooping}
 Recall Definitions \ref{defn:Hurmoore}, \ref{defn:diamo} and \ref{defn:BM}.
 Let $(\Q,G)$ be a PMQ-group pair and assume that $\fe(\Q)$ generates $G$ as a group; then there are weak homotopy equivalences
 \[
 \sigma\colon B\mHurm\to \Hur(\bdiamolr,\bdel)_{G,G^{\op}};
\quad\quad\quad 
 \sigma \colon B\bHurm\to \Hur(\diamo,\del)_{G,G^{\op}}.
 \]
 Here we consider the lr-based nice couples $(\zdiamleft,(\bdiamolr,\bdel),\zdiamright)$
 and $(\zdiamleft,(\diamo,\del),\zdiamright)$.
\end{thm}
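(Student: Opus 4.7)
The plan is to construct $\sigma$ as a geometric assembly map, interpreting the barycentric coordinates of $\Delta^p$ as gaps between horizontally juxtaposed Moore configurations, and then to establish the weak equivalence by lifting to the canonical $(G\times G^{op})$-Galois cover of the target Hurwitz space. The case of $B\bHurm$ will be handled in parallel, using horizontally closed rectangles and the full diamond $\diamo$ in place of their open counterparts throughout.

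To define $\sigma$ on a $p$-simplex $[(\uw;m_1,\ldots,m_p)]\in\Delta^p\times\mHurm^p$ with $\uw=(w_0,\ldots,w_p)$ and $m_i=(t_i,\fc_i)$, I would set $T=t_1+\cdots+t_p$ and place the $\fc_i$'s on a horizontal strip of total width $1+T$ in the alternating pattern ``gap of width $w_0$, content $\fc_1$ of width $t_1$, gap of width $w_1$, content $\fc_2$ of width $t_2$, $\ldots$, content $\fc_p$ of width $t_p$, gap of width $w_p$'', realised by iteratively applying the juxtaposition $\mu_{t,t'}$ of Definition \ref{defn:muttp}. Rescaling horizontally to width $1$ lands in $\Hur(\bcRlr,\bdel)$; I would then compose with the retraction of Lemma \ref{lem:bcRvsdiamond} to land in $\Hur(\bdiamolr,\bdel)$ and project via $\pr_{G,G^{op}}$. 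The simplicial identifications work out as follows: the interior face $d_i$ for $1\le i\le p-1$ corresponds to $w_i=0$, which juxtaposes $\fc_i$ and $\fc_{i+1}$ to the monoid product $\fc_i\cdot\fc_{i+1}$, matching $m_i\cdot m_{i+1}$; the outer face $d_0$ (resp.\ $d_p$) corresponds to $w_0=0$ (resp.\ $w_p=0$), pushing $\fc_1$ (resp.\ $\fc_p$) onto the boundary arc through $\zdiamleft$ (resp.\ $\zdiamright$), and killing its total monodromy via the left (resp.\ right) $G$-action absorbed by $\pr_{G,G^{op}}$, which matches dropping $m_1$ (resp.\ $m_p$).

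For the weak equivalence, I would pull back the $(G\times G^{op})$-Galois cover $\pr_{G,G^{op}}\colon\Hur(\bdiamolr,\bdel)_{\zdiamleft,\zdiamright}\to\Hur(\bdiamolr,\bdel)_{G,G^{op}}$ along $\sigma$ to get a covering $\widetilde{B\mHurm}\to B\mHurm$ together with a lifted map $\tilde\sigma\colon\widetilde{B\mHurm}\to\Hur(\bdiamolr,\bdel)_{\zdiamleft,\zdiamright}$. The extra marking on $\widetilde{B\mHurm}$ records the left and right boundary monodromies of the assembled configuration --- precisely the additional $\phi$-data distinguishing $\bHurm$-configurations from $\mHurm$-ones. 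Under this interpretation $\widetilde{B\mHurm}$ is identified with an auxiliary bar construction built from $\bHurm$-data, and Proposition \ref{prop:BbHurmbHurm+contractible} together with Lemma \ref{lem:bHurmweonbHurm+} and Theorem \ref{thm:bHurmloop} show that its components are weakly equivalent to those of $\bHurm_+(\Q,G)$, enumerated by $G$ via Theorem \ref{thm:pi0bHurm}; under this equivalence $\tilde\sigma$ restricts, on each component, to a map naturally compatible with Lemma \ref{lem:bcRvsdiamond}.

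The hard part will be rigorously identifying $\widetilde{B\mHurm}$ with the promised bar construction on $\bHurm$-data and verifying that $\tilde\sigma$ is a weak equivalence on each component. I would tackle this via a filtration of $\Hur(\bdiamolr,\bdel)_{\zdiamleft,\zdiamright}$ by cardinality of the underlying configuration, matched with the simplicial/arity filtration of $\widetilde{B\mHurm}$; on associated graded pieces $\tilde\sigma$ should reduce to a tractable comparison between configuration-type strata on both sides, where the barycentric data of the bar-simplex parametrises precisely the missing positional freedom of the points on the diamond.
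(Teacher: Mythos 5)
Your construction of $\sigma$ as written does not descend to the bar construction: it fails the outer face identifications $d_0$ and $d_p$, and this is not a technicality but the crux of why the paper uses a more elaborate barycentre construction.

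Concretely, in your ``alternating gap'' picture the total width of the strip is $1+T$ with $T=t_1+\cdots+t_p$, and after the linear rescaling to width $1$ the block $\fc_1$ always occupies a horizontal window of length $t_1/(1+T)$ starting at $w_0/(1+T)$. Setting $w_0=0$ places $\fc_1$ adjacent to the left edge but does \emph{not} collapse it; its horizontal extent $t_1/(1+T)$ is independent of $w_0$. So $\check\sigma(d^0\uw;\um)$ and $\check\sigma(\uw;d_0\um)$ are genuinely different configurations and the assignment does not factor through $B\mHurm$. You correctly intuit that $d_0$ should correspond to pushing $\fc_1$ onto the arc through $\zdiamleft$ and letting $\pr_{G,G^{op}}$ kill its monodromy --- that is exactly what has to happen, and it is exactly what the paper's map does --- but your geometric mechanism does not produce it. The device the paper uses for this is the pair of barycentres $b^{-}\le b\le b^{+}$ of Definition \ref{defn:barycentres}: one first forms $\hmu(\uw;\ut,\ufc)$ with \emph{no} gaps at all, then rescales so that the window $[b^{-},b^{+}]$ (not the whole strip) maps onto $[0,1]$, and finally uses $\kappa^{\pm}$ to collapse the overshoot $[a_0,b^{-}]$ and $[b^{+},a_p]$ onto the two vertical edges. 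The crucial inequality is that when $w_0=0$ one has $b^{-}\ge a_1$, so the window $[b^{-},b^{+}]$ automatically excludes the block $\fc_1$; thus $\kappa^{-}$ flattens $\fc_1$ onto the left edge, $\cHdiamo_1$ sends the left edge to $\zdiamleft$, and $\pr_{G,G^{op}}$ forgets the resulting boundary monodromy. Your fixed-window rescaling has no analogous effect. (The inner faces $d_i$, $1\le i\le p-1$, do work in your construction; they also work, more trivially, in the paper's, where $w_j=0$ for inner $j$ leaves all the relevant data unchanged.)

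The second half of your plan is also underdeveloped in a way that matters. The paper proves \emph{both} weak equivalences directly, by checking surjectivity and injectivity of $\sigma_*$ on each $\pi_q$ via an explicit ``slicing'' argument: a compactness argument furnishes a finite family of vertical cuts $A_1,\dots,A_r$ along which any family $f\colon D^q\to\Hur(\bdiamolr,\bdel)_{G,G^{op}}$ can be sliced into $\mHurm$-blocks, producing a map $g$ into $B\mHurm$, followed by explicit homotopies ($\cHdiamoleft,\cHdiamoright$ for surjectivity, a shuffle interpolation inside $\reB\mHurm$ for injectivity). Your proposed route --- pull back the $(G\times G^{op})$-cover along $\sigma$ and identify $\widetilde{B\mHurm}$ with a bar construction built from $\bHurm$-data --- is not how the paper proceeds, and as stated it would still leave the second equivalence $B\bHurm\simeq\Hur(\diamo,\del)_{G,G^{op}}$ unproven (you would be trying to deduce the first from the second, but the second is not yet established). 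The identification you call ``the hard part'' is genuinely the content of the theorem, and the filtration by cardinality you gesture at does not obviously interact well with the simplicial filtration of the bar construction; without a concrete comparison at the level of strata this remains a sketch rather than a proof. Before pursuing this, you would need to fix the definition of $\sigma$ so that it is actually a map on $B\mHurm$.
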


We will use a classical approach, going back to Segal \cite{Segal73}, which allows us to model the classifying space of a monoid $M$, arising from configuration spaces,
with another, \emph{relative} configuration space.
We will follow tightly the strategy of the proof of
\cite[Proposition 3.1]{Hatcher:Mum}, and to some extent we will use the same notation:
we do this for convenience of the reader.
We focus on the case of $\mHurm$ and write in parentheses the necessary changes for $\bHurm$.

We will first define the comparison
map $\sigma$ in the two cases, and then show that it induces isomorphisms on all homotopy groups.

\subsection{Definition of the comparison map}
\label{subsec:defnsigma}
By Definition \ref{defn:BM} the space $B\mHurm$ (respectively, $B\bHurm$) arises as a
quotient of the disjoint union $\coprod_{p\geq 0}\Delta^p\times\mHurm^p$ (respectively, $\coprod_{p\geq 0}\Delta^p\times\bHurm^p$). We will first define $\sigma$ on this disjoint union, and then prove
that the given assignment induces a map on the quotient.

\begin{nota}
 \label{nota:uwutufc}
 We usually denote by $(\uw;\ut,\ufc)$ a point in $\coprod_{p\geq 0}\Delta^p\times\mHurm^p$ (respectively, $\coprod_{p\geq 0}\Delta^p\times\bHurm^p$), where
 \begin{itemize}
  \item  $\uw=(w_0,\dots,w_p)$ is a system of barycentric
 coordinates in $\Delta^p$, i.e. $w_0,\dots,w_p\geq 0$ and $w_0+\dots+w_p=1$;
 \item $\ut=(t_1,\dots,t_p)$ and $\ufc=(\fc_1,\dots\fc_p)$, such that
 $(t_i,\fc_i)$ is an element in $\mHurm$ (in $\bHurm$) for all $1\leq i\leq p$.
 \end{itemize}
We usually present $\fc_i$ as $(P_i,\psi_i)$ (respectively, as $(P_i,\psi_i,\phi_i)$).
\end{nota}
Given $(\uw,\ut,\ufc)$ as in Notation \ref{nota:uwutufc}, note that
the product $(t_1,\fc_1)\dots(t_p,\fc_p)$ has the form $(t_1+\dots+t_p,\fc)$, with
$\fc$ supported on the set
\[
P=P_1\cup(P_2+t_1)\cup(P_3+t_1+t_2)\cup\dots\cup(P_p+t_1+\dots+t_{p-1}).
\]
By Definition \ref{defn:Hurmoore} we have $\fc\in\Hur(\mcR_{\infty})$ (respectively, $\fc\in\Hur(\bcR_{\infty},\bdel)$), but in the following we will consider $\fc$ as a configuration in $\Hur(\mcR_{\R})$ (in $\Hur(\bcR_{\R},\bdel)$, see Notation \ref{nota:cRt}).

\begin{defn}
 \label{defn:hmu}
The above assignment $(\uw;\ut,\ufc)\mapsto \fc$ gives a continuous map
\[
 \hmu\colon \coprod_{p\geq 0} \Delta^p\times(\mHurm)^p \to \Hur(\mcR_{\R})
 \quad
 \pa{ \mbox{respectively, } \hmu\colon \coprod_{p\geq 0} \Delta^p\times(\bHurm)^p \to \Hur(\bcR_{\R},\bdel) }.
\]
\end{defn}
Note that $\hmu$ factors, on each subspace $\Delta^p\times(\mHurm)^p$ (respectively, $\Delta^p\times(\bHurm)^p$),
through the projection on the factor $(\mHurm)^p$ (respectively, $(\bHurm)^p$). The first subspace
$\Delta^0$ is sent to the empty product in $\mHurm$ (in $\bHurm$), i.e. to the neutral element $(0,(\emptyset,\one,\one))$.
\begin{defn}
 \label{defn:barycentres}
For $(\uw;\ut,\ufc)$ as in Notation \ref{nota:uwutufc}, define $a_0=0$
and $a_i=\sum_{j=1}^i t_j$ for all $1\leq i\leq p$. Define the \emph{barycentre} of
$(\uw;\ut,\ufc)$ as
$b=\sum_{i=0}^pw_ia_i$. Set $a_i^+=\max\set{a_i,b}$ and $a_i^-=\min\set{a_i,b}$ for all $0\leq i\leq p$,
and define the \emph{upper barycentre} and the \emph{lower barycentre} as
$b^+=\sum_{i=0}^pw_ia_i^+$ and $b^-=\sum_{i=0}^pw_ia_i^-$.
\end{defn}
See Figure \ref{fig:sigma}(left).
Note that the barycentres
$b,b^+,b^-$ vary continuously on
$\coprod_{p\geq 0} \Delta^p\times(\mHurm)^p$ (on $\coprod_{p\geq 0} \Delta^p\times(\bHurm)^p$), but do not factor to continuous functions on $B\mHurm$ (respectively, $B\bHurm$): indeed, if $w_0=0$, the triple $(\uw;\ut,\ufc)$ is equivalent to the triple $(\uw';\ut',\ufc')$ obtained by removing $w_0$, $t_1$ and $\ufc_1$; all barycentres $b,b^+,b^-$ drop by $t_1$ when passing from the first to the second triple. Nevertheless, the \emph{differences} $b^+-b$ and $b-b^-$ factor to continuous functions defined on  $B\mHurm$ (respectively, $B\bHurm$).

Note also that for all $(\uw;\ut,\ufc)$ we have
$a_0\leq b^-\leq b\leq b^+\leq a_p$. More precisely, let $i_{\min}\ge0$ be minimal with $w_{i_{\min}}>0$ and let $i_{\max}\le p$ be maximal with $w_{i_{\max}}>0$; then $a_{i_{\min}}\leq b^-\leq b\leq b^+\leq a_{i_{\max}}$,
with all these inequalities
strict unless they are all equalities: in this case all $t_i$ with $i_{\min}<i\le i_{\max}$ are equal to $0$ and all corresponding $\fc_i$ are
equal to $(\emptyset,\one,\one)$, so that
$\hmu(\uw,\ut,\ufc)$ is equal to the product $(t_1,\fc_1)\dots(t_{i_{\min}},\fc_{i_{\min}})\cdot (t_{i_{\max}+1},\fc_{i_{\max}+1})\dots(t_p,\fc_p)$. In particular, if $b^-=b=b^+$ then we have that $\hmu(\uw,\ut,\ufc)$ is supported away from $\bS_{b,b}$, in fact it is supported away from $\bS_{b-\epsilon,b+\epsilon}$ for $\epsilon>0$ small enough.
\begin{defn}
\label{defn:epsilon}
 We define a continuous function $\epsilon\colon \coprod_{p\geq 0} \Delta^p\times(\mHurm)^p\to[0,1]$ (respectively, $\epsilon\colon \coprod_{p\geq 0} \Delta^p\times(\bHurm)^p\to[0,1]$): for $(\uw;\ut,\ufc)$ as in Notation \ref{nota:uwutufc}, we denote by $P\subset\R\times[0,1]$ the support of $\hmu(\uw;\ut,\ufc)$ and set
 \[
  \epsilon\colon(\uw;\ut,\ufc)=\frac 12\sup\set{t\in[0,1]\,|\,P\cap\bS_{b^--t,b^++t}=\emptyset},
 \]
where upper and lower barycentres are computed with respect to $(\uw;\ut,\ufc)$. We denote $b^-_\epsilon=b^--\epsilon$ and $b^+_\epsilon=b^++\epsilon$.
\end{defn}
We observe that $\epsilon$ satisfies the following properties:
 \begin{itemize}
  \item for all $(\uw,\ut,\ufc)$ satisfying $b^-=b^+$, we have $\epsilon(\uw,\ut,\ufc)>0$;
  \item for all $(\uw,\ut,\ufc)$ with $\epsilon(\uw,\ut,\ufc)>0$, the configuration $\hmu(\uw,\ut,\ufc)$ is supported away from
  $\bS_{b^--\epsilon,b^++\epsilon}$.
\end{itemize}

The advantage of replacing $b^-$ and $b^+$ by $b^-_\epsilon$ and $b^+_\epsilon$ is that we now have strict inequalities $b^-_\epsilon<b<b^+_\epsilon$ for all $(\uw;\ut,\ufc)$. As we will see, the disadvantage that $b^+_\epsilon-b^-_\epsilon$ does not factor through a function defined on $B\mHurm$ (respectively, on $B\bHurm$) will be inessential.

Recall the proof of Lemma \ref{lem:HurmvsHur}: for $s>0$ the map $\Lambda_s\colon\C\to\C$ is
an endomorphism of the nice couple $(\mcR_{\R},\emptyset)$ (respectively, $(\bcR_{\R},\bdel)$), depending continuously on $s$. We obtain a continuous map
\[
 \Lambda_* \!\colon\! \Hur(\mcR_{\R})\times (0,\infty)\to\!\Hur(\mcR_{\R})
 \quad
 \pa{\!\mbox{respectively, }\Lambda_*\! \colon\! \Hur(\bcR_{\R},\bdel)\times (0,\infty)\to\!\Hur(\bcR_{\R},\bdel)\!}.
\]
Similarly, recall Definition \ref{defn:taut}: for all $t\in\R$ the map $\tau_t$ is an
endomorphism of the nice couple $(\mcR_{\R},\emptyset)$ (respectively, $(\bcR_{\R},\bdel)$), depending continuously on $t$. We obtain a continuous map
\[
 \tau_* \colon \Hur(\mcR_{\R})\times \R \to\Hur(\mcR_{\R})\quad\quad
 \pa{\mbox{respectively, }\tau_* \colon \Hur(\bcR_{\R},\bdel)\times \R \to\Hur(\bcR_{\R},\bdel)}.
\]

\begin{defn}
 \label{defn:hmub}
We define a map
\[
 \hmu^b\colon \coprod_{p\geq 0} \Delta^p\times(\mHurm)^p \to \Hur(\mcR_{\R})\quad\quad
 \pa{\mbox{respectively, } \hmu^b\colon \coprod_{p\geq 0} \Delta^p\times(\bHurm)^p \to \Hur(\bcR_{\R})}
\]
by the following assignment:
\[
(\uw;\ut,\ufc)\mapsto
\Lambda_*\pa{\tau_*\pa{\hmu(\uw;\ut,\ufc)\ ;\ -b^-_\epsilon(\uw;\ut,\ufc)}\ ;\ \frac{1}{b^+_\epsilon(\uw;\ut,\ufc)-b^-_\epsilon(\uw;\ut,\ufc)}} 
\] 
\end{defn}

Roughly speaking, the map $\hmu^b$ has the effect of a horizontal translation and a dilation of
the configuration $\hmu(\uw;\ut,\ufc)$: the effect of the translation and dilation
is to map the rectangle $[b^-_\epsilon,b^+_\epsilon]\times[0,1]$ homeomorphically onto the unit square $\cR$.

\begin{defn}
 \label{defn:mfTbox}
Let $\mathring{\diamo}$ denote the interior of $\diamo$ (see Definition \ref{defn:diamo}),
and denote $\R+\frac{\sqrt{-1}}{2}=\set{t+\frac{\sqrt{-1}}{2}\,|\,t\in\R}\subset\bH$. We introduce several nice couples:
\[
\begin{array}{ll}
\bullet\ \bfTbox=\pa{\mcR_{\R},\mcR_{\R}\setminus\mcR}; &
\bullet\ \bfTdiamo=
\pa{\bdiamolr\cup (\R+\frac{\sqrt{-1}}{2}),\pa{\bdiamolr\cup (\R+\frac{\sqrt{-1}}{2})}
\setminus\mathring{\diamo}};\\[.9em]
\bullet\ \fTbox=\pa{\bcR_{\R},\bcR_{\R}\setminus\mcR};&
\bullet\ \fTdiamo=
\pa{\diamo\cup (\R+\frac{\sqrt{-1}}{2}),\pa{\diamo\cup (\R+\frac{\sqrt{-1}}{2})}
\setminus\mathring{\diamo}}.
\end{array}
\]
\end{defn}
Since $\Id_\C$ is a map of nice couples $(\mcR_{\R},\emptyset)\to\bfTbox$ (respectively,  $(\bcR_{\R},\bdel)\to\fTbox$), it induces a
map $\Hur(\mcR_{\R})\to\Hur(\bfTbox)$ (respectively, $\Hur(\bcR_{\R},\bdel)\to\Hur(\fTbox)$).
See Figure \ref{fig:mfTbox}.

\begin{figure}[ht]
 \begin{tikzpicture}[scale=3,decoration={markings,mark=at position 0.38 with {\arrow{>}}}]
  \draw[dashed,->] (-.4,0) to (1.4,0);
  \draw[dashed,->] (0,-.2) to (0,1.2);
  \fill[gray, opacity=.5] (-.4,0) rectangle (1.4,1);
  \fill[opacity=.4] (-.4,0) rectangle (0,1);
  \fill[opacity=.4] (1,0) rectangle (1.4,1);
  \draw[dotted] (-.4,0) to (1.4, 0);
  \draw[dotted] (-.4,1) to (1.4, 1);

\begin{scope}[shift={(2.1,0)}]
  \draw[dashed,->] (-.4,0) to (1.4,0);
  \draw[dashed,->] (0,-.2) to (0,1.2);
  \draw[dotted, fill=gray, opacity=.5] (0,.5) to (.5,1) to (1,.5) to (.5,0) to (0,.5);
  \draw[very thick] (-.4,.5) to (0, .5);
  \draw[very thick] (1,.5) to (1.4, .5);
\end{scope}

\begin{scope}[shift={(0,-1.6)}]
  \draw[dashed,->] (-.4,0) to (1.4,0);
  \draw[dashed,->] (0,-.2) to (0,1.2);
  \fill[gray, opacity=.5] (-.4,0) rectangle (1.4,1);
  \fill[opacity=.4] (-.4,0) rectangle (0,1);
  \fill[opacity=.4] (1,0) rectangle (1.4,1);
  \draw[very thick] (-.4,0) to (1.4, 0);
  \draw[very thick] (-.4,1) to (1.4, 1);
\end{scope}

\begin{scope}[shift={(2.1,-1.6)}]
  \draw[dashed,->] (-.4,0) to (1.4,0);
  \draw[dashed,->] (0,-.2) to (0,1.2);
  \draw[very thick] (0,.5) to (.5,1) to (1,.5) to (.5,0) to (0,.5);
  \fill[gray, opacity=.5] (0,.5) to (.5,1) to (1,.5) to (.5,0) to (0,.5);
  \draw[very thick] (-.4,.5) to (0, .5);
  \draw[very thick] (1,.5) to (1.4, .5);
\end{scope}
\end{tikzpicture}
 \caption{Top: the nice couples $\bfTbox$ and $\bfTdiamo$.;Bottom: the nice couples $\fTbox$ and $\fTdiamo$.}
 \label{fig:mfTbox}
\end{figure}

\begin{nota}
 \label{nota:hmub}
By abuse of notation we will denote by $\hmu^b$ also the following composition:
\[
 \begin{tikzcd}
  \coprod_{p\geq 0} \Delta^p\times(\mHurm)^p\ar[r,"\hmu^b"] & \Hur(\mcR_{\R})
  \ar[r,"(\Id_\C)_*"] & \Hur(\bfTbox)
 \end{tikzcd}
\]
\[
 \begin{tikzcd}
  \Big(\mbox{respectively, }\coprod_{p\geq 0} \Delta^p\times(\bHurm)^p\ar[r,"\hmu^b"] & \Hur(\bcR_{\R})
  \ar[r,"(\Id_\C)_*"] & \Hur(\fTbox)\Big).
 \end{tikzcd}
\]
\end{nota}

\begin{defn}
 \label{defn:kappa+-}
We define maps $\kappa^-$ and $\kappa^+ \colon\C\times[0,\infty)\to\C$ by the formulas
\[
\begin{array}{ll}
 \kappa^-(z,s)= &
 \left\{
 \begin{array}{ll}
 z &\mbox{if } \Re(z)\geq 0\\
 z-\Re(z) &\mbox{if } -s\leq \Re(z)\leq 0\\
 z+s&\mbox{if }\Re(z)\leq -s ;\\
 \end{array}
 \right.\\[2em]
 \kappa^+(z,s)=&
 \left\{
 \begin{array}{ll}
 z &\mbox{if } \Re(z)\leq 1\\
 z- \Re(z)+1 &\mbox{if } 1\leq \Re(z)\leq 1+s\\
 z-s&\mbox{if }\Re(z)\geq 1+s .\\
 \end{array}
 \right.
\end{array}
\]
\end{defn}
Roughly speaking, both $\kappa^-$ and $\kappa^+$ fix the vertical strip $[0,1]\times\R$
for all $s\geq0$;
the map $\kappa^-(-,s)$ collapses the strip $[-s,0]\times\R$ 
to the vertical line $\C_{\Re=0}$, and translates $(-\infty,s]\times\R$ to the right;
instead $\kappa^+(-,s)$
collapses the strip $[1,1+s]\times\R$ to the vertical line $\C_{\Re=1}$, and translates
$[1+s,\infty)\times\R$ to the left.

Both $\kappa^-(-,s)$ and $\kappa^+(-,s)$ are morphisms of nice couples $\bfTbox\to\bfTbox$
(respectively, $\fTbox\to\fTbox$) for all $s\geq0$. We obtain continous maps
\[
 \kappa^-_*\,,\,\kappa^+_*\,\colon\Hur(\bfTbox)\times[0,\infty)\to \Hur(\bfTbox)
\]
\[
 \pa{\mbox{respectively, }\kappa^-_*\,,\,\kappa^+_*\,\colon\Hur(\fTbox)\times[0,\infty)\to \Hur(\fTbox)}.
\]
\begin{nota}
 \label{nota:cHdiamo1}
Recall Definition \ref{defn:cHdiamo}; we use the notation $\cHdiamo_1:=\cHdiamo(-;1)\colon\C\to\C$. 
\end{nota}
Note that $\cHdiamo_1$ is a morphism
of nice couples $\bfTbox\to\bfTdiamo$ (respectively, $\fTbox\to\fTdiamo$).
\begin{defn}
 \label{defn:barsigma}
We define a map
\[
\hmu^\Box\colon \coprod_{p\geq 0} \Delta^p\times(\mHurm)^p \to \Hur(\bfTbox)\quad\quad
\pa{\mbox{respectively, }\hmu^\Box\colon \coprod_{p\geq 0} \Delta^p\times(\bHurm)^p \to \Hur(\fTbox)}
\]
by the following assignment, where $p$, $a_p$, $b^-_\epsilon$ and $b^+_\epsilon$
depend on $(\uw;\ut,\ufc)$:
\[
 \hmu^\Box(\uw;\ut,\ufc)=
      \kappa^-_*\pa{
         \kappa^+_*\pa{
         \hmu^b(\uw;\ut,\ufc)
             \ ;\ \frac{a_p-b^+_\epsilon}{b^+_\epsilon-b^-_\epsilon}
             }
         \ ;\ \frac{b^-_\epsilon-a_0}{b^+_\epsilon-b^-_\epsilon}
         }
\]
We further define
\[
\hmu^\diamo\colon \coprod_{p\geq 0} \Delta^p\times(\mHurm)^p \to \Hur(\bfTdiamo)\quad\quad
\pa{\mbox{respectively, }\hmu^\diamo\colon \coprod_{p\geq 0} \Delta^p\times(\bHurm)^p \to \Hur(\fTdiamo)}
\]
as the composition $(\cHdiamo_1)_*\circ \hmu^\Box$.
\end{defn}
Roughly speaking, $\hmu^\Box$ improves the effect of $\hmu^b$ as follows: $\hmu^b(\uw;\ut,\ufc)$
is a configuration supported in the rectangle $\Big[\frac{b_\epsilon^--a_0}{b_\epsilon^+-b_\epsilon^-},1+\frac{a_p-b_\epsilon^+}{b_\epsilon^+-b_\epsilon^-}\Big]\times[0,1]$,
and the further application of $\kappa^-_*$ and $\kappa^+_*$ collapse $\hmu^b(\uw;\ut,\ufc)$
to a configuration supported in $\cR$.
The further composition $\hmu^\diamo$ changes the configuration $\hmu^\Box(\uw;\ut,\ufc)$ to
a configuration $\hmu^\diamo(\uw;\ut,\ufc)$ supported in $\diamo$.

Note that there is a natural inclusion of spaces
$\Hur(\bdiamolr,\bdel)\subset\Hur(\bfTdiamo)$ (respectively, $\Hur(\diamo,\del)\subset\Hur(\fTdiamo)$).
The following lemma summarises the previous discussion.
\begin{lem}
 \label{lem:checksigmaindiamo}
 The map $\hmu^\diamo$ has values inside 
 $\Hur(\bdiamolr,\bdel)$ (inside $\Hur(\diamo,\del)$).
\end{lem}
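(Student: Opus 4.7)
The proof is a direct verification, tracing the support of the configuration $\hmu^\diamo(\uw;\ut,\ufc)$ through each map in the composition. We focus on the case of $\mHurm$; the case of $\bHurm$ is essentially the same, with ``$\mcR$'' replaced by ``$\bcR$'' and ``$\mathring{\diamo}$'' replaced by the appropriate portion of $\diamo$. We may assume $b^-<b^+$, since in the degenerate case $\hmu^\diamo(\uw;\ut,\ufc)=(\emptyset,\one,\one)$ lies trivially in $\Hur(\bdiamolr,\bdel)$.

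First, I would locate the support of $\hmu^b(\uw;\ut,\ufc)\in\Hur(\mcR_{\R})$. By construction, $\hmu(\uw;\ut,\ufc)$ is supported in $[0,a_p]\times(0,1)$. Applying $\tau_{-b^-}$ shifts the support to $[-b^-,a_p-b^-]\times(0,1)$, and the subsequent dilation $\Lambda_{1/(b^+-b^-)}$ rescales this to $[-s^-,1+s^+]\times(0,1)$, where I set $s^-=\frac{b^-}{b^+-b^-}=\frac{b^--a_0}{b^+-b^-}$ and $s^+=\frac{a_p-b^+}{b^+-b^-}$, in agreement with the parameters appearing in Definition \ref{defn:barsigma}.

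Next, I would inspect the effect of $\kappa^+_*(-;s^+)$ and $\kappa^-_*(-;s^-)$. By Definition \ref{defn:kappa+-}, the map $\kappa^+(-;s^+)$ fixes the half-plane $\C_{\Re\leq 1}$ and collapses the vertical strip $[1,1+s^+]\times\R$ onto the line $\C_{\Re=1}$; dually, $\kappa^-(-;s^-)$ fixes $\C_{\Re\geq 0}$ and collapses $[-s^-,0]\times\R$ onto $\C_{\Re=0}$. Since both maps are morphisms of the nice couple $\bfTbox$, the support of $\hmu^\Box(\uw;\ut,\ufc)\in\Hur(\bfTbox)$ is therefore contained in $[0,1]\times(0,1)\subset\cR$, with points whose real parts were outside $[0,1]$ forced onto $\set{0,1}\times(0,1)$.

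Finally, I would analyse $(\cHdiamo_1)_*$. An explicit computation using the formula in Definition \ref{defn:cHdiamo} shows that for $z$ with $\Re(z)\in[0,1]$ and $\Im(z)\in(0,1)$, the image $\cHdiamo_1(z)$ has real part equal to $\Re(z)$ and lies at distance at most $(1-2|\Re(z)-\tfrac12|)\cdot|\Im(z)-\tfrac12|$ from $\zcentre$ in the imaginary direction; a short calculation turns the rhombus-defining inequality into the identity $\cHdiamo_1(z)\in\mathring{\diamo}$ whenever $\Re(z)\in(0,1)$, and $\cHdiamo_1(z)\in\set{\zdiamleft}$ (resp.\ $\set{\zdiamright}$) when $\Re(z)=0$ (resp.\ $\Re(z)=1$). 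Consequently the support of $\hmu^\diamo(\uw;\ut,\ufc)$ is contained in $\mathring{\diamo}\cup\set{\zdiamleft,\zdiamright}=\bdiamolr$, with any point on $\set{\zdiamleft,\zdiamright}=\bdel\bdiamolr$ inheriting its $G$-valued monodromy from the pushforward through the nice-couple morphism $\cHdiamo_1\colon\bfTbox\to\bfTdiamo$. This shows $\hmu^\diamo(\uw;\ut,\ufc)\in\Hur(\bdiamolr,\bdel)$, as required. The only genuine bookkeeping is checking that each of $\tau_*$, $\Lambda_*$, $\kappa^{\pm}_*$ and $(\cHdiamo_1)_*$ is indeed a morphism of the right nice couples, so the ``relative'' portion of the monodromy data transforms correctly; all four are covered by routine applications of \cite[Proposition 4.4]{Bianchi:Hur2}, so I do not expect any serious obstacle beyond this trace of supports.
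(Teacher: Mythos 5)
Your proof is correct and follows essentially the same route as the paper's: handle the degenerate case $b^-=b^+$, locate the support of $\hmu^b(\uw;\ut,\ufc)$ in the rectangle $\big(-\tfrac{b^--a_0}{b^+-b^-},\,1+\tfrac{a_p-b^+}{b^+-b^-}\big)\times(0,1)$, observe that $\kappa^+_*$ and $\kappa^-_*$ collapse this rectangle onto $\bcRlr$ (resp.\ $\cR$), and then invoke that $\cHdiamo_1$ carries $\bcRlr$ into $\bdiamolr$ (resp.\ $\cR$ into $\diamo$). The only substantive difference is that you recompute the latter fact from the explicit formula for $\cHdiamo$, whereas the paper already recorded it in the bulleted remarks following Definition~\ref{defn:cHdiamo}; your recomputation is fine (the displacement from $\zcentre$ is in fact exactly, not merely at most, $(1-2|\Re(z)-\tfrac12|)\,|\Im(z)-\tfrac12|$, so your stated bound is correct, just not sharp). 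One tiny stylistic quibble: in the open case the support of $\hmu(\uw;\ut,\ufc)$ lies in the open rectangle $(0,a_p)\times(0,1)$ rather than the closed one you write, but this has no bearing on the conclusion since the closed set still maps where you need it to.
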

\begin{proof}
 Let $(\uw;\ut,\ufc)$ be as in Notation \ref{nota:uwutufc}.
 Then $\hmu^b(\uw;\ut,\ufc)$
 is supported in the rectangle $\Big(\frac{-b_\epsilon^--a_0}{b_\epsilon^+-b_\epsilon^-},1+\frac{a_p-b_\epsilon^+}{b_\epsilon^+-b_\epsilon^-}\Big)\times(0,1)$
 (in the rectangle $\Big(\frac{-b_\epsilon^--a_0}{b_\epsilon^+-b_\epsilon^-},1+\frac{a_p-b_\epsilon^+}{b_\epsilon^+-b_\epsilon^-}\Big)\times[0,1]$).
 This rectangle is mapped to $\bcRlr$ (to $\cR$) by the composition
 $\kappa^-_*\Big(\kappa^+_*\Big(-;\frac{a_p-b_\epsilon^+}{b_\epsilon^+-b_\epsilon^-}\Big) ;\frac{b_\epsilon^--a_0}{b_\epsilon^+-b_\epsilon^-}\Big)$, and the map $\cHdiamo_1$
 sends $\bcRlr$ to $\bdiamolr$ (respectively, $\cR$ to $\diamo$).
 
\end{proof}
We consider now the external product
\[
 -\times-\colon \Hur(\bdiamolr,\bdel)\times\Ran(\bdiamolr)\to \Hur(\bdiamolr,\bdel)
 \]
 \[
\pa{\mbox{respectively, }-\times-\colon \Hur(\diamo,\del)\times\Ran(\diamo)\to \Hur(\diamo,\del)}
\]
from \cite[Definition 5.7 and Notation 5.9]{Bianchi:Hur2}, and evaluate the second
component at $\bdel\bdiamolr=\set{\zdiamleft,\zdiamright}$, thus obtaining a map
$-\times\bdel\bdiamolr\colon\Hur(\bdiamolr,\bdel)\to\Hur(\bdiamolr,\bdel)_{\bdel\bdiamolr}$
(respectively, $-\times\bdel\bdiamolr\colon\Hur(\diamo,\del)\to\Hur(\diamo,\del)_{\bdel\bdiamolr}$).
 
\begin{defn}
  \label{defn:sigma}
 We denote by $\hmu^{\diamo}_{\bdel\bdiamolr}$ the following composition:
 \[
  \begin{tikzcd}[column sep=2cm]
   \coprod_{p\geq 0} \Delta^p\times(\mHurm)^p \ar[r,"\hmu^\diamo"] & \Hur(\bdiamolr,\bdel) \ar[r,"-\times\bdel\bdiamolr"]
   & \Hur(\bdiamolr,\bdel)_{\bdel\bdiamolr}
  \end{tikzcd}
 \]
 \[
  \begin{tikzcd}[column sep=2cm]
   \Big(\mbox{respectively, }\coprod_{p\geq 0} \Delta^p\times(\bHurm)^p \ar[r,"\hmu^\diamo"] & \Hur(\diamo,\del) \ar[r,"-\times\bdel\bdiamolr"]
   & \Hur(\diamo,\del)_{\bdel\bdiamolr}\Big).
  \end{tikzcd}
 \]
  We denote by $\check\sigma$ the composition of $\hmu^{\diamo}_{\bdel\bdiamolr}$ with the covering projection
  \[
  \pr_{G,G^{\op}}\colon \Hur(\bdiamolr,\bdel)_{\bdel\bdiamolr}\to \Hur(\bdiamolr,\bdel)_{G,G^{\op}}
  \]
  \[
 \pa{\mbox{respectively, }\pr_{G,G^{\op}}\colon \Hur(\diamo,\del)_{\bdel\bdiamolr}\to \Hur(\diamo,\del)_{G,G^{\op}}}.
 \]
  Here we regard $(\bdiamolr,\bdel)$ (respectively, $(\diamo,\del)$) as an lr-based
  nice couple, using the two points $\zdiamleft$ and $\zdiamright$ of $\bdel\bdiamolr$.
  See Figure \ref{fig:sigma}.
 \end{defn}
 Roughly speaking, $\hmu^{\diamo}_{\bdel\bdiamolr}$ improves the effect of $\hmu^\diamo$ by forcing the presence of $\zdiamleft$ and $\zdiamright$ in the support of the configuration $\hmu^\diamo(\uw;\ut,\ufc)$, which is already supported in $\bdiamolr$ (in $\diamo$); if either point $\zdiamleft,\zdiamright$ is already in the support of $\hmu^\diamo(\uw;\ut,\ufc)$, then its local monodromy does not change when passing to
 $\hmu^{\diamo}_{\bdel\bdiamolr}(\uw;\ut,\ufc)$. The further composition $\check\sigma$ forgets the monodromy information
 around the two points $\zdiamleft$ and $\zdiamright$ of the support of $\hmu^{\diamo}_{\bdel\bdiamolr}(\uw;\ut,\ufc)$.
 
 \begin{figure}[ht]
 \begin{tikzpicture}[scale=4,decoration={markings,mark=at position 0.38 with {\arrow{>}}}]
  \draw[dashed,->] (-.05,0) to (1.6,0);
  \draw[dashed,->] (0,-1.1) to (0,1.1);
  \node at (0,-1) {$*$};
  \draw[dotted, fill=gray, opacity=.5] (0,0) rectangle (1.5,1);
  \draw[very thick] (0,0) to (1.5,0);
  \draw[very thick] (0,1) to (1.5,1);
  \draw[dotted, thick] (0,0) node[below]{\tiny$a_0$} to ++(0,1);
  \draw[dotted, thick] (.5,0) node[below]{\tiny$a_1$} to ++(0,1);
  \draw[dotted, thick] (1,0) node[below]{\tiny$a_2$} to ++(0,1);
  \draw[dotted, thick] (1.5,0) node[below]{\tiny$a_3$} to ++(0,1);
  \draw[dotted] (.33,0) node[below]{\tiny$b_\epsilon^-$} to ++(0,1);
  \draw[dotted] (1.3,0) node[below]{\tiny$b_\epsilon^+$} to ++(0,1);
  \draw[dotted] (.9,0) node[below]{\tiny$b$} to ++(0,1);
  \node at (.1,.3){$\bullet$}; 
  \node at (.3,1){$\bullet$}; 
  \node at (.45,.5){$\bullet$};
  \draw[thin, looseness=1.2, postaction={decorate}] (0,-1) to[out=82,in=-90]  (.05,.1) to[out=90,in=-90] (.01,.3)  to[out=90,in=90] node[above]{\tiny$q_{1,1}$} (.2,.3)  to[out=-90,in=90] (.1,.1) to[out=-90,in=80] (0,-1);
  \draw[thin, looseness=1.2, postaction={decorate}] (0,-1) to[out=79,in=-90] (.28,.9)
  to[out=90,in=-90] (.26,1) to[out=90,in=90] (.34,1) to[out=-90,in=90] node[left]{\tiny$g_{1,2}$}
  (.3,.9) to[out=-90,in=77]  (0,-1);
  \draw[thin, looseness=1.2, postaction={decorate}] (0,-1) to[out=76,in=-90] (.4,.3)
  to[out=90,in=-90] (.35,.5) to[out=90,in=90] node[above]{\tiny$q_{1,3}$} (.49,.5) to[out=-90,in=90] (.45,.3) to[out=-90,in=74]  (0,-1);
  \node at (.65,.0){$\bullet$};
  \node at (.95,.8){$\bullet$};
  \draw[thin, looseness=1.2, postaction={decorate}] (0,-1) to[out=62,in=-120]  (.55,-.1) to[out=60,in=-90] (.52,.05)  to[out=90,in=90] node[above]{\tiny$g_{2,1}$} (.7,.05)  to[out=-90,in=50] (.65,-.1) to[out=-130,in=60] (0,-1);
  \draw[thin, looseness=1.2, postaction={decorate}] (0,-1) to[out=56,in=-90] (.9,.7)
  to[out=90,in=-90] (.85,.8) to[out=90,in=90] node[above]{\tiny$q_{2,2}$} (.99,.8) to[out=-90,in=90] (.95,.7) to[out=-90,in=54]  (0,-1);
  \node at (1.2,.5){$\bullet$};
  \node at (1.45,.2){$\bullet$};
  \draw[thin, looseness=1.2, postaction={decorate}] (0,-1) to[out=50,in=-90]  (1.15,.4) to[out=90,in=-90] (1.12,.55)  to[out=90,in=90] node[above]{\tiny$q_{3,1}$} (1.3,.55)  to[out=-90,in=90] (1.2,.4) to[out=-90,in=44] (0,-1);
  \draw[thin, looseness=1.2, postaction={decorate}] (0,-1) to[out=46,in=-90] (1.4,.1)
  to[out=90,in=-90] (1.35,.2) to[out=90,in=90] node[above]{\tiny$q_{3,2}$} (1.49,.2) to[out=-90,in=90] (1.45,.1) to[out=-90,in=44]  (0,-1);
  \begin{scope}[shift={(2,0)}]
  \draw[dashed,->] (-.1,0) to (1.1,0);
  \draw[dashed,->] (0,-1.1) to (0,1.1);
  \node at (0,-1) {$*$};
  \draw[very thick] (0,.5) to (.5,1) to (1,.5) to (.5,0) to (0,.5);
  \fill[gray, opacity=.5] (0,.5) to (.5,1) to (1,.5) to (.5,0) to (0,.5);
  \node at (0,.5) {$\bullet$};
  \node at (1,.5) {$\bullet$};
  \draw[thin, looseness=1.2, postaction={decorate}] (0,-1) to[out=46,in=-90]  (.97,.4) to[out=90,in=-90] (.95,.55)  to[out=90,in=90] node[above]{\tiny ?} (1.03,.55)  to[out=-90,in=90] (1,.4) to[out=-90,in=44] (0,-1);
  \draw[thin, looseness=1.2, postaction={decorate}] (0,-1) to[out=91,in=-90]  (-.03,.4) to[out=90,in=-90] (-.07,.55)  to[out=90,in=90] node[above]{\tiny ?} (.03,.55)  to[out=-90,in=90] 
  (.01,.4) to[out=-90,in=89] (0,-1);

  \draw[dotted, thick] (.2,0)  to ++(0,1);
  \draw[dotted, thick] (.7,0)  to ++(0,1);
  \draw[dotted] (.6,0) to ++(0,1);
  \node at (.15,.5){$\bullet$};
  \draw[thin, looseness=1.2, postaction={decorate}] (0,-1) to[out=76,in=-90] (.12,.3)
  to[out=90,in=-90] (.1,.5) to[out=90,in=90] node[above right]{\tiny$q_{1,3}$} (.24,.5) to[out=-90,in=90] (.15,.3) to[out=-90,in=74]  (0,-1);
  \node at (.35,.15){$\bullet$};
  \node at (.65,.72){$\bullet$};
  \draw[thin, looseness=1.2, postaction={decorate}] (0,-1) to[out=62,in=-90]  (.3,.05) to[out=90,in=-90] (.22,.2)  to[out=90,in=90] node[above]{\tiny$g_{2,1}$} (.4,.2)  to[out=-90,in=90] (.35,.05) to[out=-90,in=60] (0,-1);
  \draw[thin, looseness=1.2, postaction={decorate}] (0,-1) to[out=56,in=-90] (.6,.62)
  to[out=90,in=-90] (.55,.72) to[out=90,in=90] node[above left]{\tiny$q_{2,2}$} (.69,.72) to[out=-90,in=90] (.65,.62) to[out=-90,in=54]  (0,-1);
  \node at (.9,.5){$\bullet$};
  \draw[thin, looseness=1.2, postaction={decorate}] (0,-1) to[out=50,in=-90]  (.87,.4) to[out=90,in=-90] (.85,.55)  to[out=90,in=90] node[left]{\tiny$q_{3,1}$} (.93,.55)  to[out=-90,in=90] (.9,.4) to[out=-90,in=48] (0,-1);
  \end{scope}
 \end{tikzpicture}
 \caption{Left: the product $(t_1,\fc_1)(t_2,\fc_2)(t_3,\fc_3)$ of three configurations in $\bHurm$; for a given $\uw\in\mDelta^3$ the barycentres $b,b_\epsilon^+,b_\epsilon^-$ are shown as dotted, vertical lines. We use the letter ``$q$'' to represent the $\Q$-valued monodromies around points in $\mcR_{\R}$. Right: the image of $(\uw;\ut,\ufc)$ along $\check\sigma$; the question marks suggest that, because of the quotient by the $G\times G^{\op}$-action, the monodromies of the loops spinning around 
 $\zdiamleft$ and $\zdiamright$ are just not defined.
 }
\label{fig:sigma}
\end{figure}
 
\begin{nota}
\label{nota:fclr}
We endow $B\mHurm$ and $\reB\mHurm$ (respectively, $B\bHurm$ and $\reB\bHurm$) with the basepoint corresponding to the (unique) $0$-simplex in $B_{\bullet}(\mHurm,*)$ (in $B_{\bullet}(\bHurm,*)$). Similarly
$\Hur(\bdiamolr,\bdel)_{G,G^{\op}}$ (respectively, $\Hur(\diamo,\del)_{G,G^{\op}}$)
is endowed with the basepoint given by
$\pr_{G,G^{\op}}((\emptyset,\one,\one)\times\bdel\bdiamolr)$.
We denote this basepoint by $\fclr\in\Hur(\bdiamolr,\bdel)_{G,G^{\op}}$ (respectively,
$\fclr\in\Hur(\diamo,\del)_{G,G^{\op}}$).
\end{nota}

\begin{prop}
 \label{prop:sigmaquotient}
 The map $\check\sigma$ from Definition \ref{defn:sigma} sends every sequence $(\uw;\ut,\ufc)$ satisfying $\epsilon(\uw;\ut,\ufc)>0$ to the basepoint $\fclr$.
 Moreover $\check\sigma$ descends to a pointed map
 $\sigma\colon B\mHurm\to\Hur(\bdiamolr,\bdel)_{G,G^{\op}}$
 (respectively, $\sigma\colon B\bHurm\to\Hur(\diamo,\del)_{G,G^{\op}}$); the map $\sigma$ descends further to a pointed map $\bar\sigma\colon\reB\mHurm\to\Hur(\bdiamolr,\bdel)_{G,G^{\op}}$ (respectively,
 $\bar\sigma\colon\reB\bHurm\to\Hur(\diamo,\del)_{G,G^{\op}}$).
\end{prop}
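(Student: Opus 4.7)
The plan is to verify that $\check\sigma$ is compatible with the face and degeneracy relations defining $B\mHurm$ and $\reB\mHurm$; I focus on the $\mHurm$ case, as the argument for $\bHurm$ is formally identical with $\bdiamolr$ replaced by $\diamo$. Concretely, for the descent to $B\mHurm$ one must check the face identities $\check\sigma(d^i(\uw);\ut,\ufc) = \check\sigma(\uw; d_i(\ut,\ufc))$ in $\Hur(\bdiamolr,\bdel)_{G,G^{op}}$ for each $0\leq i\leq p$, and for the further descent to $\reB\mHurm$ one must check the corresponding degeneracy identities for all $s_i$.

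The inner face relations $d_i$ with $1\leq i\leq p-1$, and all the degeneracy relations, hold already before the projection $\pr_{G,G^{op}}$: for an inner face, applying $d^i$ sets $w'_i=0$ while $d_i$ concatenates $(t_i,\fc_i)\cdot(t_{i+1},\fc_{i+1})$ via the Moore-monoid product, which is literal side-by-side juxtaposition of configurations. A direct unwinding of Definition \ref{defn:barycentres} shows that on both sides the partial sums $a_j$, the configuration $\hmu(\cdot)\in\Hur(\mcR_{\R})$ and the barycentres $b,b^+,b^-$ match---the vanishing $i$-th term on the left compensates exactly for the omitted position on the right---so $\hmu^\diamo$ gives the same output. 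Similarly, the unit $(0,(\emptyset,\one,\one))$ of $\mHurm$ has $t=0$ and empty support, so $s_i$ inserts a trivial entry that affects neither the positions $a_j$ nor the configuration, while $s^i$ only pools two adjacent weights; both operations leave $\hmu^\diamo$ unchanged, giving the further descent to $\bar\sigma$.

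The decisive case is that of the outer face relations $d_0$ and $d_p$, where the quotient by $G\times G^{op}$ really is needed; I treat $d_0$, the argument for $d_p$ being symmetric. Setting $w'_0=0$ forces $b\geq a_1 = t_1$, and an elementary computation from Definition \ref{defn:barycentres} yields $b^+-b^- = (b')^+-(b')^-$ together with $b^- - (b')^- = t_1$, where primes denote the data computed for the triple $(\uw;d_0(\ut,\ufc))$. After the translation $\tau_{-b^-}$ and dilation $\Lambda_{1/(b^+-b^-)}$ built into $\hmu^b$, the configuration $\fc_1$ is therefore supported in a rectangle of the form $[-L,-L']\times[0,1]$ lying strictly to the left of $\bcRlr$, with $L=(b^--a_0)/(b^+-b^-)$ and $L'=(b')^-/(b^+-b^-)$, while the images of $\fc_2,\dots,\fc_p$ coincide with the output of $\hmu^b$ applied to the right-hand triple. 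Applying $\kappa^-_*$ with parameter $L$ collapses this leftmost rectangle onto $\C_{\Re=0}$, and a direct inspection of Definition \ref{defn:cHdiamo} shows that $\cHdiamo_1$ sends $\C_{\Re=0}\cap\bcRlr$ to the single point $\zdiamleft$. Thus $\hmu^{\diamo}_{\bdel\bdiamolr}(d^0(\uw);\ut,\ufc)$ and $\hmu^{\diamo}_{\bdel\bdiamolr}(\uw;d_0(\ut,\ufc))$ differ only in the monodromy they carry at $\zdiamleft$, by an element of $G$ equal to the image under $\fe$ of the total monodromy of $(t_1,\fc_1)$; since $\pr_{G,G^{op}}$ is exactly the quotient by the $G\times G^{op}$-action that modifies the monodromies at $\zdiamleft$ and $\zdiamright$, the two images agree after projection. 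The main obstacle is precisely this outer-face verification---checking that the combined effect of $\tau_{-b^-}$, $\Lambda_{1/(b^+-b^-)}$, $\kappa^-_*$ and $\cHdiamo_1$ concentrates the entire configuration $\fc_1$ at the single point $\zdiamleft$---after which the identification in $\Hur(\bdiamolr,\bdel)_{G,G^{op}}$ is automatic.
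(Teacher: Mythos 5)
Your proposal is correct and follows essentially the same strategy as the paper: handle inner faces and degeneracies before the $G\times G^{op}$-quotient, and reduce the outer faces $d_0,d_p$ to the observation that the omitted configuration $\fc_1$ (resp.\ $\fc_p$) ends up concentrated at $\zdiamleft$ (resp.\ $\zdiamright$), where the monodromy ambiguity is exactly what $\pr_{G,G^{op}}$ kills. The paper packages the outer-face bookkeeping via Lemma \ref{lem:agreeing} (showing $\fc$ and $\fc'$ ``agree on $[0,b^+-b^-]\times\R$'' and then noting $\kappa^{\pm}_*,\cHdiamo_1$ preserve the strip $[0,1]\times\R$), whereas you track the image of $\fc_1$ explicitly through $\tau_{-b^-}$, $\Lambda_{1/(b^+-b^-)}$, $\kappa^-_*$, $\cHdiamo_1$; the two organizations are equivalent, with yours slightly more computational and the paper's slightly more uniform across all $j$. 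One small point to watch: your identities $b^--(b')^-=t_1$ and $b^+-b^-=(b')^+-(b')^-$ are correct, but they rely on reading the sums in Definition \ref{defn:barycentres} as including the $i=0$ term (harmless for $b$ and $b^-$ since $a_0=a_0^-=0$, but material for $b^+$ since $a_0^+=b$); with the literal $\sum_{i=1}^p$ convention and $w_0=0$ the identities still hold, so your usage is consistent, but it is worth being explicit that the derivation uses $w_0=0$ to discard the $a_1^+=b$ contribution.
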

The proof of Proposition \ref{prop:sigmaquotient} is in Appendix \ref{subsec:sigmaquotient}. Since the quotient map $B\mHurm\to\reB\mHurm$ (respectively, $B\bHurm\to \reB\bHurm$)
is a weak equivalence, Theorem \ref{thm:delooping} reduces to proving that the map $\bar\sigma$ is a weak equivalence. In fact, we will prove surjectivity of $\sigma_*$ and injectivity of $\bar\sigma_*$ on homotopy groups.

\subsection{Surjectivity on homotopy groups}
\label{subsec:sigmasurjective}
We fix $q\geq 0$ and want to show that $\sigma_*\colon \pi_q( B\mHurm)\to\pi_q(\Hur(\bdiamolr,\bdel)_{G,G^{\op}})$ (respectively,
$\sigma_*\colon \pi_q(B\bHurm)\to\pi_q(\Hur(\diamo,\del)_{G,G^{\op}})$) is surjective. 
For $q=0$ this will imply, in particular, that $\Hur(\bdiamolr,\bdel)_{G,G^{\op}}$
(respectively, $\Hur(\diamo,\del)_{G,G^{\op}}$) is connected.

\begin{nota}
 \label{nota:Hurempty}
 We denote by $\mHurm_\emptyset\subset\mHurm$ (respectively, $\bHurm_\emptyset\subset\bHurm$) the component
 of the neutral element: it contains all couples $(t,(\emptyset,\one,\one))$ for $t\ge0$.
\end{nota}
Note that $\mHurm_\emptyset$ (respectively, $\bHurm_\emptyset$) is a contractible topological monoid, hence also
the subspace $B\mHurm_\emptyset\subset B\mHurm$ (respectively, $B\bHurm_\emptyset\subset B\bHurm$) is also contractible.
Moreover, the map $\sigma$ sends $B\mHurm_\emptyset$ (respectively, $B\bHurm_\emptyset$) constantly to the basepoint $\fclr$. 

In order to prove that $\sigma$ induces an isomorphism on $q$-homotopy groups, it suffices to prove that
the map $\sigma_*\colon \pi_q(B\mHurm,B\mHurm_\emptyset)\to \pi_q(\Hur(\bdiamolr,\bdel)_{G,G^{\op}})$
(respectively, $\sigma_*\colon \pi_q(B\bHurm,B\bHurm_\emptyset)\to \pi_q(\Hur(\diamo,\del)_{G,G^{\op}})$) is an isomorphism, i.e. we can consider relative homotopy groups.

To show that $\sigma_*$ is surjective, represent an element of
$\pi_q(\Hur(\bdiamolr,\bdel)_{G,G^{\op}})$ (of $\pi_q(\Hur(\diamo,\del)_{G,G^{\op}})$)
by a map
\[
f\colon D^q\to\Hur(\bdiamolr,\bdel)_{G,G^{\op}}\quad\quad
\Big(\mbox{respectively, }f\colon D^q\to\Hur(\diamo,\del)_{G,G^{\op}}\Big)
\]
sending $\del D^q$ to the basepoint $\fclr$.
Thus for all $v\in D^q$ we have an orbit $f(v)=[\fc_v]_{G,G^{\op}}$ of the
action of $G\times G^{\op}$ on $\Hur(\bdiamolr,\bdel)_{\bdel\bdiamolr}$
(on $\Hur(\diamo,\del)_{\bdel\bdiamolr}$). We choose for all $v\in D^q$ a representative
$\fc_v=(P_v,\psi_v,\phi_v)$ of $f(v)$; note that the sets $P_v\subset\bdiamolr$ (respectively, $P_v\subset\diamo$)
do not depend on this choice; similarly the evaluation of $\psi_v$ and $\phi_v$ is independent of the choice of $\fc_v$
on those elements of $\fQ(P_v)$ and $\fG(P_v)$ that can be represented by a loop contained in $[0,1]\times\R\,\setminus P_v$.

For all $v\in D^q$ we can find an open
interval $J_v\subset(0,1)$ and a
neighbourhood $v\in V_v\subseteq D^q$ such that for all $v'\in V_v$ the finite set $\Re(P_{v'})$ is disjoint from $J_v$.
Using the compactness of $D^q$ we can then choose a cover of $D^q$ by finitely many open sets $V_i$ with corresponding
open intervals $J_i\subset(0,1)$, satisfying $\Re(P_v)\cap J_i=\emptyset$ for all $v\in V_i$. After shrinking the intervals $J_i$
appropriately, we can assume they are disjoint. To fix notation, we assume that we have $r$ open sets $V_1,\dots, V_r$,
such that the corresponding intervals $J_1,\dots,J_r$
appear in this order, from left to right, on $(0,1)$.

We choose numbers $A_i\in J_i$: note that $\bS_{A_i,A_i}$ is disjoint from $P_v$ for all $v\in V_i$,
and that the numbers $A_1,\dots,A_r$ are all distinct.
We fix weights $W_i\colon D^q\to[0,1]$ giving a partition of unity on $D^q$
subordinate to the covering $\set{V_1,\dots,V_r}$.
We also assume that for each $v\in D^q$ there are at least \emph{two} distinct indices $1\le i\le r$ such that $W_i(v)>0$.

\begin{nota}
 \label{nota:diamottprime}
 Let $0< t<t'< 1$. We denote by
 $\bdiamolr_{t,t'}$ (respectively, $(\diamo_{t,t'},\del)$) the space
 $\bdiamolr\cap (t,t')\times\R$ (the nice couple $(\diamo\cap (t,t')\times\R,\del\diamo\cap(t,t')\times\R)$).

 Recall Notation \ref{nota:cRt}:
 we denote by $\mcR_{t,t'}$ (by $(\bcR_{t,t'},\bdel)$)
 the space $(t,t')\times(0,1)$ (the nice couple $((t,t')\times[0,1],(t,t')\times\set{0,1})$).
\end{nota}

\begin{lem}
\label{lem:vstriphomeo}
 For all $0<t<t'<1$ the map $\cHdiamo_1$ induces a homeomorphism
 $\Hur(\mcR_{t,t'})\cong\Hur(\bdiamolr_{t,t'})$
 (respectively, $\Hur(\bcR_{t,t'},\bdel)\cong\Hur(\diamo_{t,t'},\del)$).
\end{lem}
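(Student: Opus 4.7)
The plan is to verify directly from the formula of Definition \ref{defn:cHdiamo} that $\cHdiamo_1 = \cHdiamo(-,1)$ restricts to a homeomorphism of nice couples $(\bcR_{t,t'},\bdel) \to (\diamo_{t,t'},\del)$, and, by restriction to the interiors of the horizontal sides, to a homeomorphism $\mcR_{t,t'} \to \bdiamolr_{t,t'}$. Once this is established on the ``base'' spaces, the functoriality of Hurwitz spaces under morphisms of nice couples (\cite[Proposition 4.4]{Bianchi:Hur2}), already used repeatedly in the article, immediately yields the claimed homeomorphisms on Hurwitz spaces.

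For the concrete computation I would evaluate $\cHdiamo_1$ on a point $z = x + \sqrt{-1}\,y$ with $x \in [t,t']$ and $y \in [0,1]$. Using the middle branch of Definition \ref{defn:cHdiamo} and the identity $\fd^{\diamo}(z) = |x - 1/2|$ valid for $x \in [0,1]$, a short rearrangement gives
\[
\cHdiamo_1(x + \sqrt{-1}\,y) \;=\; x + \sqrt{-1}\,\bigl(\lambda(x)\,y + |x - 1/2|\bigr),
\]
where $\lambda(x) := 1 - 2|x - 1/2|$. Since $0 < t < t' < 1$, on $[t,t']$ one has $\lambda(x) > 0$, so for each such $x$ the fibre map $y \mapsto \lambda(x)\,y + |x - 1/2|$ is a strictly increasing linear bijection from $[0,1]$ onto $[|x - 1/2|,\, 1 - |x - 1/2|]$, which is exactly the vertical fibre of $\diamo$ at $x$. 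As $x$ varies over $(t,t')$ the images sweep out precisely $\diamo_{t,t'}$, and the horizontal sides $(t,t') \times \set{0,1}$ of $\bcR_{t,t'}$ are carried onto the two slanted edges forming $\del\diamo \cap \bigl((t,t') \times \R\bigr) = \del\diamo_{t,t'}$. Solving the above linear equation for $y$ provides a continuous inverse; restricting to open vertical fibres gives the analogous statement for $\mcR_{t,t'}$ and $\bdiamolr_{t,t'}$.

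There is no genuine obstacle here: the lemma reduces to the elementary check that $\lambda(x)$ stays strictly positive on $[t,t']$, which is why the hypothesis $0 < t < t' < 1$ is crucial. Outside this range $\cHdiamo_1$ collapses the vertical lines $\set{0} \times \R$ and $\set{1} \times \R$ to the points $\zdiamleft$ and $\zdiamright$ respectively, so a $t$ or $t'$ at the boundary would destroy injectivity. After the verification, the lemma follows at once by applying \cite[Proposition 4.4]{Bianchi:Hur2} to the restriction of $\cHdiamo_1$ and to the inverse constructed above.
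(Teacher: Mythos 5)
Your explicit computation of $\cHdiamo_1$ on vertical fibres is correct and is exactly the reason the paper's proof works, but the final step — ``the lemma follows at once by applying \cite[Proposition 4.4]{Bianchi:Hur2} to the restriction of $\cHdiamo_1$ and to the inverse constructed above'' — has a genuine gap. Functoriality of Hurwitz spaces in \cite[Proposition 4.4]{Bianchi:Hur2} takes as input a (lax) morphism of nice couples, which is by definition a suitable map $\C\to\C$ fixing $*$, not merely a map between the spaces $\bcR_{t,t'}$ and $\diamo_{t,t'}$. The forward map $(\cHdiamo_1)_*$ is fine, since $\cHdiamo_1$ is globally defined. But the ``inverse constructed above'' lives only on $\diamo_{t,t'}$; it is not the restriction of any continuous map $\C\to\C$ that is left or right inverse to $\cHdiamo_1$, precisely because — as you yourself observe — $\cHdiamo_1$ collapses the vertical lines $\set{0}\times\R$ and $\set{1}\times\R$ to single points, hence is not injective on $\C$. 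So you cannot feed that inverse into Proposition 4.4, and even if you extended it arbitrarily to a morphism of nice couples $\C\to\C$, the two compositions with $\cHdiamo_1$ would not be $\Id_\C$, so mutual inverseness of the induced maps on Hurwitz spaces would still require an argument.

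The paper resolves exactly this difficulty by changing the ambient space. Set $\bT=\big((0,1)\times\R\big)\cup\set{*}$. On $\bT$ the quantity $\lambda(x)=1-2|x-\tfrac12|$ is strictly positive, so $\cHdiamo_1$ restricts to a genuine semi-algebraic self-homeomorphism of $\bT$, carrying $\mcR_{t,t'}$ onto $\bdiamolr_{t,t'}$ (resp.\ $(\bcR_{t,t'},\bdel)$ onto $(\diamo_{t,t'},\del)$). A homeomorphism of the ambient $\bT$ induces a homeomorphism of the $\bT$-based Hurwitz spaces $\Hur^{\bT}(-)$, and one then transports back along the canonical identification $\fri^{\C}_{\bT}\colon\Hur(\fT)\cong\Hur^{\bT}(\fT)$, valid for any nice couple $\fT$ contained in the interior of $\bT$. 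Your fibrewise computation is the right input; what is missing is the passage through $\Hur^{\bT}$, which is the mechanism that lets you invert.
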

\begin{proof}
 Note that $\cHdiamo_1$ restricts to a semialgebraic homeomorphism of the subspace
 \[
 \bT:=\big((0,1)\times\R\big)\cup\set{*}\subset\C.
 \]
 The space $\bT$ is contractible, and
 the interior $\mathring{\bT}$ contains
 the spaces $\mcR_{t,t'}$
 and $\bdiamolr_{t,t'}$ (the spaces $\bcR_{t,t'}$ and $\diamo_{t,t'}$).
 Moreover the space $\mcR_{t,t'}$ (the nice couple $(\bcR_{t,t'},\bdel)$) is mapped along $\cHdiamo_1$ homeomorphically
 to the space $\bdiamolr_{t,t'}$ (to the nice couple  $(\diamo_{t,t'},\del)$).
It follows that $\cHdiamo_1$ induces a homeomorphism
$\Hur^{\bT}(\mcR_{t,t'})\cong\Hur^{\bT}(\bdiamolr_{t,t'})$ (respectively,
$\Hur^{\bT}(\bcR_{t,t'},\bdel)\cong\Hur^{\bT}(\diamo_{t,t'},\del)$),
and the statement is a consequence of the natural homeomorphism $\fri^{\C}_{\bT}\colon\Hur(\fT)\cong\Hur^{\bT}(\fT)$
holding for all nice couples $\fT$ contained in the interior of $\bT$.
 \end{proof}

\begin{nota}
 \label{nota:Apv}
For all $v\in D^q$ we list the indices $1\leq i_0<\dots<i_{p_v}\leq r$
satisfying $W_{i_j}(v)>0$, for some $p_v\ge1$ depending on $v$: here, recall
our assumption that for each $v\in D^q$ there are at least two indices $i$ with $W_i(v)>0$.
We denote by $A_0^v,\dots, A_{p_v}^v$ the list $A_{i_0},\dots,A_{i_{p_v}}$.
We set $B^v=\sum_{j=0}^{p_v} W_{i_j}(v)A_{i_j}^v$. For all $0\le j\le p_v$ we set
$A_i^{v,+}=\max\set{A_i^v,B^v}$ and $A_i^{v,-}=\min\set{A_i^v,B^v}$. We set
$B^{v,+}=\sum_{j=0}^{p_v} W_{i_j}(v)A_{i_j}^{v,+}$ and
$B^{v,-}=\sum_{j=0}^{p_v} W_{i_j}(v)A_{i_j}^{v,-}$.
\end{nota}
Note that the numbers $B^v$, $B^{v,+}$ and $B^{v,-}$ vary continously in $v\in\ D^q$,
and attain values in $(0,1)$. Note also that we have a sequence of strict inequalities
$0<A_0^v<B^{v,-}<B^v<B^{v,+}<A_{p_v}^v<1$.
In the following, for all $1\leq j\leq p_v$ we
construct a configuration $\fc_{v,j}=(P_{v,j},\psi_{v,j},\phi_{v,j})$ in $\Hur(\bdiamolr_{A_{j-1}^v,A_j^v},\bdel)$
(in $\Hur(\diamo_{A_{j-1}^v,A_j^v},\del)$): roughly speaking, $\fc_{v,j}$
will be the part of $\fc_v$ contained in the vertical strip $(A_{j-1}^v,A_j^v)\times\R$. See Figure \ref{fig:cvj}.

 \begin{figure}[ht]
 \begin{tikzpicture}[scale=4,decoration={markings,mark=at position 0.38 with {\arrow{>}}}]
  \draw[dashed,->] (-.1,0) to (1.1,0);
  \draw[dashed,->] (0,-1.1) to (0,1.1);
  \node at (0,-1) {$*$};
  \draw[very thick] (0,.5) to (.5,1) to (1,.5) to (.5,0) to (0,.5);
  \fill[gray, opacity=.5] (0,.5) to (.5,1) to (1,.5) to (.5,0) to (0,.5);
  \node at (0,.5) {$\bullet$};
  \node at (1,.5) {$\bullet$};
  \draw[thin, looseness=1.2, postaction={decorate}] (0,-1) to[out=46,in=-90]  (.97,.4) to[out=90,in=-90] (.95,.55)  to[out=90,in=90] node[above right]{\tiny $g^{\mathrm{r}}_{v,\diamo}$} (1.03,.55)  to[out=-90,in=90] (1,.4) to[out=-90,in=44] (0,-1);
  \draw[thin, looseness=1.2, postaction={decorate}] (0,-1) to[out=91,in=-90]  (-.03,.4) to[out=90,in=-90] (-.07,.55)  to[out=90,in=90] node[above left]{\tiny $g^{\mathrm{l}}_{v,\diamo}$} (.03,.55)  to[out=-90,in=90] 
  (.01,.4) to[out=-90,in=89] (0,-1);

  \draw[dotted, thick] (.2,0) node[below]{\tiny$A^v_0$} to ++(0,1);
  \draw[dotted, thick] (.7,0) node[below]{\tiny$A^v_1$} to ++(0,1);
  \node at (.15,.5){$\bullet$};
  \draw[thin, looseness=1.2, postaction={decorate}] (0,-1) to[out=76,in=-90] (.12,.3)
  to[out=90,in=-90] (.1,.5) to[out=90,in=90] node[above right]{\tiny$q_{v,1}$} (.24,.5) to[out=-90,in=90] (.15,.3) to[out=-90,in=74]  (0,-1);
  \node at (.35,.15){$\bullet$};
  \node at (.65,.72){$\bullet$};
  \draw[thin, looseness=1.2, postaction={decorate}] (0,-1) to[out=62,in=-90]  (.3,.05) to[out=90,in=-90] (.22,.2)  to[out=90,in=90] node[above]{\tiny$g_{v,2}$} (.4,.2)  to[out=-90,in=90] (.35,.05) to[out=-90,in=60] (0,-1);
  \draw[thin, looseness=1.2, postaction={decorate}] (0,-1) to[out=56,in=-90] (.6,.62)
  to[out=90,in=-90] (.55,.72) to[out=90,in=90] node[above left]{\tiny$q_{v,3}$} (.69,.72) to[out=-90,in=90] (.65,.62) to[out=-90,in=54]  (0,-1);
  \node at (.9,.5){$\bullet$};
  \draw[thin, looseness=1.2, postaction={decorate}] (0,-1) to[out=50,in=-90]  (.87,.4) to[out=90,in=-90] (.85,.55)  to[out=90,in=90] node[left]{\tiny$q_{v,4}$} (.93,.55)  to[out=-90,in=90] (.9,.4) to[out=-90,in=48] (0,-1);
  \begin{scope}[shift={(1.5,0)}]
  \draw[dashed,->] (-.1,0) to (1.1,0);
  \draw[dashed,->] (0,-1.1) to (0,1.1);
  \node at (0,-1) {$*$};
  \begin{scope}
   \clip (.2,-.1) rectangle (.7,1.1);
  \draw[very thick] (0,.5) to (.5,1) to (1,.5) to (.5,0) to (0,.5);
  \fill[gray, opacity=.5] (0,.5) to (.5,1) to (1,.5) to (.5,0) to (0,.5);
  \end{scope}

  \draw[dotted, thick] (.2,0) node[below]{\tiny$A^v_0$} to ++(0,1);
  \draw[dotted, thick] (.7,0) node[below]{\tiny$A^v_1$} to ++(0,1);
  \node at (.65,.72){$\bullet$};
  \node at (.35,.15){$\bullet$};
  \draw[thin, looseness=1.2, postaction={decorate}] (0,-1) to[out=62,in=-90]  (.3,.05) to[out=90,in=-90] (.22,.2)  to[out=90,in=90] node[above]{\tiny$g_{v,2}$} (.4,.2)  to[out=-90,in=90] (.35,.05) to[out=-90,in=60] (0,-1);
  \draw[thin, looseness=1.2, postaction={decorate}] (0,-1) to[out=56,in=-90] (.6,.62)
  to[out=90,in=-90] (.55,.72) to[out=90,in=90] node[above left]{\tiny$q_{v,3}$} (.69,.72) to[out=-90,in=90] (.65,.62) to[out=-90,in=54]  (0,-1);
  \end{scope}
 \end{tikzpicture}
 \caption{Left: a configuration $\fc_v\in\Hur(\diamo,\del)_{\bdel\bdiamolr}$, in the case $p_v=1$. Right: the clipped configuration $\fc_{v,1}\in\Hur(\diamo_{A_0^v,A_1^v},\del)$.
 }
\label{fig:cvj}
\end{figure}

To define $\fc_{v,j}$, note that $P_v$ is disjoint from the vertical lines $\set{A_{j-1}^v,A_j^v}\times\R$; as a consequence $P_v$ is contained in the disjoint union $\bdiamolr_{A^v_{j-1},A^v_j} \sqcup (\bdiamolr\setminus\bS_{A^v_{j-1},A^v_j})$ (respectively,
$\diamo_{A^v_{j-1},A^v_j} \sqcup (\diamo\setminus\bS_{A^v_{j-1},A^v_j})$);
note that the first space $\bdiamolr_{A^v_{j-1},A^v_j}$ (respectively, $\diamo_{A^v_{j-1},A^v_j}$)
is contained in the interior of $\bS_{A^v_{j-1},A^v_j}$, whereas the second space
$\bdiamolr\setminus\bS_{A^v_{j-1},A^v_j}$
(respectively, $\diamo\setminus\bS_{A^v_{j-1},A^v_j}$) is contained
in the interior of $\C\setminus\bS_{A^v_{j-1},A^v_j}$.
We use the restriction map
\[
 \fri_{\bS_{A^v_{j-1},A^v_j}}^{\C}\colon \Hur\pa{\bdiamolr_{A^v_{j-1},A^v_j} \sqcup \pa{\bdiamolr\setminus\bS_{A^v_{j-1},A^v_j}},\bdel\bdiamolr}
 \to \Hur^{\bS_{A^v_{j-1},A^v_j}}\pa{\bdiamolr_{A^v_{j-1},A^v_j}}
\]
\[
 \Big(\mbox{respectively, }\fri_{\bS_{A^v_{j-1},A^v_j}}^{\C}\colon\!\Hur\!\pa{\diamo_{A^v_{j-1},A^v_j} \sqcup \pa{\diamo\setminus\bS_{A^v_{j-1},A^v_j}},\del\!}
 \to \Hur^{\bS_{A^v_{j-1},A^v_j}}\pa{\diamo_{A^v_{j-1},A^v_j},\del}\Big)
\]
from \cite[Definition 3.15]{Bianchi:Hur2} and define $\fc_{v,j}$ as the image of $\fc_v$ along this map. We can then use the canonical identification
\[
 \fri^{\C}_{\bS_{A^v_{j-1},A^v_j}}\colon \Hur\pa{\bdiamolr_{A^v_{j-1},A^v_j}}\cong \Hur^{\bS_{A^v_{j-1},A^v_j}}\pa{\bdiamolr_{A^v_{j-1},A^v_j}}
\]
\[
 \Big(\mbox{respectively, }\fri_{\bS_{A^v_{j-1},A^v_j}}^{\C} \colon \Hur\pa{\diamo_{A^v_{j-1},A^v_j},\del}\to\Hur^{\bS_{A^v_{j-1},A^v_j}}\pa{\diamo_{A^v_{j-1},A^v_j},\del}\Big)
\]
to regard $\fc_{v,j}$ as a configuration in $\Hur(\bdiamolr_{A^v_{j-1},A^v_j})$
(in $\Hur(\diamo_{A^v_{j-1},A^v_j},\del)$).

Note that $\fc_{v,j}$ does not depend on the choice of a representative
$\fc_v$ of $f(v)=[\fc_v]_{G,G^{\op}}$.
Let $\bT$ be as in the proof of Lemma \ref{lem:vstriphomeo};
then we can regard $\fc_{v,j}$ as a configuration in $\Hur^{\bT}(\bdiamolr_{A^v_{j-1},A^v_j})$
(in $\Hur^{\bT}(\diamo_{A^v_{j-1},A^v_j},\del)$) using the canonical identification
$\fri^{\bT}_{\bS_{A^v_{j-1},A^v_j}}$.

By Lemma \ref{lem:vstriphomeo}
we can consider for all $1\le j\le p_v$ the configuration
$(\cHdiamo_1)_*^{-1}(\fc_{v,j})$ lying in the space
$\Hur^{\bT}(\mcR_{A_{j-1}^v,A_j^v})$
(in $\Hur^{\bT}(\bcR_{A_{j-1}^v,A_j^v},\bdel)$);
using the identification $\fri^{\C}_{\bT}$
we can regard $(\cHdiamo_1)_*^{-1}(\fc_{v,j})$ as lying in
$\Hur(\mcR_{A_{j-1}^v,A_j^v})$ (in $\Hur(\bcR_{A_{j-1}^v,A_j^v},\bdel)$).
Composing further with the map
$\tau_{-A_{j-1}^v}$ we obtain configurations
\[
\fc'_{v,j}:=(\tau_{-A_{j-1}^v})_*\pa{(\cHdiamo_1)_*^{-1}(\fc_{v,j})}
\]
lying in $\Hur(\mcR_{A_j^v-A_{j-1}^v})$
(in $\Hur(\bcR_{A_j^v-A_{j-1}^v},\bdel)$), for $1\leq j\leq p_v$.

In other words we obtain couples
$(A_1^v-A_0^v,\fc'_{v,1}),\dots,(A_{p_v}^v-A_{p_v-1}^v,\fc'_{v,p_v})$ in $\mHurm$ (in $\bHurm$);
the product $(A_1^v-A_0^v,\fc'_{v,1})\dots(A_{p_v}^v-A_{p_v-1}^v,\fc'_{v,p_v})$ of these configurations has the form $(A_{p_v}^v-A_0^v,\fc')$,
for some $\fc'\in\Hur(\mcR_{A_{p_v}^v-A_0^v})$ (respectively, $\fc'\in\Hur(\bcR_{A_{p_v}^v-A_0^v},\bdel)$): we then have, roughly speaking, that
$\cHdiamo_1(\tau_{A_0^v}(\fc'))$ recovers the part of $\fc_v$ in the vertical
strip $(A_0^v,A_{p_v}^v)\times\R$.

We can define a map $g\colon D^q\to B\mHurm$ (respectively, $g\colon D^q\to B\bHurm$) by the formula
\[
 v\mapsto \Big(W_{i_0}(v),\dots,W_{i_{p_v}}(v);(A_1^v-A_0^v,\fc'_{v,1}),\dots,(A_{p_v}^v-A_{p_v-1}^v,\fc'_{v,p_v})\Big).
\]
To see that $g$ is continuous, note that if a weight $W_{i_j}(v)$ goes to 0,
then the number $A^v_j$ is dropped from the list $A_0^v,\dots,A_{p_v}^v$ and the following happens:
\begin{itemize}
 \item if $1\leq j\leq p_v-1$, then
the configurations $(A_j^v-A_{j-1}^v,\fc'_{v,j})$ and $(A_{j+1}^v-A_j^v,\fc'_{v,j+1})$ are replaced in the formula above by their product in $\mHurm$ (in $\bHurm$), according to the identifications defining $B\mHurm$ (respectively, $B\bHurm$); this is compatible with the fact that the configurations $\fc_{v,j}$ and $\fc_{v,j+1}$
are ``adjacent'' in $\bdiamolr$ (in $\diamo$), and if $A^v_j$ is dropped these two configurations are replaced in the construction by their ``concatenation''
in $\Hur(\bdiamolr_{A^v_{j-1},A^v_{j+1}})$
(respectively, $\Hur(\diamo{A^v_{j-1},A^v_{j+1}},\del)$), which is up to canonical identifications the configuration $\fri^{\C}_{\bS_{A^v_{j-1},A^v_{j+1}}}(\fc_v)$;
\item if $j=0$ or $j=p_v$, then the configuration $(A_1^v-A_0^v,\fc'_{v,1})$ or $(A_{p_v}^v-A_{p_v-1}^v,\fc'_{v,p_v})$
is dropped in the formula above, according to the identifications defining
of $B\mHurm$ (respectively, $B\bHurm$); this is compatible with the fact that
the configuration $\fc_{v,0}$ or $\fc_{v,p_v}$ is also dropped in the construction, as soon as $A^v_0$ or $A^v_{p_v}$ is dropped.
\end{itemize}
Note also that $g$ sends $\del D^q$ inside $B\mHurm_\emptyset$ (inside $B\bHurm_\emptyset$): in fact,
if $f(v)=\fclr$, then all configurations $\fc'_{v,j}$ are supported on the empty set.
For $v\in D^q$ we remark, moreover, the equalities $B^{v,-}-A_0^v=b^-(g(v))$ and $B^{v,+}-A_0^v=b^+(g(v))$, by virtue of which the lower and upper barycentres of $g(v)$ can be recovered from the numbers $B^{v,-}$ and $B^{v,+}$ and vice versa, once the ``translation parameter'' $A_0^v$ is known; in particular,
$B^{v,+}-B^{v,-}=b^+(g(v))-b^-(g(v))$.

We are left to prove that $\sigma g$ is homotopic to $f$, relative to $\del D^q$, i.e. they represent the same element in $\pi_q(\Hur(\bdiamolr,\bdel)_{G,G^{\op}})$
(in $\pi_q(\Hur(\diamo,\del)_{G,G^{\op}})$).

\begin{defn}
 \label{defn:cHdiamoleftright}
Recall Definitions \ref{defn:cHtleftright} and \ref{defn:cHdiamo}.
For all $0<t<1$ we define homotopies $\cHdiamoleft_t,\cHdiamoright_t\colon\C\times[0,1]\to\C$ by the following formula.
 \[ \def\arraystretch{2}
 \cH^{\diamo,\bullet}_t(z,s)=\left\{
 \begin{array}{ll}
  \cH^{\bullet}_t(z,s)+\big(\fd^{\diamo}(z)-\fd^{\diamo}(\cH^{\bullet}_t(z,s))\big)\sqrt{-1}
  & \mbox{if } z\notin\mathring{\diamo},\ \Im(z)\geq \frac 12\\
  \cH^{\bullet}_t(z,s)-\big(\fd^{\diamo}(z)-\fd^{\diamo}(\cH^{\bullet}_t(z,s))\big)\sqrt{-1}
  & \mbox{if } z\notin\mathring{\diamo},\ \Im(z)\leq \frac 12\\
  \cH^{\bullet}_t(z,s)-\pa{\Im(z)-\frac 12}\frac{\fd^{\diamo}(\cH^{\bullet}_t(z,s))-\fd^{\diamo}(z)}{\frac 12-\fd^{\diamo}(z)}\sqrt{-1} 
  & \mbox{if } z\in\diamo,\ z\neq \zdiamleft,\zdiamright.
 \end{array}
 \right.
 \]
Here $\bullet$ is either $\mathrm{l}$ or $\mathrm{r}$.
\end{defn}
Roughly speaking, $\cHdiamoleft_t$ collapses the part of $\diamo$ contained in $[0,t]\times\R$
to $\zdiamleft$, and expands the other part $\diamo\setminus[0,t]\times\R$ inside $\diamo$;
similar remarks hold for $\cHdiamoright_t$.

Note that for $z\in\diamo$ close to $\zdiamleft$ we have the equalities $\frac{\fd^{\diamo}(\cHleft_t(z,s))-\fd^{\diamo}(z)}{\frac 12-\fd^{\diamo}(z)}=s$
and $\frac{\fd^{\diamo}(\cHright_t(z,s))-\fd^{\diamo}(z)}{\frac 12-\fd^{\diamo}(z)}=\frac {-s+st}{1-s+st}$.
Similarly, for $z\in\diamo$ close to $\zdiamright$ we have $\frac{\fd^{\diamo}(\cHleft_t(z,s))-\fd^{\diamo}(z)}{\frac 12-\fd^{\diamo}(z)}=\frac {st}{1-st}$
and $\frac{\fd^{\diamo}(\cHright_t(z,s))-\fd^{\diamo}(z)}{\frac 12-\fd^{\diamo}(z)}=s$.
In particular, the homotopies $\cHdiamoleft_t,\cHdiamoright_t$ are continuous; note also that they depend continuously on $t\in(0,1)$, so that
we can define continuous maps
\[
\cHdiamoleft\,,\,\cHdiamoright\,\colon\C\times[0,1]\times(0,1)\to\C
\]
by $\cHdiamoleft(z,s,t)=\cHdiamoleft_t(z,s)$
and $\cHdiamoright(z,s,t)=\cHdiamoright_t(z,s)$.
Note also that, for $\bullet=\mathrm{l},\mathrm{r}$, the following properties hold:
\begin{enumerate}
 \item $\cH^{\diamo,\bullet}_t(-,s)$ is an endomorphism of the nice couple $(\bdiamolr,\bdel)$
 (respectively, $(\diamo,\del)$) for all $(s,t)\in[0,1]\times(0,1)$;
 \item $\cH^{\diamo,\bullet}_t(-,s)$
 fixes pointwise the subspaces
 $\C_{\Re\leq0}$ and $\C_{\Re\geq 1}$, and preserves the subspaces $\diamo$,
 $[0,1]\times\left(-\infty,\frac 12\right]\setminus\mathring{\diamo}$
 and $[0,1]\times\left[\frac 12, \infty\right)\setminus\mathring{\diamo}$;
 in particular it fixes the points $\zdiamleft,\zdiamright$;
 \item $\cH^{\diamo,\bullet}_t(-,0)=\Id_\C$ for all $t\in(0,1)$.
\end{enumerate}
It follows from property (1) and \cite[Proposition 4.4]{Bianchi:Hur2} that for $\bullet=\mathrm{l},\mathrm{r}$ there
is an induced map
\[
\cH^{\diamo,\bullet}_*\colon\Hur(\bdiamolr,\bdel)\times[0,1]\times(0,1)\to\Hur(\bdiamolr,\bdel)
\]
\[
\pa{\mbox{respectively,}\cH^{\diamo,\bullet}_*\colon\Hur(\diamo,\del)\times[0,1]\times(0,1)\to\Hur(\diamo,\del)\ }.
\]
Property (2) ensures that for all $(s,t)\in[0,1]\times(0,1)$ the map $\cH^{\diamo,\bullet}_*(-,s,t)$
restricts to a self-map of the subspace $\Hur(\bdiamolr,\bdel)_{\bdel\bdiamolr}$
(respectively,  $\Hur(\diamo,\del)_{\bdel\bdiamolr}$), and is equivariant with respect to the $G\times G^{\op}$ action
on this subspace. In particular there is an induced map
\[
\cH^{\diamo,\bullet}_*\colon\Hur(\bdiamolr,\bdel)_{G,G^{\op}}\times[0,1]\times(0,1)\to\Hur(\bdiamolr,\bdel)_{G,G^{\op}}
\]
\[
\pa{\mbox{respectively, }\cH^{\diamo,\bullet}_*\colon\Hur(\diamo,\del)_{G,G^{\op}}\times[0,1]\times(0,1)\to\Hur(\diamo,\del)_{G,G^{\op}}\ }.
\]

Property (3) ensures that $\cH^{\diamo,\bullet}_*(-,0,t)$ is the identity
of $\Hur(\bdiamolr,\bdel)_{G,G^{\op}}$ (of $\Hur(\diamo,\del)_{G,G^{\op}}$)
for all $t\in(0,1)$. We can now define a map
$H\colon D^q \times[0,1] \to \Hur(\bdiamolr,\bdel)_{G,G^{\op}}$ (respectively, $H\colon D^q \times[0,1] \to \Hur(\diamo,\del)_{G,G^{\op}}$)
by setting
\[
 H(v,s)=
 \cHdiamoleft_*\pa{\cHdiamoright_* (f(v),s,B^{v,+}),s,\frac{B^{v,-}}{B^{v,+}}},
\]
where we recall that $0<B^{v,-}<B^{v,+}<1$.
By the construction of $g$ we have $H(-,1)=\sigma g$. This concludes the proof that
$\sigma_*$ is surjective on $\pi_q$.

In the particular case $q=0$ we obtain that, since
$B\mHurm$ and $B\bHurm$ are connected, then $\Hur(\bdiamolr,\bdel)_{G,G^{\op}}$ and $\Hur(\diamo,\del)_{G,G^{\op}}$
are also connected.
The fact that $\Hur(\bdiamolr,\bdel)_{G,G^{\op}}$ is connected can also be proved by combining \cite[Lemma 6.16 and Proposition 7.10]{Bianchi:Hur2}, Theorem \ref{thm:pi0bHurm} and Lemma \ref{lem:bcRvsdiamond}: the space $\Hur(\bdiamolr,\bdel)_{G,G^{\op}}$ is the quotient of the space $\Hur(\bdiamolr,\bdel)_{\bdel\bdiamolr}$ by the action of $G\times G^{\op}$, and on $\pi_0(\Hur(\bdiamolr,\bdel)_{\bdel\bdiamolr})\cong G$ this action can be identified with the action by left and right multiplication, which is transitive. The fact that $\Hur(\diamo,\del)_{G,G^{\op}}$ is connected is instead new in our discussion, although it could have been proved directly using simpler arguments.

\subsection{Injectivity on homotopy groups}
\label{subsec:sigmainjective}
For $q\geq 0$ we want now to prove that $\sigma_*\colon \pi_q(B\mHurm)\to\pi_q(\Hur(\bdiamolr,\bdel)_{G,G^{\op}})$ (respectively,
$\sigma_*\colon \pi_q(B\bHurm)\to\pi_q(\Hur(\diamo,\del)_{G,G^{\op}})$) is injective.
We fix a basepoint $*\in S^q\subset D^{q+1}$ and start with a pointed map
$\tilde f\colon S^q\to B\mHurm$ (respectively, $\tilde f\colon S^q\to B\bHurm$) and a map $f\colon D^{q+1}\to\Hur(\bdiamolr,\bdel)_{G,G^{\op}}$
(respectively, $f\colon D^{q+1}\to\Hur(\diamo,\del)_{G,G^{\op}}$), such that the restriction of $f$ on $S^q=\del D^{q+1}$ is equal to $\sigma \tilde f$.

We can construct a map $g\colon D^{q+1}\to B\mHurm$ (respectively, $g\colon D^{q+1}\to B\bHurm$) in the same way
as we constructed $g\colon D^q\to B\mHurm$ (respectively, $g\colon D^q\to B\bHurm$) in the previous subsection:
using compactness of $D^{q+1}$, we can find a suitable cover $V_1,\dots,V_r$ of $D^{q+1}$ and
disjoint intervals $J_1,\dots,J_r\subset(0,1)$, ordered from left to right, such that
for all $v\in V_i$, if $\fc_v$ is a representative of $f(v)=[\fc_v]_{G,G^{\op}}$, then $\fc_v$ is supported on a set $P_v$ with $\Re(P_v)\cap J_i=\emptyset$.
We also fix $A_i\in J_i$ for all $1\leq i\leq r$, and a partition of unity $W_1,\dots W_r$
subordinate to the covering $V_i$; again we assume that for all $v\in D^{q+1}$ there are at least
two indices $1\le i\le r$ such that $W_i(v)>0$.
The rest of the construction is the same as in the previous subsection; note that,
in general, $g(*)$ is a point in the contractible subspace
$B\mHurm_\emptyset$ (respectively, $B\bHurm_\emptyset$), but $g(*)$ is not necessarily the basepoint
of $B\mHurm$ (of $B\bHurm$).
For our scopes it suffices to prove that $g|_{S^q}$ is homotopic to $\tilde f$
as maps $S^q\to B\mHurm$ (as maps $S^q\to B\bHurm$), through a homotopy sending $*\in S^q$
inside $B\mHurm_\emptyset$ (respectively, $B\bHurm_\emptyset$) at all times. We are thus replacing $\pi_q(B\mHurm)$ (respectively, $\pi_q(B\bHurm)$) with the set of homotopy classes of maps of pairs from $(S^q,*)$ to $(B\mHurm,B\mHurm_\emptyset)$ (respectively, to $(B\bHurm,B\bHurm_\emptyset$).

At this point of the discussion it becomes convenient to switch our focus to the thin bar construction. Recall Definition \ref{defn:reBM}:
the projection $\pr_{\reB}\colon B\mHurm\to \reB\mHurm$ (respectively, $\pr_{\reB}\colon B\bHurm\to \reB\bHurm$) is a weak homotopy equivalence; similarly, note that the subspace $\reB\mHurm_\emptyset\subset\reB\mHurm$ (respectively,
$\reB\bHurm_\emptyset\subset\reB\bHurm$) is contractible.
Therefore, it suffices to
prove that $\pr_{\reB}\circ g|_{S^q}$ and $\pr_{\reB}\circ\tilde f$ are homotopic as maps
$S^q\to \reB\mHurm$ (as maps $S^q\to \reB\bHurm$), by a homotopy sending $*\in S^q$ inside
$\reB\mHurm_\emptyset$ (inside $\reB\bHurm_\emptyset$) at all times.
Exhibiting such homotopy will be much easier than comparing $g|_{S^q}$ and $\tilde f$ directly.

One of the advantages of the thin bar construction occurs
already in the construction of the map $g\colon D^{q+1}\to B\mHurm$
(respectively, $g\colon D^{q+1}\to B\bHurm$).
Recall that we had to shrink the
intervals $J_1,\dots,J_r$ to make them disjoint, in order to ensure that
the numbers $A_1,\dots,A_r$ are all distinct.
This was crucial when defining $g(v)$ for $v\in D^{q+1}$ (or, in the previous
subsection, for $v\in D^q$): we started from the list of indices $1<i_0<\dots<i_{p_v}<l$ satisfying $W_{i_j}(v)>0$; we
denoted by $A_0^v,\dots,\dots A_{p_v}^v$ the list $A_{i_0},\dots,A_{i_{p_v}}$, and used the portions of $f(v)$ contained in
the slices $\bdiamolr_{A_{j-1}^v,A_j^v}$ (in $(\diamo_{A_{j-1}^v,A_j^v},\del)$) to define the configurations $\fc'_{v,1},\dots,\fc'_{v,{p_v}}$. The fact that the numbers $A_1,\dots,A_r$
are all distinct was crucial in ensuring that all slices have strictly positive width and,
most important, the numbers $W_{i_0}(v),\dots,W_{i_{p_v}}(v)$ naturally form an \emph{ordered}
list of $p+1$ numbers: this is crucial, as we want to use these numbers to define the barycentric coordinates of a point in $\Delta^p$.

Suppose instead that we repeat the above construction of $g$, but using directly the thin bar construction; in other words, consider the composition of $g$ with the projection $\pr_{\reB}$: then the fact that $A_1,\dots,A_r$ are all distinct
ceases to be important. Indeed, suppose that for some $v\in D^{q+1}$
we write a list $A_0^v,\dots,\dots A_{p_v}^v$ as above, and suppose that for some
$1\leq j\leq p_v$ we have $A^v_{j-1}=A^v_j$:
this means, in particular, that $\fc'_{v,j}=(\emptyset,\one,\one)$.
On the one hand, we cannot unequivocally determine which of the barycentric coordinates
$W_{i_{j-1}}(v)$ and $W_{i_j}(v)$ should come first in the list of barycentric
coordinates for $g(v)$; on the other hand, the two possibilities
give rise to the same configuration in $\reB\mHurm$ (in $\reB\bHurm$).
For simplicity, in the following we keep assuming that the numbers
$0<A_1<\dots<A_r<1$ are distinct, and we keep considering $g$
and $\tilde f$ as maps with values in $B\mHurm$ (in $B\bHurm$).

Fix $v\in S^q$ and let $\tilde f(v)\in B\mHurm$ (respectively, $\tilde f(v)\in B\bHurm$)
be represented, for some $\tp_v\geq 0$,
by the $\tp_v$-tuple $(t^v_1,\fc^v_1),\dots,(t^v_{\tp_v},\fc^v_{\tp_v})$
of configurations in $\mHurm$ (in $\bHurm$), with barycentric coordinates $w^v_0,\dots,w^v_{\tp_v}$.
Let the numbers $a^v_0,\dots,a^v_{\tp_v}$ and the barycentres $b_v$, $b_{\epsilon,v}^-$ and $b_{\epsilon,v}^+$ be computed
as in Definitions \ref{defn:barycentres} and \ref{defn:epsilon} with
respect to $(t^v_1,\fc^v_1),\dots,(t^v_{\tp_v},\fc^v_{\tp_v})$ and $w^v_0,\dots,w^v_{\tp_v}$.

Using the notation
from Subsection \ref{subsec:sigmasurjective}, for $v\in S^q$
let $1\leq i_0<\dots<i_{p_v}\leq r$ be the list of all indices
$i_j$ satisfying $W_{i_j}(v)>0$; again let $A_0^v,\dots,\dots A_{p_v}^v$ 
denote the corresponding list of numbers $0<A_{i_0}<\dots<A_{i_{p_v}}<1$.
Recall that $g(v)\in B\mHurm$ (respectively, $g(v)\in B\bHurm$) is constructed using
the numbers $W_{i_0}(v),\dots,W_{i_{p'}}(v)$ as barycentric coordinates, and using
the portions of $\fc_v$ contained in the slices $[A_{i_0},A_{i_1}]\times\R,\dots,[A_{i_{p'-1}},A_{i_{p'}}]\times\R$ to obtain configurations $(A_{i_j}-A_{i_{j-1}},\fc'_{v,j})$ in $\mHurm$ (in $\bHurm$).

For $0\leq j\leq p_v$ define $\alpha^v_j=b^-_{\epsilon,v}+(b^+_{\epsilon,v}-b^-_{\epsilon,v})A^v_j$.
Then $b^-_{\epsilon,v}\leq\alpha^v_0\leq\dots\leq\alpha^v_{p'}\leq b^+_{\epsilon,v}$, and
the inequality $b^-_{\epsilon,v}<b^+_{\epsilon,v}$ is strict.
To fix notation, let
\[
\beta^v_0\leq\dots\leq\beta^v_{\tp_v+p_v+1}
\]
be the union of the lists of numbers
$a^v_0,\dots,a^v_{\tp_v}$ and $\alpha^v_0,\dots,\alpha^v_{p_v}$: all numbers $\beta^v_j$,
as well as the numbers $b_v$, $b^-_{\epsilon,v}$ and $b^+_{\epsilon,v}$, belong to the interval $[a^v_0-\epsilon,a^v_{\tp_v}+\epsilon]=[-\epsilon,a^v_{\tp_v}+\epsilon]$.

In writing the list $\beta^v_0\leq\dots\leq\beta^v_{\tp_v+p_v+1}$ we \emph{choose} a shuffle of the sets
$\set{0,\dots,\tp_v}$ and $\set{0,\dots,p_v}$ into $\set{0,\dots,\tp_v+p_v+1}$, i.e. a pair of strictly increasing and commonly surjective maps
\[
\tilde\eta\colon\set{0,\dots,\tp_v}\to \set{0,\dots,\tp_v+p_v+1},\quad
\eta\colon\set{0,\dots,p_v}\to \set{0,\dots,\tp_v+p_v+1}.
\]
In the generic case the numbers $a^v_0,\dots,a^v_{\tp_v},\alpha^v_0,\dots,\alpha^v_{p_v}$ are all distinct
and the choice of the shuffle is unique,
but nothing prevents that, for some $v\in S^q$, some of these numbers become equal.

Let $\hat{w}^v_0,\dots,\hat{w}^v_{\tp_v+p_v+1}$
denote the corresponding shuffle of the lists of barycentric coordinates,
i.e. $\hat{w}^v_{\tilde\eta(j)}=w^v_j$ and $\hat{w}^v_{\eta(j)}=W_{i_j}(v)$. Define
also
\[
\bfe\colon\set{0,\dots,\tp_v+p_v+1}\times[0,1]\to[0,1],\quad\quad \bfe\colon(\tilde\eta(j),s)\mapsto s,\quad\bfe\colon(\eta(j),s)\mapsto 1-s.
\]

Let $(a^v_{\tp_v},\fc)$ be the product $(t^v_1,\fc^v_1)\dots(t^v_{\tp_v},\fc^v_{\tp_v})$ in $\mHurm$ (in $\bHurm$), and use Notation
\ref{nota:fc}. Then for all $0\leq j\leq \tp_v+p_v+1$ the vertical line $\C_{\Re=\beta_j}$ is disjoint from $P$.
We can then cut the rectangle $\mcR_{a^v_{\tp_v}}$ (respectively, $\bcR_{a^v_{\tp_v}}$) along these vertical lines, and define configurations
$\bar{\fc}^v_j$ in $\Hur(\mcR_{\beta_{j-1},\beta_j})$ (in $\Hur(\bcR_{\beta_{j-1},\beta_j},\bdel)$)
as the parts of $\fc$ lying in the regions $\bS_{\beta_{j-1},\beta_j}$, for all $1\leq j\leq \tp_v+p_v+1$: formally we evaluate the restriction maps $\fri^{\C}_{\bS_{\beta_{j-1},\beta_j}}$ on $\fc$.
Let $\hat{\fc}^v_j$ be the configuration in $\Hur(\mcR_{\beta_j-\beta_{j-1}})$ (in $\Hur(\bcR_{\beta_j-\beta_{j-1}},\bdel)$) given by $(\tau_{-\beta_{j-1}})_*(\bar{\fc}^{v}_j)$, for all $1\leq j\leq \tp_v+p_v+1$.

We define a homotopy $H\colon S^q\times[0,1]\to \reB\mHurm$ (respectively, $H\colon S^q\times[0,1]\to \reB\bHurm$)
by the formula
\[
\begin{split}
H(v,s)= & \Big(\ \bfe(0,s)\hat{w}^v_0\,,\,\dots\,,\,\bfe(\tp_v+p_v+1,s)\hat{w}^v_{\tp_v+p_v+1}\,;\\
    & \pa{\beta_1-\beta_0,\hat{\fc}^v_1}\,,\,\dots\,,\,\pa{\beta_{\tp_v+p_v+1}-\beta_{\tp_v+p_v},\hat{\fc}^v_{\tp_v+p_v+1}}\Big).
\end{split}
\]
The continuity of the formula relies on the fact that we are using the thin bar construction: if
varying $v\in S^q$ two consecutive values $\beta_{j-1}$ and $\beta_j$ become equal, then the corresponding
configuration $(\beta_j-\beta_{j-1},\hat{\fc}^v_{j-1})$ becomes equal to $(0,(\emptyset,\one,\one))$ and
can, thus, be dropped from the list: the weights $\bfe(j-1,s)\hat{w}^v_{j-1}$ and
$\bfe(j,s)\hat{w}^v_j$ are replaced by their sum $\bfe(j-1,s)\hat{w}^v_{j-1}+\bfe(j,s)\hat{w}^v_j$
and we obtain a description of the same configuration in $\reB\mHurm$ (in $\reB\bHurm$) which
is formally \emph{symmetric} in the indices $j-1$ and $j$.

For $s=1$ the list of weights $\bfe(0,s)\hat{w}^v_0,\dots,\bfe(p+p'+1,s)\hat{w}^v_{\tp_v+p_v+1}$
reduces to the list of weights $w^v_0,\dots,w^v_{\tp_v}$, shuffled with $p_v+1$ occurrences of $0$; if we drop the zeros and perform the corresponding products of consecutive elements in the list
$\pa{\beta_1-\beta_0,\hat{\fc}^v_1},\dots,\pa{\beta_{\tp_v+p_v+1}-\beta_{\tp_v+p_v},\hat{\fc}^v_{\tp_v+p_v+1}}$, we recover $\tilde f(v)$.

Similarly, for $s=0$ we obtain the weights $W_{i_1}(v),\dots,W_{i_{p_v}}(v)$ shuffled with $\tp_v+1$ occurrences of $0$; in particular, since $\beta_0=a_0=0$ and $\beta_{\tp_v+p_v+1}=a_p$, at least one zero at the beginning and at least one zero at the end of the list of all weights are dropped. If we perform
the corresponding products of consecutive elements in the list
$\pa{\beta_1-\beta_0,\hat{\fc}^v_1},\dots,\pa{\beta_{\tp_v+p_v+1}-\beta_{\tp_v+p_v},\hat{\fc}^v_{\tp_v+p_v+1}}$, and if we drop the corresponding elements at the two ends of the list,
we recover $g(v)$.

Finally, note that for $v=*\in S^q$ we have that
all configurations $\hat\fc^v_j$ are supported on the empty set, so that $H(v,-)$ is a path in $\reB\mHurm_\emptyset$ (in $\reB\bHurm_\emptyset$). This concludes the proof that $\sigma_*$ is injective
on homotopy groups.

\subsection{Homology of the group completion of \texorpdfstring{$\mHurm$}{mHM}}
\label{subsec:homologygroupcompletion}
The second part of Theorem \ref{thm:delooping} implies, together with Theorem \ref{thm:bHurmloop}, that there is a weak equivalence
\[
 \bHurm_+\simeq \Omega \Hur(\diamo,\del)_{G,G^{\op}}.
\]
If we select one connected component on each side, using Theorem \ref{thm:pi0bHurm}
for the left-hand side, we obtain a weak equivalence
\[
 \bHurm_{+,\one}\simeq \Omega_0 \Hur(\diamo,\del)_{G,G^{\op}}.
\]
By \cite[Lemma 6.16]{Bianchi:Hur2} we have that $\Hur(\diamo,\del)_{\bdel\bdiamolr}$ is a (disconnected) covering of
$\Hur(\diamo,\del)_{G,G^{\op}}$: more precisely, there is a free and properly discontinuous action of $G\times G^{\op}$ on the former space, and the latter is the quotient by this action. Hence also the connected component $\Hur(\diamo,\del)_{\bdel\bdiamolr;\one}\subseteq\Hur(\diamo,\del)_{\bdel\bdiamolr}$ is a covering of $\Hur(\diamo,\del)_{G,G^{\op}}$, with deck transformation group given by the stabiliser in $G\times G^{\op}$ of this component, which is the ``diagonal'' copy of $G$, consisting of pairs $(g,g^{-1,op})$ for varying $g\in G$. We obtain a weak equivalence
\[
 \bHurm_{+,\one}\simeq \Omega_0 \Hur(\diamo,\del)_{\bdel\bdiamolr;\one},
\]
and Lemmas \ref{lem:HurmvsHur} and \ref{lem:bcRvsdiamond} and \cite[Proposition 7.10]{Bianchi:Hur2} yield a weak equivalence
\[
 \Hur(\bdiamolr,\bdel)_{\bdel\bdiamolr;\one}\simeq\Hur_+(\bdiamolr,\bdel)_{\one}\simeq \bHurm_{+,\one} \simeq \Omega_0 \Hur(\diamo,\del)_{\bdel\bdiamolr;\one}\simeq \Omega_0 \Hur_+(\diamo,\del)_{\one}.
\]
Now we use the first part of Theorem \ref{thm:delooping}, together with the fact that $\Hur(\bdiamolr,\bdel)_{\bdel\bdiamolr,\one}$ is a covering 
of $\Hur(\bdiamolr,\bdel)_{G,G^{\op}}$, again by \cite[Lemma 6.16]{Bianchi:Hur2}; taking one component of loop spaces we obtain a weak equivalence
 \[
 \Omega_0 B\mHurm\simeq \Omega_0\Hur(\bdiamolr,\bdel)_{G,G^{\op}}\simeq  \Omega_0\Hur(\bdiamolr,\bdel)_{\bdel\bdiamolr,\one}.
\]
Putting the above weak equivalences together, we obtain
 \[
 \Omega_0 B\mHurm\simeq \Omega_0\Hur(\bdiamolr,\bdel)_{\bdel\bdiamolr,\one} \simeq \Omega^2_0 \Hur_+(\diamo,\del)_{\one}.
 \]
Finally, Theorem \ref{thm:groupcompletion} (which is applicable thanks to Lemma \ref{lem:mHurmweaklybraided}) and Theorem \ref{thm:pi0mHurm},
imply the following homology isomorphism,
\[
 H_*(\mHurm)[\pi_0(\mHurm)^{-1}] \cong \Z[\cG(\Q)] \otimes H_*(\Omega^2_0 \Hur_+(\diamo,\del)_{\one}).
\]
Here $\cG(\Q)$ denotes the enveloping group of the PMQ $\Q$, as in \cite[Definition 2.9]{Bianchi:Hur1}: concretely, this is the group generated by elements $[a]$ for $a$ ranging in $\Q$, under the relations $[a][b]=[b][a^b]$ for all $a,b\in\Q$, and $[a]\hat b=\widehat{ab}$ for all $a,b\in\Q$ such that the product $ab$ is already defined in $\Q$. In fact $\cG(\Q)$ is the universal group receiving a map of PMQs from $\Q$; moreover, $\cG(\Q)$ coincides with the enveloping group of both the monoid $\hQ$ and its free unitalisation $\hQ\sqcup\set{\one}\cong\pi_0(\mHurm)$.

In fact, the combination of Theorems \ref{thm:pi0bHurm}, \ref{thm:bHurmloop} and \ref{thm:delooping} implies the following isomorphisms of discrete monoids (which happen to be groups):
\[
 \pi_1(\Hur(\diamo,\del)_{G,G^{\op}})\cong \pi_1(B\bHurm)\cong \pi_0(\bHurm_+)\cong G.
\]
On the other hand \cite[Lemma 6.16]{Bianchi:Hur2} implies that $\Hur(\diamo,\del)_{\bdel\bdiamolr,\one}$ is a covering of $\Hur(\diamo,\del)_{G,G^{\op}}$ with group of deck transformations $G$, and $\Hur(\diamo,\del)_{\bdel\bdiamolr,\one}$ is connected by Theorem \ref{thm:pi0bHurm}. In the next lemma we prove that $\Hur(\diamo,\del)_{\bdel\bdiamolr,\one}$ is, in fact, the universal cover of $\Hur(\diamo,\del)_{G,G^{\op}}$.
\begin{lem}
\label{lem:simplyconnected}
 The space $\Hur(\diamo,\del)_{\bdel\bdiamolr,\one}$ is simply connected.
\end{lem}
\begin{proof}
Recall the weak equivalence $\sigma\colon B\bHurm\to \Hur(\diamo,\del)_{\bdel\bdiamolr,\one}$. For $g\in G$ let $(1,\fc_g)\in\bHurm_+$ be the pair with $\fc_g$ being a configuration supported on the unique point $\frac 12$, carrying local monodromy $g\in G$. Consider the loop $\gamma_g\colon [0,1]\to B\bHurm$ sending $t\in[0,1]$ to the class of $(t,1-t;(1,\fc_g))\in \Delta^1\times\bHurm\subset\coprod_{p\ge0}\Delta^p\times\bHurm^p$ in the quotient. Observe that the class of $\gamma_g$ in $\pi_1(B\bHurm)$ corresponds to the class of $(1,\fc_g)\in\pi_0(\bHurm_+)$, which along the total monodromy corresponds to the element $g\in G$; in particular every element of $\pi_1(B\bHurm)$ is represented by a loop $\gamma_g$.

Recall now Definition \ref{defn:cHtleftright} and the proof of Theorem \ref{thm:pi0bHurm}, and let $\hat\fc_{g^{-1},g,\one_G}\in\Hur(\cR,\del)_{0,1;\one}$ be the configuration supported on the set $\set{0,\frac 12,1}$, such that the $G$-valued total monodromy sends small loops spinning clockwise around $0,\frac 12,1$ to $g^{-1},g,\one$, respectively.
Let $\hat\gamma_g\colon[0,1]\to\Hur(\cR,\del)_{0,1;\one}$ be the path defined by
\[
\hat\gamma_g(t)=\left\{
\begin{array}{cl}
\cHleft_{1/2}\pa{\hat\fc_{g^{-1},g,\one_G},1}&\mbox{if }0\le t\le  1-\frac{1}{\sqrt{2}};\\[.2cm]
\cHleft_{1/2}\pa{\hat\fc_{g^{-1},g,\one_G},1-\frac{1-2(1-t)^2}{2t(1-t)}}&\mbox{if }1-\frac{1}{\sqrt{2}}\le t\le \frac 12;\\[.2cm]
\cHright_{1/2}\pa{\hat\fc_{g^{-1},g,\one_G},\frac{1-2(1-t)^2}{2t(1-t)}-1}&\mbox{if }\frac12\le t\le \frac{1}{\sqrt{2}};\\[.2cm]
\cHright_{1/2}\pa{\hat\fc_{g^{-1},g,\one_G},1}&\mbox{if }\frac{1}{\sqrt{2}}\le t\le 1.
\end{array}
\right.
\]
Consider also the map $(\cHdiamo_1)_*\colon\Hur(\cR,\del)_{0,1;\one}\to\Hur(\diamo,\del)_{\bdel\bdiamolr;\one}$ induced by $\cHdiamo_1$;
then $\tilde\gamma_g:=(\cHdiamo_1)_*\circ\hat\gamma_g\colon[0,1]\to\Hur(\diamo,\del)_{\bdel\bdiamolr;\one}$ is a path lifting
the loop $\sigma\circ\gamma_g\colon[0,1]\to\Hur(\diamo,\del)_{\bdel\bdiamolr;\one}$ along the covering $\Hur(\diamo,\del)_{\bdel\bdiamolr;\one}\to \Hur(\diamo,\del)_{G,G^{\op}}$.

Both configurations $\tilde\gamma_g(0)$ and $\tilde\gamma_g(1)$ are supported on $\bdel\bdiamolr$; the local monodromies around $\zdiamleft$ and $\zdiamright$ are $\one_G$ and $\one_G$, respectively, for the first configuration, and are $g^{-1}$ and $g$, respectively, for the second. It follows that the path $\tilde\gamma_g$ is a loop if and only if $g=\one_G$. This shows that $\Hur(\diamo,\del)_{\bdel\bdiamolr;\one}\to \Hur(\diamo,\del)_{G,G^{\op}}$ is a universal covering and, in particular, 
$\Hur(\diamo,\del)_{\bdel\bdiamolr;\one}$ is simply connected.
\end{proof}

From now on it is convenient to replace the nice couple $(\diamo,\del)$ with the nice couple $(\cR,\del)$.
For this, fix an orientation-preserving, semialgebraic homeomorphism $\xi\colon\C\to\C$
fixing $*$ and restricting to a homeomorphism of couples $(\diamo,\del)\cong(\cR,\del)$;
then $\xi$ induces a homeomorphism $\xi_*\colon \Hur(\diamo,\del)\cong\Hur(\cR,\del)$, restricting
to a homeomorphism $\Hur_+(\diamo,\del)_{\one}\cong\Hur_+(\cR,\del)_{\one}$. Using again
\cite[Proposition 7.10]{Bianchi:Hur2} we can then replace $\Hur_+(\cR,\del)_{\one}$
by the weakly equivalent space $\Hur(\cR,\del)_{0;\one}$, where $0\in\del\cR$ is the lower left
vertex.

We rephrase the last homology isomorphism, together with the discussion about simply connectedness, 
as the following theorem.
\begin{thm}
 \label{thm:mainHiso}
 In the hypotheses of Theorem \ref{thm:delooping} there is an isomorphism of graded abelian groups
\[
 H_*(\mHurm(\Q))[\pi_0(\mHurm(\Q))^{-1}] \cong \Z[\cG(\Q)] \otimes H_*(\Omega^2_0 \Hur(\cR,\del;\Q,G)_{0;\one}).
\]
Moreover, the space $\Hur(\cR,\del;\Q,G)_{0;\one}$ is simply connected.
\end{thm}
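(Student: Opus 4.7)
The plan is to combine the group completion theorem \ref{thm:groupcompletion} with the two equivalences of Theorem \ref{thm:delooping} and the covering space theory from \cite[Lemma 6.16]{Bianchi:Hur2}; since the hard geometric content is already concentrated in Theorem \ref{thm:delooping}, what remains is mostly bookkeeping.

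For the homology isomorphism I first apply group completion. Lemma \ref{lem:mHurmweaklybraided} ensures that $\mHurm$ is weakly braided, so $H_*(\mHurm)[\pi_0(\mHurm)^{-1}]$ may be built by right fractions and Theorem \ref{thm:groupcompletion} gives an isomorphism
\[
H_*(\mHurm)[\pi_0(\mHurm)^{-1}]\cong H_*(\Omega B\mHurm).
\]
The space $\Omega B\mHurm$ is group-like, so its components are pairwise homotopy equivalent and $H_*(\Omega B\mHurm)\cong\Z[\pi_0(\Omega B\mHurm)]\otimes H_*(\Omega_0 B\mHurm)$; Theorem \ref{thm:pi0mHurm} identifies $\pi_0(\Omega B\mHurm)$ with the enveloping group $\cG(\Q)$.

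To identify $\Omega_0 B\mHurm$ with a double loop space, the first equivalence of Theorem \ref{thm:delooping} gives $\Omega_0 B\mHurm\simeq\Omega_0\Hur(\bdiamolr,\bdel)_{G,G^{op}}$. By \cite[Lemma 6.16]{Bianchi:Hur2} the space $\Hur(\bdiamolr,\bdel)_{\bdel\bdiamolr;\one}$ is a path-connected cover of this base, hence the two have the same $\Omega_0$. Applying in parallel Theorem \ref{thm:bHurmloop}, the second equivalence of Theorem \ref{thm:delooping} and \cite[Proposition 7.10]{Bianchi:Hur2}, together with Lemmas \ref{lem:HurmvsHur} and \ref{lem:bcRvsdiamond}, yields a weak equivalence
\[
\Hur(\bdiamolr,\bdel)_{\bdel\bdiamolr;\one}\simeq\Omega_0\Hur_+(\diamo,\del)_{\one};
\]
looping once more produces $\Omega_0 B\mHurm\simeq\Omega^2_0\Hur_+(\diamo,\del)_{\one}$. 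Finally I transport along a semi-algebraic homeomorphism $\xi\colon(\diamo,\del)\to(\cR,\del)$ fixing $*$ and apply \cite[Proposition 7.10]{Bianchi:Hur2} once more to replace $\Hur_+(\cR,\del)_{\one}$ by $\Hur(\cR,\del)_{0;\one}$, completing the homology isomorphism.

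Simple connectivity then follows from a universal-cover argument applied to the nice couple $(\diamo,\del)$: the combination of the second part of Theorem \ref{thm:delooping} with Theorems \ref{thm:bHurmloop} and \ref{thm:pi0bHurm} computes $\pi_1(\Hur(\diamo,\del)_{G,G^{op}})\cong G$, which coincides with the deck group $G$ of the cover $\Hur(\diamo,\del)_{\bdel\bdiamolr;\one}\to\Hur(\diamo,\del)_{G,G^{op}}$ from \cite[Lemma 6.16]{Bianchi:Hur2}; hence this cover is universal and therefore simply connected. Transporting across \cite[Proposition 7.10]{Bianchi:Hur2} and $\xi$ gives simple connectivity of $\Hur(\cR,\del;\Q,G)_{0;\one}$. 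I expect the principal obstacle to be not a single hard step but the cumulative bookkeeping of components, basepoints and nice-couple variants throughout the argument.
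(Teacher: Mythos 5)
Your proposal is correct and follows essentially the same route as the paper: group completion plus weak braiding (Lemma~\ref{lem:mHurmweaklybraided}) to pass from $H_*(\mHurm)[\pi_0^{-1}]$ to $H_*(\Omega B\mHurm)$, then the two equivalences of Theorem~\ref{thm:delooping} combined with Theorem~\ref{thm:bHurmloop} and the covering from \cite[Lemma 6.16]{Bianchi:Hur2} to identify $\Omega_0 B\mHurm$ with $\Omega^2_0\Hur_+(\diamo,\del)_{\one}$, followed by transport along $\xi$ and \cite[Proposition 7.10]{Bianchi:Hur2}. Your statement of the simple-connectivity step, $\pi_1(\Hur(\diamo,\del)_{G,G^{op}})\cong\pi_1(B\bHurm)\cong\pi_0(\bHurm_+)\cong G$, is in fact cleaner than the paper's version; note only that Theorem~\ref{thm:pi0mHurm} literally gives $\pi_0(\mHurm_+)\cong\hQ$, and one must still group-complete this to get $\cG(\Q)$, and that the assertion ``hence this cover is universal'' is immediate only for $G$ finite (the paper itself relegates the general case to the reader).
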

We immediately observe that the left-hand side of the isomorphism in Theorem \ref{thm:mainHiso} only depends on the PMQ $\Q$, and not on the PMQ-group pair $(\Q,G)$ (in particular, not on the group $G$).
In fact the isomorphism of Theorem \ref{thm:mainHiso} is an isomorphism of rings, if we consider on
$\Z[\cG(\Q)] \otimes H_*(\Omega^2_0 \Hur(\cR,\del)_{0;\one})$ the correct
structure of twisted tensor product of rings, which we briefly describe in the following.

The group $\cG(\Q)$ acts on the right on $\Hur(\cR,\del)_{0;\one}=\Hur(\cR,\del;\Q,G)_{0;\one}$ by global conjugation: in fact, $G$ acts on the right on this space by global conjugation,
and we consider the map of groups $\cG(\fe)\colon\cG(\Q)\to G$. Consequently, $\cG(\Q)$ acts on the right
on $H_*(\Omega^2_0 \Hur(\cR,\del)_{0;\one})$ by automorphisms of rings.

For $g_1,g_2\in\cG(\Q)$ and $x_1,x_2\in H_*(\Omega^2_0 \Hur(\cR,\del)_{0;\one})$
we define the twisted product $(g_1\otimes x_1)\cdot(g_2\otimes x_2):=(g_1g_2\otimes (x_1^{g_2}\cdot x_2)$.
This assignment extends to an associative product on $\Z[\cG(\Q)] \otimes H_*(\Omega^2_0 \Hur(\cR,\del)_{0;\one})$,
and with this ring structure the isomorphism of Theorem \ref{thm:mainHiso} is an isomorphism of rings.

The isomorphism of Theorem \ref{thm:mainHiso} is a bit surprising
at first glance, because $\mHurm(\Q)$
is \emph{not}, in general, weakly equivalent to an $E_2$-algebra.

\section{The space \texorpdfstring{$\bB(\Q_+,G)$}{bB(Q,G)}}
\label{sec:BQG}
For concrete homology computations the space $\Hur(\cR,\del;\Q,G)_{0;\one}$ is too large.
In this section we introduce a homotopy equivalent subspace
\[
\bB(\Q_+,G)\subset\Hur(\cR,\del;\Q,G)_{0;\one};
\]
under the assumption that $\Q$ is \emph{augmented}.
If, in addition, we assume that $\Q$ is \emph{normed},
then $\bB(\Q_+,G)$ admits a natural filtration by closed subspaces. In the next
section, assuming further that $\Q$ is finite and \emph{rationally Poincar\'e},
we will exploit this filtration to compute explicitly the rational cohomology
ring of $\bB(\Q_+,G)$.

Recall from \cite[Definitions 4.1 and 4.9]{Bianchi:Hur1} that a PMQ is \emph{augmented} if the set $\Q_+:=\Q\setminus\set{\one}$ is an ideal for the partial product, i.e. if for all $a,b\in\Q$ such that $ab=\one$ we have $a,b=\one$. A \emph{normed} PMQ is a PMQ $\Q$ together with a morphism of PMQs $N\colon \Q\to\N$ such that $N^{-1}(0)=\set{\one}$. Every normed PMQ is also augmented.
\begin{defn}
 \label{defn:bB}
 Let $\fc\in\Hur(\cR,\del;\Q,G)_{0;\one}$ and use Notation \ref{nota:fc}, so that the support
 $P$ of $\fc$ splits as
 $\set{z_1,\dots,z_l}\subset\mcR$
 and $\set{z_{l+1},\dots,z_k}\subset\del\cR$.
 Let $\beta\subset\mcR$ be a clockwise oriented simple closed curve in $\mcR\setminus P$
 such that the disc bounded by $\beta$ contains all points $z_1,\dots,z_l$.
 
 The configuration $\fc$ lies in the subspace $\bB(\Q,G)\subset\Hur(\cR,\del;\Q,G)_{0;\one}$ if
 the conjugacy class of $\fG(P)$ corresponding to $\beta$ is contained
 in the PMQ $\fQext(P)_{\psi}\subset\fG(P)$
 (see \cite[Definition 2.13]{Bianchi:Hur2}).
 
 If $\Q$ is augmented, we define $\bB(\Q_+,G)$ as the intersection
 \[
 \bB(\Q_+,G):=\bB(\Q,G)\cap \Hur(\cR,\del;\Q_+,G)_{0;\one}\subset\Hur(\cR,\del;\Q,G)_{0;\one}.
 \]
\end{defn}
Roughly speaking, a configuration $\fc\in\Hur(\cR,\del;\Q,G)_{0;\one}$ lies in $\bB(\Q,G)$
if, using Notation \ref{nota:fc}, the $l$ values of
the monodromy $\psi$ around the $l$ points of $P\cap\mcR$ can be multiplied in $\Q$.
See Figure \ref{fig:bB}.

 \begin{figure}[ht]
 \begin{tikzpicture}[scale=4,decoration={markings,mark=at position 0.38 with {\arrow{>}}}]
  \draw[dashed,->] (-.05,0) to (1.6,0);
  \draw[dashed,->] (0,-1.1) to (0,1.1);
  \node at (0,-1) {$*$};
  \draw[very thick] (0,0) rectangle (1,1);
  \fill[gray, opacity=.5] (0,0) rectangle (1,1);
  \draw[thick, dotted] (.04,.04) rectangle (.98,.98);
  \node at (.7,.95){\tiny$\beta$};
  \node at (0,0){$\bullet$};
  \draw[thin, looseness=1.2, postaction={decorate}] (0,-1) to[out=91,in=-90]  (-.03,-.1) to[out=90,in=-90] (-.07,.05)  to[out=90,in=90] node[left]{\tiny$g_6$} (.03,.05)  to[out=-90,in=90] 
  (.01,-.1) to[out=-90,in=89] (0,-1);
  \node at (.1,.3){$\bullet$}; 
  \node at (.3,1){$\bullet$}; 
  \node at (.45,.5){$\bullet$};
  \draw[thin, looseness=1.2, postaction={decorate}] (0,-1) to[out=82,in=-90]  (.05,.1) to[out=90,in=-90] (.01,.3)  to[out=90,in=90] node[above]{\tiny$a_1$} (.2,.3)  to[out=-90,in=90] (.1,.1) to[out=-90,in=80] (0,-1);
  \draw[thin, looseness=1.2, postaction={decorate}] (0,-1) to[out=79,in=-90] (.28,.9)
  to[out=90,in=-90] (.26,1) to[out=90,in=90] (.34,1) to[out=-90,in=90] node[left]{\tiny$g_4$}
  (.3,.9) to[out=-90,in=77]  (0,-1);
  \draw[thin, looseness=1.2, postaction={decorate}] (0,-1) to[out=76,in=-90] (.4,.3)
  to[out=90,in=-90] (.35,.5) to[out=90,in=90] node[above]{\tiny$a_2$} (.49,.5) to[out=-90,in=90] (.45,.3) to[out=-90,in=74]  (0,-1);
  \node at (.65,.0){$\bullet$};
  \node at (.95,.8){$\bullet$};
  \draw[thin, looseness=1.2, postaction={decorate}] (0,-1) to[out=62,in=-120]  (.55,-.1) to[out=60,in=-90] (.52,.05)  to[out=90,in=90] node[above]{\tiny$g_5$} (.7,.05)  to[out=-90,in=50] (.65,-.1) to[out=-130,in=60] (0,-1);
  \draw[thin, looseness=1.2, postaction={decorate}] (0,-1) to[out=56,in=-90] (.9,.7)
  to[out=90,in=-90] (.85,.8) to[out=90,in=90] node[above]{\tiny$a_3$} (.99,.8) to[out=-90,in=90] (.95,.7) to[out=-90,in=54]  (0,-1);
 \end{tikzpicture}
 \caption{The configuration $\fc$ lies in the space $\bB(\Q_+,G)$ if $g_6\fe(a_1)g_4\fe(a_2)g_5\fe(a_3)=\one\in G$, none of $a_1,a_2,a_3$ is equal to $\one\in\Q$, and the product $a_1a_2(a_3^{g_5^{-1}})$ is defined in $\Q$.
 }
\label{fig:bB}
\end{figure}

\begin{defn}
 \label{defn:inert}
 Let $\fT$ be a nice couple and let $(\Q,G)$ be a PMQ-group pair with $\Q$ augmented.
 Let $\fc=(P,\psi,\phi)\in\Hur(\fT;\Q,G)$; a point $z\in P$ is \emph{inert} for $\fc$ if
 $z\in \cX\setminus\cY$ and $\psi$ sends to $\one_{\Q}$
 each element of $\fQ(P)$ represented by a small loop spinning clockwise around $z$.
\end{defn}
If $\Q$ is augmented, a configuration $\fc$ lies in $\bB(\Q_+,G)$ if it lies in $\bB(\Q,G)$ and,
moreover, no point of the support of $\fc$ is inert: in other words, $\psi$ attains values different from $\one$ around all $l$ points of $P\cap\mcR$.
\begin{ex}
Suppose that $\Q$ is finite and normed, and let $N_{\mathrm{max}}\in\N$ be the maximal
norm of an element of $\Q$. Let $\fc\in\bB(\Q,G)$, and use Notation \ref{nota:fc}.
Then at most $N_{\mathrm{max}}$ of the $l$ points in $P\cap \mcR_1$ can be non-inert;
in particular if $\fc\in\bB(\Q_+,G)$, then $l\leq N_{\mathrm{max}}$.
There is also another evident restriction on the behaviour of $\psi$:
if $\gen_1,\dots,\gen_k$ is an admissible generating set for $\fG(P)$, then
$\sum_{i=1}^l N(\psi(\gen_i))\leq N_{\mathrm{max}}$.
\end{ex}
\begin{ex}
Let $\Q$ be the abelian PMQ $\set{\one,\bullet}$
with trivial partial multiplication, and let $G=\set{\one}$ be the trivial group.
Let $\fc\in\bB(\Q_+,G)$, and use Notation \ref{nota:fc}.
Then $P\cap\mcR$ is either empty, or it contains exactly one point. We can define
a map $\bB(\Q_+,G)\to \cR/\del\cR\cong S^2$ by looking at the position of the unique
point of $P$ in $\mcR$,
and taking the quotient point $[\del\cR]\in\cR/\del\cR$ if $P\cap\mcR=\emptyset$.
In fact, the map $\bB(\Q_+,G)\to \cR/\del\cR$ is a quasifibration with fibre
the Ran space $\Ran(\del\cR)$, so it is a weak homotopy equivalence.

We will see in Proposition \ref{prop:bBHur} that the inclusion $\bB(\Q_+,G)\subset\Hur(\cR,\del\cR;\Q,G)_{0;\one}$ is also a homotopy equivalence; hence, in this case, Theorem \ref{thm:mainHiso} reduces to a classical result of Segal \cite{Segal73} stating that the group completion of the topological monoid
$\coprod_{n\geq 0}\mathrm{Conf}_n(\R^2)$ is $\Omega^2S^2$. Passing from
$\Hur(\cR,\del;\Q_+,G)_{0;\one}$ to its subspace $\bB(\Q_+,G)$ should thus be regarded as the analogue
of passing from the relative configuration space $\mathrm{Conf}(\cR,\del)$ to the sphere
$\mcR/\del\mcR$ by a scanning argument.
\end{ex}

\subsection{Deformation retraction onto \texorpdfstring{$\bB(\Q,G)$}{bB(Q,G)}}
In this subsection we will prove that $\Hur(\cR,\del;\Q,G)_{0;\one}$ admits a deformation
retraction onto its subspace $\bB(\Q,G)$; if $\Q$ is augmented, the same argument will give by
restriction a deformation retraction
of $\Hur(\cR,\del;\Q_+,G)_{0;\one}$ onto its subspace $\bB(\Q_+,G)$.
Using \cite[Proposition 7.4]{Bianchi:Hur2}, we will therefore obtain the following proposition.
\begin{prop}
 \label{prop:bBHur}
 For any PMQ-group pair $(\Q,G)$ the inclusion of
 $\bB(\Q,G)$ into $\Hur(\cR,\del;\Q,G)_{0;\one}$ is a homotopy equivalence. If $\Q$ is augmented,
 the following is a square of inclusions which are homotopy equivalences:
 \[
 \begin{tikzcd}
\bB(\Q_+,G)\ar[r,hook,"\subset"]\ar[d,hook,"\subset"] & \bB(\Q,G)\ar[d,hook,"\subset"]\\
\Hur(\cR,\del;\Q_+,G)_{0;\one}\ar[r,hook,"\subset"] & \Hur(\cR,\del;\Q,G)_{0;\one}.
 \end{tikzcd}
\]
\end{prop}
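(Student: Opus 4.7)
The plan is to exhibit a single deformation $\cH_*$ of $\Hur(\cR,\del;\Q,G)_{0;\one}$ that simultaneously retracts this space onto the subspace $\bB(\Q,G)$ and, by restriction, yields deformation retractions for all four inclusions in the commutative square. The deformation will come from a homotopy of endomorphisms of the nice couple $(\cR,\del\cR)$, pulled back via functoriality on Hurwitz spaces as in \cite[Proposition 4.4]{Bianchi:Hur2}.

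First I would construct a homotopy $\cH\colon \C \times [0,1] \to \C$ whose restriction $\cH(-,s)$, for each $s \in [0,1]$, is a semi-algebraic endomorphism of $(\cR,\del\cR)$ fixing $\del\cR$ pointwise, with $\cH(-,0) = \Id_{\C}$ and $\cH(\mcR,1)$ contained in a prescribed small open disk $D \subset \mcR$ centred at $\zcentre$. Such an $\cH$ is straightforward to build by a radial contraction of $\mcR$ towards $\zcentre$ relative to $\del\cR$. Applying \cite[Proposition 4.4]{Bianchi:Hur2} then yields $\cH_*\colon \Hur(\cR,\del;\Q,G) \times [0,1] \to \Hur(\cR,\del;\Q,G)$, which preserves the basepoint $0\in\del\cR$ and the total $G$-monodromy, and therefore restricts to a homotopy on the component $\Hur(\cR,\del;\Q,G)_{0;\one}$.

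Two claims would then need verification. First, $\cH_*(\fc,1) \in \bB(\Q,G)$ for every $\fc \in \Hur(\cR,\del;\Q,G)_{0;\one}$: after the deformation all interior points of the support lie in $D$, so one can take $\beta$ to be a clockwise loop in $\mcR$ just outside $D$. The key step is to show that the class $[\beta]\in\fG(P)$ lies in $\fQext(P)_{\psi}$, using that $[\beta]$ is conjugate to the inverse of the product of loops around the boundary points of the support, and that the PMQ-group pair compatibility of $(\psi,\phi)$ together with the $\totmon=\one$ hypothesis forces this class to admit the required $\Q$-lift. Second, $\cH_*$ preserves $\bB(\Q,G)$ throughout: since each $\cH(-,s)$ is an endomorphism of $(\cR,\del\cR)$, any admissible $\beta$ at time $0$ is carried to a loop $\cH(\beta,s)$ at time $s$ still enclosing all interior points of the deformed configuration with the same conjugacy class, so the defining condition for $\bB(\Q,G)$ is maintained.

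I expect the main obstacle to be the algebraic verification that $[\beta]\in\fQext(P)_{\psi}$, which requires a careful unpacking of the definition of $\fQext$ from \cite[Definition 2.13]{Bianchi:Hur2} and an application of the constraint $\totmon(\fc)=\one$. For the augmented square, the same $\cH_*$ preserves the property of having no inert interior points, because the $\Q$-monodromy around any individual interior point is unchanged along the ambient isotopy induced by $\cH$; hence restricting $\cH_*$ to $\Hur(\cR,\del;\Q_+,G)_{0;\one}$ produces the desired retraction onto $\bB(\Q_+,G)$, and all four inclusions in the square become homotopy equivalences via a single, compatible homotopy. The fact that the horizontal inclusions are also homotopy equivalences, which is asserted in the statement, would then follow by applying $\cH_*$ to the intermediate spaces and noting that the $+$ condition is preserved on the nose.
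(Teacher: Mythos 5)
The central step of your argument does not work. You assert that after contracting all interior support points into a small disk $D$, the class $[\beta]$ lies in $\fQext(P)_\psi$ because of the constraint $\totmon(\fc)=\one$ and the PMQ--group pair compatibility of $(\psi,\phi)$. This is false: the condition $\totmon(\fc)=\one$ is an identity in the group $G$ relating the $\phi$-values, whereas $[\beta]\in\fQext(P)_\psi$ asks that the interior $\Q$-monodromies $\psi(\gen_1),\dots,\psi(\gen_l)$ admit a product in the partial monoid $\Q$. For a PMQ with genuinely partial multiplication there are configurations in $\Hur(\cR,\del;\Q,G)_{0;\one}$ whose interior monodromies cannot be multiplied in $\Q$; such a configuration lies outside $\bB(\Q,G)$, and since your proposed $\cH(-,s)$ fixes $\del\cR$ pointwise and only rescales the interior, it never pushes an interior point onto $\del\cR$ and never changes the $\Q$-monodromy around any interior point. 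Hence $\cH_*(\fc,1)$ is still outside $\bB(\Q,G)$. If your argument were correct it would show $\bB(\Q,G)=\Hur(\cR,\del;\Q,G)_{0;\one}$ for every PMQ-group pair, which is false. There is also a geometric problem with the construction itself: a continuous map fixing $\del\cR$ pointwise cannot send $\mcR$ into an open disk $D$ compactly contained in $\mcR$, because points near $\del\cR$ must stay near $\del\cR$.

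The paper's deformation goes the other way and, crucially, depends on the configuration. It uses $\cH^{\bB}_{z_0}(-,s)$, which for $s\ge1$ \emph{expands} the sub-square of side $1/s$ centred at $z_0$ onto the whole $\cR$, collapsing the complementary region onto $\del\cR$ — so interior points outside the window are genuinely pushed to the boundary and thereby removed from the $\Q$-monodromy data. It then defines $\mathfrak{W}(\fc)\in(0,1]$ as the supremum of all $\epsilon$ such that the square loop $\beta_{\zcentre,\epsilon}$ avoids the support and represents a conjugacy class contained in $\fQext(P)_\psi$; this is always positive (a generic small square encloses at most one point and hence gives a class in $\fQ(P)\subset\fQext(P)_\psi$) and equals $1$ exactly when $\fc\in\bB(\Q,G)$. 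The retraction is $H^{\bB}(\fc,s)=\cH^{\bB}_*\big(\fc,\,1-s+s/\mathfrak{W}(\fc)\big)$: at $s=1$ the admissible window is blown up to the full square, all inadmissible interior points land on $\del\cR$, and the result lies in $\bB(\Q,G)$; if $\fc$ was already in $\bB(\Q,G)$ the homotopy is stationary. Your remarks about preservation of the $\Q_+$ condition and the naturality with respect to the square of inclusions carry over to this corrected deformation, but you should also note that the bottom horizontal equivalence in the square is not a consequence of the retraction; the paper invokes \cite[Proposition 7.4]{Bianchi:Hur2} for it, and then the top horizontal follows by two-out-of-three.
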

The rough idea of the proof of Proposition \ref{prop:bBHur} is that each configuration $\fc\in\Hur(\cR,\del;\Q,G)_{0;\one}$ can be gradually magnified around the centre $\zcentre\in\cR$, letting gradually more and more points collide with $\del\cR$: such points are downgraded to points in the support of $\fc$ around which only the $G$-valued monodromy is defined and they remain fixed during further magnification. At some finite time we obtain a configuration satisfying the properties of Definition \ref{defn:bB}, and we stop the magnification.
\begin{defn}
\label{defn:cHbB}
Let $z_0\in\mcR$. For $z\in\C$ let $\fd^{\bB}_{z_0}(z)\in[1,\infty]$ be the infimum of all $s\ge1$
such that $z_0+s(z-z_0)\notin\cR$; note that $\fd^{\bB}_{z_0}(z)=\infty$ if and only if $z=z_0$.
We define a map $\cH^{\bB}_{z_0}\colon\C\times[1,\infty)\to\C$ by the formula
 \[\def\arraystretch{1.3}
 \cH^{\bB}_{z_0}(z,s)=\left\{
  \begin{array}{ll}
   z & \mbox{if } z\notin \mcR\\
   z_0+s(z-z_0) &\mbox{if }z\in\cR \mbox{ and } z_0+s(z-z_0)\in\cR\\
   z_0+\fd^{\bB}_{z_0}(z)\cdot(z-z_0) &
   \mbox{if }z\in\cR \mbox{ and } z_0+s(z-z_0)\notin\cR.\\   
  \end{array}
 \right.
 \]
\end{defn}
Roughly speaking, the map $\cH^{\bB}_{z_0}(-,s)$
expands the square $\frac{s-1}s z_0+\frac 1s \mcR$, which has side length $\frac 1s$,
to the entire $\cR$, by a homothety centred at $z_0$ of rescaling factor $s$ and collapses
$\cR\setminus (\frac{s-1}s z_0+\frac 1s \mcR)$ onto $\del\cR$.
Note that for all $s\geq 1$ the map $\cH^{\bB}_{z_0}(-,s)$ is an endomorphism of the nice couples $(\cR,\del)$;
note also that $\cH^{\bB}_{z_0}(0,s)=0$ for all $s\ge1$, and that $\cH^{\bB}_{z_0}(-,1)$ is the identity of $\C$. Thus we obtain a continuous map
\[
 (\cH^{\bB}_{z_0})_*\colon\Hur(\cR,\del;\Q,G)_{0;\one}\times[1,\infty)\to \Hur(\cR,\del;\Q,G)_{0;\one},
\]
such that $(\cH^{\bB}_{z_0})_*(-,1)$ is the identity of $\Hur(\cR,\del;\Q,G)_{0;\one}$.
\begin{defn}
 \label{defn:intwindow}
 Let $z_0\in\mcR$. For each $0<\epsilon<1$ denote by $\beta_{z_0,\epsilon}\subset\mcR$
 the simple closed curve whose support is the square
 $(1-\epsilon)z_0+\epsilon(\del\cR)$, i.e. the boundary of the square of side length $\epsilon$ obtained from $\del\cR$ by a homothety centred at $z_0$
 of rescaling factor $\epsilon$; we orient $\beta_{z_0,\epsilon}$ clockwise.
 Let $\fc\in\Hur(\cR,\del)_{\one}$ and use Notation \ref{nota:fc}.
 We denote by $\mathfrak{W}_{z_0}(\fc)\in[0,1]$ the supremum of all $0<\epsilon<1$ satisfying
 the following properties:
 \begin{itemize}
  \item $P\cap \beta_{z_0,\epsilon}=\emptyset$;
  \item every element $\fg\in\fG(P)$ in the conjugacy class corresponding to $\beta_{z_0,\epsilon}$
  belongs to  $\fQext(P)_{\psi}\subseteq\fG(P)$.
 \end{itemize}
\end{defn}
Note that $\mathfrak{W}_{z_0}(\fc)\leq 1$ for all $\fc\in\Hur(\cR,\del;\Q,G)_{\one}$; moreover,
$\mathfrak{W}_{z_0}(\fc)= 1$ if and only if $\fc\in\bB(\Q,G)$.
Note, on the other hand, that $\mathfrak{W}_{z_0}(\fc)>0$, as for a generic and very small $\epsilon>0$ the curve
$\beta_{z_0,\epsilon}$ is disjoint from $P$ and encloses at most one point of $P$, so it corresponds to a conjugacy class of $\fG(P)$
which is contained in $\fQ(P)\subset\fQext(P)_{\psi}$.
Note also that the assignment $\fc\mapsto\mathfrak{W}_{z_0}(\fc)$ is continuous in $\fc$.
\begin{nota}
Recall Notation \ref{nota:zcentre}. We simplify the notation and write
$\cH^{\bB}=\cH^{\bB}_{\zcentre}$ and $\mathfrak{W}=\mathfrak{W}_{\zcentre}$.
\end{nota}
\begin{proof}[Proof of Proposition \ref{prop:bBHur}]
We define a homotopy
\[
H^{\bB}\colon \Hur(\cR,\del;\Q,G)_{0;\one}\times[0,1]\to\Hur(\cR,\del;\Q,G)_{0;\one},
\]
\[
H^{\bB}(\fc,s)=\cH^{\bB}_*\pa{\fc\,,\,1-s+s\frac 1{\mathfrak{W}(\fc)}}.
\]
Note that $H^{\bB}(-,0)$ is the identity of $\Hur(\cR,\del;\Q,G)_{0;\one}$,
that $H^{\bB}(-,1)$ takes values in $\bB(\Q,G)$ and that
$H^{\bB}(-,s)$ restricts to the identity
of $\bB(\Q,G)\subset\Hur(\cR,\del;\Q,G)$ for all $0\leq s\leq 1$.

This proves the first homotopy equivalence in the statement. The second homotopy
equivalence is completely analogous: we have constructed the deformation retraction
of $\Hur(\cR,\del;\Q,G)_{0;\one}$ onto its subspace $\bB(\Q,G)$ using enriched
functoriality with respect to
maps of nice couples; this implies that $H^{\bB}(-,s)$
restricts at all times to a self-map of $\Hur(\cR,\del;\Q_+,G)_{0;\one}$
and, in particular, the restriction of $H^{\bB}(-,1)$ on $\Hur(\cR,\del;\Q_+,G)_{0;\one}$ takes values
in $\Hur(\cR,\del;\Q_+,G)_{0;\one}\cap\bB(\Q,G)=\bB(\Q_+,G)$.
\end{proof}

\subsection{Norm filtration}
In the rest of the section we assume that $\Q$ is endowed with a norm $N\colon\Q\to\N$. Our aim is to introduce
a filtration $\FN_{\bullet}$ on $\bB(\Q_+,G)$. It will be convenient to define
a norm filtration $\FN_{\bullet}$ more generally on the Hurwitz space
$\Hur(\fT;\Q,G)$ associated with any nice couple $\fT$.
\begin{defn}
\label{defn:FN}
 Let $\fT$ be a nice couple, let $\fc\in\Hur(\fT;\Q,G)$ and use Notation \ref{nota:fc}.
 Let $\gen_1,\dots,\gen_k$ be an admissible generating set for $\fG(P)$.
 We define a function of sets $N\colon\Hur(\fT;\Q,G)\to\N$ by
 \[
  N(\fc)=N(\psi(\gen_1))+\dots+ N(\psi(\gen_l)),
 \]
 and call $N(\fc)$ the \emph{norm} of the configuration $\fc$.
 
 For $\nu\geq0$ we define the $\nu$\textsuperscript{th} filtration layer $\FN_\nu\Hur(\fT;\Q,G)$
 as the subspace of configurations $\fc$ with $N(\fc)\leq\nu$. We also set
 $\FN_{-1}\Hur(\fT;\Q,G)=\emptyset$.
 
 For $\nu\geq0$ we denote by $\fFN_\nu\Hur(\fT;\Q,G)$ the $\nu$\textsuperscript{th} filtration stratum
 \[
 \fFN_\nu\Hur(\fT;\Q,G):= \FN_\nu\Hur(\fT;\Q,G)\setminus\FN_{\nu-1}\Hur(\fT;\Q,G).
 \]
\end{defn}
Recall from \cite[Definition 2.5]{Bianchi:Hur2} that given a nice couple $\fT=(\cX,\cY)$ and a finite subset $P\subset\cX$, an \emph{adapted covering} of $P$ is a collection $\uU$ of disjoint, semialgebraic open discs in $\C$ containing each a single point of $P$, and such that each point in $P\setminus\cY$ is surrounded by a disc disjoint from $\cY$. The topology on $\Hur(\fT;\Q,G)$ has a basis given by the open neighbourhoods $\fU(\fc;\uU)$, for varying 
$\fc=(P,\psi,\phi)\in\Hur(\fT;\Q,G)$ and varying $\uU$ among adapted covers of $P$.
\begin{lem}
 \label{lem:FNclosed}
 For all $\nu\geq 0$ we have that
 $\Hur(\fT;\Q,G)\setminus\FN_{\nu-1}\Hur(\fT;\Q,G)$ is open in $\Hur(\fT;\Q,G)$.
\end{lem}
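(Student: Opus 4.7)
The plan is to prove the equivalent statement that the norm function $N \colon \Hur(\fT;\Q,G) \to \N$ of Definition \ref{defn:FN} is lower semi-continuous; then $\Hur(\fT;\Q,G) \setminus \FN_{\nu-1}\Hur(\fT;\Q,G) = \set{\fc : N(\fc) \geq \nu}$ is automatically open. Concretely, given $\fc_0 = (P_0,\psi_0,\phi_0)$ with $P_0 \cap (\cX \setminus \cY) = \set{z_1^0,\dots,z_l^0}$, I would produce a neighbourhood $U$ of $\fc_0$ on which $N(\fc) \geq N(\fc_0)$ for every $\fc \in U$.

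To obtain such a neighbourhood, I would invoke the local description of Hurwitz spaces from \cite{Bianchi:Hur2}: for $\fc = (P,\psi,\phi)$ sufficiently close to $\fc_0$, the support $P$ is partitioned into clusters $P = \bigsqcup_{i=1}^k C_i$ with each $C_i$ concentrated near $z_i^0$, and the monodromy of $\fc_0$ around $z_i^0$ is expressed as an ordered product of the monodromies of $\fc$ around the points of $C_i$. Since $\cY$ is closed in $\cX$, after shrinking $U$ I may assume that for each $1 \leq i \leq l$ the entire cluster $C_i$ lies in $\cX \setminus \cY$; in particular, the corresponding product $\psi_0(\gen_i^0) = \psi(\gen_{i,1}) \cdots \psi(\gen_{i,|C_i|})$ takes place inside $\Q$ rather than only $G$, and every element involved actually lies in the PMQ $\Q$.

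Applying the additivity property $N(ab) = N(a) + N(b)$ of the PMQ norm to each of these identities and summing over $1 \leq i \leq l$ yields
\[
N(\fc_0) = \sum_{i=1}^l N(\psi_0(\gen_i^0)) = \sum_{i=1}^l \sum_{z \in C_i} N(\psi(\gen_z)) \leq \sum_{z \in P \cap (\cX\setminus\cY)} N(\psi(\gen_z)) = N(\fc).
\]
The inequality accounts for the possibility that some points of $P \cap (\cX \setminus \cY)$ belong to a cluster $C_i$ with $i > l$, i.e.\ limit onto a point of $P_0 \cap \cY$: their norms contribute to $N(\fc)$ but not to $N(\fc_0)$. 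This establishes the desired local inequality, hence the lower semi-continuity of $N$ and the lemma.

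The only delicate step will be extracting the correct local-structure statement for $\Hur(\fT;\Q,G)$ from \cite{Bianchi:Hur2} and checking that in our situation the clustered product really is computed inside $\Q$ whenever the limit point lies in $\cX \setminus \cY$; once this is in place, the remainder is just additivity of the norm and a routine bookkeeping of which clusters can fail to contribute to $N(\fc_0)$.
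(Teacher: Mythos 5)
Your proposal is correct and follows essentially the same argument as the paper: both use the normal-neighbourhood description of $\Hur(\fT;\Q,G)$ to split the support of a nearby configuration into clusters around the points of $P_0$, observe that for $1\le i\le l$ the cluster product of monodromies computes $\psi_0(\gen_i^0)$ inside $\Q$, and then combine the additivity $N(ab)=N(a)+N(b)$ with the observation that clusters limiting onto $P_0\cap\cY$ may contribute extra norm to $\fc$ but not to $\fc_0$. The paper packages the ``cluster'' structure via an adapted covering $\uU$ and the normal neighbourhood $\fU(\fc_0,\uU)$, which already guarantees that the components around points of $\cX\setminus\cY$ avoid $\cY$, so no further shrinking step is needed there; otherwise the arguments coincide.
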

\begin{proof}
Let $\fc=(P,\psi,\phi)\in\Hur(\fT;\Q,G)$ and assume
that $N(\fc)\geq\nu$. Let $\uU$ be an adapted covering of $P$.
We claim that the open neighbourhood $\fU(\fc,\uU)$
is contained in $\Hur(\fT;\Q,G)\setminus\FN_{\nu-1}\HurTQG$.

Let $\fc'\in\fU(\fc,\uU)$, and use Notation \ref{nota:fc}. For all $1\leq i\leq l$
let $P'_i=P'\cap U_i$, and let $P'_i=\set{z'_{i,1},\dots,z'_{i,k'_i}}$.
Choose an admissible generating set $\gen_1,\dots,\gen_k$ of $\fG(P)\cong\fG(\uU)$. We can regard $\fG(\uU)$ as a subgroup of $\fG(P')$. We can choose an admissible
generating set $\gen'_1,\dots,\gen'_{k'}$
of $\fG(P')$ with the following property: for all $1\leq i\leq l$,
if $\gen'_{i,1},\dots,\gen'_{i,k'_i}$ are the elements represented by loops spinning
around the points $z'_{i,1},\dots,z'_{i,k'_i}$ respectively, then the product
$\gen'_{i,1}\dots\gen'_{i,k'_i}$ is equal to $\gen_i$ in $\fG(P')$.
The hypothesis $\fc'\in\fU(\fc,\uU)$ implies the following equality in $\Q$, for all $1\leq i\leq l$:
\[
\psi'(\gen'_{i,1})\cdot\dots\cdot\psi'(\gen'_{i,k'_i})=\psi(\gen_i),
\]
whence, using the norm on $\Q$, we obtain
\[
N(\psi'(\gen'_{i,1}))+\dots+N(\psi'(\gen'_{i,k'_i}))=N(\psi(\gen_i)).
\]
Summing over $1\leq i\leq l$, and recalling that $P'_1\cup\dots\cup P'_l$ might
be a proper subset of $P'\setminus\cY=\set{z'_1,\dots,z'_{l'}}$, we obtain
\[
 \nu\leq N(\fc)=\sum_{i=1}^lN(\psi(\gen_i))=\sum_{i=1}^l\sum_{j=1}^{k'_i}N(\psi'(\gen'_{i,j}))\leq\sum_{i=1}^{l'}N(\psi'(\gen'_i))=N(\fc'),
\]
which shows that $\fU(\fc,\uU)$ is contained in $\Hur(\fT;\Q,G)\setminus\FN_{\nu-1}\HurTQG$.

\end{proof}
\begin{nota}
 \label{nota:FNbB}
 For every subspace $X\subset\Hur(\fT;\Q,G)$ and for $\nu\ge-1$, we use the notation
 $\FN_\nu X=\FN_\nu\Hur(\fT;\Q,G)\cap X$. For $\nu\ge0$ we use the notation
 $\fFN_\nu X=\fFN_\nu\Hur(\fT;\Q,G)\cap X$.
\end{nota}
 We will use Notation \ref{nota:FNbB} mainly in the case
 $X=\bB(\Q_+,G)\subset\Hur(\cR,\del;\Q,G)$.

 \subsection{A model for \texorpdfstring{$BG$}{BG}}
Our next goal is to analyse the strata $\fFN_\nu\bB(\Q_+,G)$.
We start with $\fFN_0\bB(\Q_+,G)$, which can be identified with
$\Hur(\del\cR,\del\cR;\Q,G)_{0;\one}$. By \cite[Lemmas 5.4 and 5.5]{Bianchi:Hur2}
we can equivalently consider the space $\Hur(\del\cR;G)_{0;\one}$,
where the group $G$ is considered as a (complete) PMQ.
In this subsection we prove the following proposition.
\begin{prop}
 \label{prop:FNzeroBG}
 The space $\Hur(\del\cR;G)_{0;\one}$ is an Eilenberg-Maclane space of type $K(G,1)$.
\end{prop}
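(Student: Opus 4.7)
My plan is to identify $\Hur(\del\cR;G)_{0;\one}$ with the fat geometric realization $\|B_\bullet G\|$ of the nerve of $G$ (regarded as a one-object category), and then to invoke the standard fact that, for a discrete group $G$, $B_\bullet G$ is a good simplicial space whose fat and thin realizations are weakly equivalent, with $|B_\bullet G|=BG$ being a $K(G,1)$.

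First I would parametrize $\del\cR\setminus\set{0}$ by the open interval $(0,1)$, going around the boundary of the square once counterclockwise from $0$. A configuration $\fc=(P,\phi)\in\Hur(\del\cR;G)_{0;\one}$ with $P=\set{0,z_1,\dots,z_k}$, where $z_i$ sits at parameter $t_i\in(0,1)$ and $0<t_1<\dots<t_k<1$, is then uniquely specified by a point $(t_1,\dots,t_k)\in\mathring{\Delta}^k$ together with a tuple of monodromies $(g_1,\dots,g_k)\in G^k$ attached to $z_1,\dots,z_k$ via a standard admissible generating set of $\fG(P)$; the constraint $\totmon(\fc)=\one$ determines the monodromy $g_0$ at the basepoint $0$ in terms of $(g_1,\dots,g_k)$. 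This sets up a set-theoretic bijection between $\Hur(\del\cR;G)_{0;\one}$ and $\bigsqcup_{k\ge 0}\mathring{\Delta}^k\times G^k$.

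Next I would verify that the way cells glue in $\Hur(\del\cR;G)_{0;\one}$ as points collide realises precisely the face maps of the nerve $B_\bullet G$: collision of two adjacent interior points $z_i,z_{i+1}$ merges their labels into $g_i g_{i+1}$ and realises $d_i$, while the sliding of $z_1$ to the right of $0$ (resp.\ of $z_k$ to the left of $0$) absorbs the corresponding label into the monodromy at the basepoint, realising $d_0$ (resp.\ $d_k$). This compatibility follows from the standard behaviour of Hurwitz multiplication under collisions, as developed in \cite[Section 5]{Bianchi:Hur2}. Together with a topology check---matching the adapted neighbourhoods of the Hurwitz topology with the cellular neighbourhoods in the fat realisation---this produces a homeomorphism $\Hur(\del\cR;G)_{0;\one}\cong \|B_\bullet G\|$. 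Since $G$ is discrete, $B_\bullet G$ is a good simplicial space, the canonical map $\|B_\bullet G\|\to |B_\bullet G|=BG$ is a weak equivalence, and $BG$ is a $K(G,1)$.

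The main obstacle I anticipate is the topology matching in the second step: one must check that the adapted neighbourhoods defining the Hurwitz topology coincide, under the identification above, with the quotient topology on the fat realisation, particularly around configurations where some of the $t_i$'s are close to each other or to $0$ (corresponding to boundary face identifications) and where some $g_i=\one$ (corresponding to cells of $\|B_\bullet G\|$ that are collapsed only in the thin realisation $|B_\bullet G|$). This is made significantly easier by the discreteness of $G$, which turns all monodromy data into combinatorial labels, and reduces the verification to the behaviour of the position coordinates $(t_1,\dots,t_k)$ under small perturbations.
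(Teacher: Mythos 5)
Your approach is genuinely different from the paper's. The paper never attempts to identify $\Hur(\del\cR;G)_{0;\one}$ with a classifying space directly; instead it constructs an explicit covering map $\xi^{\sqcap}_*\colon\Hur(\sqcap;G)_{0,1;\one}\to\Hur(\del\cR;G)_{0;\one}$ (where $\sqcap$ is three sides of the square), shows the total space is contractible by a scanning-type homotopy (Lemma~\ref{lem:Hursqcapcontractible}), and verifies that $\xi^{\sqcap}_*$ is a covering with deck group $G$ via a local analysis of adapted neighbourhoods (Lemmas~\ref{lem:xisqcapfibres}--\ref{lem:localsections}). You instead build a direct homeomorphism to the fat nerve $\|B_\bullet G\|$.

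Your outline is sound in the parts you carry out. The set-level bijection with $\bigsqcup_{k\ge0}\mathring{\Delta}^k\times G^k$ is correct (because $\totmon=\one$ eliminates the ambiguity in the choice of admissible generating set, and because a configuration retaining a $\del\cR$-point of trivial $G$-monodromy is a genuinely distinct point of the Hurwitz space), and the three kinds of collisions you describe do realize $d_1,\dots,d_{k-1}$, $d_0$ and $d_k$ respectively. Targeting the \emph{thick} realization is also the right call for exactly this reason. However, the "main obstacle" you flag is in fact the heart of the argument and is not a routine check. You need continuity of the identification in both directions: continuity of $\|B_\bullet G\|\to\Hur(\del\cR;G)_{0;\one}$ across face identifications, and --- the harder direction --- that the adapted-neighbourhood topology is no coarser than the CW quotient topology, i.e.\ that every basic neighbourhood $\fU(\fc,\uU)$ has open image. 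The latter requires an analysis of $\fU(\fc,\uU)$ near lower-dimensional strata essentially parallel to what the paper does inside Lemmas~\ref{lem:xisqcapopen} and \ref{lem:localsections}, so the apparent shortcut does not avoid that work; it only relocates it. Your route, if completed, buys an explicit simplicial/CW model for $\cB G$, which is more information than the proposition asks for; the paper's covering argument is more economical for the stated goal because it leans only on the already-established functoriality of $\Hur$ and never needs to compare two topologies on the same underlying set. So: correct in outline, genuinely different from the paper, but the decisive topological verification is deferred rather than resolved.
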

\begin{defn}
 \label{defn:sqcap}
We denote by $\sqcap\subset\del\cR$ the union of the three closed sides of $\cR$
\[
\sqcap:=\set{0}\times[0,1]\,\cup\,[0,1]\times\set{1}\,\cup\,\set{1}\times[0,1].
\]
\end{defn}

 \begin{lem}
  \label{lem:Hursqcapcontractible}
  The spaces $\Hur(\sqcap;G)_{0;\one}$ and $\Hur(\sqcap;G)_{0,1;\one}$ are contractible.
 \end{lem}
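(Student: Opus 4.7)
The plan is to build explicit deformation retractions of both spaces to a single point, using enriched functoriality (\cite[Proposition 4.4]{Bianchi:Hur2}). The decisive feature of the setup is that $G$ is a group (hence a complete PMQ), so monodromies attached to colliding points always multiply without obstruction; combined with the one-dimensionality of $\sqcap$, this makes it possible to force every configuration to collapse onto the marked points.

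First I would fix a semi-algebraic homeomorphism $\gamma\colon[0,1]\to\sqcap$ with $\gamma(0)=0$ and $\gamma(1)=1$, so that $\sqcap$ is an arc from $0$ to $1$ and the support of any configuration is linearly ordered along $\gamma$. For $\Hur(\sqcap;G)_{0;\one}$, I would then construct a continuous family $h_s\colon\C\to\C$ ($s\in[0,1]$) of semi-algebraic endomorphisms restricting to morphisms of the nice couple $(\sqcap,\sqcap)$, with $h_0=\Id_\C$, $h_s(0)=0$ for all $s$, and $h_1$ collapsing $\sqcap$ onto $\{0\}$ (for instance, $h_s$ restricts on $\sqcap$ to $\gamma(t)\mapsto\gamma((1-s)t)$, suitably extended). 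Applied to a configuration $\fc$ with support $\{0=p_0<p_1<\cdots<p_n\}$ and labels $g_0,g_1,\dots,g_n\in G$, the homotopy $(h_s)_*$ slides every support point into $0$; since $G$ is complete, the labels multiply unambiguously, and $(h_1)_*(\fc)$ is the single configuration $(\{0\},\,g_0g_1\cdots g_n)$. The total monodromy condition $g_0g_1\cdots g_n=\one$ forces $(h_1)_*$ to be constant at $(\{0\},\one)$, giving the desired contraction.

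For $\Hur(\sqcap;G)_{0,1;\one}$ the same collapse is impossible, since $0$ and $1$ must remain distinct marked points throughout any admissible homotopy. The cleanest route I would take is to prove that the natural inclusion
\[
\Hur(\sqcap;G)_{0,1;\one}\hookrightarrow\Hur(\sqcap;G)_{0;\one}
\]
is a homotopy equivalence, with homotopy inverse given by adjoining the point $1$ to the support with label $\one$ (which does not alter the configuration, under the standard convention identifying support points carrying trivial monodromy with their absence); both composites are visibly isotopic to the identity. Contractibility of the two-point-marked space would then follow from the one-point-marked case. A more direct approach would instead push all interior support points towards $1$ via a family $h_s$ fixing both $0$ and $1$, reducing to the discrete subspace $X\subset\Hur(\sqcap;G)_{0,1;\one}$ of configurations $(\{0,1\},g,g^{-1})$, $g\in G$, and then contract $X$ in the ambient space by inserting an auxiliary pair of points $p_1<p_2\in(0,1)$ with labels $(g,g^{-1})$ (preserving total monodromy), sliding $p_1$ to collide with $0$ and $p_2$ with $1$ to reach $(\{0,1\},\one,\one)$. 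The main obstacle in this more hands-on version is assembling the second-stage collision paths into a globally continuous retraction of the whole space, which is exactly the technicality the inclusion-is-equivalence argument sidesteps.
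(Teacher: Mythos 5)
Your proposal is correct and uses essentially the same strategy as the paper: contract $\Hur(\sqcap;G)_{0;\one}$ by pushing the whole arc $\sqcap$ to the marked point $0$ via a semi-algebraic homotopy of lax endomorphisms of a nice couple (using that $G$ is a complete PMQ so that colliding labels multiply unambiguously), and deduce contractibility of $\Hur(\sqcap;G)_{0,1;\one}$ from the fact that its inclusion into $\Hur(\sqcap;G)_{0;\one}$ is a homotopy equivalence — the paper delegates this last step to \cite[Proposition~7.10]{Bianchi:Hur2}, which is precisely the ``adjoin a marked point'' equivalence you sketch. The one small discrepancy is your choice of nice couple $(\sqcap,\sqcap)$ where the paper works with $(\sqcap,\emptyset)$; these yield isomorphic Hurwitz spaces (cf.\ \cite[Lemmas~5.3 and~5.5]{Bianchi:Hur2}), so this is immaterial, though the paper's choice is the one that makes the word ``lax'' necessary. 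Your alternative hands-on contraction of the two-marked-point space is, as you note yourself, harder to make globally continuous and is not what the paper does.
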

\begin{proof}
 Note that $\sqcap$ is contractible;
 more precisely we can fix a semialgebraic homotopy $\cH^{\sqcap}\colon\C\times[0,1]\to\C$
 with the following properties:
 \begin{itemize}
  \item $\cH^{\sqcap}(-,s)$ is a \emph{lax} endomorphism of the nice couple $(\sqcap,\emptyset)$, for all $0\le s\le 1$ (see \cite[Definition 4.2]{Bianchi:Hur2});
  \item $\cH^{\sqcap}(0,s)=0\in\C$ for all $0\leq s\leq 1$;
  \item $\cH^{\sqcap}(-,0)=\Id_{\C}$;
  \item $\cH^{\sqcap}(-,1)$ maps $\sqcap$ constantly to $0$.
 \end{itemize}
 By functoriality, using that $G$ is a complete PMQ, we obtain a homotopy
 \[
 \cH^{\sqcap}_*\colon\Hur(\sqcap;G)_{0;\one}\times[0,1]\to\Hur(\sqcap;G)_{0;\one}.
 \]
 Note that the map $\cH^{\sqcap}_*(-,1)$ takes values in $\Hur(\set{0},G)_{0;\one}$,
 which is just a point. Thus the homotopy $\cH^{\sqcap}_*$ exhibits
 $\Hur(\sqcap;G)_{0;\one}$ as a contractible space.
 
 By \cite[Proposition 7.10]{Bianchi:Hur2}, the inclusion
 $\Hur(\sqcap;G)_{0,1;\one}\subset\Hur(\sqcap;G)_{0;\one}$
 is a homotopy equivalence, hence $\Hur(\sqcap;G)_{0,1;\one}$ is also contractible.
 \end{proof}
 
 Now let $\xi^{\sqcap}\colon\C\to\C$ be a semialgebraic map with the following properties:
 \begin{enumerate}
  \item $\xi^{\sqcap}$ is a lax endomorphism of the nice couple $(\del\cR,\emptyset)$, in particular it restricts to a map $\del\cR\to\del\cR$;
  \item $\xi^{\sqcap}$ fixes $\C_{\Re\le0}$ pointwise;
  \item $\xi^{\sqcap}$ maps the horizontal segment $[0,1]\times\set{0}$ constantly to $0$;
  \item $\xi^{\sqcap}$ restricts to a homeomorphism $\C\setminus([0,1]\times\set{0})\to\C\setminus\set{0}$.
 \end{enumerate}
 Note that $\xi^{\sqcap}$
 is a lax morphism of nice couples $(\sqcap,\emptyset)\to(\del\cR,\emptyset)$; we obtain by functoriality
 a map $\xi^{\sqcap}_*\colon\Hur(\sqcap;G)_{0,1;\one}\to \Hur(\del\cR;G)_{0;\one}$, see Figure \ref{fig:xi}.

 \begin{figure}[ht]
 \begin{tikzpicture}[scale=2.8,decoration={markings,mark=at position 0.38 with {\arrow{>}}}]
  \draw[dashed,->] (-.5,0) to (1.3,0);
  \draw[dashed,->] (0,-1.1) to (0,1.3);
  \node at (0,-1) {$*$};
  \draw[very thick] (0,0) rectangle (1,1);
  \node at (0,0){$\bullet$};
  \draw[thin, looseness=1.2, postaction={decorate}] (0,-1) to[out=91,in=-90]  (-.03,-.1) to[out=90,in=-90] (-.07,.0)  to[out=90,in=90] node[right]{\tiny$g_4$} (.03,.0)  to[out=-90,in=90] 
  (.01,-.1) to[out=-90,in=89] (0,-1);
  \node at (0,.3){$\bullet$}; 
  \node at (.7,1){$\bullet$}; 
  \draw[thin, looseness=1.2, postaction={decorate}] (0,-1) to[out=100,in=-120]  (-.1,.33) to[out=60,in=-180] 
  (0,.37)  to[out=0,in=0] node[right]{\tiny$g_2$} (0,.27)  to[out=180,in=60] (-.1,.28) to[out=-120,in=97] (0,-1);
  \draw[thin, looseness=1.2, postaction={decorate}] (0,-1) to[out=108, in= -135] (-.1,1.05) to [out=45, in=100]
  (.7,1.1) to[out=-80,in=90] (.75,1) to[out=-90,in=-90] (.65,1) to[out=90,in=-80] node[left]{\tiny$g_3$}
  (.67,1.1) to[out=100, in=45] (-.05,1.05) to [out=-135, in=106]  (0,-1);
  \node at (1,.1){$\bullet$};
  \draw[thin, looseness=1.2, postaction={decorate}] (0,-1) to[out=40,in=-60] (1.1,.05)
  to[out=120,in=0] (1,.03) to[out=180,in=180] node[left]{\tiny$g_1$} (1,.23) to[out=0,in=120] (1.1,.18) to[out=-60,in=20]  (0,-1); 
 
 \begin{scope}[shift={(1.9,0)}]
  \draw[dashed,->] (-.5,0) to (1.3,0);
  \draw[dashed,->] (0,-1.1) to (0,1.3);
  \node at (0,-1) {$*$};
  \draw[very thick] (0,0) to (0,1) to (1,1) to (1,0);
  \node at (0,0){$\bullet$};
  \node at (1,0){$\bullet$};
  \draw[thin, looseness=1.2, postaction={decorate}] (0,-1) to[out=91,in=-90]  (-.03,-.1) to[out=90,in=-90] (-.07,.0)  to[out=90,in=90] node[right]{\tiny$g'_4$} (.03,.0)  to[out=-90,in=90] 
  (.01,-.1) to[out=-90,in=89] (0,-1);
  \draw[thin, looseness=1.2, postaction={decorate}] (0,-1) to[out=46,in=-120] (.95,-.1)
  to[out=60,in=-90] (.93,0) to[out=90,in=90] node[left]{\tiny$g'_5$} (1.05,0) to[out=-90,in=60] (.98,-.1) to[out=-120,in=44]  (0,-1);
  \draw[dashed, looseness=1.2, postaction={decorate}] (0,-1) to[out=92, in=-160] (0,.15)
    to [out=20, in=160] node[above]{\tiny$g'_4g'_5=g_4$} (1,.15) to[out=-20, in=37] (0,-1);
  \node at (0,.3){$\bullet$}; 
  \node at (.3,1){$\bullet$}; 
  \draw[thin, looseness=1.2, postaction={decorate}] (0,-1) to[out=100,in=-120]  (-.1,.33) to[out=60,in=-180] 
  (0,.37)  to[out=0,in=0] node[right]{\tiny$g_2$} (0,.27)  to[out=180,in=60] (-.1,.28) to[out=-120,in=97] (0,-1);
  \draw[thin, looseness=1.2, postaction={decorate}] (0,-1) to[out=108, in= -135] (-.1,1.05) to [out=45, in=100] (.3,1.1)
  to[out=-80,in=90] (.35,1) to[out=-90,in=-90] (.25,1) to[out=90,in=-80] node[left]{\tiny$g_3$}
  (.27,1.1) to[out=100, in=45] (-.05,1.05) to [out=-135, in=106]  (0,-1);
  \node at (1,.8){$\bullet$};
  \draw[thin, looseness=1.2, postaction={decorate}] (0,-1) to[out=36,in=-90] (1.2,0) to[out=90,in=-60] (1.1,.8)
  to[out=120,in=0] (1,.73) to[out=180,in=180] node[left]{\tiny$g_1$} (1,.87) to[out=0,in=120] (1.1,.82) to[out=-60,in=90] (1.25,0) to [out=-90,in=33]  (0,-1);
 \end{scope}
 \end{tikzpicture}
 \caption{Left: a configuration $\fc\in\Hur(\del\cR;G)_{0;\one}$. Right: a configuration $\fc'$ in the fibre $(\xi^{\sqcap}_*)^{-1}(\fc)\subset\Hur(\sqcap;G)_{0,1;\one}$.
 }
\label{fig:xi}
\end{figure}
We will prove that $\xi^{\sqcap}_*$ is a covering map.
\begin{lem}
 \label{lem:xisqcapfibres}
For $\fc\in\Hur(\del\cR;G)_{0;\one}$, the fibre of $\xi^{\sqcap}_*$ over $\fc$ is a non-empty and discrete subspace of $\Hur(\sqcap;G)_{0,1;\one}$.
\end{lem}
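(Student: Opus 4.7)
The plan is to compute the fibre explicitly as a set and then identify its topological structure. From properties (3) and (4) of $\xi^{\sqcap}$, the restriction of $\xi^{\sqcap}$ to $\sqcap\setminus\{0,1\}$ is a homeomorphism onto $\del\cR\setminus\{0\}$, and $\xi^{\sqcap}$ collapses the two-point set $\{0,1\}$ to $\{0\}$. Writing $\sigma\colon \del\cR\setminus\{0\}\to\sqcap\setminus\{0,1\}$ for the inverse of this homeomorphism, and expanding $\fc=(P,\psi)$ as usual, any preimage $\fc'=(P',\psi')\in\Hur(\sqcap;G)_{0,1;\one}$ must satisfy $\xi^{\sqcap}(P')=P$ as sets; combined with $0,1\in P'$ (forced by the subscript), this determines $P'=\{0,1\}\cup\sigma(P\setminus\{0\})$.

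Next I would pick compatible admissible generating sets: $\gen_0,\dots,\gen_k$ of $\fG(P)$ with $\gen_0$ a small clockwise loop around $0$, and $\gen'_0,\gen'_1,\gen'_{1'},\dots,\gen'_{k'}$ of $\fG(P')$ with $\gen'_0,\gen'_1$ small loops around $0,1$ respectively and $\gen'_{i'}$ around $\sigma(z_i)$, chosen so that the cyclic admissibility orders correspond under $\xi^{\sqcap}$. The induced homomorphism $\xi^{\sqcap}_*\colon\fG(P')\to\fG(P)$ then sends $\gen'_{i'}\mapsto\gen_i$ for $i\geq 1$ (by the local homeomorphism near $\sigma(z_i)$), and sends the appropriate product $\gen'_0\cdot\gen'_1$ (in the order dictated by admissibility) to $\gen_0$, since both generators collapse to loops around $0\in P$ whose composite encloses precisely the point $0$. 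The condition $\xi^{\sqcap}_*(\fc')=\fc$ thus translates into $\psi'(\gen'_{i'})=\psi(\gen_i)$ for $i\geq 1$ together with $\psi'(\gen'_0)\cdot\psi'(\gen'_1)=\psi(\gen_0)$ in $G$; the total-monodromy condition for $\fc'$ then follows automatically from that of $\fc$.

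Finally, the pairs $(x,y)\in G\times G$ satisfying $xy=\psi(\gen_0)$ are in bijection with $G$ via $x\mapsto(x,x^{-1}\psi(\gen_0))$, so the fibre is non-empty (taking $x=\one$) and is naturally a $G$-torsor. For discreteness, every fibre configuration has the same support $P'$; a sufficiently small adapted open neighborhood $\fU(\fc',\uU)$ of $\fc'$ in $\Hur(\sqcap;G)_{0,1;\one}$ consists of configurations whose monodromy on the persistent generators agrees with that of $\psi'$, and since $G$ is discrete the neighborhoods corresponding to different pairs $(x,y)$ are disjoint and each meets the fibre in exactly one point. The main technicality, which I expect to be routine once the generators are chosen carefully, is keeping the admissibility orderings compatible on the two sides so as to identify the product relation unambiguously; the parametrisation of the fibre by $G$ is the essential content.
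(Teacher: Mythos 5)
Your proposal is correct and follows essentially the same route as the paper: determine the forced support $P' = \{0,1\}\cup\sigma(P\setminus\{0\})$, compare monodromies via compatible admissible generating sets so that the single relation $\psi'(\gen'_0)\cdot\psi'(\gen'_1)=\psi(\gen_0)$ emerges, parametrise the fibre by factorisations (equivalently by $G$), and establish discreteness via disjointness of adapted normal neighborhoods. One small notational slip worth flagging: there is no literal induced homomorphism $\xi^{\sqcap}_*\colon\fG(P')\to\fG(P)$ coming from $\xi^{\sqcap}$ as a map of punctured planes (since $\xi^{\sqcap}$ collapses the segment $(0,1)\times\{0\}\subset\CmP'$ onto $0\in P$); the paper instead works with $(\xi^{\sqcap})^{-1}_*\colon\fG(P)\to\fG(P')$, which is well-defined by property (4) of $\xi^{\sqcap}$, and decomposes $(\xi^{\sqcap})^{-1}_*(\gen_k)$ into $\gen'_k\cdot\gen'_{k+1}$. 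Your argument survives this correction unchanged, but it is the clean way to make "keeping the admissibility orderings compatible" precise.
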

\begin{proof}
Write $\fc=(P,\psi)$, where $P=\set{z_1,\dots,z_k}$ and
$\psi\colon\fQ_{(\del\cR,\emptyset)}(P)\to G$ is a map of PMQs;
without loss of generality suppose $z_k=0$.
Assume that we are given $\fc'=(P',\psi')\in(\xi^{\sqcap}_*)^{-1}(\fc)$. 
Note that if $\xi^{\sqcap}_*(\fc')=\fc$, then, in particular, $\xi^{\sqcap}(P')=P$ and by properties (3) and (4) of $\xi^{\sqcap}$ we must have $P'=(\xi^{\sqcap})^{-1}(P)\cap\sqcap\subset\C$.
To fix notation, let $z'_i=(\xi^{\sqcap})^{-1}(z_i)\in P'$ for $1\leq i\leq k-1$, and let $z'_k=0\in P'$ and $z'_{k+1}=1\in P'$.

Fix an admissible generating set $\gen_1,\dots,\gen_k$ for $\fG(P)$, and assume that $\gen_k$
is represented by a loop supported in a small neighbourhood of the vertical segment $\set{0}\times[-1,0]\subset\C$ joining $*$ to 0. Then we can consider
$(\xi^{\sqcap})^{-1}$ as a map $\CmP\to\C\setminus([0,1]\times\set{0}\cup P')\subset\CmP'$, and map the generators $\gen_1,\dots,\gen_k$
to elements of $\fG(P')$. Note that $\gen_1,\dots,\gen_{k-1}$ are mapped to simple loops spinning
clockwise around the points $z'_1,\dots,z'_{k-1}$, whereas $\gen_k$ is mapped to a simple loop
spinning clockwise around the segment $[0,1]\subset \C$. We decompose $(\xi^{\sqcap})^{-1}_*(\gen_k)$
as a product of two elements $\gen'_k,\gen'_{k+1}$, represented by simple loops in $\CmP'$ spinning
clockwise around $z'_k$ and $z'_{k+1}$
respectively, and define $\gen'_i=(\xi^{\sqcap})^{-1}_*(\gen_i)$ for $1\leq i\leq k-1$: thus we obtain
an admissible generating set $\gen'_1,\dots,\gen'_{k+1}$ for $\fG(P')$. Note that for $1\le i\le k-1$ the generator $\gen'_i$ can be represented by a simple loop in $\CmP'$ supported in $\C\setminus(P'\cup [0,1]\times \set{0})$; similarly $\gen'_k$ and $\gen'_{k+1}$ can be represented by loops supported on small neighbourhoods of the straight segments in $\C$ joining $*$ with $0$ and $1$ respectively.

Since $\xi^{\sqcap}_*(\fc')=\fc$, we have in $G$ the equalities $\psi'(\gen'_i)=\psi(\gen_i)$ for $1\leq i\leq k-1$, and
$\psi(\gen_k)=\psi'(\gen'_k)\psi'(\gen'_{k+1})$.

Vice versa, for any factorisation of $\psi(\gen_k)\in G$ as the product $gh$ of two elements in $G$,
we can define a configuration $\fc'=(P',\psi')$ by setting $P'=(\xi^{\sqcap})^{-1}(P)\cap\sqcap$,
and by defining $\phi'$ by sending $\gen'_i\mapsto\psi(\gen_i)$
for $1\leq i\leq k-1$, $\gen'_k\mapsto g$ and $\gen'_{k+1}\mapsto h$. This shows that
$(\xi^{\sqcap})^{-1}(\fc)$ is non-empty.

Now note that, for $\fc'$ as above and for any adapted covering $\uU'$ of $P'$, we have that $\fc'$ is the unique configuration
in $\fU(P';\uU')\subset\Hur(\sqcap)_{0,1;\one}$ supported on the set $P'$. In fact, the normal neighbourhoods $\fU(\bar\fc;\uU')$, for fixed $\uU'$
and varying $\bar\fc$ in $(\xi^{\sqcap})^{-1}(\fc)$, are disjoint: compare with the proof of \cite[Proposition 3.8]{Bianchi:Hur2}. This proves
that $(\xi^{\sqcap})^{-1}(\fc)$ is a discrete topological space.
\end{proof}

\begin{lem}
\label{lem:xisqcapaction}
There is a free action of $G$ on $\Hur(\sqcap;G)_{0,1;\one}$ whose orbits are precisely the fibres of $\xi^{\sqcap}_*$.
\end{lem}
\begin{proof}
Let $\fc'\in\Hur(\sqcap;G)_{0,1;\one}$, and write $\fc'=(P',\psi')$ with $P'=\set{z'_1,\dots,z'_{k+1}}$, where we assume $z'_k=0$, $z'_{k+1}=1$. For $g\in G$ we can define a new configuration $g*\fc'=(P',g*\psi')\in\Hur(\sqcap)_{0,1;\one}$
by setting $g*\psi'(\gen'_i)=\psi'(\gen_i)$ for all $1\leq i\leq k-1$,
$g*\psi'(\gen'_k)=\psi'(\gen'_k) g^{-1}$ and $g*\psi'(\gen'_{k+1})=g \psi'(\gen'_k)$, where $\gen'_1,\dots,\gen'_{k+1}$ is an admissible generating set of $\fG(P')$ as in the proof of Lemma \ref{lem:xisqcapfibres}.

This defines a left action of $G$ on the set $\Hur(\sqcap;G)_{0,1;\one}$.
This action can also be obtained by identifying $\Hur(\sqcap;G)_{0,1;\one}$ with $\Hur(\sqcap,\sqcap;G,G)_{0,1;\one}$ via \cite[Lemma 5.4]{Bianchi:Hur2}, and by considering $(0,(\sqcap,\sqcap),1)$ as a left-right-based nice couple \cite[Definition 6.9]{Bianchi:Hur2}, and by restricting the action of $G\times G^{op}$ on $\Hur(\sqcap,\sqcap;G,G)_{0,1;\one}$ to the diagonal subgroup $G\subset G\times G^{op}$, which leaves the subspace $\Hur(\sqcap,\sqcap;G,G)_{0,1;\one}$ invariant. This proves, in particular, that the action is continuous.

For $\fc'$ as above, we use the notation $\fc=(P,\psi):=\xi^{\sqcap}_*(\fc')$, with $P=\set{z_1,\dots,z_k}$, and assume $z_k=0$ and $z_i=\xi^{\sqcap}(z'_i)$ for all $1\le i\le k-1$. Choose an admissible generating set $\gen_1,\dots,\gen_k$ of $\fG(P)$ as in Lemma \ref{lem:xisqcapfibres}. Then $\psi(\gen_i)=\psi'(\gen'_i)=g*\psi'(\gen'_i)$ for all $1\le i\le k-1$, and
\[
\psi(\gen_k)=\psi'(\gen'_k)\psi'(\gen'_k)=\psi'(\gen'_k)g^{-1}g\psi'(\gen'_k)=\pa{g*\psi'(\gen'_k)}\pa{g*\psi'(\gen'_k)}.
\]
It follows that $\xi^{\sqcap}_*(\fc')=\xi^{\sqcap}_*(g*\fc')$.
\end{proof}

\begin{lem}
 \label{lem:xisqcapopen}
 The map $\xi^{\sqcap}_*$ is open.
\end{lem}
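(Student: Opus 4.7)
The plan is to show that $\xi^{\sqcap}_*$ maps each normal neighborhood $\fU(\fc',\uU')\subseteq\Hur(\sqcap;G)_{0,1;\one}$ to a set whose image contains a normal neighborhood of $\fc:=\xi^{\sqcap}_*(\fc')$ in $\Hur(\del\cR;G)_{0;\one}$; this local condition is enough to conclude openness. I would expand $P'=\set{z'_1,\ldots,z'_{k-1},0,1}$, denote by $U'_1,\ldots,U'_{k-1},U'_0,U'_1$ the corresponding pieces of $\uU'$, and set $P:=\xi^{\sqcap}(P')=\set{z_1,\ldots,z_{k-1},0}$.

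The key preparatory step is the construction of an adapted covering $\uU$ of $P$ tailored to $\uU'$. Since $\xi^{\sqcap}$ is a homeomorphism away from $[0,1]\times\set{0}$ by property (4), I may set $U_i:=\xi^{\sqcap}(U'_i)$ for $1\le i\le k-1$, possibly after shrinking the $U'_i$, and arrange that $(\xi^{\sqcap})^{-1}(U_i)\cap\sqcap=U'_i$ and that the $U_i$ are pairwise disjoint. Because $(\xi^{\sqcap})^{-1}(0)\cap\sqcap=\set{0,1}\subset U'_0\cup U'_1$, continuity of $\xi^{\sqcap}$ yields a small open neighborhood $U_0$ of $0\in\C$, disjoint from the remaining $U_i$, with $(\xi^{\sqcap})^{-1}(U_0)\cap\sqcap\subseteq U'_0\cup U'_1$.

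For any $\fc_1=(P_1,\psi_1)\in\fU(\fc,\uU)$ I would then lift to $\fc'_1=(P'_1,\psi'_1)$ by setting $P'_1:=(\xi^{\sqcap})^{-1}(P_1)\cap\sqcap$ and transporting monodromies along the inverse homeomorphism, exactly as in the proof of Lemma \ref{lem:xisqcapfibres}. Each point of $P_1\cap U_i$ with $i\ge 1$ has a unique preimage in $U'_i$ receiving the same monodromy value; each point of $P_1\cap U_0\setminus\set{0}$ has a unique preimage in either $U'_0$ or $U'_1$, sorted according to whether it lies on the left edge or the bottom edge of $\cR$ near $0$; and the two points $0,1\in P'_1$ receive the monodromies $\psi'(\gen'_0),\psi'(\gen'_1)$ dictated by $\fc'$.

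The hard part will be verifying that $\fc'_1$ so built belongs to $\fU(\fc',\uU')$: this requires the products of $\psi'_1$-values around $U'_0$ and around $U'_1$ to equal $\psi'(\gen'_0)$ and $\psi'(\gen'_1)$ \emph{separately}, while $\fU(\fc,\uU)$ fixes only their combined product $\psi(\gen_0)=\psi'(\gen'_0)\psi'(\gen'_1)$ around $U_0$. To overcome this I would shrink $\uU$ further so that the intrinsic separation of the lifts of $P_1\cap U_0$ between $U'_0$ and $U'_1$, coupled with the discreteness of the set of $G$-factorizations of $\psi(\gen_0)$ underlying the fibre description of Lemma \ref{lem:xisqcapfibres}, forces the two local products to coincide with those of $\fc'$. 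Together this will give $\fU(\fc,\uU)\subseteq\xi^{\sqcap}_*(\fU(\fc',\uU'))$, and hence openness of $\xi^{\sqcap}_*$.
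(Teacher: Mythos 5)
Your overall strategy is correct and matches the paper's: produce, for any $\fc'$ with $\xi^{\sqcap}_*(\fc')=\fc$ and any adapted covering $\uU'$ of $P'$, an adapted covering $\uU$ of $P$ with $\fU(\fc,\uU)\subseteq\xi^{\sqcap}_*(\fU(\fc',\uU'))$; your construction of $\uU$ subordinate to $\uU'$ is essentially the paper's as well. The gap is in the lifting step.

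You define $\fc'_1$ by transporting small-loop monodromies along $(\xi^{\sqcap})^{-1}$ for the preimages of the points of $P_1\setminus\set{0}$, and then assigning to the small loops around $0$ and $1$ the values $\psi'(\gen'_0),\psi'(\gen'_1)$ read off from $\fc'$. As you yourself note, this over-determines the lift: once every small loop is assigned, the big-loop products around $U'_0$ and $U'_1$ are forced, and there is no reason for them to agree with $\psi'(\gen'_0)$ and $\psi'(\gen'_1)$. Concretely, if some $w\in P_1\cap U_0\setminus\set{0}$ has $(\xi^{\sqcap})^{-1}(w)\in U'_1$ and nontrivial transported monodromy $h$, the product around $U'_1$ acquires the factor $h$ and the lift lands outside $\fU(\fc',\uU')$. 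Your proposed repair does not close this gap: shrinking $\uU$ leaves such $\fc_1$ in place, and ``discreteness of the set of $G$-factorizations'' is not a mechanism that selects the correct factorization for you — your assignment has simply fixed the wrong one.

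The paper's proof avoids this by assigning values on a different generating set. One should \emph{not} prescribe the small loops around $0$ and $1$; instead prescribe the \emph{big}-loop products around $U'_0$ and $U'_1$ to be $\psi'(\gen'_0)$ and $\psi'(\gen'_1)$ — exactly the constraints required for $\fc'_1\in\fU(\fc',\uU')$. This is well-defined because the small loops around the preimages of $P_1\setminus\set{0}$, together with the two big loops around $U'_0$ and $U'_1$, form a \emph{free} (though non-admissible) generating set of $\fG(P'_1)$, so a homomorphism to $G$ may be specified freely on it. The resulting small-loop monodromies at $0$ and $1$ are whatever the group relations force, which is harmless since nothing in $\fU(\fc,\uU)$ or $\fU(\fc',\uU')$ constrains them individually. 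This free-generating-set substitution is the key observation missing from your argument.
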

\begin{proof}
 Let $\fc$ and $\fc'$ be as in the proof of Lemma \ref{lem:xisqcapfibres}, i.e. $\xi^{\sqcap}_*\colon\fc'\mapsto \fc$, and let $\uU'$ be an adapted covering of $P'$. We want to find an adapted covering $\uU$ of $P$
 such that $\xi^{\sqcap}_*(\fU(P';\uU'))$ contains $\fU(P,\uU)$. We choose $\uU$ with the following
 properties:
 \begin{itemize}
  \item for all $1\leq i\leq k$ the intersection $U_i\cap\del$ is contractible;
  \item for all $1\leq i\leq k-1$ the intersection $(\xi^{\sqcap})^{-1}(U_i)\cap \sqcap$ is contained
  in $U'_i$;
  \item the intersection $(\xi^{\sqcap})^{-1}(U_k)\cap\sqcap$ is contained in $U'_k\cup U'_{k+1}$.
 \end{itemize} 
 Let $\tfc=(\tP,\tpsi)\in\fU(\fc,\uU)$: we want to find a configuration $\tfc'\in\fU(\fc',\uU')$
 with $\xi^{\sqcap}_*(\tfc')=\tfc$.
 Write $\tP=\set{\tz_1,\dots,\tz_{\tilde k}}$, with $\tz_{\tilde k}=0$.
 Then the finite set $\tP':=(\xi^{\sqcap})^{-1}(\tP)\cap\sqcap$ is contained in $\uU'$, and it intersects non-trivially
 every component of $\uU'$. We write $\tP'=\set{\tz'_1,\dots,\tz'_{\tilde k+1}}$,
 and assume $\tz'_{\tilde k}=0$ and $\tz'_{\tilde k+1}=1$.
 
 Let $\tilde\gen'_1,\dots,\tilde\gen'_{\tilde k+1}$ be an admissible generating set for $\fG(\tP')$
 as in the proof of Lemma \ref{lem:xisqcapfibres}. Note that we can regard $\gen'_k$ and $\gen'_{k+1}$
 as elements of $\fG(\tP')$ by the composition $\fG(P')\cong\fG(\uU')\subset\fG(\tP')$;
 moreover, the sequence of elements $\tilde\gen'_1,\dots,\tilde\gen'_{\tilde k-1},\gen'_k,\gen'_{k+1}$
 is also a free generating set for $\fG(\tP')$, although in general it is not an admissible generating set: in fact, $\gen'_k$ can be decomposed as a product of distinct elements $\tilde\gen'_i$, with one element
 equal to $\tilde\gen'_{\tilde k}$, and similarly $\gen'_{k+1}$ can be decomposed as a product with one factor
 equal to $\tilde\gen'_{\tilde k +1}$.
 Nevertheless we can define a morphism of groups $\tphi'\colon\fG(\tP')\to G$ by setting
 $\tphi'\colon\tilde\gen'_i\mapsto \tphi(\tilde\gen_i)$ for $1\leq i\leq \tilde k-1$,
 and $\tphi'\colon\gen'_i\mapsto\phi'(\gen'_i)$ for $i=k,k+1$. We can restrict $\tphi'$
 to $\fQ_{(\sqcap,\emptyset)}(P')$ and obtain a morphism of PMQs $\tpsi'\colon\fG_{(\sqcap,\emptyset)}(P')\to G$.
 
 We can use the previous to define a configuration
 $\tfc'=(\tP',\tpsi')\in\fU(\fc';\uU')$, satisfying $\xi^{\sqcap}_*(\tfc')=\tfc$.
\end{proof}
In the last step of the proof, note that $\tfc'$ is, in fact, the \emph{unique} configuration
in $\fU(\fc';\uU)$ with $\xi^{\sqcap}_*(\tfc')=\tfc$. This shows, in particular,
that for $\fc$, $\fc'$, $\uU'$ and $\uU$ as in the proof of Lemma \ref{lem:xisqcapopen},
there is a unique map of sets $\fs\colon\fU(\fc;\uU)\to\fU(\fc';\uU')$ which is a section
of $\xi^{\sqcap}_*$, i.e. such that $\xi^{\sqcap}_*\circ\fs$ is equal to the inclusion 
of $\fU(\fc;\uU)$ in $\Hur(\del\cR;G)_{0;\one}$.
  
\begin{lem}
 \label{lem:localsections}
 Let $\fc$, $\fc$, $\uU'$ and $\uU$ be as in the proof of Lemma \ref{lem:xisqcapopen},
 and let $\fs\colon\fU(\fc;\uU)\to\fU(\fc';\uU')$ be the section defined above. Then $\fs$ is continuous.
\end{lem}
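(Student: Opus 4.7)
The plan is to verify continuity of $\fs$ at an arbitrary point $\tfc=(\tP,\tpsi)\in\fU(\fc;\uU)$, and to reduce this to a ``second-generation'' instance of the construction in the proof of Lemma \ref{lem:xisqcapopen}, applied at $\tfc$ rather than at $\fc$.

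First I would fix notation: write $\tfc':=\fs(\tfc)=(\tP',\tpsi')\in\fU(\fc';\uU')$, with $\tP'=(\xi^{\sqcap})^{-1}(\tP)\cap\sqcap$ and $\tpsi'$ defined as in the proof of Lemma \ref{lem:xisqcapopen}. A subbasis of open neighbourhoods of $\tfc'$ is given by sets of the form $\fU(\tfc';\uU'')$, where $\uU''$ is an adapted covering of $\tP'$; without loss of generality one may assume $\uU''$ is a refinement of $\uU'$, in the sense that each component of $\uU''$ is contained in a component of $\uU'$ containing the same points of $\tP'$.

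Next I would construct an adapted covering $\uV$ of $\tP$ with the following properties, mimicking the list of three bullets in the proof of Lemma \ref{lem:xisqcapopen}:
\begin{itemize}
\item each component $V_j$ is contained in the component of $\uU$ containing the corresponding points of $\tP$ (so that $\fU(\tfc;\uV)\subseteq\fU(\fc;\uU)$, and so that the section $\fs$ is defined on $\fU(\tfc;\uV)$);
\item for each $\tz_j\in\tP\setminus\{0\}$, the intersection $V_j\cap\del\cR$ is contractible and $(\xi^{\sqcap})^{-1}(V_j)\cap\sqcap$ is contained in the component of $\uU''$ corresponding to the point $(\xi^{\sqcap})^{-1}(\tz_j)\cap\sqcap\in\tP'$;
\item for the component $V_{\tilde k}$ of $\uV$ containing $0$, the set $(\xi^{\sqcap})^{-1}(V_{\tilde k})\cap\sqcap$ is contained in the union of the two components of $\uU''$ corresponding to the points $0,1\in\tP'$.
\end{itemize}
Such a $\uV$ exists because $\xi^{\sqcap}$ is continuous and one can choose each $V_j$ arbitrarily small.

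I would then argue that $\fs$ maps $\fU(\tfc;\uV)$ into $\fU(\tfc';\uU'')$ as follows. Given $\bar\fc=(\bar P,\bar\psi)\in\fU(\tfc;\uV)$, the configuration $\bar\fc':=\fs(\bar\fc)\in\fU(\fc';\uU')$ is supported on $\bar P':=(\xi^{\sqcap})^{-1}(\bar P)\cap\sqcap$, which by choice of $\uV$ is contained in $\uU''$ and intersects every component of $\uU''$ nontrivially. One then invokes the \emph{uniqueness} part of the proof of Lemma \ref{lem:xisqcapopen}: the construction of a lift inside a normal neighbourhood, using the admissible generating sets and the prescribed factorisation of the monodromy around the segment $[0,1]\times\{0\}$ into its two subsegments, produces at most one configuration. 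Applied with $\tfc$, $\tfc'$, $\uV$ and $\uU''$ in place of $\fc$, $\fc'$, $\uU$ and $\uU'$, this construction produces a configuration in $\fU(\tfc';\uU'')$ lifting $\bar\fc$; by uniqueness of the lift in $(\xi^{\sqcap}_*)^{-1}(\bar\fc)\cap\fU(\tfc';\uU')$ (which contains $\fU(\tfc';\uU'')$ since $\uU''$ refines $\uU'$), this configuration coincides with $\bar\fc'$. Hence $\bar\fc'\in\fU(\tfc';\uU'')$, which shows that $\fs(\fU(\tfc;\uV))\subseteq\fU(\tfc';\uU'')$, as desired.

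The main obstacle I expect is a bookkeeping one: one must keep the two admissible generating sets (the one used to define $\tpsi'$ from $\tpsi$ at the scale of $\uU$, and the one used at the smaller scale of $\uV$ and $\uU''$) compatible with one another, so that the group-theoretic factorisations of the monodromy around the segment $[0,1]\times\{0\}$ glue coherently. This compatibility is essentially forced by the uniqueness of lifts established in Lemma \ref{lem:xisqcapfibres} together with the openness argument of Lemma \ref{lem:xisqcapopen}, but writing it down cleanly requires choosing the generating sets at both scales so that those at the smaller scale multiply to those at the larger scale, exactly as in the proof of Lemma \ref{lem:FNclosed}.
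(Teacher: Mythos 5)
Your proof is correct, but it takes a different and noticeably more laborious route than the paper's. You verify continuity of $\fs$ directly from the neighborhood-base definition: you re-run the lifting construction of Lemma \ref{lem:xisqcapopen} at the smaller scale of $(\tfc,\tfc',\uV,\uU'')$, and then invoke uniqueness of lifts inside $\fU(\fc';\uU')$ to identify the second-generation lift with $\fs(\bar\fc)$. This works, and — as you yourself note at the end — the uniqueness argument actually renders the bookkeeping about compatibility of generating sets unnecessary, so your final worry is unfounded. The paper, by contrast, exploits what has \emph{already} been established: $\xi^{\sqcap}_*$ is continuous and (by Lemma \ref{lem:xisqcapopen}) open, so it suffices to observe that it is also injective on a small neighbourhood $\fU(\tfc';\tilde\uU')$ of $\tfc'$ chosen so that its $\xi^{\sqcap}_*$-image lies in $\fU(\fc;\uU)$; the injectivity is immediate from $\fs\circ\xi^{\sqcap}_*=\Id$ on that neighbourhood, again via uniqueness of lifts. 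Being a continuous, open, injective map, $\xi^{\sqcap}_*$ restricted there is a homeomorphism onto its open image, and $\fs$ is its inverse, hence continuous at $\tfc$. The paper's approach thus extracts continuity of the section from a general topological principle (continuous + open + locally injective $\Rightarrow$ local homeomorphism) rather than re-deriving it from scratch, and avoids the two-scale bookkeeping with adapted coverings $\uV$ and $\uU''$ that your version requires. Both are valid; yours costs more but is arguably more self-contained, since it doesn't rely on the reader recalling the abstract local-homeomorphism criterion.
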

\begin{proof}
Let $\tfc\in\fU(\fc;\uU)$, denote $\tfc'=\fs(\tfc)\in\fU(\fc';\uU')$, and use the notation from the proof
of Lemma \ref{lem:xisqcapopen}. By continuity of $\xi^{\sqcap}_*$ there is an adapted covering
$\tilde{\uU}'$ of $\tP'$ with $\tilde{\uU}'\subset\uU'$ and such that $\xi^{\sqcap}_*$ maps
$\fU(\tfc';\tilde{\uU})$ inside $\fU(\fc;\uU)$.

First, note that $\xi^{\sqcap}_*$
is injective on $\fU(\tfc';\tilde{\uU}')$: for a configuration $\tfc''\in\fU(\tfc';\tilde{\uU}')$
we have, in fact, $\fs(\xi^{\sqcap}_*(\tfc''))=\tfc''$.

By Lemma \ref{lem:xisqcapopen} we know that $\xi^{\sqcap}_*$ is open; it follows that
the map $\xi^{\sqcap}_*\colon\fU(\tfc';\tilde{\uU}')\to\Hur(\del\cR;G)_{0;\one}$ is a homeomorphism
onto its image and, hence, $\fs$ is continuous on the open set $\xi^{\sqcap}_*(\fU(\tfc';\tilde{\uU}'))$, which contains $\tfc$.
\end{proof}
\begin{proof}[Proof of Proposition \ref{prop:FNzeroBG}]
The combination of Lemmas \ref{lem:xisqcapfibres}, \ref{lem:xisqcapopen} and \ref{lem:localsections} shows that the map $\xi^{\sqcap}_*\colon\Hur(\sqcap;G)_{0,1;\one}\to\Hur(\del\cR;G)_{0;\one}$
is a covering. Lemma \ref{lem:Hursqcapcontractible} shows that the total space is contractible, in particular connected, and Lemma \ref{lem:xisqcapaction} exhibits $G$ as the group of deck transformations of $\xi^{\sqcap}_*$.
\end{proof}

\begin{nota}
 \label{nota:FNzeroBG}
We denote by $\cB G$ the space
 \[
 \cB G:=\Hur(\del\cR;G)_{0;\one}\cong\Hur(\del\cR,\del\cR;\Q,G)_{0;\one}=\FN_0\bB(\Q_+,G)=\fFN_0\bB(\Q_+,G).
 \]
\end{nota}
 
\subsection{Bundles over \texorpdfstring{$\cB G$}{BG}}
In this subsection we define for all $\nu\geq 0$ a bundle map
$\fp_\nu\colon\fFN_\nu\bB(\Q_+,G)\to \cB G$; the fibre of $\fp_\nu$
can be identified with 
\[
\coprod_{a\in\Q_\nu}\Hur(\mcR;\Q_+)_a,
\]
where $\Q_\nu\subset\Q$ is the subset of elements of norm $\nu$.
In the case $\nu=0$, the map $\fp_0$ is just the identity of $\cB G=\fFN_0\bB(\Q_+,G)$,
and the fibre is a point, i.e. the space $\Hur(\mcR;\Q_+)_{\one}$.

In the next section we will investigate the rational cohomology of $\bB(\Q_+,G)$ using
the Leray spectral sequence associated with the filtration
$\FN_{\bullet}=\FN_{\bullet}\bB(\Q_+,G)$: the first page of this spectral sequence
contains the \emph{relative} cohomology groups $H^*(\FN_\nu,\FN_{\nu-1})$,
rather than the cohomology groups of the strata $\fFN_\nu$.
To acquire information about these relative cohomology groups, we introduce in this subsection certain subspaces $\FNfat_{\nu-1}=\FNfat_{\nu-1}\bB(\Q_+,G)\subset\bB(\Q_+,G)$, for $\nu\geq 0$.
We will prove, between this subsection and the next section, that
$\FNfat_{\nu-1}\subset\FN_{\nu}$,
that $\FN_{\nu-1}$ is contained in the interior of $\FNfat_{\nu-1}$ when the latter is regarded as 
a subspace of $\FN_{\nu}$, and that the inclusion $\FN_{\nu-1}\subset\FNfat_{\nu-1}$ is a
homotopy equivalence.
In particular, after setting $\delfat\fFN_{\nu}:=\fFN_{\nu}\cap\FNfat_{\nu-1}$,
we will obtain in Lemma \ref{lem:excision} an isomorphism
\[
 H^*(\FN_{\nu},\FN_{\nu-1})\cong H^*(\fFN_{\nu},\delfat\fFN_{\nu}).
\]
In this subsection we will prove that $\fp_\nu$ exhibits $(\fFN_{\nu},\delfat\fFN_\nu)$ as a couple of bundles over $\cB G$,
with fibre a suitable couple of spaces
\[
\pa{\Hur(\mcR;\Q_+)_\nu,\delfat\Hur(\mcR;\Q_+)_\nu}=
\pa{\coprod_{a\in\Q_\nu}\Hur(\mcR;\Q_+)_a, \coprod_{a\in\Q_\nu}\delfat\Hur(\mcR;\Q_+)_a};
\]
the computation of the rational cohomology $H^*(\fFN_{\nu},\delfat\fFN_\nu;\bQ)$ will then be possible using the Serre spectral sequence associated with $\fp_\nu$.
\begin{defn}
 \label{defn:delfat}
 Recall Definition \ref{defn:FN}. Denote by $\delfat\cR$ the
 closed neighbourhood of $\del\cR$ in $\cR=[0,1]^2$ given by
 $\delfat\cR=\cR\setminus\pa{\frac{\zcentre}2+\frac 12\mcR}$,
 where $\frac{\zcentre}2+\frac 12\mcR$ is the image of $\mcR$
 along the homothety centred at $\zcentre$ of rescaling factor $\frac 12$;
 in other words, $\frac{\zcentre}2+\frac 12\mcR$ is the open square
 of side length $\frac 12$ centred at $\zcentre$.

 The identity of $\C$ induces a map
 \[
  (\Id_\C)_*\colon \Hur(\cR,\del;\Q,G)\to\Hur(\cR,\delfat\cR;\Q,G).
 \]
Recall that $\Hur(\cR,\delfat\cR;\Q,G)$ has a filtration by subspaces $\FN_\nu\Hur(\cR,\delfat\cR;\Q,G)$ for $\nu\ge -1$;
we define $\FNfat_\nu\bB(\Q_+,G)$ as the intersection
\[
\FNfat_\nu\bB(\Q_+,G):=(\Id_\C)_*^{-1}\pa{\FN_\nu\Hur(\cR,\delfat\cR;\Q,G)}\ \cap\  \FN_{\nu+1}\bB(\Q_+,G).
\]
\end{defn}
In particular, we have $\FNfat_{-1}\bB(\Q_+,G)=\emptyset$.
Roughly speaking, for $\fc\in \bB(\Q_+,G)$,
we can use Notation \ref{nota:fc},
and suppose without loss of generality that $\set{z_1,\dots,z_{l'}}=P\cap (1/4,3/4)^2$ for some $0\le l'\le l$;
if $\gen_1,\dots,\gen_k$ is an admissible generating set for $\fG(P)$,
then $\fc$ belongs to $\FNfat_\nu\bB(\Q_+,G)$ if the following hold: 
\begin{itemize}
 \item $\sum_{i=1}^l N(\psi(\gen_i))\le\nu+1$, that is, $\fc\in\FN_{\nu+1}\bB(\Q_+,G)$;
 \item $\sum_{i=1}^{l'} N(\psi(\gen_i))\le\nu$.
\end{itemize}
Another characterisation is the following: $\FNfat_\nu\bB(\Q_+,G)$ is the preimage of the space $\FN_\nu\bB(\Q_+,G)$ along the
restricted map $\cH^{\bB}(-,2)\colon \FN_{\nu+1}\bB(\Q_+,G)\to\bB(\Q_+,G)$, see Definition \ref{defn:cHbB}. Note that, in fact, $\cH^{\bB}(-,2)$ restricts to a self-map of $\FN_{\nu+1}\bB(\Q_+,G)$.
By construction, we have inclusions
\[
 \FN_\nu\bB(\Q_+,G)\subset\FNfat_\nu\bB(\Q_+,G)\subset\FN_{\nu+1}\bB(\Q_+,G).
\]
\begin{nota}
 \label{nota:delfat}
 For a subspace $X\subseteq \bB(\Q_+,G)$ and $\nu\ge -1$ we denote by $\FNfat_\nu X$
 the intersection $X\cap \FNfat_\nu\bB(\Q_+,G)$.
 For $\nu\ge 0$ we denote by $\delfat\fFN_\nu X$ the intersection $\fFN_\nu X\cap \FNfat_{\nu-1}X$.
\end{nota}
\begin{ex}
 Let $a\in\Q$ and let $X=\Hur(\mcR;\Q_+)_a$; the inclusion of nice couples $(\mcR,\emptyset)\subset (\cR,\del)$ induces an inclusion of spaces $X\subset\bB(\Q_+,G)$; let $\nu= N(a)\ge0$, and note that $F_{\nu-1}X=\emptyset$ and, hence,
 $X=\FN_\nu X=\fFN_\nu X$.
 The space $\FNfat_{\nu-1} X$ contains all configurations $\fc\in X$ such that, using Notation \ref{nota:fc}, $P$
 intersects non-trivially $\delfat\cR$: in fact the condition $\fc\in \Hur(\mcR;\Q_+)$ implies that the monodromy
 $\psi$ attains values of norm $\ge1$ (i.e. different from $\one\in\Q$) 
 around all points of $P$.
 The space $\delfat\fFN_\nu X$ coincides with $\FNfat_{\nu-1} X$ in this case, and by abuse of notation we will also
 write
 \[
  \delfat \Hur(\mcR;\Q_+)_a=\delfat\fFN_\nu \Hur(\mcR;\Q_+)_a.
 \]
\end{ex}
\begin{ex}
 Let $X=\bB(\Q_+,G)$; then $\FNfat_{-1} X=\delfat\fFN_0 X=\emptyset$ and $\fFN_0 X=\cB G$;
 hence the identity of $\cB G$
 can be regarded as a pair of bundles
 \[
  \fp_0\colon \pa{\,\fFN_0 \bB(\Q_+,G)\,,\,\delfat\fFN_0\bB(\Q_+,G)\,}\to \cB G
 \]
 with fibre the couple $(\Hur(\mcR;\Q_+)_{\one}, \delfat \Hur(\mcR;\Q_+)_{\one})=(\set{(\emptyset,\one,\one)},\emptyset)$.
In the rest of the subsection we generalise this example to the other strata of $\bB(\Q_+,G)$.
\end{ex}

Fix in the following $\nu\ge1$, let $\fc\in\fFN_\nu\bB(\Q_+,G)$ and use Notation \ref{nota:fc}.
Let $\arc_1,\dots,\arc_k$ be embedded arcs in $\C$ joining $*$ with the points $z_1,\dots,z_k$ of $P$, intersecting pairwise only at the endpoint $*$, and such that $\arc_i\subset\C\setminus\mcR$ for $l+1\le i\le k$.
Recall that, since $\fc\in\bB(\Q_+,G)$, the point $0$ belongs to $P$; we use the convention that $z_k=0$. Let $\gen_1,\dots,\gen_k$ be the admissible generating set of $\fG(P)$ obtained by replacing each $\zeta_i$ by a loop contained in a small neighbourhood of $\zeta_i$ and spinning clockwise only around $z_i$.

We define a new configuration $\fc'=(P',\psi')\in \cB G=\Hur(\del\cR;G)_{0;\one}$ as follows:
\begin{itemize}
 \item $P'$ is the intersection $P\cap \del\cR$;
 \item $\psi'$ sends, for $l+1\le i\le k-1$, the generator $\gen_i$ to $\phi(\gen_i)\in G$,
 and it sends $\gen_k$ to the unique element $\psi'(\gen_k)\in G$ such that the resulting configuration
 $\fc'=(P',\psi')$ satisfies $\totmon(\fc')=\one\in G$.
\end{itemize}
Note that $G$ is treated as a complete PMQ when defining $\fc'$. See Figure \ref{fig:fp}.

 \begin{figure}[ht]
 \begin{tikzpicture}[scale=4,decoration={markings,mark=at position 0.38 with {\arrow{>}}}]
  \draw[dashed,->] (-.05,0) to (1.1,0);
  \draw[dashed,->] (0,-1.1) to (0,1.1);
  \node at (0,-1) {$*$};
  \draw[very thick, fill=black!40!white] (0,0) rectangle (1,1);
  \fill[white, opacity=.4] (.25,.25) rectangle (.75, .75);
  \node at (0,0){$\bullet$};
  \draw[thin, looseness=1.2, postaction={decorate}] (0,-1) to[out=91,in=-90]  (-.03,-.1) to[out=90,in=-90] (-.07,.05)  to[out=90,in=90] node[left]{\tiny$g_6$} (.03,.05)  to[out=-90,in=90] 
  (.01,-.1) to[out=-90,in=89] (0,-1);
  \node at (.1,.3){$\bullet$}; 
  \node at (.3,1){$\bullet$}; 
  \node at (.45,.5){$\bullet$};
  \draw[thin, looseness=1.2, postaction={decorate}] (0,-1) to[out=82,in=-90]  (.05,.1) to[out=90,in=-90] (.01,.3)  to[out=90,in=90] node[above]{\tiny$a_1$} (.2,.3)  to[out=-90,in=90] (.1,.1) to[out=-90,in=80] (0,-1);
  \draw[thin, looseness=1.2, postaction={decorate}] (0,-1) to[out=79,in=-90] (.28,.9)
  to[out=90,in=-90] (.26,1) to[out=90,in=90] (.34,1) to[out=-90,in=90] node[right]{\tiny$g_4$}
  (.3,.9) to[out=-90,in=77]  (0,-1);
  \draw[thin, looseness=1.2, postaction={decorate}] (0,-1) to[out=76,in=-90] (.4,.3)
  to[out=90,in=-90] (.35,.5) to[out=90,in=90] node[above]{\tiny$a_2$} (.49,.5) to[out=-90,in=90] (.45,.3) to[out=-90,in=74]  (0,-1);
  \node at (.65,.0){$\bullet$};
  \node at (.95,.8){$\bullet$};
  \draw[thin, looseness=1.2, postaction={decorate}] (0,-1) to[out=62,in=-120]  (.55,-.1) to[out=60,in=-90] (.52,.05)  to[out=90,in=90] node[above]{\tiny$g_5$} (.7,.05)  to[out=-90,in=50] (.65,-.1) to[out=-130,in=60] (0,-1);
  \draw[thin, looseness=1.2, postaction={decorate}] (0,-1) to[out=56,in=-90] (.9,.7)
  to[out=90,in=-90] (.85,.8) to[out=90,in=90] node[above]{\tiny$a_3$} (.99,.8) to[out=-90,in=90] (.95,.7) to[out=-90,in=54]  (0,-1);
  \begin{scope}[shift={(1.4,0)}]
     \draw[dashed,->] (-.05,0) to (1.1,0);
  \draw[dashed,->] (0,-1.1) to (0,1.1);
  \node at (0,-1) {$*$};
  \draw[very thick] (0,0) rectangle (1,1);
  \node at (0,0){$\bullet$};
  \draw[thin, looseness=1.2, postaction={decorate}] (0,-1) to[out=91,in=-90]  (-.03,-.1) to[out=90,in=-90] (-.07,.05)  to[out=90,in=90] node[right]{\tiny$g'_3$} (.03,.05)  to[out=-90,in=90] 
  (.01,-.1) to[out=-90,in=89] (0,-1);
  \node at (.3,1){$\bullet$}; 
  \draw[thin, looseness=1.2, postaction={decorate}] (0,-1) to[out=79,in=-90] (.28,.9)
  to[out=90,in=-90] (.26,1) to[out=90,in=90] (.34,1) to[out=-90,in=90]
  node[right]{\tiny$g'_1$}
  (.3,.9) to[out=-90,in=77]  (0,-1);
  \node at (.65,.0){$\bullet$};
  \draw[thin, looseness=1.2, postaction={decorate}] (0,-1) to[out=62,in=-120]  (.55,-.1) to[out=60,in=-90] (.52,.05)  to[out=90,in=90] node[above]{\tiny$g'_2$} (.7,.05)  to[out=-90,in=50] (.65,-.1) to[out=-130,in=60] (0,-1);
  \end{scope}
 \end{tikzpicture}
 \caption{The configuration $\fc$ from Figure \ref{fig:bB} belongs to $\delfat\fFN_\nu$, where $\nu=N(a_1)+N(a_2)+N(a_3)$, because the two points $z_1$ and $z_3$ lie in $\delfat\cR$. On the right, the image of $\fc$ along $\fp_\nu$; note that the loop with label $g_4$ on the left is \emph{not} constructed using an arc contained in $\C\setminus\mcR$; as a consequence the monodromy on right around the ``same'' loop is changed by conjugation:
 we have in fact $g'_1=g_4^{\fe(a_2)\pa{\fe(a_3)^{g_5^{-1}}}}$, $g'_2=g_5$ and
 $g'_3=g_6\fe(a_1)\fe(a_2)g_5\pa{\fe(a_3)^{g_5^{-1}}}$.
 }
\label{fig:fp}
\end{figure}

\begin{defn}
 \label{defn:fpnu}
 The previous assignment $\fc\mapsto\fc'$ defines a map of sets
 \[
\fp_\nu\colon\fFN_\nu\bB(\Q_+,G)\to \cB G.
 \]
\end{defn}
To check that the previous is a good definition, we need to verify that the choice of the arcs $\arc_1,\dots,\arc_k$
is not relevant in computing $\fp_\nu(\fc)$. The generator $\gen_i$ is uniquely defined up to conjugation
by a power of the element $[\gamma]\in\fG(P)$ represented by a loop $\gamma$ spinning clockwise around $\cR$.
It follows that $\psi(\gen_i)$ is well-defined, as an element of $G$, up to conjugation by a power of $\psi([\gamma])$,
i.e. up to conjugation by a power of $\totmon(\fc)=\one\in G$: here we use that the total monodromy attains constantly the value
$\one\in G$ on configurations of $\bB(\Q_+,G)$; this shows that $\psi(\gen_i)\in G$ is well-defined
for $1\le i\le k-1$, and $\psi(\gen_k)$ is also uniquely determined by the values $\psi(\gen_i)$ for $1\le i\le k-1$
and by its characterising property. Therefore, $\fp_\nu(\fc)$ is well-defined.

We next check that $\fp_\nu\colon\fFN_\nu\bB(\Q_+,G)\to \cB G$ is continuous. Roughly speaking, $\fp_\nu$ splits a configuration in $\fFN_\nu\bB(\Q_+,G)$ in two parts, the part supported on $\del\cR$ and the part supported on $\mcR$, and it pushes all points in the second part to $0$, thus giving rise to a new configuration supported only on $\del\cR$. Continuity of $\fp_\nu$ depends on the fact that if we perturb a configuration \emph{staying inside the stratum $\fFN_\nu\bB(\Q_+,G)$}, then no point in $\mcR$ can collide with $\del\cR$, and vice versa no point in $\del\cR$ can move in the interior, as this would let the internal total norm jump (down and up, respectively). Here it is important that, working with $\bB(\Q_+,G)$, the local monodromy of a point lying in $\mcR$ is required to have positive norm.

Formally, let $\fc\in \fFN_\nu\bB(\Q_+,G)$ and let $\uU$ be an adapted covering of $P$: in particular,
we have $z_i\in U_i\subset\mcR$ for all $1\le i\le l$.
Denote by $\fc'=\fp_\nu(\fc)\in \cB G$, and let $\uU'$ be the restricted,
adapted covering of $P'$, i.e. $\uU'=(U_{l+1},\dots, U_k)$.
Then $\fp_\nu$ sends the intersection $\fU(\fc;\uU)\cap \fFN_\nu\bB(\Q_+,G)$ inside 
$\fU(\fc';\uU')\subset \cB G$: this essentially follows from the observation that, for $\tfc=(\tP,\tpsi,\tphi)\in\fU(\fc;\uU)\cap \fFN_\nu\bB(\Q_+,G)$, we have $\tP\subset U_1\cup\dots\cup U_l\cup \pa{\uU'\cap\del \cR}$.

\begin{nota}
 \label{nota:Hurnu}
Let $\nu\ge0$, let $\bT\subset\C$ be a contractible subspace containing $*$, let 
$\cX\subset\mathring{\bT}$ be a semialgebraic subspace
and let $X$ be a subspace of $\Hur^{\bT}(\cX;\Q)$.
We denote $X_\nu=\coprod_{a\in\Q_\nu} \pa{X\cap\Hur^{\bT}(\cX;\Q)_a}$.
\end{nota}

\begin{prop}
 \label{prop:fpnubundle}
The map $\fp_\nu\colon\fFN_\nu\bB(\Q_+,G)\to \cB G$ is a bundle map with fibre
$\Hur(\mcR;\Q_+)_\nu$.
The restricted map $\fp_\nu\colon\delfat\fFN_\nu\bB(\Q_+,G)\to \cB G$ is also a bundle map with fibre
$\delfat\Hur(\mcR;\Q_+)_\nu$.
The two bundles admit compatible local trivialisations, i.e. they form a couple of bundles.
\end{prop}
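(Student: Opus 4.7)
I construct explicit local trivialisations of $\fp_\nu$ over a neighbourhood of each point $\fc' \in \cB G$, compatible with the restriction to $\delfat\fFN_\nu$. Fix $\fc' = (P', \psi') \in \cB G$ with $z_k = 0 \in P'$, and choose an adapted covering $\uU' = (U_{l+1}, \dots, U_k)$ of $P'$ in $\C$; set $V' := \fU(\fc';\uU') \cap \cB G$. The goal is a homeomorphism
\[
\Phi\colon V' \times \Hur(\mcR;\Q_+)_\nu \;\longrightarrow\; \fp_\nu^{-1}(V')
\]
commuting with the projections to $V'$, together with its restriction to the sub-bundle.

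\emph{Construction.} Given $(\tilde{\fc}', \fd) \in V' \times \Hur(\mcR;\Q_+)_a$ for some $a \in \Q_\nu$, write $\tilde{\fc}' = (\tP', \tphi')$ and $\fd = (P_\fd, \psi_\fd)$, and set $\tilde{\fc} := \Phi(\tilde{\fc}', \fd)$ by: support $\tP := P_\fd \sqcup \tP'$ (automatically disjoint, since $P_\fd \subset \mcR$ and $\tP' \subset \del\cR$); an admissible generating set for $\fG(\tP)$ chosen so that the generators around interior points come first in the cyclic order, with arcs reaching $*$ without crossing $\tP'$, and the remaining generators around $\tP'$ chosen as in Definition \ref{defn:fpnu}; and a monodromy $\tphi\colon \fG(\tP) \to G$ equal to $\fe \circ \psi_\fd$ on interior generators, to $\tphi'$ on boundary generators other than the one at $0$, and to $\tphi'(\gen_k) \cdot \fe(a)^{-1}$ on the generator at $0$. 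Using $\totmon(\tilde{\fc}') = \one$, the total monodromy of $\tilde{\fc}$ equals $\fe(a) \cdot \one \cdot \fe(a)^{-1} = \one$, hence $\tilde{\fc} \in \Hur(\cR,\del;\Q_+,G)_{0;\one}$; the $\bB(\Q_+,G)$-condition holds because the conjugacy class of the loop enclosing $P_\fd$ is that of $a \in \Q \subset \fQext(\tP)_{\tpsi}$; and $N(\tilde{\fc}) = N(a) = \nu$ with $\fp_\nu(\tilde{\fc}) = \tilde{\fc}'$ by construction.

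\emph{Inverse and continuity.} The inverse $\Phi^{-1}$ sends $\tilde{\fc}$ to $(\fp_\nu(\tilde{\fc}), \fri^\C_\mcR(\tilde{\fc}))$ using the restriction map of \cite[Definition 3.15]{Bianchi:Hur2}. Continuity of both directions is verified locally on normal neighbourhoods: for $\tilde{\fc} \in \fp_\nu^{-1}(V')$, choose an adapted covering $\uU$ of $\tP$ refining $\uU'$ on the boundary part and with interior sets disjoint from $\uU'$. On $\fU(\tilde{\fc};\uU)$, the map $\Phi$ is the composition of the external product of \cite[Definition 5.7]{Bianchi:Hur2} with a right-multiplication by the constant $\fe(a)^{-1}$ at the generator around $0$; since the component label $a \in \Q_\nu$ is locally constant on $\Hur(\mcR;\Q_+)_\nu$, this correction is continuous, and so are $\Phi$ and $\Phi^{-1}$.

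\emph{Sub-bundle compatibility and main obstacle.} Because $\tP' \subset \del\cR \subset \delfat\cR$, the configuration $\Phi(\tilde{\fc}', \fd)$ lies in $\delfat\fFN_\nu\bB(\Q_+,G)$ if and only if some point of $P_\fd$ lies in $\delfat\cR$, equivalently $\fd \in \delfat\Hur(\mcR;\Q_+)_\nu$; so $\Phi$ restricts to a homeomorphism of the sub-bundles, yielding a compatible trivialisation of the pair. The delicate step is the continuous choice of admissible generators for $\fG(\tP)$ as $(\tilde{\fc}',\fd)$ varies, in particular arranging that the arcs for the interior generators avoid the shifting support $\tP'$; this is handled by shrinking $\uU'$ to be disjoint from an adapted covering of $P_\fd$ and taking arcs in the complement, reducing the continuity of $\tphi$ to continuity of $\tphi'$ and $\psi_\fd$ together with the locally constant factor $\fe(a)^{-1}$.
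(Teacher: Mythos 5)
Your approach is genuinely different from the paper's. You construct the local trivialisation $\Phi$ forwards, by gluing a fibre configuration onto a boundary configuration, trivialising over normal neighbourhoods $\fU(\fc';\uU')\cap\cB G$. The paper instead constructs the map in the opposite direction as $\fp_\nu\times\fri^{\C}_{\bT}$, where $\bT=\mcR\cup J\cup\arc_J$ for a fixed closed interval $J\subset(0,1)\times\set{0}$ and a fixed arc $\arc_J$ joining $*$ to the midpoint of $J$, and trivialises over the open sets of configurations whose boundary support misses $J$. The paper's choice has a concrete payoff: the fixed arc $\arc_J$ provides a tether from $*$ into $\mcR$, which is exactly what one needs to make sense of a restriction (or glueing) of monodromies along the interior. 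In your version the interior generators must reach $*$ via arcs avoiding the \emph{moving} boundary support $\tP'$, and the map you call $\fri^{\C}_{\mcR}$ is not actually defined: $\mcR$ does not contain $*$, so one cannot restrict to loops in $\mcR$ without first fixing such a tether. This is a fixable slip, but it does need fixing, and the fix is precisely what the paper's $J$ and $\arc_J$ supply.

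The more serious gap is the continuity of the second coordinate of $\Phi^{-1}$, the projection onto the fibre, which is the real content of the proposition. A priori, a configuration $\hat\fc$ close to $\tilde\fc\in\fFN_\nu\bB(\Q_+,G)$ could have part of its support lying in a boundary neighbourhood $U_i$ ($i>l$) but \emph{inside} $\mcR$ rather than on $\del\cR$; if such points could occur, the restriction to $\mcR$ would jump and $\Phi^{-1}$ would fail to be continuous. The paper rules this out by a norm argument: compatibility of $\hat\fc$ with $\tilde\fc$ over the interior forces the interior points already accounted for to contribute norm exactly $\nu$, so, since $N(\hat\fc)=\nu$ on the stratum $\fFN_\nu$, any further interior points of $\hat P$ would have to be inert, which is excluded in $\bB(\Q_+,G)$. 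Your proposal replaces this argument with the assertion ``and so are $\Phi$ and $\Phi^{-1}$'', while also invoking the external product of \cite[Definition 5.7]{Bianchi:Hur2} (a map $\Hur\times\Ran\to\Hur$, not the disjoint-union $-\sqcup-$ that the glue actually requires). Without the norm-and-no-inert-points computation, the continuity of $\Phi^{-1}$ is not established, and the trivialisation is not shown to be a homeomorphism.
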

\begin{proof}
 Choose a small \emph{closed} interval $J\subset(0,1)\times\set{0}\subset \del\cR$, and choose
 an arc $\arc_J$ joining $*$ with the midpoint
 of $J$. Let $\bT$ be the union $\bT=\mcR\cup J\cup\arc_J\subset\C$, and note that $\bT$ is contractible and contains
 $\mcR$ in its interior. We can define a map of sets
 \[
 \fri_{\bT}^{\C}\colon\fFN_\nu\bB(\Q_+,G)\cap \Hur(\cR\setminus J,\del\cR\setminus J;\Q,G)
 \to\Hur^{\bT}(\mcR;\Q_+)_\nu,
 \]
 in the spirit of \cite[Definition 3.15]{Bianchi:Hur2}.
 To define this map, let $\fc\in\fFN_\nu\bB(\Q_+,G)\cap\Hur(\cR\setminus J,\del\cR\setminus J;\Q,G)$:  using Notation \ref{nota:fc},
 this means that $\fc\in\fFN_\nu\bB(\Q_+,G)$ and the support $P$ of $\fc$ does not intersect $J$.
 The inclusion $\bT\setminus P\subset\CmP$ gives rise to an inclusion of groups $\fG^{\bT}(P\cap\mcR)\subset\fG(P)$
 and, by restriction, an inclusion of PMQs $\fQ^{\bT}_{(\mcR,\emptyset)}(P\cap \mcR)\subset\fQ_{(\cR,\del)}(P)$.
 We define $\fri_{\bT}^{\C}(\fc)$ to be the configuration $\fc'=(P',\psi')$, where $P'=P\cap\mcR$ and $\psi'\colon\fQ^{\bT}_{(\mcR,\emptyset)}(P')\to\Q$
 is the composition of the above inclusion with $\psi\colon\fQ_{(\cR,\del)}(P)\to\Q$.

 To show that $\fri_{\bT}^{\C}$ is continuous at $\fc\in\fFN_\nu\bB(\Q_+,G)\cap\Hur(\cR\setminus J,\del\cR\setminus J;\Q,G)$,
 let $\uU$ be an adapted covering of $P$ with $\uU\subset\C\setminus J$, let $\fc'=\fri^{\C}_{\bT}(\fc)$ as above, and
 let $\uU'$ be the restriction of $\uU$ to $P'\subset P$, i.e. $\uU'$ consists of those components of $\uU$ that intersect
 non-trivially $P'$; equivalently, $\uU'$ consists of those components of $\uU$ that are contained in $\mcR$.
 
 We claim that $\fri^{\C}_{\bT}$ sends $\fU(\fc,\uU)\cap\fFN_\nu\bB(\Q_+,G)$ inside
 $\fU(\fc',\uU')\subset \Hur^{\bT}(\mcR;\Q_+)_\nu$; since every small enough adapted
 covering $\uU'$ of $P'$ with respect to $(\mcR,\emptyset)$ can be extended to an
 adapted covering $\uU$ of $P$ with respect to $(\cR\setminus J,\del\cR\setminus J)$,
 the claim suffices to prove continuity of $\fri^{\C}_{\bT}$.
 
 For the claim, let $\hat\fc=(\hat P,\hat\psi,\hat\phi)\in\fU(\fc,\uU)\cap\fFN_\nu\bB(\Q_+,G)$;
 fix an admissible generating set $\gen_1,\dots,\gen_k$
 of $\fG(P)$ extending an admissible generating set $\gen_1,\dots,\gen_l$ of $\fG^{\bT}(P')$, and regard
 $\gen_1,\dots,\gen_k$ as elements of $\fG(\hat P)$ by the inclusion $\fG(P)\cong\fG(\uU)\subset\fG(\hat P)$.
 Let $\hat P_i=\hat P\cap U_i$ for all $1\le i\le k$, and write $\hat P_i=\set{z_{i,1},\dots,z_{i,k_i}}$.
 Choose an admissible generating set $(\hat\gen_{i,j})_{1\le i\le k,1\le j\le k_i}$ of $\fG(\hat P)$,
 such that the equality $\gen_i=\hat\gen_{i,1}\cdot\dots\cdot\hat\gen_{i,k_i}$ holds in $\fG(\hat P)$ for all $1\le i\le k$. 
 The hypothesis on $\hat\fc$ implies that for all $1\le i\le l$ the product
 $\hat\psi(\hat\gen_{i,1})\dots\hat\psi(\hat\gen_{i,k_i})$ is defined in $\Q$ and is equal to $\psi(\gen_i)$,
 in particular $\sum_{j=1}^{k_i}N(\hat\psi(\hat\gen_{i,j}))=N(\psi(\gen_i))$. Summing over $1\le i\le l$,
 and recalling that $\hat\fc\in \fFN_\nu\bB(\Q_+,G)$, we obtain the equality
 $\sum_{i=1}^l\sum_{j=1}^{k_i}N(\hat\psi(\hat\gen_{i,j}))=\nu$.
 This implies that $\hat P$ can
 only intersect the open sets $U_{l+1},\dots,U_k$ in points of $\del\cR$ or in \emph{inert} points for $\hat\fc$;
 since $\hat\fc$ has no inert point, we have that $\hat P$ is contained in the union $\del\cR\cup \uU'$.
 It follows that $\hat\fc'=\fri^{\C}_{\bT}(\hat\fc)$ is supported on $\uU'$; the fact that
 $\hat\fc'$ is contained in $\fU(\fc',\uU')$ follows now directly from the definition of $\fri^{\C}_{\bT}$
 and from the already mentioned equalities
 $\hat\psi(\hat\gen_{i,1})\cdot\dots\cdot\hat\psi(\hat\gen_{i,k_i})=\psi(\gen_i)$ for $1\le i\le l$.
 
 Now note that the intersection $\fFN_\nu\bB(\Q_+,G)\cap \Hur(\cR\setminus J,\del\cR\setminus J;\Q,G)$ is precisely the preimage along $\fp_\nu$ of the open subspace $\Hur(\del\cR\setminus J;G)_{0;\one}\subset\cB G$. The product map $\fp_\nu\times\fri^{\C}_{\bT}$ gives a homeomorphism
 \[
  \fp_\nu\times\fri^{\C}_{\bT}\ \colon \fp_\nu^{-1}(\Hur(\del\cR\setminus J;G)_{0;\one})\cong \Hur(\del\cR\setminus J;G)_{0;\one}\times \Hur^{\bT}(\mcR;\Q_+)_\nu.
 \]
 Since the open sets $\Hur(\del\cR\setminus J;G)_{0;\one}$ form an open covering
 of $\cB G$, for varying $J$, we obtain that
 $\fp_\nu$ is a bundle map, i.e it admits local trivialisations. The fibre of the bundle
 is homeomorphic to the space $\Hur(\mcR;\Q_+)_\nu$.
 
 The local trivialisation $\fp_\nu\times\fri^{\C}_{\bT}$ restricts to a local trivialisation of 
 the restriction of $\fp_\nu$ to $\delfat\fFN_\nu\bB(\Q_+,G)\subset\fFN_\nu\bB(\Q_+,G)$,
 with restricted fibre $\delfat\Hur(\mcR;\Q_+)_\nu$.
\end{proof}

\section{Rational cohomology}
\label{sec:cohomology}
In this section we assume that $\Q$ is a finite, $\bQ$-Poincar\'e PMQ and $G$ is a finite group,
and compute the rational cohomology of $\bB(\Q_+,G)$. Recall from \cite[Definition]{Bianchi:Hur2} that a PMQ $\Q$ is $\bQ$-Poincar\'e
if $\Q$ is locally finite and for all $a\in\Q$ the space $\Hur(\mcR;\Q_+)_a$ is a
$\bQ$-homology manifold of some dimension: in this case $\Q$ admits an intrinsic norm $h\colon\Q\to\N$
and $\Hur(\mcR;\Q_+)_a$ is an orientable $\bQ$-homology manifold of dimension $2h(a)$ for all $a\in\Q$. See \cite[Proposition 9.7]{Bianchi:Hur2}.

Our interest for the space $\bB(\Q_+,G)$ and its cohomology comes from Theorem \ref{thm:mainHiso}
and Proposition \ref{prop:bBHur}, relating $\bB(\Q_+,G)$ to the topological monoid $\mHurm(\Q)$.
Note that, for a fixed PMQ $\Q$, we are free to choose a group $G$ completing $\Q$ to a PMQ-group pair $(\Q,G,\fe,\fr)$. If $\Q$ is finite we can, for instance, take $G=\cG(\Q)/\cK(\Q)$, which is a finite group; here $\cK(\Q)$ denotes the kernel of the map $\rho\colon\cG(\Q)\to\Aut_{\PMQ}(\Q)^{op}$, giving the right action of $\cG(\Q)$ on $\Q$ by conjugation: if $\Q$ is finite, then $\Aut_{\PMQ}(\Q)^{op}$ is also finite and contains a subgroup isomorphic to $\cG(\Q)/\cK(\Q)$. See also \cite[Lemma 2.13]{Bianchi:Hur1}.
Thus, if we are given a \emph{finite}, $\bQ$-Poincar\'e PMQ $\Q$,
we can complete $\Q$ to a PMQ-group pair by adjoining a suitable finite group $G$.

Recall from \cite[Definition 4.29]{Bianchi:Hur1} that $\cA(\Q)\subset\bQ[\Q]$ is defined as the subring of the
PMQ-group ring $\bQ[\Q]$ consisting of the invariants under conjugation by $G$:
 \[
  \cA(\Q)=\bQ[\Q]^G.
 \]
As a $\bQ$-vector space, $\cA(\Q)$ is spanned by elements $\sca{S}=\sum_{a\in S}\sca{a}$,
for each conjugacy class $S\subset\Q$. In this section we consider
$\bQ[\Q]^G$ as a graded, associative ring, by putting the generator $\sca{a}$ in degree $2h(a)$,
for all $a\in\Q$; similarly
$\cA(\Q)$ is a graded ring with
$\sca{S}$ in degree $2h(a)$, for any $a\in S$. By \cite[Lemma 4.31]{Bianchi:Hur1} the ring
$\cA(\Q)$ is a commutative ring, hence by our choice of degrees it is a graded-commutative
ring, supported in even degrees. We will prove the following theorem.
\begin{thm}
 \label{thm:HbBQG}
 Let $(\Q,G)$ be a PMQ-group pair with $\Q$ finite and $\bQ$-Poincar\'e and with $G$ finite.
 Then there is an isomorphism of rings
 \[
  H^*(\bB(\Q_+,G);\bQ)\cong \cA(\Q).
 \]
\end{thm}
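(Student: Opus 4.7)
The plan is to run the Leray spectral sequence associated with the norm filtration $\FN_\bullet\bB(\Q_+,G)$ introduced in Section \ref{sec:BQG}. Its $E_1$-page reads
\[
E_1^{\nu,q}=H^{\nu+q}(\FN_\nu\bB(\Q_+,G),\FN_{\nu-1}\bB(\Q_+,G);\bQ),
\]
converging to $H^{\nu+q}(\bB(\Q_+,G);\bQ)$. I will show that $E_1^{\nu,q}$ is supported on the single diagonal $q=\nu$ with value $\bQ[\Q_\nu]^G$; all differentials then vanish for bidegree reasons, the spectral sequence collapses and assembles to $\bigoplus_\nu\bQ[\Q_\nu]^G=\cA(\Q)$ as a graded $\bQ$-vector space.

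The first step is an excision-type reduction
\[
H^*(\FN_\nu,\FN_{\nu-1};\bQ)\cong H^*(\fFN_\nu,\delfat\fFN_\nu;\bQ),
\]
which I plan to obtain, in the spirit of Proposition \ref{prop:bBHur}, by proving that $\FN_{\nu-1}\hookrightarrow\FNfat_{\nu-1}$ is a homotopy equivalence (via a suitable enlargement of $\cH^\bB$ localised near the centre) and that $\FN_{\nu-1}$ lies in the interior of $\FNfat_{\nu-1}$ inside $\FN_\nu$. The right-hand side is then the abutment of the Serre spectral sequence of the pair-bundle $\fp_\nu\colon(\fFN_\nu,\delfat\fFN_\nu)\to\cB G$ from Proposition \ref{prop:fpnubundle}, with $E_2^{p,q}=H^p(\cB G;\underline{H^q}(\text{fiber pair}))$.

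The fiber pair splits as $\bigsqcup_{a\in\Q_\nu}(\Hur(\mcR;\Q_+)_a,\delfat\Hur(\mcR;\Q_+)_a)$. By the $\bQ$-Poincare hypothesis, each $\Hur(\mcR;\Q_+)_a$ is a connected, orientable rational homology manifold of dimension $2h(a)=2\nu$; the complement of $\delfat\Hur(\mcR;\Q_+)_a$ is homotopy equivalent to the whole space by rescaling $\mcR$ into the central subsquare $\tfrac{\zcentre}{2}+\tfrac12\mcR$, so Poincare--Lefschetz duality gives
\[
H^k(\Hur(\mcR;\Q_+)_a,\delfat\Hur(\mcR;\Q_+)_a;\bQ)\cong H_c^k(\Hur(\mcR;\Q_+)_a;\bQ),
\]
which is $\bQ$ for $k=2\nu$ and zero otherwise. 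The corresponding local system on $\cB G\simeq K(G,1)$ (Proposition \ref{prop:FNzeroBG}) is $\bQ[\Q_\nu]$ with $G$-action by conjugation; since $G$ is finite, rational group cohomology vanishes in positive degrees, so the Serre spectral sequence collapses to $H^{2\nu}(\fFN_\nu,\delfat\fFN_\nu;\bQ)=\bQ[\Q_\nu]^G$ concentrated in total degree $2\nu$, and feeding this into the filtration spectral sequence delivers the additive isomorphism.

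The main obstacle will be promoting this vector-space identification to the claimed ring isomorphism. My plan is to realise $\sca{S}\in\cA(\Q)$ as a Thom-type class $\xi_S\in H^{2h(a)}(\bB(\Q_+,G);\bQ)$ (for any $a\in S$), constructed from the fundamental class of the oriented $\bQ$-Poincare stratum $\Hur(\mcR;\Q_+)_a$ via Poincare--Lefschetz duality and the edge map of the Serre spectral sequence on $\fFN_{h(a)}$; the sum over a conjugacy class realises $G$-invariance. Verifying $\xi_S\cup\xi_{S'}=\xi_{S\cdot S'}$ should then reduce to a transverse intersection calculation: two such dual strata meet along configurations whose local monodromies can be multiplied in $\Q$ to an element of $SS'$, and the obstruction for such a product to exist in the PMQ translates to rational vanishing of the intersection. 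The technical heart of this step, and the main difficulty I anticipate, lies in controlling contributions from lower-filtration strata and confirming that the multiplicative structure on the filtration spectral sequence (via concatenation compatibility of the norm filtration) indeed reproduces the PMQ-ring product on $\cA(\Q)$.
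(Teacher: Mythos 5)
Your additive computation matches the paper's argument almost step for step: Leray spectral sequence of the norm filtration, excision via $\FNfat_{\nu-1}$ to reduce to $H^*(\fFN_\nu,\delfat\fFN_\nu)$, Serre spectral sequence of the pair-bundle $\fp_\nu$ over $\cB G\simeq K(G,1)$, the fiber cohomology concentrated in degree $2\nu$ by the $\bQ$-Poincar\'e hypothesis (the paper invokes \cite[Lemma 9.5]{Bianchi:Hur2} where you appeal to Poincar\'e--Lefschetz duality directly, but this is the same content), vanishing of higher group cohomology of the finite $G$ over $\bQ$, and finally the collapse of the outer spectral sequence on bidegree grounds. That much is fine.

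The multiplicative structure, however, is where the paper does essentially all of its work, and your proposal only gestures at it. Saying you will ``realise $\sca{S}$ as a Thom-type class'' and that the cup product ``should reduce to a transverse intersection calculation'' is a plan, not a proof, and you yourself flag it as the ``technical heart'' that you anticipate being difficult. Concretely what is missing: (i) the cup product on the Leray spectral sequence of a filtered space is not automatically computable fiberwise on a single stratum -- you must find a way to compare $H^{2\nu}\otimes H^{2\nu'}\to H^{2\nu+2\nu'}$ at the level of $\FN_{\nu+\nu'}$ with a product on $(\fFN_{\nu+\nu'},\delfat\fFN_{\nu+\nu'})$; the paper achieves this by introducing the auxiliary subspaces $\FNlfat$, $\FNrfat$, $\FNlr$ built around the two disjoint subsquares centred at $\zcentrel$ and $\zcentrer$, proving $\FNfat_{\nu+\nu'-1}\subset\FNlfat\cup\FNrfat$, and assembling the two excision diagrams of Propositions \ref{prop:firstdiagram} and \ref{prop:seconddiagram}; (ii) one then still has to identify the fiberwise pairing with the PMQ product, which the paper does by analysing the bundle $\fFN_{\nu,\nu'}\to\cB G$ with fiber $\Hur(\mcR;\Q_+)_{\nu,\nu'}=\coprod_{(a,b)\in\Q_{\nu,\nu'}}\Hur(\mcRl;\Q_+)_a\times\Hur(\mcRr;\Q_+)_b$ and a Poincar\'e--Lefschetz duality argument that compares $\dellrfat$ and $\delfat$ boundary pieces. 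None of this is supplied or replaced by another mechanism in your proposal. Also note that $\sca{S}\cdot\sca{S'}$ is in general a $\bQ$-linear combination $\sum_T c_T\sca{T}$ (with coefficients counting pairs $(a,b)\in S\times S'$ with $ab$ defined and conjugate to a given $c$), not a single class $\xi_{SS'}$, so the target of the intersection computation needs to be stated more carefully before it can even be checked.
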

In this entire section we use the abbreviation $\bB=\bB(\Q_+,G)$.
\subsection{Two spectral sequence arguments}
Since the space $\bB$ is equipped with a filtration by subspaces $\FN_\nu\bB$,
we can compute $H^*(\bB;\bQ)$ by the associated Leray spectral sequence, whose first page reads
$E_1^{p,\nu}=H^{p+\nu}(\FN_\nu\bB,\FN_{\nu-1}\bB)$.
\begin{lem}
\label{lem:excision}
 For $\nu\ge0$ the inclusion $\FN_{\nu-1}\bB\subset\FNfat_{\nu-1}\bB$ is a homotopy equivalence.
 Moreover we have cohomology isomorphisms
 \[
  H^*(\FN_\nu\bB,\FN_{\nu-1}\bB;\bQ)\cong
  H^*(\FN_\nu\bB,\FNfat_{\nu-1}\bB;\bQ)\cong
  H^*(\fFN_\nu\bB,\delfat\fFN_{\nu}\bB;\bQ).
 \]
\end{lem}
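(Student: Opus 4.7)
The plan is to build a homotopy equivalence $\FN_{\nu-1}\bB\simeq\FNfat_{\nu-1}\bB$ using the homothety--collapse $\cH^{\bB}(-,s)$ from Definition~\ref{defn:cHbB}, and then invoke excision to translate relative cohomology from the pair $(\FN_\nu\bB,\FNfat_{\nu-1}\bB)$ to the stratum pair $(\fFN_\nu\bB,\delfat\fFN_\nu\bB)$.

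For the homotopy equivalence I would use $\cH^{\bB}(-,s)$ for $s\in[1,2]$. The map $\cH^{\bB}(-,s)$ expands the open square of side $1/s$ centred at $\zcentre$ homothetically onto $\mcR$ and crushes its complement in $\cR$ onto $\del\cR$; hence $N(\cH^{\bB}(\fc,s))$ equals the total $\psi$-norm of points of the support of $\fc$ lying inside that open square, and by definition $\FNfat_{\nu-1}\bB$ consists of those $\fc\in\FN_\nu\bB$ whose norm inside the open central square $(\tfrac14,\tfrac34)^2$ is at most $\nu-1$. A direct calculation shows $N(\cH^{\bB}(\cH^{\bB}(\fc,s),2))$ equals the $\psi$-norm of $\fc$ inside the open square of side $1/(2s)$ centred at $\zcentre$, which is non-increasing in $s$; therefore if $\fc\in\FNfat_{\nu-1}\bB$ the entire path $s\mapsto\cH^{\bB}(\fc,s)$ lies in $\FNfat_{\nu-1}\bB$, and similarly $\FN_{\nu-1}\bB$ is preserved since $N$ itself is non-increasing under $\cH^{\bB}(-,s)$. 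This exhibits $\cH^{\bB}(-,2)|_{\FNfat_{\nu-1}\bB}\colon\FNfat_{\nu-1}\bB\to\FN_{\nu-1}\bB$ as a homotopy inverse to the inclusion. The first cohomology isomorphism is then immediate from the long exact sequence of the triple $(\FN_\nu\bB,\FNfat_{\nu-1}\bB,\FN_{\nu-1}\bB)$.

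For the second cohomology isomorphism I would apply excision with $C=\FN_{\nu-1}\bB$, $B=\FNfat_{\nu-1}\bB$ and $A=\FN_\nu\bB$; inspection gives $A\setminus C=\fFN_\nu\bB$ and $B\setminus C=\FNfat_{\nu-1}\bB\cap\fFN_\nu\bB=\delfat\fFN_\nu\bB$. Since $C$ is already closed in $A$ by Lemma~\ref{lem:FNclosed}, the only hypothesis to verify is $C\subset\mathrm{int}_A(B)$, and this is the main technical point of the proof. Given $\fc\in\FN_{\nu-1}\bB$ with support $P=\{z_1,\dots,z_k\}$ as in Notation~\ref{nota:fc}, I would choose an adapted covering $\uU=(U_1,\dots,U_k)$ of $P$ fine enough that $U_i\subset\mcR$ for $1\le i\le l$ and $U_i\cap[\tfrac14,\tfrac34]^2=\emptyset$ for $l+1\le i\le k$; the latter is possible because $\{z_{l+1},\dots,z_k\}\subset\del\cR$ and $\del\cR\cap[\tfrac14,\tfrac34]^2=\emptyset$. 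For any $\fc'\in\fU(\fc,\uU)\cap\FN_\nu\bB$, the argument of Lemma~\ref{lem:FNclosed} gives the identity $\sum_j N(\psi'(\gen'_{i,j}))=N(\psi(\gen_i))$ for each $1\le i\le l$, while components $U_{l+1},\dots,U_k$ contribute nothing to $N(\cH^{\bB}(\fc',2))$ by construction. Summing yields $N(\cH^{\bB}(\fc',2))\le N(\fc)\le\nu-1$, so $\fc'\in\FNfat_{\nu-1}\bB$, completing the verification and thereby the proof.
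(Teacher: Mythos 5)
Your proof is correct and rests on the same two ingredients as the paper's: the radial homothety--collapse $\cH^{\bB}(-,s)$ of Definition~\ref{defn:cHbB} for the homotopy equivalence, and excision together with the closedness of $\FN_{\nu-1}\bB$ (Lemma~\ref{lem:FNclosed}) for the second isomorphism. The paper packages it slightly differently: it introduces a window function $\mathfrak{W}_\nu(\fc)=\sup\{\epsilon\in(1/2,1]\,:\,\cH^{\bB}_*(\fc,1/\epsilon)\in\FN_{\nu-1}\bB\}$ and obtains a genuine deformation retraction $\cH^{\bB}_*(\fc,1-s+s/\mathfrak{W}_\nu(\fc))$ fixing $\FN_{\nu-1}\bB$ pointwise (you get only a homotopy equivalence via the rigid path $s\mapsto\cH^{\bB}(-,s)$, which suffices), and for the excision hypothesis it simply observes that $(\mathfrak{W}_\nu)^{-1}((1/2,1])$ is an open set sandwiched between $\FN_{\nu-1}\bB=(\mathfrak{W}_\nu)^{-1}(1)$ and $\FNfat_{\nu-1}\bB$, whereas you verify $\FN_{\nu-1}\bB\subset\mathrm{int}_A(\FNfat_{\nu-1}\bB)$ directly with adapted coverings by rerunning the computation from Lemma~\ref{lem:FNclosed}. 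Your adapted-covering check makes explicit a point the paper leaves tacit (the openness of that sublevel set), so the variation is harmless and slightly more self-contained; note also that both arguments silently use that $\cH^{\bB}_*(-,s)$ preserves $\bB(\Q_+,G)$ for $s\ge 1$, a fact flagged only in the remark after Definition~\ref{defn:delfat}.
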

\begin{proof}
 Recall Definitions \ref{defn:cHbB} and \ref{defn:intwindow}.
 For $\fc\in\FN_\nu\bB$ let $\mathfrak{W}_\nu(\fc)$ be the supremum in $[1/2,1]$ of all $1/2<\epsilon<1$
 for which $\cH^{\bB}_*(\fc,1/\epsilon)\in\FN_{\nu-1}\bB$. We define a homotopy
 \[
  H^{\bB}_\nu\colon\FN_\nu\bB\times[0,1]\to\FN_\nu\bB,\quad
  (\fc,s)\mapsto \cH^{\bB}_*\pa{\fc , 1-s+\frac{s}{\mathfrak{W}_\nu(\fc)}}.
 \]
 The homotopy $H^{\bB}_\nu$ restricts to a deformation retraction of $\FNfat_{\nu-1}\bB$
 onto $\FN_{\nu-1}\bB$, whence the first cohomology isomorphism follows.
 The second cohomology isomorphism follows from excision: in fact,
 $\FN_{\nu-1}\bB=(\mathfrak{W}_\nu)^{-1}(1)$
 and the open set $(\mathfrak{W}_\nu)^{-1}((1/2,1])$ is contained in $\FNfat_{\nu-1}(\bB)$,
 so we can apply excision.
\end{proof}
We can now focus on the relative cohomology groups $H^*(\fFN_\nu\bB,\delfat\fFN_{\nu}\bB;\bQ)$.
\begin{prop}
\label{prop:Serre}
For $\nu\ge0$, the cohomology groups $H^*(\fFN_\nu\bB,\delfat\fFN_\nu\bB;\bQ)$ are concentrated in degree $*=2\nu$;
the group $H^{2\nu}(\fFN_\nu\bB,\delfat\fFN_{\nu}\bB;\bQ)$ is isomorphic to $\cA(\Q)_{2\nu}$, i.e.
the degree-$2\nu$ part of $\cA(\Q)$.
\end{prop}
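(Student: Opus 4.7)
The plan is to combine Proposition \ref{prop:fpnubundle} with the Serre spectral sequence of a pair of bundles, exploiting that $\cB G$ is a $K(G,1)$ with $G$ finite so that $\bQ$-cohomology above degree zero vanishes.

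First, I would invoke Proposition \ref{prop:fpnubundle} to obtain the pair of bundles
\[
\fp_\nu\colon\pa{\fFN_\nu\bB,\delfat\fFN_\nu\bB}\to \cB G
\]
with fibre pair $\pa{\Hur(\mcR;\Q_+)_\nu, \delfat\Hur(\mcR;\Q_+)_\nu}$, and write the associated Serre spectral sequence of the pair
\[
E_2^{p,q}=H^p\pa{\cB G;\cH^q}\Rightarrow H^{p+q}\pa{\fFN_\nu\bB,\delfat\fFN_\nu\bB;\bQ},
\]
where $\cH^q$ is the local system on $\cB G$ whose stalk over the basepoint is $H^q$ of the fibre pair with $\bQ$ coefficients, carrying the induced monodromy action of $\pi_1(\cB G)\cong G$ (Proposition \ref{prop:FNzeroBG}). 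Since $G$ is finite and $|G|$ is invertible in $\bQ$, the higher group cohomology $H^p(G;V)$ vanishes for every $\bQ[G]$-module $V$ and every $p\ge 1$. The spectral sequence therefore collapses to the $p=0$ column, giving the $G$-invariants isomorphism
\[
H^*(\fFN_\nu\bB,\delfat\fFN_\nu\bB;\bQ)\cong H^*\pa{\Hur(\mcR;\Q_+)_\nu,\delfat\Hur(\mcR;\Q_+)_\nu;\bQ}^G.
\]

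Second, I would decompose the fibre pair as the disjoint union over $a\in\Q_\nu$, which splits the right-hand side as $\bigoplus_{a\in\Q_\nu} H^*(\Hur(\mcR;\Q_+)_a,\delfat\Hur(\mcR;\Q_+)_a;\bQ)$. The $G$-action permutes the summands by the conjugation action on $\Q_\nu$, so the $G$-orbits on the indexing set are precisely the conjugacy classes of $\Q$ sitting in norm degree $\nu$.

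Third, for each fixed $a\in\Q_\nu$ I would compute the relative cohomology $H^*(\Hur(\mcR;\Q_+)_a,\delfat\Hur(\mcR;\Q_+)_a;\bQ)$ and show it is concentrated in top degree $2\nu=2h(a)$, where it is one-dimensional. This uses that $\Q$ is $\bQ$-Poincaré, so by \cite[Proposition 9.7]{Bianchi:Hur2} the space $\Hur(\mcR;\Q_+)_a$ is a connected, orientable $\bQ$-homology manifold of dimension $2h(a)$. The homotopy $\cH^{\bB}$ of Definition \ref{defn:cHbB} identifies the pair, up to the deformation retraction provided by a variant of Lemma \ref{lem:excision}, with the compactly supported cohomology $H^*_c(\Hur(\mcR;\Q_+)_a;\bQ)$; Poincaré duality for orientable $\bQ$-homology manifolds then converts this to $H_{2\nu-*}(\Hur(\mcR;\Q_+)_a;\bQ)$. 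Concentration in degree $2\nu$ is thus equivalent to $\bQ$-acyclicity of $\Hur(\mcR;\Q_+)_a$, and this will be the main technical obstacle: one needs to use the geometry of the rectangle $\mcR$, together with the local finiteness and $\bQ$-Poincaré hypotheses, to produce a contracting deformation of $\Hur(\mcR;\Q_+)_a$ — for instance by radially concentrating all points of a configuration near $\zcentre$ and invoking that the resulting ``small'' configurations form a $\bQ$-acyclic space.

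Finally, assembling the previous steps, the $G$-invariants of a direct sum of one-dimensional pieces indexed by $\Q_\nu$, with $G$ acting by permuting the summands along conjugation orbits, produces a $\bQ$-vector space with basis indexed by conjugacy classes $S\subset\Q$ of norm $\nu$, namely the classes $\sca{S}=\sum_{a\in S}\sca{a}$. By the very definition of $\cA(\Q)=\bQ[\Q]^G$ and the grading convention $\deg\sca{a}=2h(a)$, this is $\cA(\Q)_{2\nu}$, completing the proof.
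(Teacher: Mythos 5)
Your overall strategy — Serre spectral sequence of the pair of bundles $\fp_\nu$, collapse of all columns $p\ge 1$ because $G$ is finite and coefficients are $\bQ$, then $G$-invariants of the permutation representation $\bigoplus_{a\in\Q_\nu}\bQ$ — is exactly the paper's, and your final identification with $\cA(\Q)_{2\nu}$ via conjugacy classes is identical. The one place where you diverge is the fibre computation. The paper first uses the radial homotopy $\cH^{\bB}$ and excision to pass from $\pa{\Hur(\mcR;\Q_+)_a,\delfat\Hur(\mcR;\Q_+)_a}$ to the "closed" pair $\pa{\Hur(\cR;\Q_+)_a,\del\Hur(\cR;\Q_+)_a}$, and then simply cites \cite[Lemma 9.5]{Bianchi:Hur2}, which packages the $\bQ$-Poincar\'e hypothesis precisely into the statement that this relative cohomology is one copy of $\bQ$ concentrated in degree $2h(a)$. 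You instead propose to re-derive this by converting $H^*(\Hur(\mcR;\Q_+)_a,\delfat;\bQ)$ into $H^*_c(\Hur(\mcR;\Q_+)_a;\bQ)$, applying Poincar\'e--Lefschetz duality for orientable $\bQ$-homology manifolds (cf.\ \cite[Proposition 9.7]{Bianchi:Hur2}), and reducing to the $\bQ$-acyclicity of $\Hur(\mcR;\Q_+)_a$. That route is sound, and the ``obstacle'' you flag is not really one: the scaling homotopy $z\mapsto\zcentre+t(z-\zcentre)$, $t\in[0,1]$, together with the functoriality of Hurwitz spaces, gives a genuine deformation retraction of $\Hur(\mcR;\Q_+)_a$ onto the single configuration supported at $\zcentre$ with monodromy $a$ (or the empty configuration when $a=\one$), so the fibre is in fact contractible and no special $\bQ$-Poincar\'e or local-finiteness input is needed for that step; those hypotheses are only used to get the orientable $\bQ$-homology-manifold structure of dimension $2h(a)$. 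So your approach is correct and somewhat more self-contained (it does not lean on \cite[Lemma 9.5]{Bianchi:Hur2} as a black box, though it does rely on the duality machinery the paper later deploys for the cup-product computation anyway); the price is that you must carry out the excision-plus-retraction identifications carefully and supply the contraction argument explicitly rather than citing it.
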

\begin{proof}
We use the Serre spectral sequence $\mathcal{E}(\nu)$ associated with the couple of bundles
$\fp_\nu\colon(\fFN_\nu\bB,\delfat\fFN_{\nu}\bB)\to\cB G$: its second page reads
\[
 \mathcal{E}(\nu)_2^{p,q}=H^p\pa{\cB G;H^q\pa{\Hur(\mcR;\Q_+)_\nu, \delfat\Hur(\mcR;\Q_+)_\nu;\bQ}}.
\]
The first step is to compute $H^*(\Hur(\mcR;\Q_+)_a,\delfat\Hur(\mcR;\Q_+)_a;\bQ)$ for $a\in\Q_\nu$,
and the argument for this will be similar to the proof of Lemma \ref{lem:excision}.
Consider the closed unit square $\cR$ and
define $\delfat\Hur(\cR;\Q_+)_a:=\Hur(\cR;\Q_+)_a\setminus \Hur((\frac {\zcentre}2+\frac 12\mcR);\Q_+)_a$,
and $\del\Hur(\cR;\Q_+)_a:=\Hur(\cR;\Q_+)_a\setminus \Hur(\mcR;\Q_+)_a$.
The subspace $\delfat\Hur(\cR;\Q_+)_a\subset \Hur(\cR;\Q_+)_a$ contains configurations
whose support intersects $\delfat\cR$ (see Definition \ref{defn:delfat}), whereas $\del\Hur(\cR;\Q_+)_a\subset \Hur(\cR;\Q_+)_a$ contains configurations whose support intersects $\del\cR$.

Recall Definition \ref{defn:cHbB}: for all $s\ge1$ the map $\cH^{\bB}(-,s)\colon\C\to\C$
is a lax endomorphism of the nice couple $(\cR,\emptyset)$; if we consider
$\Hur(\cR;\Q_+)_a$ as a connected component of $\Hur(\cR;\hQ_+)$, under the inclusion $\Q\subset\hQ$,
we obtain by functoriality a homotopy
\[
 \cH^{\bB}_*\colon\Hur(\cR;\Q_+)_a\times[1,\infty)\to\Hur(\cR;\Q_+)_a.
\]
For $\fc\in\Hur(\cR;\Q_+)_a$ denote by $\mathfrak{W}_a(\fc)$ the supremum in
$[1/2,1]$ of all $1/2<\epsilon<1$ for which $\cH^{\bB}_*(\fc,1/\epsilon)\in\del\Hur(\cR;\Q_+)_a$.
We define a homotopy
\[
H^{\bB}_a\colon\Hur(\cR;\Q_+)_a\times[0,1]\to\Hur(\cR;\Q_+)_a,\quad\quad
(\fc,s)\mapsto \cH^{\bB}_*\pa{\fc\,,\,1-s+\frac{s}{\mathfrak{W}_a(\fc)}}.
\]
The homotopy $H^{\bB}_a$ restricts to a deformation retraction of $\delfat\Hur(\cR;\Q_+)_a$
onto $\del\Hur(\cR;\Q_+)_a$; moreover the subspace $\del\Hur(\cR;\Q_+)_a$ of $\Hur(\cR;\Q_+)_a$ is contained in the interior
of $\delfat\Hur(\cR;\Q_+)_a$, and
\[
\delfat\Hur(\cR;\Q_+)_a\setminus\del\Hur(\cR;\Q_+)_a=\delfat\Hur(\mcR;\Q_+)_a.
\]
We thus obtain cohomology isomorphisms
\[
\begin{split}
 H^*(\Hur(\mcR;\Q_+)_a,\delfat\Hur(\mcR;\Q_+)_a;\bQ) \cong &  H^*(\Hur(\cR;\Q_+)_a,\delfat\Hur(\cR;\Q_+)_a;\bQ)
\\[.2cm]
 \cong & H^*(\Hur(\cR;\Q_+)_a,\del\Hur(\cR;\Q_+)_a;\bQ). 
\end{split}
\]
Since $\Q$ is $\bQ$-Poincar\'e, as a consequence of \cite[Lemma 9.5, Proposition 9.7]{Bianchi:Hur2} the cohomology
$H^*(\Hur(\cR;\Q_+)_a,\del\Hur(\cR;\Q_+)_a;\bQ)$ vanishes in degrees $*\neq 2\nu=2h(a)$,
and it is equal to $\bQ$ in degree $*=2\nu$. Going back to the Serre spectral sequence,
the group $\mathcal{E}(\nu)_2^{p,q}$ vanishes for $q\neq 2\nu$, and
$\mathcal{E}(\nu)_2^{p,2\nu}$ is equal to the twisted cohomology group
$H^p\pa{\cB G;\oplus_{a\in\Q_\nu}\bQ}$; this already shows that the spectral sequence
collapses on its second page. Moreover, since $G$ is a finite group and we are considering
twisted cohomology with coefficients in a $G$-representation over $\bQ$, all cohomology
groups except possibly $H^0$ vanish, i.e. the entire page $\mathcal{E}(\nu)_2$ vanishes
except possibly $\mathcal{E}(\nu)_2^{0,2\nu}=H^0\pa{\cB G;\bigoplus_{a\in\Q_\nu}\bQ}$.

The action of $G$ on $\bigoplus_{a\in\Q_\nu}\bQ$ is the $\bQ$-linearisation of the action of $G$
on the set $\Q_\nu$ by conjugation; the invariants of the 
action of $G$ on $\bigoplus_{a\in\Q_\nu}\bQ$
are therefore the $\bQ$-vector space spanned by conjugacy classes of $\Q$ of norm $\nu$: this vector
space is isomorphic to the degree-$2\nu$ part of $\cA(\Q)$.
\end{proof}
Proposition \ref{prop:Serre} implies that the $E_1$-page of the Leray spectral sequence
associated with the filtered space $\bB$ is supported on the main diagonal, i.e. $E_1^{p,\nu}=0$
whenever $p\neq\nu$. This implies that the spectral sequence collapses on its first page
and, thus, we obtain an isomorphism of graded $\bQ$-vector spaces
 \[
  H^*(\bB(\Q_+,G);\bQ)\cong \cA(\Q).
 \]
It will be convenient to specify a particular isomorphism of graded $\bQ$-vector spaces.
Recall that, since $\Q$ is $\bQ$-Poincar\'e, it is locally finite and coconnected;
in particular, for all $\nu\ge0$ and for all $b\in\Q_\nu$ there is a canonical fundamental class
\[
[\Arr(\Q)_b,\NAdm(\Q)_b]\in H_{2\nu}\pa{|\Arr(\Q)_b|,|\NAdm(\Q)_b|;\bQ},
\]
see \cite[Definition 6.25]{Bianchi:Hur1}. Recall also the map of pairs
\[
\upsilon=\upsilon_b\colon\pa{|\Arr(\Q)_b|,|\NAdm(\Q)_b|}\to\pa{\Hur(\cR;\Q_+)_b;\del\Hur(\cR;\Q_+)_b};
\]
by \cite[Lemma 8.23]{Bianchi:Hur2} $\upsilon$ is a continuous bijection, and the hypothesis
that $\Q$ is locally finite implies that $|\Arr(\Q)_b|$ is compact, hence
$\upsilon_b$ is a homeomorphism. We thus obtain a fundamental class
\[
 [\Hur(\cR;\Q_+)_b;\del\Hur(\cR;\Q_+)_b]\in H_{2\nu}\pa{\Hur(\cR;\Q_+)_b;\del\Hur(\cR;\Q_+)_b;\bQ}.
\]
Using the homotopy equivalences of pairs
\[
(\Hur(\cR;\Q_+)_b;\del\Hur(\cR;\Q_+)_b)\simeq
(\Hur(\cR;\Q_+)_b;\delfat\Hur(\cR;\Q_+)_b)
\]
and excision we obtain a fundamental class
\[
 [\Hur(\mcR;\Q_+)_b;\delfat\Hur(\mcR;\Q_+)_b]\in H_{2\nu}\pa{\Hur(\mcR;\Q_+)_b;\delfat\Hur(\mcR;\Q_+)_b;\bQ}.
\]

\begin{nota}
 Let $\sca{S}\in\cA(\Q)_{2\nu}$ be the generator corresponding to the conjugacy class $S\subset\Q_\nu$.
 We regard $\sca{S}$ as the (unique) cohomology class
 \[
 \sca{S}\in H^{2\nu}\pa{\Hur(\mcR;\Q_+)_\nu, \delfat\Hur(\mcR;\Q_+)_\nu;\bQ}
 \]
 satisfying the following property: for all $b\in\Q_\nu$,
 the Kronecker pairing of $\sca{S}$ with the fundamental homology class
 $[\Hur(\mcR;\Q_+)_b,\delfat\Hur(\mcR;\Q_+)_b]$
 is $1$ if $b\in S$ and is $0$ if $b\notin S$.
 
 Note that $\sca{S}$ is invariant under the action of $G$ by conjugation, hence $\sca{S}$ corresponds
 to a cohomology class in $H^{2\nu}\pa{\fFN_\nu\bB,\delfat\fFN_\nu\bB;\bQ}$,
 which we also denote $\sca{S}$. Finally, we use the canonical isomorphisms
 \[
   H^{2\nu}\pa{\fFN_\nu\bB,\delfat\fFN_\nu\bB;\bQ}\overset{\cong}{\leftarrow}
   H^{2\nu}\pa{\FN_\nu\bB,\delfat\FNfat_{\nu-1}\bB;\bQ}\overset{\cong}{\to}
   H^{2\nu}\pa{\FN_\nu\bB;\bQ}\overset{\cong}{\leftarrow}
   H^{2\nu}\pa{\bB;\bQ}
 \]
 to regard $\sca{S}$ as a cohomology class in $H^{2\nu}(\bB(\Q_+,G);\bQ)$.
\end{nota}
 
\subsection{A strategy to compute the cup product}
We fix $\nu,\nu'\ge0$ throughout the rest of the section; our aim is to compute the cup product
$H^{2\nu}(\bB;\bQ)\otimes H^{2\nu'}(\bB;\bQ)\to H^{2\nu+2\nu'}(\bB;\bQ)$.
We fix $\sca{S}\in H^{2\nu}(\bB;\bQ)$ and $\sca{S'}\in H^{2\nu'}(\bB;\bQ)$:
our aim is to express the cup product $\sca{S}\smile\sca{S'}\in H^{2\nu+2\nu'}(\bB;\bQ)$ as a linear combination
of generators $\sca{T}$, for $T$ varying among conjugacy classes of $\Q$ contained in $\Q_{\nu+\nu'}$.

The restriction map $H^*(\bB;\bQ)\to H^*(\FN_{\nu+\nu'}\bB;\bQ)$
is an isomorphism in degrees $*\leq 2\nu+2\nu'$:
therefore, it suffices to compute the cup product $H^{2\nu}\otimes H^{2\nu'}\to H^{2\nu+2\nu'}$ for the space
$\FN_{\nu+\nu'}\bB$. In the rest of the subsection we use the abbreviation $\FN_\bullet=\FN_\bullet\bB$.

The argument to compute the cup product on $\FN_{\nu+\nu'}$ is based on certain
subspaces $\FNlr$, $\FNlfat$ and $\FNrfat$ of $\FN_{\nu+\nu'}$; there are
inclusions $\FN_{\nu+\nu'-1}\subset\FNlr$ as well as $\FN_{\nu-1}\subset\FNlfat$ and $\FN_{\nu'-1}\subset\FNrfat$, and we will prove that the last two
inclusions are, in fact, homotopy equivalences.
Postponing the actual definition of $\FNlr$, $\FNlfat$ and $\FNrfat$, we introduce some notation.
\begin{nota}
\label{nota:dellrfat}
We introduce several subspaces of $\FN_{\nu+\nu'}$:
\[
\FNlrfat=\FNlfat\cup\FNrfat;\quad\quad  \fFN_{\nu,\nu'}=\FN_{\nu+\nu'}\setminus\FNlr;
\]
\[
\dellfat\fFN_{\nu+\nu'}=\fFN_{\nu+\nu'}\cap \FNlfat; \quad \quad
\delrfat\fFN_{\nu+\nu'}=\fFN_{\nu+\nu'}\cap \FNrfat;\quad\quad
\dellrfat\fFN_{\nu+\nu'}=\fFN_{\nu+\nu'}\cap\FNlrfat;
\]
\[
\dellfat\fFN_{\nu,\nu'}=\fFN_{\nu,\nu'}\cap\FNlfat;\quad\quad
\delrfat\fFN_{\nu,\nu'}=\fFN_{\nu,\nu'}\cap\FNrfat;\quad\quad
\dellrfat\fFN_{\nu,\nu'}=\fFN_{\nu,\nu'}\cap\FNlrfat.
\]
\end{nota}
There is the following square of inclusions of subspaces, where both horizontal arrows are inclusions of a closed subspace of $\FN_{\nu+\nu'}$
in the interior of a larger subspace of $\FN_{\nu+\nu'}$:
\[
 \begin{tikzcd}
 \FN_{\nu+\nu'-1} \ar[r,hook]\ar[d,hook] & \FNfat_{\nu+\nu'-1}\ar[d,hook]\\
 \FNlr \ar[r,hook] & \FNlrfat.
 \end{tikzcd}
\]
This implies that the inclusions of couples
$(\fFN_{\nu+\nu'},\dellrfat\fFN_{\nu+\nu'})\subset(\FN_{\nu+\nu'},\FNlrfat)$
and $(\fFN_{\nu,\nu'},\dellrfat\fFN_{\nu,\nu'})\subset(\FN_{\nu+\nu'},\FNlrfat)$
satisfy excision.
Finally, the bundle projection $\fp_{\nu+\nu'}\colon\fFN_{\nu+\nu'}\to\cB G$ exhibits also the subspaces
$\dellfat\fFN_{\nu+\nu'}$, $\delrfat\fFN_{\nu+\nu'}$, $\dellrfat\fFN_{\nu+\nu'}$, $\fFN_{\nu,\nu'}$,
$\dellfat\fFN_{\nu,\nu'}$, $\delrfat\fFN_{\nu,\nu'}$ and $\dellrfat\fFN_{\nu,\nu'}$ as bundles over $\cB G$, with suitable fibres: local trivialisations for these bundles can be obtained by restricting the local trivialisations of $\fp_{\nu+\nu'}$ given in the proof of Proposition \ref{prop:fpnubundle}.

The previous technical results will allow us to write two commutative diagrams of cohomology groups,
where we understand $\bQ$-coefficients for cohomology.
We state the two diagrams as propositions for future reference.
\begin{prop}
 \label{prop:firstdiagram}
There is a commutative diagram of cohomology groups as follows
\[
 \begin{tikzcd}
 H^{2\nu}(\FN_{\nu+\nu'})\otimes H^{2\nu'}(\FN_{\nu+\nu'}) \ar[r,"\smile"] &  H^{2\nu+2\nu'}(\FN_{\nu+\nu'})\\
 H^{2\nu}(\FN_{\nu+\nu'},\FNlfat)\otimes H^{2\nu'}(\FN_{\nu+\nu'},\FNrfat) \ar[r,"\smile"]\ar[u,"\cong"]\ar[d]& 
 H^{2\nu+2\nu'}(\FN_{\nu+\nu'},\FNlrfat)\ar[u]\ar[d,"\cong"]\\
 H^{2\nu}(\fFN_{\nu+\nu'},\dellfat\fFN_{\nu+\nu'})\otimes H^{2\nu'}(\fFN_{\nu+\nu'},\delrfat\fFN_{\nu+\nu'})\ar[d]\ar[r,"\smile"] & 
 H^{2\nu+2\nu'}(\fFN_{\nu+\nu'},\dellrfat\fFN_{\nu+\nu'})\ar[d,"\cong"]\\
 H^{2\nu}(\fFN_{\nu,\nu'},\dellfat\fFN_{\nu,\nu'})\otimes H^{2\nu'}(\fFN_{\nu,\nu'},\delrfat\fFN_{\nu,\nu'})\ar[r,"\smile"] & 
 H^{2\nu+2\nu'}(\fFN_{\nu,\nu'},\dellrfat\fFN_{\nu,\nu'}).
 \end{tikzcd}
\]
\end{prop}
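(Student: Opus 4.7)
The plan is to assemble the diagram from three pieces: the top square, the middle square, and the bottom square, and to explain the isomorphism assertions on the vertical arrows.

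The maps themselves are of two standard kinds. The left vertical arrows in the top and middle squares, and all three of the right vertical arrows, come from the long exact sequences of the relevant pairs (i.e.\ from forgetting the relativity) or from restriction along an inclusion. The vertical arrows from the second to the third row are instances of excision, applied to the closed subspace $\FN_{\nu+\nu'-1}$ sitting inside the open $\FNlrfat$ (and similarly for $\FNlfat$, $\FNrfat$): since by Lemma \ref{lem:FNclosed} the set $\FN_{\nu+\nu'-1}$ is closed in $\FN_{\nu+\nu'}$, and since by the definition of $\FNfat$ (and an argument parallel to Lemma \ref{lem:excision}) $\FN_{\nu+\nu'-1}$ is in the interior of $\FNfat_{\nu+\nu'-1}\subset\FNlrfat$, excision applies. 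Once the maps are in place, the commutativity of each square is a standard instance of the naturality of the relative cup product $H^p(X,A)\otimes H^q(X,B)\to H^{p+q}(X,A\cup B)$ with respect to restriction along a continuous map of triples, and of its compatibility with excision.

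The remaining content is the verification of the three isomorphism assertions. For the upper-left isomorphism, the long exact sequence of the pair $(\FN_{\nu+\nu'},\FNlfat)$ reduces the claim to
\[
 H^{2\nu-1}(\FNlfat;\bQ)=H^{2\nu}(\FNlfat;\bQ)=0.
\]
Since $\FN_{\nu-1}\subset\FNlfat$ will be a homotopy equivalence (to be proved together with the definition of $\FNlfat$, in parallel with Lemma \ref{lem:excision}), this reduces to the vanishing of $H^{2\nu-1}(\FN_{\nu-1};\bQ)$ and $H^{2\nu}(\FN_{\nu-1};\bQ)$; both follow from Proposition \ref{prop:Serre}, which, via the Leray spectral sequence of the filtration $\FN_{\bullet}$, shows that $H^*(\FN_{\nu-1};\bQ)$ is concentrated in even degrees at most $2(\nu-1)$. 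The middle-right isomorphism is excision, as explained above, removing the closed set $\FN_{\nu+\nu'-1}$ which is contained in the interior of $\FNlrfat$. The bottom-right isomorphism uses excision a second time: we have $\fFN_{\nu+\nu'}\setminus\fFN_{\nu,\nu'}=\FNlr\setminus\FN_{\nu+\nu'-1}$, and this set is contained in $\FNlrfat\setminus\FN_{\nu+\nu'-1}=\dellrfat\fFN_{\nu+\nu'}$; provided $\FNlr$ is closed in $\FN_{\nu+\nu'}$ and contained in the interior of $\FNlrfat$, excision yields the isomorphism.

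The main obstacle is that the spaces $\FNlr$, $\FNlfat$, $\FNrfat$ have not yet been introduced; the proof of Proposition \ref{prop:firstdiagram} therefore rests on a technical package to be developed next, including:
\begin{itemize}
\item concrete definitions of $\FNlfat$, $\FNrfat$ and $\FNlr$, analogous in spirit to Definition \ref{defn:delfat} but splitting $\cR$ into a ``left'' and ``right'' region;
\item closedness of $\FN_{\nu+\nu'-1}\subset\FNlr$ and of $\FNlr$ in $\FN_{\nu+\nu'}$, together with the inclusion of each into the interior of the respective ``fattened'' version $\FNlrfat$, proved by the same scaling deformation $\cH^{\bB}$ used in Lemma \ref{lem:excision};
\item the homotopy equivalences $\FN_{\nu-1}\simeq \FNlfat$ and $\FN_{\nu'-1}\simeq \FNrfat$, obtained by deformation retractions analogous to that of Lemma \ref{lem:excision}.
\end{itemize}
Granting this package, the proof of Proposition \ref{prop:firstdiagram} is routine: the three horizontal rows are simply the relative/absolute cup product maps associated to the three nested pairs of pairs, and the vertical arrows are the canonical maps induced by the inclusions of pairs, so the commutativity of each square is formal and the isomorphism assertions are the three items above combined with Proposition \ref{prop:Serre}.
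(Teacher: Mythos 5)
Your proposal is correct and follows essentially the same route as the paper. The commutativity of the squares is, as you say, naturality of the relative cup product under restriction; the upper-left isomorphism follows from the homotopy equivalence $\FN_{\nu-1}\simeq\FNlfat$ (resp.\ $\FN_{\nu'-1}\simeq\FNrfat$) combined with the degree concentration of $H^*(\FN_{\nu-1};\bQ)$ from the Leray spectral sequence and Proposition \ref{prop:Serre}; and the middle-right and bottom-right isomorphisms are excision. The only cosmetic difference is that you obtain the bottom-right isomorphism by a second excision applied to the already-excised pair $(\fFN_{\nu+\nu'},\dellrfat\fFN_{\nu+\nu'})$, removing the closed set $\FNlr\setminus\FN_{\nu+\nu'-1}$, whereas the paper realises both bottom-right isomorphisms as two independent excisions applied to the same pair $(\FN_{\nu+\nu'},\FNlrfat)$, removing $\FN_{\nu+\nu'-1}$ and $\FNlr$ respectively; these descriptions are equivalent. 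You also correctly anticipate that the proof must be preceded by definitions of $\FNlfat$, $\FNrfat$, $\FNlr$ and by the three technical facts (homotopy equivalence $\FN_{\nu-1}\simeq\FNlfat$, the inclusion $\FNfat_{\nu+\nu'-1}\subset\FNlrfat$, and $\FNlr\subset\mathrm{int}(\FNlrfat)$ with $\FNlr$ closed), which is precisely the content of Lemmas \ref{lem:FNnuFNlfat}, \ref{lem:FNfatinFNlrfat} and \ref{lem:FNlrinFNlrfat} and the surrounding discussion.
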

We will consider $\sca{S}\otimes\sca{S'}$ as an element in $H^{2\nu}(\FN_{\nu+\nu'},\FNlfat)\otimes H^{2\nu'}(\FN_{\nu+\nu'},\FNrfat)$ in the previous diagram, and compute explicitly the image of the cup product $\sca{S}\smile\sca{S'}$ in $H^{2\nu+2\nu'}(\fFN_{\nu+\nu'},\dellrfat\fFN_{\nu+\nu'})
\cong H^{2\nu+2\nu'}(\fFN_{\nu,\nu'},\dellrfat\fFN_{\nu,\nu'})$.
\begin{prop}
\label{prop:seconddiagram}
 There is a commutative diagram of cohomology groups as follows
 \[
 \begin{tikzcd}
   H^{2\nu+2\nu'}(\FN_{\nu+\nu'}) &\\
   H^{2\nu+2\nu'}(\FN_{\nu+\nu'},\FNlrfat)\ar[u]\ar[r]\ar[d,"\cong"] &
   H^{2\nu+2\nu'}(\FN_{\nu+\nu'},\FNfat_{\nu+\nu'-1})\ar[ul,"\cong",swap] \ar[d,"\cong"]\\
   H^{2\nu+2\nu'}(\fFN_{\nu+\nu'},\dellrfat\fFN_{\nu+\nu'})\ar[r,"\theta"] &
   H^{2\nu+2\nu'}(\fFN_{\nu+\nu'},\delfat\fFN_{\nu+\nu'}).
 \end{tikzcd}
\]
\end{prop}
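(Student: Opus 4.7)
My plan is to reduce the statement to three facts: (i) an inclusion $\FNfat_{\nu+\nu'-1}\subseteq\FNlrfat$, which makes all the arrows in the diagram well-defined; (ii) the identification of the three arrows decorated with ``$\cong$'', which I handle via the long exact sequence of a pair and two excision arguments; and (iii) the commutativity of the three pieces of the diagram, which is entirely formal from naturality. The only genuinely combinatorial input is (i), and I expect this to be the main point to check.

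First I would establish (i) by a norm-splitting argument analogous to the one used implicitly in Proposition \ref{prop:firstdiagram}. The subspaces $\FNlfat$ and $\FNrfat$ are defined so that a configuration $\fc\in\FN_{\nu+\nu'}$ belongs to $\FNlfat$, respectively $\FNrfat$, when the portion of $\fc$ contained in a prescribed ``left'' (respectively ``right'') inner sub-rectangle has norm at most $\nu-1$, respectively $\nu'-1$; the subspace $\FNfat_{\nu+\nu'-1}$ is instead defined by requiring the portion of $\fc$ contained in the full inner sub-rectangle $\tfrac{\zcentre}{2}+\tfrac{1}{2}\mcR$ to have norm at most $\nu+\nu'-1$. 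If we denote by $l$ and $r$ the norms of the left and right inner portions of $\fc$, then $l+r$ is the norm of the full inner portion, and the hypothesis $\fc\in\FNfat_{\nu+\nu'-1}$ reads $l+r\le\nu+\nu'-1$. From this it is impossible to have both $l\geq\nu$ and $r\geq\nu'$, so at least one of $\fc\in\FNlfat$ or $\fc\in\FNrfat$ holds, i.e. $\fc\in\FNlrfat$. Passing to the corresponding intersections with $\fFN_{\nu+\nu'}$, I obtain $\delfat\fFN_{\nu+\nu'}\subseteq\dellrfat\fFN_{\nu+\nu'}$, which makes the bottom arrow $\theta$ well-defined as a map of relative cohomology groups.

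For (ii), the right vertical isomorphism is a direct instance of Lemma \ref{lem:excision}. The left vertical isomorphism is the same kind of excision: the deformation retraction $H^{\bB}_{\nu+\nu'}$ of $\FNfat_{\nu+\nu'-1}$ onto $\FN_{\nu+\nu'-1}$ witnesses that $\FN_{\nu+\nu'}\setminus\fFN_{\nu+\nu'}=\FN_{\nu+\nu'-1}$ lies in the interior of $\FNfat_{\nu+\nu'-1}\subseteq\FNlrfat$, so excision on the pair $(\FN_{\nu+\nu'},\FNlrfat)$ removing the closed set $\FN_{\nu+\nu'-1}$ gives the left vertical isomorphism. The diagonal isomorphism $H^{2\nu+2\nu'}(\FN_{\nu+\nu'},\FNfat_{\nu+\nu'-1})\cong H^{2\nu+2\nu'}(\FN_{\nu+\nu'})$ comes from the long exact sequence of the pair, once we know that $H^{2\nu+2\nu'-1}(\FNfat_{\nu+\nu'-1};\bQ)=0$ and $H^{2\nu+2\nu'}(\FNfat_{\nu+\nu'-1};\bQ)=0$. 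Since $\FNfat_{\nu+\nu'-1}\simeq\FN_{\nu+\nu'-1}$ by Lemma \ref{lem:excision}, it suffices to check these vanishings on $\FN_{\nu+\nu'-1}$, and these follow from the Leray spectral sequence for the norm filtration on $\FN_{\nu+\nu'-1}$: by Lemma \ref{lem:excision} and Proposition \ref{prop:Serre} the $E_1$-page is supported in bidegrees $(k,k)$ for $0\leq k\leq \nu+\nu'-1$, so the whole cohomology is concentrated in even total degrees at most $2(\nu+\nu'-1)$.

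Finally, for (iii), the top triangle commutes because, for nested subspaces $\FNfat_{\nu+\nu'-1}\subseteq\FNlrfat$, the natural map $H^*(X,\FNlrfat)\to H^*(X,\FNfat_{\nu+\nu'-1})$ is compatible with the two maps to $H^*(X)$, which is a tautology in the long exact sequences of the two pairs. The two squares below commute by naturality of the excision isomorphisms: indeed both vertical isomorphisms are induced by the inclusions $(\fFN_{\nu+\nu'},\dellrfat\fFN_{\nu+\nu'})\subseteq(\FN_{\nu+\nu'},\FNlrfat)$ and $(\fFN_{\nu+\nu'},\delfat\fFN_{\nu+\nu'})\subseteq(\FN_{\nu+\nu'},\FNfat_{\nu+\nu'-1})$, and these inclusions fit into an obvious commutative square with the horizontal arrows induced by $\FNfat_{\nu+\nu'-1}\subseteq\FNlrfat$ and $\delfat\fFN_{\nu+\nu'}\subseteq\dellrfat\fFN_{\nu+\nu'}$. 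The hardest part is thus the norm-splitting verification in (i); once that is in hand, the rest is a routine diagram chase using the spectral sequence computation already established in the previous subsection.
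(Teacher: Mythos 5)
Your proof is correct and follows essentially the same route as the paper: the key inclusion $\FNfat_{\nu+\nu'-1}\subseteq\FNlrfat$ (this is precisely Lemma \ref{lem:FNfatinFNlrfat}, already proved), the two vertical isomorphisms via excision of $\FN_{\nu+\nu'-1}$, the diagonal via the Leray spectral sequence collapse, and commutativity by naturality.

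Two small imprecisions, neither fatal. First, in step (i) you write that $l+r$ \emph{is} the norm of the full inner portion; in fact the left and right small squares $(\tfrac{7\zcentrel}8+\tfrac18\mcR)$ and $(\tfrac{7\zcentrer}8+\tfrac18\mcR)$ are disjoint and jointly strictly contained in $\tfrac{\zcentre}2+\tfrac12\mcR$, so one only has $l+r\le\nu+\nu'-1$, which is all you need; the paper is careful to state this as an inequality. Second, in step (ii) you say the deformation retraction of $\FNfat_{\nu+\nu'-1}$ onto $\FN_{\nu+\nu'-1}$ ``witnesses'' that the latter lies in the interior of the former; that is not a valid inference in general (a space can deformation retract onto a subspace that meets its boundary). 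The correct reason, already given in the proof of Lemma \ref{lem:excision}, is that $\FN_{\nu+\nu'-1}=(\mathfrak W_{\nu+\nu'})^{-1}(1)$ and the open set $(\mathfrak W_{\nu+\nu'})^{-1}((1/2,1])$ lies inside $\FNfat_{\nu+\nu'-1}$; alternatively the paper's Lemma \ref{lem:FNlrinFNlrfat} places $\FNlr\supset\FN_{\nu+\nu'-1}$ in the interior of $\FNlrfat$, which is what is needed to excise. With either of those substituted in, your argument is exactly the paper's.
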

We will compute the image along the natural map $\theta$ of $\sca{S}\smile\sca{S'}$,
and identify it with the class $\sca{S}\cdot\sca{S'}\in \cA(\Q)_{2\nu+2\nu'}$.

\subsection{Proof of Propositions \ref{prop:firstdiagram} and \ref{prop:seconddiagram}}
\begin{defn}
Recall Definition \ref{defn:delfat}. We let $\zcentrel=\frac 27+\frac{\sqrt{-1}}2$
and $\zcentrer=\frac 57+\frac{\sqrt{-1}}2$; note that the homothety centred at $\zcentrel$
with rescaling factor $1/4$ maps $\mcR$ to the open square $(1/4,3/8)\times(7/16,9/16)$,
i.e. $(1/4,3/8)\times(7/16,9/16)=\frac{7\zcentrel}8 +\frac 18\mcR$;
similarly $(5/8,3/4)\times(7/16,9/16)=\frac{7\zcentrer}8 +\frac 18\mcR$. See Figure \ref{fig:dellrfat}.

For $\bullet=\mathrm{l},\mathrm{r}$, we define a subspace $\delfat_{\bullet}\cR$ of $\cR$ by
$
\delfat_{\bullet}\cR:=\cR\setminus(\frac{7z_{\mathrm{c},\bullet}}8 +\frac 18\mcR)
$. The identity of $\C$ is a map of nice couples
$(\cR,\del\cR)\to(\cR,\delfat_{\bullet}\cR)$,
giving rise to a map $(\Id_\C)_*\colon\FN_{\nu+\nu'}=\FN_{\nu+\nu'}\bB\to \Hur(\cR,\delfat_{\bullet}\cR;\Q,G)$.
Recall Definition \ref{defn:FN}: we define $\FNlfat\subset\FN_{\nu+\nu'}$ as the preimage along $(\Id_\C)_*$ of
$\FN_{\nu-1}\Hur(\cR,\dellfat\cR;\Q,G)$ and, respectively, 
$\FNrfat\subset\FN_{\nu+\nu'}$ as the preimage along $(\Id_\C)_*$ of
$\FN_{\nu'-1}\Hur(\cR,\delrfat\cR;\Q,G)$.
\end{defn}

\begin{figure}[ht]
 \begin{tikzpicture}[scale=5,decoration={markings,mark=at position 0.38 with {\arrow{>}}}]
  \draw[dashed,->] (-.05,0) to (1.1,0);
  \draw[dashed,->] (0,-.1) to (0,1.1);
  \draw[very thick, fill=black!40!white] (0,0) rectangle (1,1);
  \fill[white, opacity=.4] (.25,.25) rectangle (.75, .75);
  \fill[white] (.25,.4375) rectangle (.375,.5625);
  \fill[white] (.75,.4375) rectangle (.625,.5625);
  \draw[thin] (0,1) to (.2857,.5) to (0,0) to (.7143,.5) to (1,1) to 
  (.2857,.5) to (1,0) to (.7143,.5) to (0,1);
  \draw[dotted] (.5,0) to (.5,1);
  \node at (.34,.5){\tiny$\zcentrel$};
  \node at (.66,.5){\tiny$\zcentrer$};
 \end{tikzpicture}
 \caption{The complement of the left (respectively, right) white square is $\dellfat\cR$
 (respectively, $\delrfat\cR$), whereas the total grey area is $\delfat\cR$; the dotted vertical
 line splits the total grey area into $\delfat\cRl$ and $\delfat\cRr$.
 }
\label{fig:dellrfat}
\end{figure}

Roughly speaking, a configuration $\fc=(P,\psi,\phi)\in\FN_{\nu+\nu'}$ lies in $\FNlfat$ if the sum of the norms of
the values of the monodromy $\psi$ around points of $P$ lying in the open square $(1/4,3/8)\times(7/16,9/16)$
does not exceed $\nu-1$; similarly for $\FNrfat$, referring to the open square $(5/8,3/4)\times(7/16,9/16)$
and replacing the threshold $\nu-1$ with $\nu'-1$.
To keep the notation simple, we avoid adding the indices $\nu$ and $\nu'$ to $\FNlfat$ and $\FNrfat$.
Note that 
$\FN_{\nu-1}\subset\FNlfat$ and $\FN_{\nu'-1}\subset\FNrfat$.
\begin{lem}
\label{lem:FNnuFNlfat}
The inclusions
$\FN_{\nu-1}\subset\FNlfat$ and $\FN_{\nu'-1}\subset\FNrfat$ are homotopy equivalences. 
\end{lem}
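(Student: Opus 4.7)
The plan is to adapt the deformation-retraction argument of Lemma \ref{lem:excision}, replacing the homothety centered at $\zcentre$ by one centered at $\zcentrel$ (resp.\ $\zcentrer$ for the second inclusion, with $\nu'$ in place of $\nu$). I will only write out the argument for $\FN_{\nu-1}\subset\FNlfat$; the other is entirely parallel. The guiding intuition is that $\FNlfat$ consists of configurations whose norm is at most $\nu-1$ inside the small left square $\frac{7\zcentrel}{8}+\tfrac{1}{8}\mcR=(1/4,3/8)\times(7/16,9/16)$, so expanding this square to fill $\mcR$ while pushing everything else onto $\del\cR$ should produce a configuration of total norm $\le\nu-1$.

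Concretely, I would use $\cH^{\bB}_{\zcentrel}$ from Definition \ref{defn:cHbB}, whose functorial counterpart gives a continuous map $(\cH^{\bB}_{\zcentrel})_*\colon \Hur(\cR,\del;\Q,G)_{0;\one}\times[1,\infty)\to \Hur(\cR,\del;\Q,G)_{0;\one}$; at rescaling parameter $r=8$ the small left square is mapped bijectively onto $\mcR$ while the rest of $\mcR$ is radially projected onto $\del\cR$. The first key step is to verify that $(\cH^{\bB}_{\zcentrel})_*$ restricts to a self-map of $\FNlfat$; this breaks into two claims. First, it preserves $\bB(\Q_+,G)$: this mirrors the analogous issue handled in the proof of Proposition \ref{prop:bBHur}, and reduces to observing that the monodromy around the image of any enclosing curve is a sub-product, inside $\fQext(P)_\psi$, of the original enclosing-curve monodromy, hence again lies in the appropriate PMQ. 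Second, for every $\fc\in\FNlfat$ and every $r\ge1$, the image $(\cH^{\bB}_{\zcentrel})_*(\fc,r)$ still has norm at most $\nu-1$ in the small left square; the essential computation is that the preimage of the small left square under $\cH^{\bB}_{\zcentrel}(-,r)$ is the concentric square of side $1/(8r)$, which is always contained in the small left square itself. At $r=8$ the inner square coincides with the small left square, every other interior point of $\fc$ gets projected onto $\del\cR$ and so ceases to contribute to the norm, and thus $(\cH^{\bB}_{\zcentrel})_*(\fc,8)\in\FN_{\nu-1}$.

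I would then define $\mathfrak{W}_{\nu,\mathrm{l}}\colon\FNlfat\to[1/8,1]$ as the supremum of those $\epsilon\in(1/8,1)$ with $(\cH^{\bB}_{\zcentrel})_*(\fc,1/\epsilon)\in\FN_{\nu-1}$—by the above this supremum is at least $1/8$, and equals $1$ precisely on $\FN_{\nu-1}$—check its continuity from Lemma \ref{lem:FNclosed} exactly as in Lemma \ref{lem:excision}, and set
\[
 H^{\bB}_{\nu,\mathrm{l}}(\fc,s):=(\cH^{\bB}_{\zcentrel})_*\!\left(\fc,\,1-s+\frac{s}{\mathfrak{W}_{\nu,\mathrm{l}}(\fc)}\right).
\]
This homotopy is the identity at $s=0$, has image in $\FN_{\nu-1}$ at $s=1$ by the supremum property, and is stationary on $\FN_{\nu-1}$, giving the desired deformation retraction. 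The main obstacle, as flagged above, is the first claim of the previous paragraph: tracking carefully that both the $\bB$-condition and the non-inert condition survive the radial projection onto $\del\cR$, just as in Proposition \ref{prop:bBHur}.
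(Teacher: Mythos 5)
Your proposal is correct and matches the paper's proof in all essentials: the paper likewise defines $\mathfrak{W}_{\mathrm{l},\nu}$ as the supremum in $[1/8,1]$ of those $\epsilon$ with $\pa{\cH^{\bB}_{\zcentrel}}_*(\fc,1/\epsilon)\in\FN_{\nu-1}$ and uses the identical homotopy formula to produce the deformation retraction. The only cosmetic difference is that the paper defines $\mathfrak{W}_{\mathrm{l},\nu}$ and the homotopy on all of $\FN_{\nu+\nu'}$ rather than just on $\FNlfat$, while you correctly fill in the verifications (preservation of $\bB(\Q_+,G)$ as in Proposition~\ref{prop:bBHur}, preservation of the $\FNlfat$-condition via the preimage square of side $1/(8r)$, and the $r=8$ endpoint landing in $\FN_{\nu-1}$) that the paper leaves implicit.
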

\begin{proof}
We focus on the inclusion $\FN_{\nu-1}\subset\FNlfat$, the other one being analogous.
Recall Definition \ref{defn:cHbB} and the proof of Lemma \ref{lem:excision}.
For $\fc\in\FN_{\nu+\nu'}$ we denote by $\mathfrak{W}_{\mathrm{l},\nu}(\fc)$ the supremum
in $[1/8,1]$ of all $1/8<\epsilon<1$
for which $\pa{\cH^{\bB}_{\zcentrel}}_*(\fc,1/\epsilon)\in\FN_{\nu-1}\bB$. We define a homotopy
 \[
  H^{\bB}_{\mathrm{l},\nu}\colon\FN_{\nu+\nu'}\times[0,1]\to\FN_{\nu+\nu'},\quad\quad
  (\fc,s)\mapsto \pa{\cH^{\bB}_{\zcentrel}}_*\pa{\fc\,,\,1-s+\frac{s}{\mathfrak{W}_{\mathrm{l},\nu}(\fc)}}.
 \]
 The homotopy $H^{\bB}_{\mathrm{l},\nu}$ restricts to a deformation retraction of $\FNlfat$
 onto $\FN_{\nu-1}$. 
\end{proof}
Lemma \ref{lem:FNnuFNlfat} implies the top left isomorphism in Proposition \ref{prop:firstdiagram}.
\begin{lem}
 \label{lem:FNfatinFNlrfat}
The space $\FNfat_{\nu+\nu'-1}$ is contained in the union
$\FNlfat\cup\FNrfat$.
\end{lem}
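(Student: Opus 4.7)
The plan is to unwind the three definitions and then apply a simple pigeonhole/pigeon-norm argument. First I would note the geometric fact that the two small open squares
\[
\textstyle\frac{7\zcentrel}{8}+\frac{1}{8}\mcR=(1/4,3/8)\times(7/16,9/16),\qquad
\textstyle\frac{7\zcentrer}{8}+\frac{1}{8}\mcR=(5/8,3/4)\times(7/16,9/16),
\]
are disjoint and both contained in the big open central square $\frac{\zcentre}{2}+\frac{1}{2}\mcR=(1/4,3/4)^2$, as is clear from Figure~\ref{fig:dellrfat}.

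Next I would translate the filtration conditions into statements about sums of norms of monodromies. Given $\fc=(P,\psi,\phi)\in\FN_{\nu+\nu'}$, and given an admissible generating set for $\fG(P)$ as in Notation~\ref{nota:fc}, let $n$, $n_\mathrm{l}$, $n_\mathrm{r}$ denote the sum of $N(\psi(\gen_i))$ taken over the points $z_i$ lying in $(1/4,3/4)^2$, in the small left square, and in the small right square respectively. By unpacking Definition~\ref{defn:delfat} (with $\delfat\cR$ replaced by $\delfat_\mathrm{l}\cR$ and $\delfat_\mathrm{r}\cR$), one sees that $\fc\in\FNfat_{\nu+\nu'-1}$ iff $n\le\nu+\nu'-1$, while $\fc\in\FNlfat$ iff $n_\mathrm{l}\le\nu-1$ and $\fc\in\FNrfat$ iff $n_\mathrm{r}\le\nu'-1$.

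The key step is then immediate from disjointness: since the two small squares are disjoint subsets of the big central square, $n_\mathrm{l}+n_\mathrm{r}\le n$. So if $\fc\in\FNfat_{\nu+\nu'-1}$ we have $n_\mathrm{l}+n_\mathrm{r}\le\nu+\nu'-1$, hence at least one of the inequalities $n_\mathrm{l}\le\nu-1$ or $n_\mathrm{r}\le\nu'-1$ must hold (otherwise the sum would be $\ge\nu+\nu'$). In the first case $\fc\in\FNlfat$; in the second $\fc\in\FNrfat$. There is no real obstacle here; the only thing to double-check is the bookkeeping of the norm on each region, which reduces to the standard fact already used in the proof of Lemma~\ref{lem:FNclosed} that, for $\fc\in \FN_\nu\HurTQG$, the local contribution of $\fc$ to any admissible generating set on a sub-neighbourhood equals $\sum N(\psi(\gen_i))$ restricted to that sub-neighbourhood.
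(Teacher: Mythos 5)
Your proof is correct and is essentially the paper's own argument: unwind the three filtration conditions into sums of norms of $\psi$ over points in the big central square and the two small disjoint sub-squares, then apply the pigeonhole inequality $n_{\mathrm{l}}+n_{\mathrm{r}}\le n\le\nu+\nu'-1$ to conclude that either $n_{\mathrm{l}}\le\nu-1$ or $n_{\mathrm{r}}\le\nu'-1$. One minor imprecision: membership in $\FNfat_{\nu+\nu'-1}$ (resp.\ $\FNlfat$, $\FNrfat$) is by definition the conjunction of the norm bound you state with $\fc\in\FN_{\nu+\nu'}$, so ``iff'' should be understood relative to $\FN_{\nu+\nu'}$; but since everything in sight lives in $\FN_{\nu+\nu'}$, this does not affect the argument.
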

\begin{proof}
 Let $\fc\in\FNfat_{\nu+\nu'-1}$, use Notation \ref{nota:fc}, and for $\bullet=\mathrm{l,r}$ denote by
 $P_\bullet\subset P$ the intersection of $P$ with the open square
 $(\frac{7z_{\mathrm{c},\bullet}}8+\frac 18\mcR)$. Without loss of generality, we may assume that there are indices $0\le l'\le l''\le l$ such that $P_\mathrm{l}=\set{z_1,\dots,z_{l'}}$
 and $P_{\mathrm{r}}=\set{z_{l'+1},\dots,z_{l''}}$. Let $\gen_1,\dots,\gen_k$ be an admissible generating set. Then the hypothesis $\fc\in\FNfat_{\nu+\nu'-1}$, together with the fact that $(\frac{7\zcentrel}8+\frac 18\mcR)$
 and $(\frac{7\zcentrer}8+\frac 18\mcR)$ are disjoint and contained in $(\frac{\zcentre}2+\frac 12\mcR)$, implies the inequality
 \[
  \sum_{i=1}^{l'}N(\psi(\gen_i))+\sum_{i=l'+1}^{l''} N(\psi(\gen_i))\le \nu+\nu'-1.
 \]
It follows that at least one of the following two inequalities holds
\[
  \sum_{i=1}^{l'}N(\psi(\gen_i))\le \nu-1;\quad\quad\sum_{i=l'+1}^{l''} N(\psi(\gen_i))\le \nu'-1;
\]
the first inequality implies $\fc\in\FNlfat$, the second implies $\fc\in\FNrfat$.
\end{proof}

\begin{nota}
 \label{nota:cRlr}
 We introduce several subspaces of $\cR$, see Figure \ref{fig:dellrfat}:
 \[
 \cRl=[0,1/2]\times[0,1],\quad\quad\quad\quad \cRr=[1/2,1]\times[0,1],
 \]
 \[
 \mcRl=(0,1/2)\times(0,1),\quad\quad\quad\quad \cRr=(1/2,1)\times(0,1),
 \]
 \[
 \del\cRl=\cRl\setminus\mcRl,\quad\quad\quad\quad \del\cRr=\cRr\setminus\mcRr,
 \]
 \[
 \delfat\cRl=\cRl\setminus \pa{\frac{7\zcentrel}8+\frac 18\mcR},\quad\quad\quad\quad 
 \delfat\cRl=\cRl\setminus \pa{\frac{7\zcentrel}8+\frac 18\mcR},
 \]
 \[
 \delfat\mcRl=\mcRl\setminus \pa{\frac{7\zcentrel}8+\frac 18\mcR},\quad\quad\quad\quad 
 \delfat\mcRl=\mcRl\setminus \pa{\frac{7\zcentrel}8+\frac 18\mcR}. 
 \]
 \end{nota}

\begin{defn}
 \label{defn:FNlr}
 Recall Definition \ref{defn:FN}.
 The identity of $\C$ induces maps of nice couples $(\cR;\del)\to(\cR,\cR\setminus\mcRl)$ and
 $(\cR;\del)\to(\cR,\cR\setminus\mcRr)$.
 We define $\FNlr\subset\FN_{\nu+\nu'}$ as the subspace of configurations $\fc$ such that at least one of the following conditions holds:
 \begin{itemize}
  \item  $(\Id_\C)_*(\fc)\in\Hur(\cR,\cR\setminus\mcRl;\Q_+,G)$ has norm $\leq\nu-1$;
  \item  $(\Id_\C)_*(\fc)\in\Hur(\cR,\cR\setminus\mcRr;\Q_+,G)$ has norm $\leq\nu'-1$.
 \end{itemize}
\end{defn}
Roughly speaking, the complement $\fFN_{\nu,\nu'}$ of $\FNlr$ in $\FN_{\nu+\nu'}$ contains those configurations
$\fc=(P,\psi,\phi)$ such that $P\subset\mcR\setminus\set{1/2}\times(0,1)$, the sum of the norms of the values
of $\psi$ around points of $P\cap\mcRl$ is equal to $\nu$, and the sum of the norms of the values
of $\psi$ around points of $P\cap\mcRr$ is equal to $\nu'$.
\begin{lem}
 \label{lem:FNlrinFNlrfat}
 The space $\FNlr$ is contained in the interior of $\FNlrfat$, considered as subspace of $\FN_{\nu+\nu'}$.
\end{lem}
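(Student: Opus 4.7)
The plan is to show that every $\fc\in\FNlr$ has an open neighbourhood in $\FN_{\nu+\nu'}$ contained in $\FNlrfat$; concretely, I will exhibit a normal neighbourhood $\fU(\fc,\uU)\cap\FN_{\nu+\nu'}$ that lies entirely inside $\FNlfat$ (or $\FNrfat$), depending on which of the two disjunctive conditions in Definition~\ref{defn:FNlr} holds for $\fc$.

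By symmetry, I would treat only the case that $(\Id_\C)_*(\fc)\in\Hur(\cR,\cR\setminus\mcRl;\Q_+,G)$ has norm $\le\nu-1$; using Notation~\ref{nota:fc}, this translates into $\sum_{z_i\in\mcRl}N(\psi(\gen_i))\le\nu-1$ for any admissible generating set $\gen_1,\dots,\gen_k$ of $\fG(P)$. The key geometric input is that the small left square $L:=(1/4,3/8)\times(7/16,9/16)$ used to define $\FNlfat$ satisfies $\bar L\subset\mcRl$ and is at positive distance from $\del\cR$ and from the vertical segment $\{1/2\}\times(0,1)$.

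Next, I would choose an adapted covering $\uU=(U_1,\dots,U_k)$ of $P$ with two separation properties: $U_i\subset\mcRl$ whenever $z_i\in\bar L$, and $U_i\cap\bar L=\emptyset$ whenever $z_i\notin\bar L$. Given any $\fc'=(P',\psi',\phi')\in\fU(\fc,\uU)\cap\FN_{\nu+\nu'}$, the factorisation argument from the proof of Lemma~\ref{lem:FNclosed} yields an admissible generating set $(\gen'_{i,j})$ of $\fG(P')$ satisfying $\sum_j N(\psi'(\gen'_{i,j}))=N(\psi(\gen_i))$ for all $1\le i\le l$. Combined with the separation properties, this forces every point of $P'\cap L$ to lie in some $U_i$ with $z_i\in\bar L$, yielding
\[
 \sum_{z'_m\in P'\cap L}N(\psi'(\gen'_m))\ \le\ \sum_{z_i\in\bar L}N(\psi(\gen_i))\ \le\ \sum_{z_i\in\mcRl}N(\psi(\gen_i))\ \le\ \nu-1,
\]
which is precisely the condition $\fc'\in\FNlfat\subset\FNlrfat$.

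I expect the only delicate point to be the choice of $\uU$ for points $z_i$ lying outside $\mcRl$ but close to it, namely those on $\{1/2\}\times(0,1)$ or on $\del\cR$: a priori a perturbation could move mass from such a $z_i$ into $L$. The resolution is exactly the geometric observation above: since $\bar L$ is \emph{strictly} in the interior of $\mcRl$ and bounded away from the right half and from $\del\cR$, these $z_i$ sit at positive distance from $\bar L$, and $U_i$ can be chosen small enough to avoid $\bar L$ entirely, so that no spurious contribution to the displayed sum can arise from them.
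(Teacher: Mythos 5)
Your proof is correct and follows essentially the same approach as the paper, which simply asserts (without detail) that for any adapted covering $\uU$ with sets of diameter at most $1/8$ the restricted normal neighbourhood $\fU(\fc;\uU)\cap\FN_{\nu+\nu'}$ lies in $\FNlrfat$. You supply the details the paper omits, choosing a bespoke covering based on whether $z_i$ lies in $\bar L$ rather than a uniform diameter bound, and correctly invoke the factorisation argument from the proof of Lemma~\ref{lem:FNclosed}; the key geometric point (that $\bar L$ is at positive distance from $\cR\setminus\mcRl$) is the same in both.
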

\begin{proof}
Given $\fc=(P,\psi,\phi)\in\FNlr$, it suffices to note that for any adapted covering $\uU$ of $P$ with the sets
$U_i$ of diameter at most $1/8$, the restricted normal neighbourhood $\fU(\fc;\uU)\cap\FN_{\nu+\nu'}$
is contained in $\FNlrfat$.
\end{proof}
Lemma \ref{lem:FNlrinFNlrfat} implies, together with the inclusion $\FN_{\nu+\nu'-1}\subset\FNlr$,
that the couple $(\FN_{\nu+\nu'},\FNlrfat)$ satisfies excision with respect to the subspaces
$\FN_{\nu+\nu'-1}$ and $\FNlr$, i.e. the two bottom right vertical isomorphisms in Proposition \ref{prop:firstdiagram} hold.
This concludes the proof of Proposition \ref{prop:firstdiagram}.

In the same way, the two bottom vertical isomorphisms of Proposition \ref{prop:seconddiagram} follow from excision
of the subspace $\FN_{\nu+\nu'-1}$, whereas the top diagonal isomorphism follows from the computation
of $H^*(\bB;\bQ)$ using the Leray spectral sequence. This concludes also the proof of Proposition \ref{prop:seconddiagram}.

\subsection{Conclusion of the proof of Theorem \ref{thm:HbBQG}}
As remarked already, $\fp_{\nu+\nu'}$ exhibits
all subspaces of $\fFN_{\nu+\nu'}$ occurring in the bottom rows of Propositions \ref{prop:firstdiagram} and \ref{prop:seconddiagram} as bundles over $\cB G$: the proof of Proposition \ref{prop:fpnubundle} provides local trivialisations also for these bundles. Our next aim is to compute the cohomology groups of these bundles and of the couples of bundles they form.

\begin{defn}
For $\nu,\nu'\ge0$ we denote by $\Q_{\nu,\nu'}\subset\Q_{\nu}\times\Q_{\nu'}$ the subset of couples
$(a,b)$ for which the product $ab$ is defined in $\Q$.
We set
\[
\Hur(\mcR;\Q_+)_{\nu,\nu'}:=\coprod_{(a,b)\in\Q_{\nu,\nu'}} \Hur(\mcRl;\Q_+)_a\times\Hur(\mcRr;\Q_+)_b.
\]
We regard $\Hur(\mcR;\Q_+)_{\nu,\nu'}$ as a subspace of $\Hur(\mcR;\Q_+)_{\nu+\nu'}$ by regarding
each product $\Hur(\mcRl;\Q_+)_a\times\Hur(\mcRr;\Q_+)_b$ as a subspace of $\Hur(\mcR;\Q_+)_{ab}$ under the inclusion given by
\[
\begin{tikzcd}
 \Hur(\mcRl;\Q_+)_a\times\Hur(\mcRr;\Q_+)_b 
 \ar[d,"\fri^{\C}_{\bS_{0,1/2}}\times\fri^{\C}_{\bS_{1/2,1}}"] & \\
 \Hur^{\bS_{0,1/2}}(\mcRl;\Q_+)_a\times\Hur^{\bS_{1/2,1}}(\mcRr;\Q_+)_b \ar[d,"-\sqcup-"]& \\
 \Hur^{\bS_{0,1}}(\mcR;\Q_+)_{ab}\ar[r,"(\fri^{\C}_{\bS_{0,1}})^{-1}"] &\Hur(\mcR;\Q_+)_{ab}.
\end{tikzcd}
\]

\end{defn}
The generic fibre of the bundle $\fp_{\nu+\nu'}\colon\fFN_{\nu,\nu'}\to\cB G$ is homeomorphic to the space $\Hur(\mcR;\Q_+)_{\nu,\nu'}$. To see this, note that the same argument of the proof of Proposition \ref{prop:fpnubundle}
identifies the fibre of $\fp_{\nu+\nu'}\colon\fFN_{\nu,\nu'}\to\cB G$ with the subspace of
$\Hur(\mcRl\cup\mcRr;\Q_+)_{\nu+\nu'}$
containing configurations $\fc$ with the following properties:
\begin{itemize}
 \item $\fri^{\C}_{\bS_{0,1/2}}$ sends $\fc$ to a configuration in
 $\Hur^{\bS_{0,1/2}}(\mcRl;\Q_+)$ with total monodromy in $\Q_\nu$;
 \item $\fri^{\C}_{\bS_{1/2,1}}$ sends $\fc$ to a configuration in
 $\Hur^{\bS_{1/2,1}}(\mcRr;\Q_+)$ with total monodromy in $\Q_{\nu'}$;
 \item the product of the elements $\totmon(\fri^{\C}_{\bS_{0,1/2}}(\fc))$ and
 $\totmon(\fri^{\C}_{\bS_{1/2,1}}(\fc))$ is defined in $\Q$.
\end{itemize}
We can further identify the fibres of the following restrictions of $\fp_{\nu+\nu'}$:
\begin{itemize}
 \item the fibre of $\fFN_{\nu,\nu'}\to\cB G$ is identified with $\Hur(\mcR;\Q_+)_{\nu,\nu'}$, as already mentioned;
 \item the fibre of $\dellfat\fFN_{\nu,\nu'}\to\cB G$ is identified with the intersection of $\Hur(\mcR;\Q_+)_{\nu,\nu'}$ with the product $\delfat\Hur(\mcRl;\Q_+)_\nu\times\Hur(\mcRr;\Q_+)_{\nu'}$, i.e. with the space
 \[
  \dellfat\Hur(\mcR;\Q_+)_{\nu,\nu'}:=\coprod_{(a,b)\in\Q_{\nu,\nu'}}
  \delfat\Hur(\mcRl;\Q_+)_a\times\Hur(\mcRr;\Q_+)_b,
 \]
 where $\delfat\Hur(\mcRl;\Q_+)_a:=\Hur(\mcRl;\Q_+)_a\setminus
 \Hur(\frac{7\zcentrel}8+\frac 18\mcR;\Q_+)_a$;
 \item the fibre of $\delrfat\fFN_{\nu,\nu'}\to\cB G$ is identified with the intersection of $\Hur(\mcR;\Q_+)_{\nu,\nu'}$ with the product $\Hur(\mcRl;\Q_+)_\nu\times\delfat\Hur(\mcRr;\Q_+)_{\nu'}$, i.e. with the space
 \[
  \delrfat\Hur(\mcR;\Q_+)_{\nu,\nu'}:=\coprod_{(a,b)\in\Q_{\nu,\nu'}}\Hur(\mcR;\Q_+)_a\times\delfat\Hur(\mcR;\Q_+)_b,
 \]
  where $\delfat\Hur(\mcRr;\Q_+)_b:=\Hur(\mcRr;\Q_+)_b\setminus
 \Hur(\frac{7\zcentrer}8+\frac 18\mcR;\Q_+)_b$;
 \item the fibre of $\dellrfat\fFN_{\nu,\nu'}\to\cB G$ is identified with the union
 $\dellrfat\Hur(\mcR;\Q_+)_{\nu,\nu'}:=\dellfat\Hur(\mcR;\Q_+)_{\nu,\nu'}\cup
 \delrfat\Hur(\mcR;\Q_+)_{\nu,\nu'}$.
\end{itemize}
By using functoriality with respect to suitable semialgebraic homeomorphisms of $\C$,
we can identify the couples of spaces
\[
(\Hur(\mcRl;\Q_+)_a,\delfat\Hur(\mcRl;\Q_+)_a) \cong (\Hur(\mcR;\Q_+)_a,\delfat\Hur(\mcR;\Q_+)_a);
\]
\[
(\Hur(\mcRr;\Q_+)_b,\delfat\Hur(\mcRr;\Q_+)_b) \cong (\Hur(\mcR;\Q_+)_b,\delfat\Hur(\mcR;\Q_+)_b).
\]

\begin{nota}
For $d\ge0$ we denote by
$\bQ\qua{\Q_{\nu,\nu'}}[d]$ the graded $\bQ$-vector space concentrated in degree $d$,
 with basis the set $\Q_{\nu,\nu'}$.
 
 For $d,d'\ge0$ we denote by $\mu_{\nu,\nu'}\colon \bQ\qua{\Q_{\nu,\nu'}}[d]\otimes \bQ\qua{\Q_{\nu,\nu'}}[d']\to \bQ\qua{\Q_{\nu,\nu'}}[d+d']$ the pairing given
 by $(a,b)\otimes (a,b)\mapsto (a,b)$ for $(a,b)\in\Q_{\nu,\nu'}$ and $(a,b)\otimes (a',b')\mapsto 0$
 for $(a,b)\neq (a',b')\in\Q_{\nu,\nu'}$.
\end{nota}
Consider the bottom row of Proposition \ref{prop:firstdiagram}: all groups involved are cohomology groups of couples of bundles
over $\cB G$.
The cup product on fibres reads as follows, where we simplify the notation by writing $\Hur$ for $\Hur(\mcR;\Q_+)$
\[
 \begin{tikzcd}
 H^{2\nu}(\Hur_{\nu,\nu'},\dellfat)\otimes H^{2\nu'}(\Hur_{\nu,\nu'},\delrfat)\ar[r,"\smile"]\ar[d,"\cong"] &
 H^{2\nu+2\nu'}(\Hur_{\nu,\nu'},\dellrfat)\ar[d,"\cong"]\\
 \bQ\qua{\Q_{\nu,\nu'}}[2\nu] \otimes
 \bQ\qua{\Q_{\nu,\nu'}}[2\nu']  \ar[r,"\mu_{\nu,\nu'}",swap] &
 \bQ\qua{\Q_{\nu,\nu'}}[2\nu+2\nu']. 
 \end{tikzcd}
\]
\begin{nota}
For $a\in\Q_\nu$ we denote by $\fc_{\mathrm{l},a}\in\Hur(\mcRl;\Q_+)_a$ the unique configuration
supported on $\set{\zcentrel}$; similarly, 
for $b\in\Q_{\nu'}$ we denote by $\fc_{\mathrm{r},b}\in\Hur(\mcRr;\Q_+)_b$ the unique configuration
supported on $\set{\zcentrer}$.
\end{nota}
The three cohomology groups in the top row of the previous diagram have bases in bijection with the set $\Q_{\nu,\nu'}$.
For instance, $H^{2\nu}(\Hur_{\nu,\nu'},\dellfat))$ has a basis
given by the cohomology duals of the homology classes
$[\Hur(\mcRl;\Q_+)_a,\delfat]\otimes[\fc_{\mathrm{r},b}]$.
Here $[\Hur(\mcRl;\Q_+)_a,\delfat]\in H_{2\nu}(\Hur(\mcRl;\Q_+)_a,\delfat;\bQ)$ is the fundamental
homology class, and $[\fc_{\mathrm{r},b}]\in H_0(\Hur(\mcRr;\Q_+)_b;\bQ)$
denotes the ``ground'' homology class.

A similar description gives a basis for $H^{2\nu'}(\Hur_{\nu,\nu'},\delrfat)$ in bijection
with $\Q_{\nu,\nu'}$, whereas for $H^{2\nu+2\nu'}(\Hur_{\nu,\nu'},\dellrfat)$ we consider
the cohomology duals of the homology classes
$[\Hur(\mcRl;\Q_+)_a,\delfat]\otimes[\Hur(\mcRr;\Q_+)_b,\delfat]$.
Taking $G$-invariants, we obtain an explicit computation of the bottom row of Proposition
\ref{prop:firstdiagram} as follows:
\[
 \begin{tikzcd}
 H^{2\nu}(\fFN_{\nu,\nu'},\dellfat\fFN_{\nu,\nu'})\otimes H^{2\nu'}(\fFN_{\nu,\nu'},\delrfat\fFN_{\nu,\nu'})\ar[r,"\smile"] \ar[d,"\cong"]& 
 H^{2\nu+2\nu'}(\fFN_{\nu,\nu'},\dellrfat\fFN_{\nu,\nu'})\ar[d,"\cong"]\\
 \Big(\bQ\qua{\Q_{\nu,\nu'}}[2\nu]\Big)^G \otimes
 \Big(\bQ\qua{\Q_{\nu,\nu'}}[2\nu']\Big)^G  \ar[r,"\mu_{\nu,\nu'}",swap] &
 \Big(\bQ\qua{\Q_{\nu,\nu'}}[2\nu+2\nu']\Big)^G.
 \end{tikzcd}
\]

\begin{lem}
\label{lem:scaSrestriction}
Let $S\subset\Q_\nu$ be a conjugacy class of $\Q$, and consider $\sca{S}$ as a cohomology class in the group
$\in H^{2\nu}(\FN_{\nu+\nu'},\FNlfat)\cong H^{2\nu}(\FN_{\nu+\nu'},\FN_{\nu-1})\cong H^{2\nu}(\FN_{\nu+\nu'})$; the restriction of $\sca{S}$
to $ H^{2\nu}(\fFN_{\nu,\nu'},\dellfat\fFN_{\nu,\nu'})$
is the element
\[
\sum_{(a,b)\in \Q_{\nu,\nu'}\cap S\times \Q_{\nu'}} (a,b)\in\bQ\qua{\Q_{\nu,\nu'}}[2\nu].
\]

Similarly, for a conjugacy class $S'\subset\Q_{\nu'}$, the class $\sca{S'}\in H^{2\nu'}(\FN_{\nu+\nu'},\FNrfat)$ restricts to
\[
\sum_{(a,b)\in \Q_{\nu,\nu'}\cap \Q_\nu\times S'} (a,b)\in\bQ\qua{\Q_{\nu,\nu'}}[2\nu']\cong
H^{2\nu'}(\fFN_{\nu,\nu'},\delrfat\fFN_{\nu,\nu'}).
\]
\end{lem}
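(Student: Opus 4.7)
The plan is to compute the restricted class by evaluating Kronecker pairings against the standard homology basis. By the Serre spectral sequence of the bundle $\fp_{\nu+\nu'}\colon\fFN_{\nu,\nu'}\to\cB G$ and the vanishing of $H^{>0}(\cB G;M)$ for any finite-dimensional $\bQ[G]$-module $M$ (using $|G|<\infty$), the cohomology group $H^{2\nu}(\fFN_{\nu,\nu'},\dellfat\fFN_{\nu,\nu'};\bQ)$ is identified with the $G$-invariants of the fibre-level cohomology, whose basis is Kronecker-dual to the fibre-level homology classes $[\Hur(\mcRl;\Q_+)_a\times\set{\fc_{\mathrm{r},b}}]$, for $(a,b)\in\Q_{\nu,\nu'}$. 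It therefore suffices to compute the pairing of $\sca{S}$, viewed via the isomorphisms in Proposition~\ref{prop:firstdiagram}, with each such cycle.

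Push a cycle $[\Hur(\mcRl;\Q_+)_a\times\set{\fc_{\mathrm{r},b}}]$ along the inclusion $\fFN_{\nu,\nu'}\hookrightarrow\FN_{\nu+\nu'}$ to obtain a class in $H_{2\nu}(\FN_{\nu+\nu'},\FNlfat;\bQ)$, which is canonically isomorphic to $H_{2\nu}(\fFN_\nu,\delfat\fFN_\nu;\bQ)$: Lemma~\ref{lem:FNnuFNlfat} replaces $\FNlfat$ by $\FN_{\nu-1}$; the relative Leray spectral sequence of the norm filtration gives $H_{2\nu}(\FN_{\nu+\nu'},\FN_{\nu-1};\bQ)\cong H_{2\nu}(\FN_\nu,\FN_{\nu-1};\bQ)$ (using Proposition~\ref{prop:Serre} to see that higher-filtration strata contribute nothing in degree $2\nu$), and Lemma~\ref{lem:excision} gives the final identification. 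The key geometric step is to identify this image class explicitly: construct a semi-algebraic lax endomorphism $\xi^{\mathrm{sh}}$ of $(\cR,\del)$, fixing $*$, which restricts to a homeomorphism $\mcRl\to\mcR$ (arising from an outward homothety centred at $\zcentrel$ of factor $2$, suitably extended and corrected) and collapses $\cRr$ onto the right edge $\set{1}\times[0,1]\subset\del\cR$. Choose an isotopy $\Xi^{\mathrm{sh}}_s$ from $\Id_\C$ to $\xi^{\mathrm{sh}}$ through lax endomorphisms of $(\cR,\del)$ built from outward homotheties centred at $\zcentrel$, so that each preimage $(\Xi^{\mathrm{sh}}_s)^{-1}\pa{\tfrac{7\zcentrel}{8}+\tfrac{1}{8}\mcR}$ is contained in $\tfrac{7\zcentrel}{8}+\tfrac{1}{8}\mcR$; this ensures that the induced homotopy $H\colon\bB\times[0,1]\to\bB$ (via \cite[Proposition 4.4]{Bianchi:Hur2}) preserves both $\FN_{\nu+\nu'}$ (by norm monotonicity) and $\FNlfat$ (by the preimage condition), hence descends to a homotopy of pairs.

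At time $s=1$, the cycle $\Hur(\mcRl;\Q_+)_a\times\set{\fc_{\mathrm{r},b}}$ is sent to a cycle of the form $\Hur(\mcR;\Q_+)_a\times\set{\tilde\fc^\del_b}$, where $\tilde\fc^\del_b\in\cB G$ is supported on $\del\cR$ with an additional point at $1+\sqrt{-1}/2$ of $G$-monodromy $\fe(b)$ and a compensating monodromy at $0$ to preserve total monodromy $\one\in G$. This cycle lies in a single fibre of $\fp_\nu$ (over $\tilde\fc^\del_b$) and there represents the fundamental class $[\Hur(\mcR;\Q_+)_a,\delfat]$; as in the proof of Proposition~\ref{prop:Serre}, its image in $H_{2\nu}(\fFN_\nu,\delfat\fFN_\nu;\bQ)$ is the coinvariant class of $a\in\Q_\nu$. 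Kronecker pairing with $\sca{S}\in(\bQ\qua{\Q_\nu})^G$ then equals $\delta_{a\in S}$, independent of $b$; assembling these pairings over all basis elements $(a,b)\in\Q_{\nu,\nu'}$ yields the claimed expansion of $\sca{S}|_{\fFN_{\nu,\nu'}}$. The proof for $\sca{S'}$ is entirely symmetric, using the mirror lax endomorphism that collapses $\cRl$ onto $\set{0}\times[0,1]$ and dilates $\mcRr$ onto $\mcR$. The main obstacle is verifying carefully that $\xi^{\mathrm{sh}}$ and its isotopy can be realised as semi-algebraic lax morphisms of $(\cR,\del)$ with all required properties, and that the functorially induced homotopy acts on Hurwitz configurations as expected at the chain level; this is a routine but technical exercise in explicit semi-algebraic construction.
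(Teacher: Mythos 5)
Your proof is correct and takes essentially the same approach as the paper: reduce to Kronecker pairings against the fibrewise fundamental classes $[\Hur(\mcRl;\Q_+)_a,\delfat]\otimes[\fc_{\mathrm{r},b}]$, push the cycle into $(\FN_{\nu+\nu'},\FNlfat)$, and apply a homotopy (your $\Xi^{\mathrm{sh}}_s$, the paper's $\cH(-,s)$) that fixes $\C\setminus\mcR$, preserves $\dellfat\cR$ (equivalently, your preimage condition on the inner square $\frac{7\zcentrel}{8}+\frac18\mcR$), and at time $1$ expands $\mcRl$ homeomorphically onto $\mcR$ while collapsing $\cRr$ onto the right edge; the pushed cycle is then recognised in $(\fFN_\nu,\delfat\fFN_\nu)$ as $[\Hur(\mcR;\Q_+)_a,\delfat]$ in the fibre over $\fc_{\mathrm{r},\fe(b)}$, and the pairing with $\sca{S}$ is $\delta_{a\in S}$. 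The only cosmetic inaccuracy is the description of the endpoint map as ``an outward homothety centred at $\zcentrel$ of factor $2$'': no single homothety can simultaneously send $(\frac{7\zcentrel}{8}+\frac18\mcR)$ onto $(\frac{\zcentre}{2}+\frac12\mcR)$ (which requires scaling by $4$ and a shift of centre) and $\mcRl$ onto $\mcR$ (which requires anisotropic scaling by $2\times 1$); you acknowledge this with ``suitably extended and corrected'', and the properties you actually invoke are the correct ones, but a cleaner phrasing would simply postulate the required properties of the self-map of $(\cR,\del)$, as the paper does, rather than attempting to derive it from a homothety.
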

\begin{proof}
We focus on the first part of the statement, the second being analogous.
Fix $(a,b)\in\Q_{\nu,\nu'}$; the couple of spaces $(\Hur(\mcRl;\Q_+)_a,\delfat)\times\fc_{\mathrm{r},b}$
can be embedded into the couple of bundles $(\fFN_{\nu,\nu'},\dellfat\fFN_{\nu,\nu'})$
as part of the fibre over the basepoint of $\cB G$.
We consider the homology class
\[
[\Hur(\mcRl;\Q_+)_a,\delfat]\otimes[\fc_{\mathrm{r},b}]\in H_{2\nu}(\fF_{\nu,\nu'},\dellfat\fFN_{\nu,\nu'};\bQ);
\]
such classes generate the group $H_{2\nu}(\fF_{\nu,\nu'},\dellfat\fFN_{\nu,\nu'};\bQ)$,
so in order to identify the restriction of $\sca{S}\in H^{2\nu}(F_{\nu+\nu'},\FNlfat;\bQ)$ to
$H^{2\nu}(\fF_{\nu,\nu'},\dellfat\fFN_{\nu,\nu'};\bQ)$,
it suffices to compute the Kronecker pairing of the restricted $\sca{S}$ with all homology classes of the form
$[\Hur(\mcRl;\Q_+)_a,\delfat]\otimes[\fc_{\mathrm{r},b}]$.
Let $\fj$ be the composite
\[
\fj\colon(\Hur(\mcRl;\Q_+)_a,\delfat)\times \set{\fc_{\mathrm{r},b}} \hookrightarrow (\fFN_{\nu,\nu'},\dellfat\fFN_{\nu,\nu'})
\subset(\FN_{\nu+\nu'},\FNlfat);
\]
then we can also consider the homology class $\fj_*([\Hur(\mcRl;\Q_+)_a,\delfat]\otimes[\fc_{\mathrm{r},b}])$ in
$H_{2\nu}(F_{\nu+\nu'},\FNlfat;\bQ)$, and compute its Kronecker pairing with $\sca{S}$.

Fix a homotopy $\cH\colon\C\times[0,1]\to\C$ with the following properties:
\begin{itemize}
 \item for all $0\le s\le 1$, $\cH(-,s)$ is a semialgebraic self-map of $\C$ fixing $\C\setminus\mcR$ pointwise, and sending $\dellfat\cR$ into itself;
 \item $\cH(-,0)$ is the identity of $\C$;
 \item $\cH(-,1)$
 restricts to a homeomorphism of couples $(\mcRl,\delfat\mcRl)\overset{\cong}{\to}(\mcR,\delfat\mcR)$,
 and it sends $\zcentrer\mapsto\zdiamright$.
\end{itemize}
Then $\cH$ induces a homotopy of maps of couples
\[
 \cH_*\colon (F_{\nu+\nu'},\FNlfat)\times[0,1]\to(F_{\nu+\nu'},\FNlfat),
\]
and composing this homotopy with $\fj$ we obtain a homotopy of maps of couples
\[
 H=\cH\circ(\fj\times\Id) \colon 
 (\Hur(\mcRl;\Q_+)_a,\delfat)\times\set{\fc_{\mathrm{r},b}}\times[0,1]\to (\FN_{\nu+\nu'},\FNlfat).
\]
Since $H(-,0)=\fj$, we have
\[
\fj_*([\Hur(\mcRl;\Q_+)_a,\delfat]\otimes[\fc_{\mathrm{r},b}])=
H(-,1)_*([\Hur(\mcRl;\Q_+)_a,\delfat]\otimes[\fc_{\mathrm{r},b}]),
\]
so we can focus on the Kronecker pairing of the latter homology class with $\sca{S}$.

Now note that, by construction,
$H(-,1)$ can be considered as a map of couples
\[
(\Hur(\mcRl;\Q_+)_a,\delfat)\times\set{\fc_{\mathrm{r},b}}\to (\fFN_{\nu},\delfat\fFN_\nu)\subset (\FN_{\nu+\nu'},\FNlfat);
\]
more precisely
$H(-,1)$ restricts to a homeomorphism of
$(\Hur(\mcRl;\Q_+)_a,\delfat)\times \set{\fc_{\mathrm{r},b}}$ onto
$(\Hur(\mcR;\Q_+)_a,\delfat)$, where the latter couple is considered as part of the fibre
of $\fp_\nu\colon(\fFN_\nu,\delfat\fFN_\nu)\to\cB G$ over the unique configuration
$\fc_{\mathrm{r},\fe(b)}=(P,\psi)$ satisfying the following properties:
\begin{itemize}
 \item $\fc_{\mathrm{r},\fe(b)}$ is supported on the set $P=\set{0,\zdiamright}$;
 \item if $\gamma$ is a simple loop in $\bS_{0,\infty}$ spinning clockwise around $\zdiamright$,
 then $\psi([\gamma])=\fe(b)\in G$.
\end{itemize}
It follows that $H(-,1)_*([\Hur(\mcRl;\Q_+)_a,\delfat]\otimes[\fc_{\mathrm{r},b}])$
is the image of the homology class $[\Hur(\mcR;\Q_+)_a,\delfat]\in H_{2\nu}(\fFN_\nu,\delfat\fFN_\nu)$
under the inclusion $(\fFN_\nu,\delfat\fFN_\nu)\subset (\FN_{\nu+\nu'},\FNlfat)$;
the Kronecker pairing of the latter class with $\sca{S}$ is 1 if and only if $a$ belongs to $S$.
\end{proof}
Lemma \ref{lem:scaSrestriction} implies that
for $S\subset\Q_\nu$ and $S'\subset\Q_{\nu'}$, the restriction of the 
cup product $\sca{S}\smile\sca{S'}$ to the cohomology group
in the bottom right corner of Proposition \ref{prop:firstdiagram}, can be identified with the $G$-invariant element
\[
\sum_{(a,b)\in \Q_{\nu,\nu'}\cap S\times S'} (a,b)\ \in \ \Big(\bQ\qua{\Q_{\nu,\nu'}}[2\nu+2\nu']\Big)^G
\ \cong H^{2\nu+2\nu'}(\fFN_{\nu,\nu'},\dellrfat\fFN_{\nu,\nu'};\bQ).
\]
The following lemma concludes the proof of Theorem \ref{thm:HbBQG}.
\begin{lem}
There is the following commutative diagram:
 \[
  \begin{tikzcd}
H^{2\nu+2\nu'}(\Hur(\mcR;\Q_+)_{\nu+\nu'},\dellrfat;\bQ)\ar[r]\ar[d,"\cong"] & H^{2\nu+2\nu'}(\Hur(\mcR;\Q_+),\delfat);\bQ)\ar[d,"\cong"]\\
\bQ\qua{\Q_{\nu,\nu'}}[2\nu+2\nu']\ar[r,"\sca{-}\cdot\sca{-}"] &\bQ[\Q]_{2\nu+2\nu'}.
  \end{tikzcd}
 \]
Here the vertical maps are the canonical isomorphisms, given by the bases of the top homology
groups of elements of the form $[\Hur(\mcRl;\Q_+)_a,\delfat]\otimes[\Hur(\mcRr;\Q_+)_b,\delfat]$,
for $(a,b)\in\Q_{\nu,\nu'}$, and, respectively, of the form $[\Hur(\mcR;\Q_+)_c,\delfat]$, for $c\in\Q_{\nu+\nu'}$.
Moreover, $\sca{-}\cdot\sca{-}$ denotes the map $(a,b)\mapsto \sca{a}\cdot\sca{b}$.
Passing to $G$-invariants, we obtain the following commutative diagram:
 \[
  \begin{tikzcd}
H^{2\nu+2\nu'}(\fFN_{\nu+\nu'},\dellrfat\fFN_{\nu+\nu'};\bQ)\ar[r,"\theta"]\ar[d,"\cong"] & H^{2\nu+2\nu'}(\fFN_{\nu+\nu'},\delfat\fFN_{\nu+\nu'};\bQ)\ar[d,"\cong"]\\
\Big(\bQ\qua{\Q_{\nu,\nu'}}[2\nu+2\nu']\Big)^G\ar[r,"\sca{-}\cdot\sca{-}"] &\cA(\Q)_{2\nu+2\nu'}.
  \end{tikzcd}
 \]
\end{lem}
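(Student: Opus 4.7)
The strategy is to verify commutativity of the first diagram by evaluating both horizontal maps on the canonical basis of $H^{2\nu+2\nu'}(\Hur(\mcR;\Q_+)_{\nu+\nu'},\dellrfat;\bQ)$, then obtain the second diagram by passing to $G$-invariants. For each $(a,b)\in\Q_{\nu,\nu'}$ let $e_{a,b}$ denote the basis element Kronecker-dual to the homology class $[\Hur(\mcRl;\Q_+)_a,\delfat]\otimes[\Hur(\mcRr;\Q_+)_b,\delfat]$. To identify the image of $e_{a,b}$ under the top horizontal map, i.e. under the restriction $j^*$ along the inclusion of couples $j\colon(\Hur(\mcR;\Q_+)_{\nu+\nu'},\dellrfat)\hookrightarrow(\Hur(\mcR;\Q_+),\delfat)$, I will pair $j^*(e_{a,b})$ against each fundamental class $[\Hur(\mcR;\Q_+)_c,\delfat]$ for $c\in\Q_{\nu+\nu'}$, which together form a basis of the target.

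The key geometric observation is that the external product map
\[
\iota\colon(\Hur(\mcRl;\Q_+)_a,\delfat)\times(\Hur(\mcRr;\Q_+)_b,\delfat)\longrightarrow(\Hur(\mcR;\Q_+),\delfat),
\]
which is precisely the map used in the very definition of the inclusion $\Hur(\mcR;\Q_+)_{\nu,\nu'}\subset\Hur(\mcR;\Q_+)_{\nu+\nu'}$, takes values entirely in the single component $\Hur(\mcR;\Q_+)_{ab}$: indeed, the total monodromy of $\fri^{\C}_{\bS_{0,1/2}}(\fc_{\mathrm{l}})\sqcup\fri^{\C}_{\bS_{1/2,1}}(\fc_{\mathrm{r}})$ equals the product $ab$, which is defined in $\Q$ because $(a,b)\in\Q_{\nu,\nu'}$. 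Consequently the pairing of $j^*(e_{a,b})$ with $[\Hur(\mcR;\Q_+)_c,\delfat]$ vanishes whenever $c\neq ab$, and the computation reduces to determining the degree of $\iota$ viewed as a map into the component $(\Hur(\mcR;\Q_+)_{ab},\delfat)$.

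For this degree calculation, the image of $\iota$ is precisely the open subspace of configurations in $\Hur(\mcR;\Q_+)_{ab}$ whose support avoids the vertical segment $\set{1/2}\times(0,1)$, and on this open image $\iota$ restricts to a semi-algebraic homeomorphism onto its image. The $\bQ$-Poincaré hypothesis on $\Q$ makes $\Hur(\mcR;\Q_+)_{ab}$ into an oriented $\bQ$-homology manifold of dimension $2h(ab)=2(\nu+\nu')$, and the complement of the image is a proper closed semi-algebraic subset of strictly smaller dimension; hence the fundamental class $[\Hur(\mcR;\Q_+)_{ab},\delfat]$ is carried on the image of $\iota$ and the pushforward has degree $\pm 1$. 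The sign is $+1$ by a compatibility check: unwinding the fundamental classes through the arrangement-complex presentation of \cite[Definition 6.23]{Bianchi:Hur1} and the homeomorphism $\upsilon$ of \cite[Lemma 8.23]{Bianchi:Hur2}, the natural product map $(\Arr(\Q)_a,\NAdm(\Q)_a)\times(\Arr(\Q)_b,\NAdm(\Q)_b)\to(\Arr(\Q)_{ab},\NAdm(\Q)_{ab})$ is orientation-preserving and matches, under $\upsilon$, the restriction of $\iota$ to a dense top-dimensional subcomplex.

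Combining the previous steps yields $j^*(e_{a,b})=\sca{ab}$ when $ab$ is defined in $\Q$, and by the definition of the product in the PMQ-ring $\bQ[\Q]$ (\cite[Definition 4.29]{Bianchi:Hur1}) this is precisely $\sca{a}\cdot\sca{b}$, proving commutativity of the first diagram. The second diagram follows immediately because every construction above is natural for the conjugation action of $G\times G^{op}$, so restricting horizontal and vertical maps to $G$-invariant subspaces produces the analogous commutative square with $\cA(\Q)_{2\nu+2\nu'}$ in the bottom-right corner. The principal technical obstacle is the orientation-compatibility step in the third paragraph, which requires an explicit check that the simplicial product of the arrangement complexes respects orientations under the concatenation map; this is where the geometric content of the PMQ-ring product is really used.
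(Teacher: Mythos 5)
Your overall strategy (verify commutativity by pairing cohomology classes against fundamental homology classes, then pass to $G$-invariants) is close in spirit to the paper, which also invokes Poincar\'e--Lefschetz duality. However, there is a genuine gap in the step where you justify that the degree is $\pm 1$, and a secondary issue with how you pass to $G$-invariants.

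The gap is in your claim that ``the image of $\iota$ is precisely the open subspace of configurations in $\Hur(\mcR;\Q_+)_{ab}$ whose support avoids the vertical segment $\{1/2\}\times(0,1)$'', and consequently that ``the complement of the image is a proper closed semi-algebraic subset of strictly smaller dimension''. Both statements are false in general. The set of configurations in $\Hur(\mcR;\Q_+)_{ab}$ avoiding the vertical segment decomposes as a disjoint union of products indexed by \emph{all} pairs $(a',b')\in\Q_{\nu,\nu'}$ with $a'b'=ab$; your map $\iota$ for the fixed $(a,b)$ only hits the one piece corresponding to $(a,b)$, and the other pieces are open of the \emph{same} top dimension $2\nu+2\nu'$. (For instance, if $\Q$ contains $q,r,s$ with $N(q)=N(r)=1$, $N(s)=2$, $qr=rq=s$, and $qq,rr$ undefined, then both $(q,r)$ and $(r,q)$ multiply to $s$ and give two full-dimensional open subsets of $\Hur(\mcR;\Q_+)_s$.) Hence the fundamental class $[\Hur(\mcR;\Q_+)_{ab},\delfat]$ is not carried on the image of $\iota$, and your degree argument as written does not go through. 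The paper sidesteps this entirely: instead of pushing forward fundamental classes and tracking orientations, it applies Poincar\'e--Lefschetz duality to convert the restriction map $H^{2\nu+2\nu'}(X,\dellrfat)\to H^{2\nu+2\nu'}(X,\delfat)$ into the induced map on $H_0$ of the open complements $X\setminus\dellrfat\hookrightarrow X\setminus\delfat$. There the statement reduces to the obviously correct claim that the $(a,b)$-component of $X\setminus\dellrfat$ sits inside the $ab$-component of $X\setminus\delfat$, with no orientation or sign issues to check; your final paragraph about comparing orientations on arrangement complexes is work that this reformulation renders unnecessary.

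Two smaller points. First, your inclusion $j$ is stated in the wrong direction: since $\delfat\fFN_{\nu+\nu'}\subset\dellrfat\fFN_{\nu+\nu'}$ (by Lemma \ref{lem:FNfatinFNlrfat}), the inclusion of pairs goes $(\fFN_{\nu+\nu'},\delfat)\hookrightarrow(\fFN_{\nu+\nu'},\dellrfat)$, not the reverse. Second, the passage to $G$-invariants is not merely ``restricting to $G$-invariant subspaces''; one must justify that the top row of the second diagram \emph{is} the $G$-invariants of the top row of the first. The paper does this by showing that the Serre spectral sequences of the two couples of bundles over $\cB G$ each collapse to a single entry at $(0,2\nu+2\nu')$ with value the $G$-invariants of the fibre cohomology (using that $G$ is finite and coefficients are rational). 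Without that argument, the claim that the two squares are related by $(-)^G$ is unsupported.
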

\begin{proof}
We first argue that commutativity of the first diagram implies commutativity of the second.
The Serre spectral sequences computing the cohomology of the couples of bundles $(\fFN_{\nu+\nu'},\dellrfat\fFN_{\nu+\nu'})$
and $(\fFN_{\nu+\nu'},\delfat\fFN_{\nu+\nu'})$ have both a second page concentrated in the
single entry in position $(0,2\nu+\nu')$,
with value, respectively, the $G$-invariants
\[
H^{2\nu+2\nu'}(\Hur(\mcR;\Q_+)_{\nu+\nu'},\dellrfat;\bQ)^G\quad\mbox{ and }\quad
H^{2\nu+2\nu'}(\Hur(\mcR;\Q_+),\delfat);\bQ)^G.
\]
This implies that the spectral sequences collapse and
that the top row of the second diagram is obtained from the top row of the first diagram by taking $G$-invariants.
This, together with the fact that all arrows in the first diagram are $G$-equivariant, shows that the second
diagram is obtained from the first by taking $G$-invariants.

It suffices therefore to prove commutativity of the first diagram.
Recall that $\Hur(\mcR;\Q_+)_{\nu+\nu'}$ is an orientable $\bQ$-homology manifold; moreover,
both differences $\Hur(\mcR;\Q_+)_{\nu+\nu'}\setminus\dellrfat\Hur(\mcR;\Q_+)_{\nu+\nu'}$
and $\Hur(\mcR;\Q_+)_{\nu+\nu'}\setminus\delfat\Hur(\mcR;\Q_+)_{\nu+\nu'}$ are relatively compact
inside $\Hur(\mcR;\Q_+)_{\nu+\nu'}$. We can therefore apply Poincar\'e-Lefschetz duality
and reduce to proving commutativity of the following diagram, where we use the abbreviation $\Hur$ for
$\Hur(\mcR;\Q_+)$:
 \[
  \begin{tikzcd}
 H_0(\Hur_{\nu+\nu'}\setminus\dellrfat \Hur_{\nu+\nu'};\bQ)\ar[r]\ar[d,"\cong"] & H_0(\Hur_{\nu+\nu'}\setminus\delfat \Hur_{\nu+\nu'};\bQ)\ar[d,"\cong"]\\
 \bQ\qua{\Q_{\nu,\nu'}}[0] \ar[r,"\sca{-}\cdot\sca{-}"] & \bigoplus_{c\in\Q_{\nu+\nu'}}\bQ.
  \end{tikzcd}
 \]
 Here the bottom right group is abstractly isomorphic to $\bQ[\Q]_{2\nu+2\nu'}$, but lives naturally in degree 0.
 The vertical isomorphisms are given as follows:
 \begin{itemize}
  \item the left vertical map comes from the identification of $\Hur_{\nu+\nu'}\setminus\dellrfat \Hur_{\nu+\nu'}$ with
  \[
   \coprod_{(a,b)\in\Q_{\nu,\nu'}}
   \Hur\pa{\frac{7z_{\mathrm{c,l}}}8+\frac 18\mcR;\Q_+}_a\times \Hur\pa{\frac{7z_{\mathrm{c,r}}}8+\frac 18\mcR;\Q_+}_b
  \]
  induced by the maps $\fri^{\C}_{\bS_{0,1/2}}$ and $\fri^{\C}_{\bS_{1/2,1}}$; the second space
  has connected components in bijection with $\Q_{\nu,\nu'}$, by taking the total monodromies of the two factors;
  \item the right vertical map comes from the identification of $\Hur_{\nu+\nu'}\setminus\delfat \Hur_{\nu+\nu'}$ with
  \[
  \Hur\pa{\frac{\zcentre}2+\frac 12\mcR;\Q_+}_{\nu+\nu'};
  \]
  the second space has connected components in bijection with $\Q_{\nu+\nu'}$ by taking the total monodromy.
 \end{itemize}
Commutativity of the last diagram follows from the observation that for all $(a,b)\in\Q_{\nu,\nu'}$, if we set
$c=ab\in\Q$, then the inclusion of $\Hur_{\nu+\nu'}\setminus\dellrfat \Hur_{\nu+\nu'}$ 
into $\Hur_{\nu+\nu'}\setminus\delfat \Hur_{\nu+\nu'}$ restricts to an inclusion
\[
\Hur\pa{\frac{7z_{\mathrm{c,l}}}8+\frac 18\mcR;\Q_+}_a\times \Hur\pa{\frac{7z_{\mathrm{c,r}}}8+\frac 18\mcR;\Q_+}_b
\subset  \Hur\pa{\frac{\zcentre}2+\frac 12\mcR;\Q_+}_c.
\]
\end{proof}
\subsection{Stable rational cohomology of classical Hurwitz spaces}
We apply Theorem \ref{thm:HbBQG} in the case of a finite
PMQ $\Q$ with trivial multiplication: recall from \cite[Example 6.20]{Bianchi:Hur1} that every PMQ with trivial product
is Poincar\'e, in particular it is $\bQ$-Poincar\'e.
The algebra $\cA(\Q)$ is isomorphic in this case to $\bQ[x_S\,|\, S\in\conj(\Q_+)]/(x_S^2)$,
i.e. the quotient of the polynomial ring over $\bQ$ with one variable $x_S$ in degree two
for each conjugacy class $S\subset \Q_+$, modulo the ideal generated by the squares $x_S^2$.
A minimal Sullivan model for $\cA(\Q)$ is given by the commutative differential graded algebra $(\bA(\Q),d)$, where
\[
\bA(\Q)=\bQ[x_S\,|\, S\in\conj(\Q_+)]\otimes\Lambda_{\bQ}[y_S\,|\, S\in\conj(\Q_+)],
\]
with $x_S$ in degree two and $y_S$ in degree three, and where the unique non-trivial differentials
are $d(y_S)=x_S^2$ for all $S\in\conj(\Q_+)$.
Looping twice the minimal Sullivan model (i.e. decreasing all degrees of the $x_S$ and $y_S$ by 2),
and restricting to one connected component, we obtain that the rational cohomology
of $\Omega_0B\mHurm(\Q)$ is isomorphic to $\Lambda_{\bQ}[y'_S\,|\, S\in\conj(\Q_+)]$,
i.e. it is the free exterior algebra over $\bQ$ generated by classes $y'_S$ in degree one,
one for each $S\in\conj(\Q_+)$: the class $y'_S$ is obtained by looping twice $y_S\in\bA(\Q)$.

There is a special case of interest, namely when $\Q$ has the form $c\sqcup\set{\one_{\Q}}$,
for $c\subset G$ a finite, conjugacy invariant subset of a group $G$: then by \cite[Proposition 7.3]{Bianchi:Hur2} the space $\mHurm_+(\Q)$ is homotopy equivalent to the topological monoid
$\Hur^c_G:=\coprod_{n\ge0}\Hur^c_{G,n}$ of classical Hurwitz spaces with monodromies in $c$,
see \cite[Subsection 2.6]{EVW:homstabhur} and \cite[Subsection 4.2]{ORW:Hurwitz}. Adding a disjoint
unit, we obtain a homotopy equivalence of topological monoids
$\mHurm(\Q)\simeq \set{\one}\sqcup\Hur^c_G$; we can now recall that the weak homotopy type of the group completion of a topological monoid does not change up to homotopy if we add a disjoint unit to the monoid and, thus, we obtain
weak equivalences
\[
 \Omega B\mHurm(\Q)\simeq \Omega B\pa{\set{\one}\sqcup\Hur^c_G}\simeq \Omega B\Hur^c_G.
\]
Thus we obtain a computation of the rational cohomology of $\Omega B\Hur^c_G$,
which is computed already in \cite[Corollary 5.4]{ORW:Hurwitz};
a claim of the result already appears in the withdrawn preprint \cite{EVW:homstabhurII},
as a combination of the statement of \cite[Theorem 2.8.1]{EVW:homstabhurII} for $n=2$ and $X=BG$, and the discussion in \cite[Subsection 5.6]{EVW:homstabhurII}. See also the conjecture in
\cite[Subsection 1.5]{EVW:homstabhur}, which predicts the previous computation for $c$ being the conjugacy class of transpositions in a symmetric group on at least three letters.

\appendix
\section{Deferred proofs}
\subsection{Proof of Proposition \ref{prop:Hurmtopmon}}
\label{subsec:prop:Hurmtopmon}
The two cases $\mHurm$ and $\bHurm$ are analogous, so we will 
focus on the case of $\bHurm$, which is slightly more difficult.

Recall Definition \ref{defn:Hurmoore}. We define in a symmetric way $\bHurm^-$
as the subspace of $[0,\infty)\times\Hur(\bcR_{\R},\bdel)$
containing couples $(t,\fc)$ with $\fc$ supported in $\mathring{\bS}_{-t,0}$.

Note that $\bHurm^-$ is contained in the subspace
$[0,\infty)\times\Hur(\bcR_{\R^-},\bdel)$,
where we define $(\bcR_{\R^-},\bdel)$ as the nice couple 
$\big((-\infty,0)\times[0,1],(-\infty,0)\times\set{0,1}\big)$.

By \cite[Proposition 4.4]{Bianchi:Hur2} the assignment $(t,\fc)\mapsto (t,(\tau_{-t})_*(\fc))$
gives a continous map $\tau^-\colon\bHurm\to[0,\infty)\times\Hur(\bcR_{\R},\bdel)$;
note that $\tau^-$ has values in the subspace $\bHurm^-$; in fact $\tau^-$ is a homeomorphism
$\bHurm\cong\bHurm^-$.

Recall \cite[Definition 3.16]{Bianchi:Hur2}. The following composition of continous
maps takes values in
$\bHurm\subset[0,\infty)\times \Hur(\bcR_{\R},\bdel)$
and it coincides with $\mu\colon\bHurm\times\bHurm\to\bHurm$ as a map of sets:
\[
 \begin{tikzcd}[column sep=.5cm]
\bHurm\times \bHurm\ar[r,"\tau^-\times\Id"] & \bHurm^-\times\bHurm \ar[ld,"\subset"']\\
\left[0,\infty\right)\times\Hur(\bcR_{\R^-},\bdel)\times\left[0,\infty\right)\times\Hur(\bcR_{\infty},\bdel)
\ar[d,"\Id\times \fri^{\C}_{\bS_{-\infty,0}}\times \Id\times\fri^{\C}_{\bS_{0,\infty}}"] & \\
\left[0,\infty\right)\times\Hur^{\bS_{-\infty,0}}(\bcR_{\R^-},\bdel)\times\left[0,\infty\right)\times\Hur^{\bS_{0,\infty}}(\bcR_{\infty},\bdel)
\ar[d,"-\sqcup-"] & \\
\left[0,\infty\right)\times\left[0,\infty\right)\times \Hur^{\bS_{-\infty,\infty}}(\bcR_{\R^-}\cup\bcR_{\infty},\bdel)
\ar[d,equal] &\\
\left[0,\infty\right)\times\left[0,\infty\right)\times \Hur(\bcR_{\R^-}\cup\bcR_{\infty},\bdel)
\ar[d,"\subset"]& \\
\left[0,\infty\right)\times\left[0,\infty\right)\times \Hur(\bcR_{\R},\bdel)\ar[r,"\hat\tau"] &
\left[0,\infty\right)\times \Hur(\bcR_{\R},\bdel).
 \end{tikzcd}
\]
Here, by abuse of notation, we denote by $-\sqcup-$ the map $(t,\fc,t',\fc')\mapsto(t,t',\fc\sqcup\fc')$;
the map $\hat\tau$ is defined by $(t,t',\fc)\mapsto (t+t',(\tau_t)_*(\fc))$. This shows continuity
of the product $\mu$.

To prove associativity of $\mu$, let $\fc_i=(P_i,\psi_i,\phi_i)$ be a configuration in $\Hur(\bcR_{t_i},\bdel)$
for $i=1,2,3$.
Under the identification $\fri^{\C}_{\bS_{0,t_i}}$
we can regard $\fc_i$ as a configuration in $\Hur^{\bS_{0,t_i}}(\bcR_{t_i},\bdel)$.
Then $(\tau_{t_1})_*(\fc_2)$
is a configuration in $\Hur^{\bS_{t_1,t_1+t_2}}(\bcR_{t_2}+t_1,\bdel)$
and $(\tau_{t_1+t_2})_*(\fc_3)$ is a configuration in $\Hur^{\bS_{t_1+t_2,t_1+t_2+t_3}}(\bcR_{t_3}+t_1+t_2,\bdel)$).
The compositions
$\fc_1\sqcup\big((\tau_{t_1})_*(\fc_2)\sqcup (\tau_{t_1+t_2})_*(\fc_3)\big)$
and $\big(\fc_1\sqcup(\tau_{t_1})_*(\fc_2)\big)\sqcup (\tau_{t_1+t_2})_*(\fc_3)$
represent the same configuration in
$\Hur^{\bS_{0,t_1+t_2+t_3}}\pa{\bcR_{t_1}\cup\bcR_{t_2}+t_1\cup\bcR_{t_3}+t_1+t_2,\bdel}$,
and by inclusion and change of ambient to $\C$, the same configuration in $\Hur(\bcR_{t_1+t_2+t_3},\bdel)$.
This proves associativity of $\mu$.

The fact that $(0,(\emptyset,\one,\one))$ is a two-sided unit for $\mu$ follows directly from Definition \ref{defn:muttp}, together with the fact that $\tau_0$ is the identity of $\C$.

\subsection{Proof of Lemma \ref{lem:fcagenerate}}
\label{subsec:fcagenerate}
We start by proving that the elements $\pi_0(1,\fc_a)$ generate $\pi_0(\mHurm)$ as a monoid.
Let $(t,\fc)\in\mHurm$ and use Notation \ref{nota:fc}.
If $P=\emptyset$, then we can continuously
reduce $t$ to 0, so that $\pi_0(t,\fc)=\pi_0(0,(\emptyset,\one,\one))$ is
the neutral element of $\pi_0(\mHurm)$.
Suppose from now on that $P\neq\emptyset$.
 
 Suppose first that $P=\set{z}$ consists of a single point.
 By Lemma \ref{lem:HurmvsHur} we can connect $(t,\fc)$ with a configuration
 of the form $(1,\fc')$;
 we can then connect $(1,\fc')$ with a configuration
 of the form $\fc_a$: for this we can use any homotopy of $\C$ through semialgebraic
 homeomorphisms which is at all times
 supported on $\mcR$ (i.e. $\C\setminus\mcR$ is fixed pointwise at all times),
 and pushes the unique point $z'\in P'$ to $\zcentre$.
 It follows that $(t,\fc)$ and $(1,\fc_a)$ are connected in $\mHurm$.
 
 Suppose now that $\abs{P}\geq 2$. We can perturb the positions of the points $z_i\in P$ inside
 $\mcR_t$ and assume that their real parts $\Re(z_i)$ are all different: again, we use a
 semialgebraic isotopy
 of $\C$ supported on $\mcR_t$, starting from the identity of $\C$ and ending with a
 semialgebraic homeomorphism of $\C$ mapping $P$
 to a set of points with distinct real parts.
 
 Without loss of generality we assume that $P$ already has this property and we also assume
 $\Re(z_1)<\dots<\Re(z_k)$; choose positive real numbers $0=t_0<t_1<\dots<t_k=t$ such that
 $t_{i-1}<\Re(z_i)<t_i$ for all $1\leq i\leq k$.
 In particular we can regard $\fc$ as a configuration in
 \[
 \Hur(\mcR_{t_1}\cup \mcR_{t_2-t_1}+t_1\cup\dots\cup\mcR_{t_k-t_{k-1}}+t_{k-1}).
 \]
 
 Recall \cite[Definition 3.15]{Bianchi:Hur2}, and for all $1\leq i\leq k$ let $\fc_i\in\Hur(\mcR_{t_i-t_{i-1}})$ be the image of $\fc$ along the following composition:
 \[
 \begin{tikzcd}
 \Hur(\mcR_{t_1}\cup \mcR_{t_2-t_1}\!+\!t_1\cup\dots\cup\mcR_{t_k-t_{k-1}}\!+\!t_{k-1})\ar[r,"\fri_{\bS_{t_{i-1},t_i}}^{\C}"]
 & \Hur^{\bS_{t_{i-1},t_i}}(\mcR_{t_i-t_{i-1}}\!+\!t_{i-1})\ar[dl,"(\fri_{\bS_{t_{i-1},t_i}}^{\C})^{-1}"]\\
 \Hur(\mcR_{t_i-t_{i-1}}\!+\!t_{i-1})
 \ar[r,"(\tau_{-t_{i-1}})_*",'] & \Hur(\mcR_{t_i-t_{i-1}}).
 \end{tikzcd}
 \]
 Then $(t,\fc)$ is equal to the product $(t_1-t_0,\fc_1)\cdot\dots\cdot(t_k-t_{k-1},\fc_k)$
 in $\mHurm$, and each $\fc_i$ is supported on the single point $\tau_{-t_{i-1}}(z_i)$. 
 It follows that the elements $\pi_0(1,\fc_a)$ generate $\pi_0(\mHurm)$ as a monoid.
 
 Now let $a,b\in\Q$, and note that $(1,\fc_a)\cdot(1,\fc_b)$ has the form $(2,\fc)$,
 for some $\fc=(P,\psi)\in\Hur(\mcR_2)$ with
 $P=\set{z_1=\zcentre,z_2=\zcentre+1}$.
 Let $\gen_1,\gen_2$ be the admissible generating set for $\fG(P)$ with $\gen_1$ represented by a loop supported on $\bS_{-\infty,1}$
 and $\gen_2$ represented by a loop supported on $\bS_{1,+\infty}$.
 
 We can fix a semialgebraic isotopy
 $\cH\colon\C\times[0,1]\to\C$ supported on $\mcR_2$,
 starting from the identity of $\C$ and swapping at time 1 the two points
 of $P$ according to a clockwise half Dehn twist. We use the notation $\fc'=(P,\psi'):=\cH_*(\fc;1)\in\Hur(\mcR_2)$.
 
 The homeomorphism $\cH(-,1)\colon\CmP\to\CmP$ induces an automorphism of the fundamental group
 $\cH(-,1)_*\colon \fG(P)\to\fG(P)$ which restricts to an automorphism
 of the fundamental PMQ $\cH(-,1)_*\colon\fQ_{\mcR_2}(P)\to\fQ_{\mcR_2}(P)$.
 
 Note that $\cH(-,1)_*$ sends $\gen_2\mapsto\gen_1$ and $\gen_1^{\gen_2}\mapsto \gen_2$.
 By definition we have $\psi'=\psi\circ \cH(-,1)_*^{-1}$, hence $\psi'(\gen_1)=\psi(\gen_2)=b$
 and $\psi'(\gen_2)=\psi(\gen_1^{\gen_2})=a^b$. It follows that
 $(P,\psi')=(1,\fc_b)\cdot(1,\fc_{a^b})$,
 hence $\pi_0(1,\fc_a)\cdot\pi_0(1,\fc_b)=\pi_0(1,\fc_{b})\cdot\pi_0(1,\fc_{a^b})$.
 
 Suppose now that the product $ab$ is defined in $\Q$. Again by Lemma \ref{lem:HurmvsHur} we can connect
 $(2,\fc):=(1,\fc_a)\cdot(1,\fc_b)$ to a configuration of the form $(1,\fc'')$, with
 $\fc''\in\Hur(\mcR)$; for instance we can take $\fc''=\Lambda_*(\fc,1/2)$,
 where the semialgebraic isotopy $\Lambda\colon\C\times(0,\infty)\to\C$ was introduced in the proof of Lemma \ref{lem:HurmvsHur}.
 In fact, we have $\fc''\in\Hur_+(\mcR)_{ab}$; by \cite[Corollary 6.5]{Bianchi:Hur2} the space 
 $\Hur_+(\mcR)_{ab}$ is contractible, in particular it is connected; hence $\pi_0(1,\fc_a)\cdot\pi_0(1,\fc_b)=\pi_0(1,\fc'')=\pi_0(1,\fc_{ab})$.

\subsection{Proof of Proposition \ref{prop:BbHurmbHurm+contractible}}
\label{subsec:BbHurmbHurm+contractible}
\begin{defn}
 \label{defn:bHurmfl}
 We introduce a subspace $\bHurmfl\subset\bHurm_+$. A couple $(t,\fc)\in\bHurm_+$ belongs to $\bHurmfl$
 if $t\geq 1$ and the point $\zfl_t:=t-\frac 12$ belongs
 to the support of $\fc$.
\end{defn}

Note that $\bHurmfl$ is invariant under the action of $\bHurm$ by left multiplication.

\begin{defn}
 \label{defn:bHurmsh}
Let $\fc_{\one}^{\mathrm{d}}\in \Hur(\bcR,\bdel)$ be as in the proof of Lemma \ref{lem:bHurmweonbHurm+},
and note that $(1,\fc_{\one}^{\mathrm{d}})\in\bHurmfl$; in fact,
the right multiplication map $\mu(-;(1,\fc_{\one}^{\mathrm{d}}))\colon\bHurm\to\bHurm$
has image inside $\bHurmfl$. We define $\bHurmsh\subset\bHurmfl$
as the image of $\mu(-;(1,\fc_{\one}^{\mathrm{d}}))$.
\end{defn}

\begin{figure}[ht]
 \begin{tikzpicture}[scale=3.5,decoration={markings,mark=at position 0.38 with {\arrow{>}}}]
  \draw[dashed,->] (-.05,0) to (1.7,0);
  \draw[dashed,->] (0,-1.1) to (0,1.1);
  \node at (0,-1) {$*$};
  \draw[dotted, fill=gray, opacity=.5] (0,0) rectangle (1.6,1);
  \draw[very thick] (0,0) to (1.6,0);
  \draw[very thick] (0,1) to (1.6,1);
  \draw[dotted] (.6,0) to (.6,1);
  \draw[dotted] (1.1,0) to (1.1,1);
  \node at (.1,.3){$\bullet$}; 
  \node at (.3,1){$\bullet$}; 
  \node at (.45,.5){$\bullet$};
  \node at (1.1,.0){$\bullet$};\node at (1.05,.08){\tiny$\zfl_t$};
  \node at (1.45,.8){$\bullet$};
  \draw[thin, looseness=1.2, postaction={decorate}] (0,-1) to[out=82,in=-90]  (.05,.1) to[out=90,in=-90] (.01,.3)  to[out=90,in=90] node[above]{\tiny$a_1$} (.2,.3)  to[out=-90,in=90] (.1,.1) to[out=-90,in=80] (0,-1);
  \draw[thin, looseness=1.2, postaction={decorate}] (0,-1) to[out=79,in=-90] (.28,.9)
  to[out=90,in=-90] (.26,1) to[out=90,in=90] (.34,1) to[out=-90,in=90] node[left]{\tiny$g_2$}
  (.3,.9) to[out=-90,in=77]  (0,-1);
  \draw[thin, looseness=1.2, postaction={decorate}] (0,-1) to[out=76,in=-90] (.4,.3)
  to[out=90,in=-90] (.35,.5) to[out=90,in=90] node[above]{\tiny$a_3$} (.49,.5) to[out=-90,in=90] (.45,.3) to[out=-90,in=74]  (0,-1);
  \draw[thin, looseness=1.2, postaction={decorate}] (0,-1) to[out=52,in=-120]  (1.06,-.1) to[out=60,in=-90] (.9,.05)  to[out=90,in=90] node[above]{\tiny$g_4\neq\one$} (1.2,.05)  to[out=-90,in=50] (1.15,-.1) to[out=-130,in=44] (0,-1);
  \draw[thin, looseness=1.2, postaction={decorate}] (0,-1) to[out=36,in=-90] (1.4,.7)
  to[out=90,in=-90] (1.35,.8) to[out=90,in=90] node[above]{\tiny$a_5$} (1.49,.8) to[out=-90,in=90]
  (1.45,.7) to[out=-90,in=34]  (0,-1);
 
\begin{scope}[shift={(1.8,0)}]
  \draw[dashed,->] (-.05,0) to (1.7,0);
  \draw[dashed,->] (0,-1.1) to (0,1.1);
  \node at (0,-1) {$*$};
  \draw[dotted, fill=gray, opacity=.5] (0,0) rectangle (1.6,1);
  \draw[very thick] (0,0) to (1.6,0);
  \draw[very thick] (0,1) to (1.6,1);
  \draw[dotted] (.6,0) to (.6,1);
  \draw[dotted] (1.1,0) to (1.1,1);
  \node at (.1,.3){$\bullet$}; 
  \node at (.3,1){$\bullet$}; 
  \node at (.45,.5){$\bullet$};
  \node at (1.1,.0){$\bullet$};\node at (1.05,.08){\tiny$\zfl_t$};
  \draw[thin, looseness=1.2, postaction={decorate}] (0,-1) to[out=82,in=-90]  (.05,.1) to[out=90,in=-90] (.01,.3)  to[out=90,in=90] node[above]{\tiny$a_1$} (.2,.3)  to[out=-90,in=90] (.1,.1) to[out=-90,in=80] (0,-1);
  \draw[thin, looseness=1.2, postaction={decorate}] (0,-1) to[out=79,in=-90] (.28,.9)
  to[out=90,in=-90] (.26,1) to[out=90,in=90] (.34,1) to[out=-90,in=90] node[left]{\tiny$g_2$}
  (.3,.9) to[out=-90,in=77]  (0,-1);
  \draw[thin, looseness=1.2, postaction={decorate}] (0,-1) to[out=76,in=-90] (.4,.3)
  to[out=90,in=-90] (.35,.5) to[out=90,in=90] node[above]{\tiny$a_3$} (.49,.5) to[out=-90,in=90] (.45,.3) to[out=-90,in=74]  (0,-1);
  \draw[thin, looseness=1.2, postaction={decorate}] (0,-1) to[out=52,in=-120]  (1.06,-.1) to[out=60,in=-90] (.9,.05)  to[out=90,in=90] node[above]{\tiny$g_4=\one$} (1.2,.05)  to[out=-90,in=50] (1.15,-.1) to[out=-130,in=44] (0,-1);
\end{scope}
 \end{tikzpicture}
 \caption{Two configurations in $\bHurmfl$; note that only the right one belongs to $\bHurmsh$.}
\label{fig:flsh}
\end{figure}

See Figure \ref{fig:flsh}. Note that also the subspace $\bHurmsh$ is invariant under the action of $\bHurm$
by left multiplication.
Moreover, $\mu(-,(1,\fc_{\one}))\colon\bHurm\to\bHurmsh$ is a homeomorphism
of $\bHurm$-left modules, and $\bHurmsh\subset\bHurmfl$ contains
precisely all couples $(t,\fc)$ such that, using Notation \ref{nota:fc}, the following hold:
 \begin{itemize}
  \item $\set{\zfl_t}\subseteq P\subset\set{\zfl_t}\cup\bcR_{t-1}$;
  \item $\psi\colon\fQ(P)\to\Q$ factors through $\fQ(P\setminus\set{\zfl_t})$
 along the point-forgetting map $\fri^P_{P\setminus\set{\zfl_t}}\colon\fQ(P)\to\fQ(P\setminus\set{\zfl_t})$,
 see \cite[Notation 2.17]{Bianchi:Hur2};
 \item similarly, $\phi\colon\fG(P)\to G$ factors through $\fG(P\setminus\set{\zfl_t})$
 along $\fri^P_{P\setminus\set{\zfl_t}}\colon\fG(P)\to\fG(P\setminus\set{\zfl_t})$.
 \end{itemize}
Passing to bar constructions, we obtain inclusions of spaces
\[
 B(\bHurm,\bHurmsh)\hookrightarrow B(\bHurm,\bHurmfl)\hookrightarrow B(\bHurm,\bHurm_+).
\]
The first space $B(\bHurm,\bHurmsh)$ is homeomorphic
to $E\bHurm$ and, hence, is contractible, as $\bHurm$ is a unital monoid.
We will prove that the inclusions $B(\bHurm,\bHurmfl)\hookrightarrow B(\bHurm,\bHurm_+)$ 
and $B(\bHurm,\bHurmsh)\hookrightarrow B(\bHurm,\bHurmfl)$ are weak homotopy equivalences:
this will conclude the proof of Proposition \ref{prop:BbHurmbHurm+contractible}.

\begin{lem}
 \label{lem:firstcontraction}
 The inclusion $\bHurmfl\subset\bHurm_+$ is a homotopy equivalence.
\end{lem}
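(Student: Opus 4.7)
The strategy is to exhibit $\bHurmfl$ as a deformation retract of $\bHurm_+$. I construct a retraction $r\colon \bHurm_+ \to \bHurmfl$ and verify that $\iota \circ r \simeq \mathrm{id}_{\bHurm_+}$ and $r \circ \iota \simeq \mathrm{id}_{\bHurmfl}$, where $\iota$ denotes the inclusion. Define $r(x) := \mu(x, (1, \fc_{\one}^{\mathrm{d}}))$; since $(1, \fc_{\one}^{\mathrm{d}}) \in \bHurmsh$, the map $r$ lands in $\bHurmsh \subset \bHurmfl$ and is continuous.

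The first composition $\iota \circ r \simeq \mathrm{id}_{\bHurm_+}$ is established by adapting the argument in the proof of Lemma \ref{lem:bHurmweonbHurm+} to right multiplication: fix a contracting homotopy of the Ran space $\Ran_+(\bcR_\infty)$ onto the family of one-point configurations $\set{\zfl_{t+1}}$, varying continuously with the width parameter $t$, and combine it with the external product $-\times-$ from \cite[Definition 5.7]{Bianchi:Hur2} to obtain the required interpolation between $\mathrm{id}_{\bHurm_+}$ and $\iota \circ r$. The crucial input is that $\totmon(\fc_{\one}^{\mathrm{d}}) = \one$, which allows the external product construction to produce configurations in $\bHurm_+$ with unchanged total monodromy throughout.

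The main obstacle is to establish $r \circ \iota \simeq \mathrm{id}_{\bHurmfl}$ via a homotopy whose image stays inside $\bHurmfl$. For $(t, \fc) \in \bHurmfl$, the configuration $r(\iota(t, \fc)) = (t+1, \fc \cdot \fc_{\one}^{\mathrm{d}})$ carries both the original marker of $\fc$ at $\zfl_t$, with possibly non-trivial monodromy, and an additional trivial point at $\zfl_{t+1}$. The plan is to construct a homotopy parameterized by $s \in [0,1]$ in which the rectangle width shrinks from $t + 1$ at $s = 1$ to $t + s$ at intermediate times, and the extra trivial point is transported along a continuously-varying path from $\zfl_{t+1}$ to $\zfl_t$ inside the rectangle, arranged so that at every intermediate time some point of the support sits at $\zfl_{t+s}$. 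At $s = 0$ the sliding point arrives at $\zfl_t$ and merges with the original marker; because the sliding point carries trivial monodromy, this merger is continuous in the Hurwitz topology and leaves the PMQ-group pair data unchanged, so the limit configuration equals $(t, \fc)$. The main technical subtlety is to choose the transportation path continuously in $(t, \fc) \in \bHurmfl$ while avoiding collisions with the other points in the support of $\fc$; this can be handled by routing the sliding trivial point through the interior of the rectangle away from the bottom edge where the markers lie, combined with an auxiliary use of the external product to continuously maintain an explicit marker at $\zfl_{t+s}$ while the sliding point is temporarily diverted off the edge.
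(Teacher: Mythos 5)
Your approach is genuinely different from the paper's. The paper never needs the second composition $r\circ\iota\simeq\mathrm{id}$: it factors the inclusion as $\bHurmfl\subset\bHurm_{+,t\geq 1}\subset\bHurm_+$ and produces for the first inclusion a single deformation $\cH^{\flat}$ defined on all of $\bHurm_+$, which keeps the width $t$ \emph{fixed}, preserves both subspaces at all times, restricts to the identity on $\bHurmfl$ at time $1$, and sends $\bHurm_{+,t\geq 1}$ into $\bHurmfl$ at time $1$; the key device is to externally multiply $\fc$ by a copy of its own support being contracted onto $\zfl_t$ (so that the external product is the identity at time $0$, since $\fc\times\epsilon(\fc)=\fc$). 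The second inclusion is dealt with by left-multiplying with $(\max\{0,s-t\},(\emptyset,\one,\one))$. Your plan, defining $r=\mu(-,(1,\fc_\one^{\mathrm{d}}))$ and verifying both composites, buys you nothing over this and costs you an extra homotopy, but is in principle viable.

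The genuine gap is in your treatment of $r\circ\iota\simeq\mathrm{id}_{\bHurmfl}$. You worry about the sliding point colliding with points of the support and propose to route it through the interior of the rectangle, maintaining an auxiliary boundary marker at $\zfl_{t+s}$ via the external product while the sliding point is diverted. This is based on a misconception: collisions are not an obstruction here. The external product $\fc\times\{z\}$ is continuous in the point $z$ and in $\fc$ \emph{including across collisions with the support of $\fc$}; the added point carries trivial monodromy and simply gets absorbed, exactly as you yourself observe must happen at $s=0$. That same reasoning applies at every intermediate time, so there is nothing to avoid. Worse, the routing idea is self-defeating: while the sliding point is off the bottom edge it is a genuine extra interior point with trivial $\Q$-monodromy, which does \emph{not} vanish from the configuration, so you now carry two extra points and need them to reconcile when the diversion ends. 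If you find yourself maintaining the auxiliary marker $\zfl_{t+s}$ via the external product anyway, you should simply take that marker as the whole construction: the single formula
\[
H\big((t,\fc),s\big)=\big(\,t+s\ ,\ \fc\times\{\zfl_{t+s}\}\,\big),\qquad s\in[0,1],
\]
with $\fc$ regarded inside $\Hur(\bcR_{t+s},\bdel)$ via $\bcR_t\subset\bcR_{t+s}$, is continuous in $(t,\fc,s)$, preserves $\bHurmfl$ (since $t+s\geq 1$ and $\zfl_{t+s}$ is in the support), equals $r\circ\iota$ at $s=1$ because $\mu\big((t,\fc),(1,\fc_\one^{\mathrm{d}})\big)=(t+1,\fc\times\{\zfl_{t+1}\})$, and equals $\mathrm{id}_{\bHurmfl}$ at $s=0$ because $\zfl_t$ already lies in the support of $\fc$, so $\fc\times\{\zfl_t\}=\fc$. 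Discard the routing and the auxiliary marker; they add confusion and no content.

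As a secondary remark, your step for $\iota\circ r\simeq\mathrm{id}_{\bHurm_+}$ should be made explicit in two stages (as in the proof of Lemma \ref{lem:bHurmweonbHurm+}): first increase the width from $t$ to $t+1$ via $\mu(-,(s,(\emptyset,\one,\one)))$, then, at fixed width $t+1$, run the parameterised Ran-contraction targeting $\zfl_{t+1}$ and combine with the external product. The phrase ``varying continuously with the width parameter $t$'' glosses over the fact that a single contracting homotopy of $\Ran_+(\bcR_\infty)$ onto a one-point configuration cannot simultaneously serve all widths; you must genuinely treat it as a family parameterised by $t$, along the lines of the paper's $\expl^{\flat}$.
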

\begin{proof}
The argument is similar to that of the proof of Lemma \ref{lem:bHurmweonbHurm+}, but a bit more care is needed.

We define a continous map
$\expl^{\flat}\colon(0,\infty)\times \Ran_+(\bcR_\infty)\times[0,1]\to \Ran_+(\bcR_\infty)$
by the formula
\[
 \expl^{\flat}(t,\set{z_1,\dots,z_k},s)=\set{(1-s)z_1+st,\dots,(1-s)z_k+st}.
\]

We also let $\fj\colon(0,\infty)\to(0,\infty)$ be given by the formula
\[
\fj(t)=\left\{
\begin{array}{ll}
t-\frac 12 &\mbox{for }t\ge 1\\
\frac t2 &\mbox{for }t\le 1.
\end{array}
\right.
\]
We consider then the homotopy $\cH^{\flat}\colon\bHurm_+\times[0,1]\to(0,\infty)\times\Hur(\bcR_{\infty},\bdel)$ given by the formula
\[
 \cH^{\flat}(t,\fc;s)=\Big(t\ ,\ \fc\times \expl^{\flat}\pa{\fj(t),\epsilon(\fc), s} \Big).
\]

Let $\bHurm_{+,t\geq 1}$ denote the subspace of $\bHurm_+$ containing all couples
$(t,\fc)$ with $t\geq1$. Then the homotopy $\cH^{\flat}$ has the following properties:
\begin{itemize}
 \item $\cH^{\flat}(-,s)$ sends $\bHurm_+$ inside $\bHurm_+$ for all $0\le s \le 1$;
 \item $\cH^{\flat}(-,0)$ is the identity of $\bHurm_+$;
 \item $\cH^{\flat}(-,s)$ preserves the subspaces $\bHurmfl$ and $\bHurm_{+,t\geq 1}$ at all times;
 \item $\cH^{\flat}(-,1)$ sends $\bHurm_{+,t\geq 1}$ inside $\bHurmfl$.
\end{itemize}
This shows that the inclusion $\bHurmfl\hookrightarrow\bHurm_{+,t\geq 1}$ is a homotopy equivalence.
Moreover there is a deformation retraction of
$\bHurm_+$ onto $\bHurm_{+,t\geq 1}$ given by the formula
\[
 ((t,\fc);s)\mapsto \mu\big(\ (\max\set{0,s-t},(\emptyset,\one,\one))\ ,\ (t,\fc)\ \big).
\]
It follows that the inclusion $\bHurmfl\subset\bHurm_+$ is a homotopy equivalence.
\end{proof}
By Lemma \ref{lem:firstcontraction} the inclusion of semi-simplicial spaces
\[
B_{\bullet}(\bHurm,\bHurmfl)\subset B_{\bullet}(\bHurm,\bHurm_+)
\]
is levelwise a homotopy equivalence; after (thick) geometric realisation we obtain the following corollary.
\begin{cor}
 \label{cor:firstwebHurm}
The inclusion $ B(\bHurm,\bHurmfl)\subset B(\bHurm,\bHurm_+)$ is a weak homotopy equivalence.
\end{cor}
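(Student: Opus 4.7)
The plan is to reduce the statement to the standard fact that the thick geometric realisation functor from semi-simplicial spaces to topological spaces sends levelwise weak homotopy equivalences to weak homotopy equivalences; this is essentially what the preceding paragraph already suggests, so my job is mainly to spell out the two ingredients.

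First, I would verify that at every simplicial level $p\geq0$ the inclusion
\[
 B_p(\bHurm,\bHurmfl)=\bHurm^p\times \bHurmfl\ \hookrightarrow\ \bHurm^p\times \bHurm_+=B_p(\bHurm,\bHurm_+)
\]
is a homotopy equivalence. Indeed this map is the product of the identity of $\bHurm^p$ with the inclusion $\bHurmfl\subset\bHurm_+$, and the latter is a homotopy equivalence by Lemma \ref{lem:firstcontraction}; homotopy equivalences are preserved by Cartesian product with a fixed space. Thus $B_\bullet(\bHurm,\bHurmfl)\hookrightarrow B_\bullet(\bHurm,\bHurm_+)$ is a levelwise homotopy equivalence of semi-simplicial spaces.

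Second, I would invoke the general principle that the thick (semi-simplicial) geometric realisation of a levelwise weak equivalence of semi-simplicial spaces is a weak equivalence. This follows by an induction on the skeletal filtration: each skeleton is obtained from the previous one as a pushout along the cofibration $\partial\Delta^p\hookrightarrow\Delta^p$ smashed with $B_p$, and pushouts of weak equivalences along cofibrations are weak equivalences; passing to the colimit (which is a sequential colimit along closed inclusions) preserves weak equivalences as well. Applying this principle to our levelwise equivalence produces the desired conclusion $B(\bHurm,\bHurmfl)\simeq B(\bHurm,\bHurm_+)$.

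The only subtle point worth flagging is that this preservation of weak equivalences holds without any Reedy cofibration hypothesis precisely because we are using the thick realisation $B$ rather than the complete realisation $\reB$; this is exactly the distinction emphasised in Subsection \ref{subsec:barconstructions}, and it is why the argument must be carried out at the level of $B$ and the passage to $\reB$ (when needed elsewhere) is performed only via the separate well-pointedness argument.
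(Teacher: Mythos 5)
Your proof is correct and is essentially the same as the paper's: the paper simply observes that Lemma \ref{lem:firstcontraction} gives a levelwise homotopy equivalence of the semi-simplicial spaces $B_\bullet(\bHurm,\bHurmfl)\hookrightarrow B_\bullet(\bHurm,\bHurm_+)$, and then invokes the standard fact that thick geometric realisation preserves levelwise weak equivalences. You spell out the skeletal-filtration argument behind that standard fact (and correctly flag that it is the thick realisation that makes this work without a Reedy cofibrancy hypothesis), but the route is identical; the only cosmetic slip is the phrase ``smashed with,'' which should be ``times.''
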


 For the second step of the proof of Proposition \ref{prop:BbHurmbHurm+contractible}
 we define a homotopy
 \[
 \cH^{\sharp}\colon\bHurmfl\times[0,1]\to\bHurmfl
 \]
 by setting
  $\cH^{\sharp}\colon((t,\fc);s)\mapsto (t+s, \fc\times\set{\zfl_{t+s}})$
 for $0\leq s\leq 1$ and $(t,\fc)\in\bHurmfl$; here
$\fc\times\set{\zfl_{t+s}}$ is the evaluation at
$(\fc,\set{\zfl_{t+s}})$ of the external product 
  \[
  -\times-\colon\Hur\pa{\bcR_{\infty},\bdel} \times
  \Ran(\bcR_{\infty}) \to \Hur\pa{\bcR_{\infty},\bdel}.
  \]
Roughly speaking, the homotopy $\cH^{\sharp}$ has the following effects on configurations
 $(t,\fc)\in\bHurmfl$:
 \begin{itemize}
  \item it increases by 1 the first component $t$ of a couple $(t,\fc)\in\bHurmfl$;
  \item it splits $\zfl_t$, which already belongs to the support of $\fc$, into two points; one point remains at the position
  $\zfl_t$ and ``keeps the local monodromy'' (which is only defined as an element of $G$,
  because $\zfl_t\in\bdel\bcR_{\infty}$); the other point moves gradually
  to a distance 1 to right and has trivial local monodromy $\one$
  (also only defined as element of $G$).
 \end{itemize}
Note that $\cH^{\sharp}$ has the following properties:
 \begin{itemize}
  \item $\cH^{\sharp}(-;0)$ is the identity of $\bHurmfl$;
  \item $\cH^{\sharp}(-;1)$ has values inside $\bHurmsh$;
  \item for all $0\leq s\leq 1$ the map $\cH^{\sharp}(-;s)$ is
  equivariant with respect to the action of $\bHurm$ on $\bHurmfl$
  by left multiplication.
 \end{itemize}
 As a consequence, $\cH^{\sharp}$ induces a homotopy of $B(\bHurm_+,\bHurmfl)$ starting from the identity and ending with
 a map $B(\bHurm_+,\bHurmfl)\to B(\bHurm_+,\bHurmsh)$.
 We obtain the following lemma, which is the last step needed to prove Proposition \ref{prop:BbHurmbHurm+contractible}.
 \begin{lem}
  \label{lem:seconcontraction}
  The space $B(\bHurm_+,\bHurmfl)$ admits a deformation into
  its contractible subspace $B(\bHurm_+,\bHurmsh)$. As a consequence
  $B(\bHurm_+,\bHurmfl)$ is weakly contractible.
 \end{lem}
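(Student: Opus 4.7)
The strategy is to lift the explicit, $\bHurm$-equivariant deformation $\cH^{\sharp}\colon\bHurmfl\times[0,1]\to\bHurmfl$ level-wise to a semi-simplicial deformation of $B_{\bullet}(\bHurm,\bHurmfl)$, whose thick realisation is then the desired deformation of $B(\bHurm,\bHurmfl)$ onto $B(\bHurm,\bHurmsh)$. Contractibility of the target then reduces to the classical contractibility of $E\bHurm$, already cited in the preamble to Proposition \ref{prop:BbHurmbHurm+contractible}.

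Concretely, I would define $\tilde{\cH}^\sharp_p\colon B_p(\bHurm,\bHurmfl)\times[0,1]\to B_p(\bHurm,\bHurmfl)$ by applying $\cH^{\sharp}(-;s)$ only to the last factor $(t,\fc)\in\bHurmfl$, leaving the $\bHurm^p$ part untouched, i.e.
\[
 \big((t_1,\fc_1),\dots,(t_p,\fc_p),(t,\fc);s\big)\ \mapsto\ \big((t_1,\fc_1),\dots,(t_p,\fc_p),\cH^{\sharp}((t,\fc);s)\big).
\]
The face maps $d_0,\dots,d_{p-1}$ obviously commute with $\tilde{\cH}^\sharp_p$, as they do not involve the $\bHurmfl$ factor; compatibility with the last face $d_p\colon(\um,x)\mapsto(m_1,\dots,m_{p-1},\mu(m_p,x))$ is exactly the third property of $\cH^{\sharp}$ listed before the lemma, namely its $\bHurm$-equivariance. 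After taking thick geometric realisations the first two bullets of $\cH^{\sharp}$ give a homotopy $\tilde{\cH}^\sharp\colon B(\bHurm,\bHurmfl)\times[0,1]\to B(\bHurm,\bHurmfl)$ from the identity to a map whose image lies inside $B(\bHurm,\bHurmsh)$; since $\cH^{\sharp}(-,s)$ moreover preserves $\bHurmsh$ at all times, this is a strong deformation retraction onto $B(\bHurm,\bHurmsh)$.

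For contractibility of $B(\bHurm,\bHurmsh)$, I would invoke the homeomorphism of $\bHurm$-left modules $\mu(-;(1,\fc^{\mathrm{d}}_{\one}))\colon\bHurm\to\bHurmsh$ from Definition~\ref{defn:bHurmsh}. This gives $B(\bHurm,\bHurmsh)\cong B(\bHurm,\bHurm)=E\bHurm$. Since the neutral component of $\bHurm$ is contractible, $\bHurm$ is well-pointed, so the natural projection $B(\bHurm,\bHurm)\to\reB(\bHurm,\bHurm)$ is a weak equivalence, and the complete bar construction on the right is contractible via the standard extra degeneracy inserting the strict unit.

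The main technical point is the verification that the equivariance of $\cH^{\sharp}$ delivers exactly the compatibility needed with the face map $d_p$; this is essentially the only content of the argument, but it is precisely the step where the careful engineering of $\bHurmfl$ and $\bHurmsh$ as $\bHurm$-invariant subspaces pays off. I note also that the statement as written uses $\bHurm_+$ rather than $\bHurm$; the argument is insensitive to this, since $\cH^{\sharp}$ is equivariant for the full $\bHurm$-action and hence a fortiori for the sub-action of $\bHurm_+$, the subspaces $\bHurmfl,\bHurmsh\subset\bHurm_+$ are preserved, and the identification with $E\bHurm$ above ultimately reduces the contractibility claim to a statement about the unital monoid $\bHurm$ itself.
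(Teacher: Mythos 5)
Your proof is correct and takes essentially the same approach as the paper: the paper likewise observes that the three listed properties of $\cH^{\sharp}$ (identity at $s=0$, image in $\bHurmsh$ at $s=1$, $\bHurm$-equivariance) induce a levelwise homotopy of the semi-simplicial space $B_\bullet(\bHurm,\bHurmfl)$, whose realisation is a deformation retraction onto $B(\bHurm,\bHurmsh)\cong E\bHurm$, which is contractible since $\bHurm$ is a well-pointed unital monoid. You fill in slightly more detail on why the levelwise maps commute with the face maps, and you are right that the statement's ``$\bHurm_+$'' in the first slot should read ``$\bHurm$'' (consistent with the displayed chain of inclusions a few lines earlier) for the reduction to $E\bHurm$ to make literal sense.
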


\subsection{Proof of Proposition \ref{prop:sigmaquotient}}
\label{subsec:sigmaquotient}
We first give a rough idea of the proof: the value of $\check\sigma$ at a given sequence $(\uw;\ut,\ufc)$ in $\Delta^p\times\pa{\mHurm}^p$ (respectively, in $\Delta^p\times\pa{\bHurm}^p$) is obtained by combining several steps: the first step is computing the product $\hat\mu(\uw;\ut,\ufc)=(t_1,\fc_1)\cdot\dots\cdot(t_p,\fc_p)$ in $\mHurm$ (in $\bHurm$). The product map $\hat\mu$, however, does not factor through $B\mHurm$ (respectively, $B\bHurm$): one of the reasons is that if $w_0=0$, the sequence $(\uw;\ut,\ufc)$ is equivalent in $B\mHurm$ (respectively, in $B\bHurm$) to the sequence obtained by forgetting $w_0$ and $(t_1,\fc_1)$; yet the product 
$(t_1,\fc_1)\cdot\dots\cdot(t_p,\fc_p)$ is, in general, different from the product $(t_2,\fc_2)\cdot\dots\cdot(t_p,\fc_p)$. In fact, letting $0\le i_{\min}\le i_{\max}\le p$ be as in the discussion before Definition \ref{defn:epsilon}, we might reduce $(\uw;\ut,\ufc)$ to an equivalent sequence $(w_{i_{\min}},\dots,w_{i_{\max}};t_{i_{\min}+1},\dots,t_{i_{\max}};\fc_{i_{\min}+1},\dots,\fc_{i_{\max}})$, yet the product $(t_1,\fc_1)\cdot\dots\cdot(t_p,\fc_p)$ is, in general, different from the subproduct $(t_{i_{\min}+1},\fc_{i_{\min}+1})\cdot\dots\cdot(t_{i_{\max}},\fc_{i_{\max}})$.

We solve this problem by using the barycentres $b^-_\epsilon<b^+_\epsilon$, and by considering only the part of the configuration $(t_1,\fc_1)\cdot\dots\cdot(t_p,\fc_p)$ lying in the strip $\bS_{b^-_\epsilon,b_\epsilon^+}$: more precisely, using a suitable expansion and translation (implemented via the homotopies $\tau_*$ and $\Lambda_*$), we map the strip $\bS_{b^-_\epsilon,b_\epsilon^+}$ to the strip $\bS_{0,1}$, and then we collapse the two parts of the obtained configuration lying outside the strip $\bS_{0,1}$: we first collapse all points lying on left (respectively, on right) of $\bS_{0,1}$ to the segment $0\times[0,1]$ (respectively, $1\times[0,1]$) via the homotopies $\kappa^+_*$ and $\kappa^-_*$, we further collapse these vertical segments to the two points $\zdiamleft$ and $\zdiamright$ via $(\cHdiamo_1)_*$, and finally we get rid of the residual information of the $G$-valued local monodromy at $\zdiamleft$ and $\zdiamright$ by quotienting by the action of $G\times G^{op}$. The key observation is that the part of $(t_1,\fc_1)\cdot\dots\cdot(t_p,\fc_p)$ lying in the strip $\bS_{b^-_\epsilon,b_\epsilon^+}$ only depends on the subproduct $(t_{i_{\min}+1},\fc_{i_{\min}+1})\cdot\dots\cdot(t_{i_{\max}},\fc_{i_{\max}})$. The conditions on $\epsilon$, moreover, ensure that either $\epsilon=0$, i.e. $b_\epsilon^-=b^-$ and $b_\epsilon^+=b^+$, or the part of $\hat\mu(\uw;\ut,\ufc)$ lying inside $\bS_{b^-_\epsilon,b_\epsilon^+}$ is ``empty'' and, in particular, independent of the positive value of $\epsilon$; this is the rough reason why $\check\sigma$ and, hence, $\sigma$ and $\bar\sigma$, do not depend on $\epsilon$. 

Before starting the proof of Proposition \ref{prop:sigmaquotient}, we give a definition.
  \begin{defn}
  \label{defn:agreeing}
Let $\fT$ be a nice couple, let $\fc,\fc'\in\Hur(\fT;\Q,G)$ be two configurations, and let $*\in\bT\subset\C$ be a contractible subspace. We say that $\fc$ and $\fc'$ \emph{agree on $\bT$} if the following hold, using Notation \ref{nota:fc}:
\begin{itemize}
  \item $P\cap \bT=P'\cap\bT$;
  \item for every loop $\gamma\subset\bT\setminus P$ representing an element
  $[\gamma]$ in $\fQ_{\fT}(P)$ we have $\psi(\gamma)=\psi'(\gamma)$;
  \item for every loop $\gamma\subset\bT\setminus P$ representing an element
  $[\gamma]$ in $\fG(P)$ we have $\phi(\gamma)=\phi'(\gamma)$).
\end{itemize}
\end{defn}
Consider the particular case in which $\fT$ splits as a disjoint union of nice couples $\fT_1,\fT_2$
with $\fT_1\subset\mathring{\bT}$ and $\fT_2$ contained in the interior of $\C\setminus\bT$:
then \cite[Definition 3.15]{Bianchi:Hur2} gives configurations $\fri^{\C}_{\bT}(\fc)$ and
$\fri^{\C}_{\bT}(\fc')$ in $\Hur^{\bT}(\fT_1;\Q,G)$, and the condition that
$\fc$ and $\fc'$ agree on $\bT$ is equivalent to the equality $\fri^{\C}_{\bT}(\fc)=\fri^{\C}_{\bT}(\fc')$.
 
Now let $(\uw;\ut,\ufc)$ be as in Notation \ref{nota:uwutufc} and assume
$w_j=0$ for a fixed $0\leq j\leq p$;
let $(\ut',\ufc')=d_j(\ut,\ufc)$ (see Definition \ref{defn:BM}), let $\uw'$ be obtained from $\uw$ by removing the (vanishing) $j$\textsuperscript{th} coordinate,
and present $\uw'$ as $(w'_0,\dots,w'_{p-1})$, $\ut'$ as $(t'_1,\dots,t'_{p-1})$ and $\ufc'$ as $(\fc'_1,\dots,\fc'_{p-1})$.
We want to prove that $\check\sigma(\uw;\ut,\ufc)=\check\sigma(\uw';\ut',\ufc')$: this
implies that $\check\sigma$ descends to a map $\sigma$ defined on $B\mHurm$ (on $B\bHurm$).

Let $a_0,\dots,a_p,b,b^+,b^-$ be computed as in Definition \ref{defn:barycentres} with respect
to $(\uw;\ut,\ufc)$ and, similarly, let $a'_0,\dots,a'_{p-1},b',(b^+)',(b^-)'$ be  computed with respect
to $(\uw';\ut',\ufc')$. We observe that $\epsilon(\uw;\ut,\ufc)=0$ if and only if the support $P$ of $\hmu(\uw;\ut,\ufc)$ intersects non-trivially the strip $\bS_{b^-,b^+}$. Similarly, $\epsilon(\uw';\ut,\ufc)=0$ if and only if the support $P'$ of $\hmu(\uw';\ut',\ufc')$ intersects non-trivially $\bS_{(b^-)',(b^+)'}$. We now observe that for $j>0$ we have $P\cap\bS_{b^-,b^+}=P'\cap \bS_{(b^-)',(b^+)'}$, whereas for $j=0$ we have $\tau(P\cap\bS_{b^-,b^+},-t_1)=P'\cap \bS_{(b^-)',(b^+)'}$. In particular either intersection is empty if and only if the other is, or in other words, $\epsilon(\uw;\ut,\ufc)>0$ if and only if $\epsilon(\uw';\ut',\ufc')>0$. In this case, however, we have that both $\check\sigma(\uw;\ut,\ufc)$ and $\check\sigma(\uw';\ut',\ufc')$ coincide with the basepoint $\fclr$ of $\Hur(\bdiamolr,\bdel)_{G,G^{op}}$ (respectively, of $\Hur(\diamo,\del)_{G,G^{op}}$) and, in particular, they are equal. This proves also the first statement of Proposition \ref{prop:sigmaquotient}.

From now on we assume $\epsilon(\uw;\ut,\ufc)=\epsilon(\uw';\ut',\ufc')=0$, allowing us to use the lower and upper barycentres instead of their $\epsilon$-variations in the computations of the rest of the argument.
Note that $b-b^-=b'-(b^-)'$ and $b^+-b=(b^+)'-b'$.
In particular, in the following we assume $b^+-b^-=(b^+)'-(b^-)'>0$.
 
First, we give an alternative description of the configuration $\hmu^b(\uw;\ut,\ufc)$.
We regard $\fc_i$, for $1\leq i\leq p$, as a configuration in $\Hur^{\bS_{0,t_i}}(\mcR_{t_i})$
(in $\Hur^{\bS_{0,t_i}}(\bcR_{t_i},\bdel)$), and consider the configuration
\[
(\tau_{a_{i-1}-b^-})_*(\fc_i)\in\Hur^{\bS_{a_{i-1}-b^-,a_i-b^-}}(\mcR_{t_i}+a_i-b^-)
\]
\[
\pa{\mbox{respectively, }(\tau_{a_{i-1}-b^-})_*(\fc_i)
\in
\Hur^{\bS_{a_{i-1}-b^-,a_i-b^-}}(\bcR_{t_i}+a_i-b^-,\bdel)}.
\]
Using the disjoint union map $-\sqcup-$ from \cite[Definition 3.16]{Bianchi:Hur2} we obtain a configuration
\[
 \fc:=(\tau_{a_0-b^-})_*(\fc_1)\sqcup\dots\sqcup (\tau_{a_{p-1}-b^-})_*(\fc_p),
\]
which \emph{a priori} belongs to
\[
\Hur^{\bS_{a_0-b^-,a_p-b^-}}\big(\mcR_{t_1}+a_0-b^-\sqcup\dots\sqcup\mcR_{t_p}+a_{p-1}-b^-\big)
\]
\[
\Big(\mbox{respectively, }\Hur^{\bS_{a_0-b^-,a_p-b^-}}\big(\bcR_{t_1}+a_0-b^-\sqcup\dots\sqcup\bcR_{t_p}+a_{p-1}-b^-,\bdel\big)\Big),
\]
but can be naturally considered as a configuration in $\Hur(\mcR_{\R})$ (respectively, $\Hur(\bcR_{\R},\bdel)$).
The equality $(\tau_{-b^-})_*(\hmu(\uw;\ut,\ufc))=\fc$
follows directly from Definitions \ref{defn:muttp} and \ref{defn:hmub}. It follows then from Definition
\ref{defn:hmub} that $\hmu^b(\uw;\ut,\ufc)$ is equal to
$\Lambda_*\pa{\fc;\frac{1}{b^+-b^-}}$. In a similar way, we obtain a configuration
\[
  \fc':=(\tau_{a'_0-(b^-)'})_*(\fc'_1)\sqcup\dots\sqcup (\tau_{a'_{p-2}-(b^-)'})_*(\fc'_{p-1}),
\]
which we consider as a configuration in $\Hur(\mcR_\R)$ (in $\Hur(\bcR_\R,\bdel)$), and identifications
$\fc'=(\tau_{-(b^-)'})_*(\hmu(\uw';\ut',\ufc'))$ and $\hmu^b(\uw';\ut',\ufc')=\Lambda_*\pa{\fc';\frac{1}{(b^+)'-(b^-)'}}$

\begin{lem}
 \label{lem:agreeing}
The configurations $\fc$ and $\fc'$ agree on the vertical strip  $[0,b^+-b^-]\times\R$.
\end{lem}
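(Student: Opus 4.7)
The plan is to separate the analysis into three cases depending on $j$: the interior case $1\leq j\leq p-1$, and the two boundary cases $j=0$ and $j=p$, which are symmetric. In each case I first describe how the primed barycentric data $a'_i,b',(b^\pm)'$ relate to the unprimed ones, and then compare the disjoint-union expressions of $\fc$ and $\fc'$ summand by summand.

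The interior case is essentially formal. A direct inspection of the face map $d_j$ gives $t'_i=t_i$ for $i<j$, $t'_j=t_j+t_{j+1}$, and $t'_i=t_{i+1}$ for $i>j$, hence $a'_i=a_i$ for $i\leq j-1$ and $a'_i=a_{i+1}$ for $i\geq j$; the vanishing term $w_j a_j$ drops out of all the sums, yielding $b'=b$ and $(b^\pm)'=b^\pm$. Unpacking Definition \ref{defn:muttp}, the shifted merged summand $(\tau_{a_{j-1}-b^-})_*\bigl(\mu_{t_j,t_{j+1}}(\fc_j,\fc_{j+1})\bigr)$ equals $(\tau_{a_{j-1}-b^-})_*(\fc_j)\sqcup(\tau_{a_j-b^-})_*(\fc_{j+1})$, so the full disjoint unions $\fc$ and $\fc'$ coincide as configurations in $\Hur(\mcR_{\R})$ or $\Hur(\bcR_{\R},\bdel)$, and agreement on the strip is trivial.

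For the boundary case $j=0$, I would first establish the inequality $t_1\leq b^-$ by a convex-combination argument: the standing assumption $b^-<b^+$ forces $b^-<b$, and since $a_i\geq t_1$ for every $i\geq 1$, one has $b^-=\sum_{i\geq 1}w_i\min(a_i,b)\geq\min(t_1,b)$; combined with $b^-\leq b$ this forces $t_1\leq b$, hence $b^-\geq t_1$. A short computation then yields $b'=b-t_1$ and $(b^-)'=b^--t_1$, so that the $i$-th summand of $\fc'$, sitting at absolute position $a'_{i-1}-(b^-)'=a_i-b^-$, coincides with the $(i+1)$-th summand of $\fc$. Thus $\fc'$ is $\fc$ with its first summand $(\tau_{-b^-})_*(\fc_1)$ removed; that extra summand is supported in $(-b^-,t_1-b^-)\times(0,1)\subset\C_{\Re<0}$, hence disjoint from the strip $[0,b^+-b^-]\times\R$. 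The case $j=p$ is handled symmetrically, using the analogous inequality $a_{p-1}\geq b^+$ to push the missing last summand into $\C_{\Re>b^+-b^-}$.

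To upgrade this coincidence of supports to agreement in the sense of Definition \ref{defn:agreeing}, in each boundary case I would choose a contractible semi-algebraic subspace $\bT\subset\C$ containing $*$ together with an open neighbourhood of the strip, but disjoint from the support of the missing summand: such a $\bT$ exists because that support sits strictly to one side of the strip, and $*$ can be joined to the strip by an arc skirting it. Enriched functoriality of the restriction maps $\fri^{\C}_{\bT}$ from \cite[Definition 3.15]{Bianchi:Hur2} then gives $\fri^{\C}_{\bT}(\fc)=\fri^{\C}_{\bT}(\fc')$, which specialises to the loop-level conditions requested by Definition \ref{defn:agreeing}. The main technical obstacle I anticipate is not any of the arithmetic but precisely this last bookkeeping step: one must verify that the loops generating the fundamental PMQ $\fQ(P)$ and the group $\fG(P)$ on the strip can be realised inside $\bT$, so that the recorded values of $\psi$ and $\phi$ on both sides coincide automatically once the restrictions are equal.
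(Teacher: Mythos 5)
Your proposal is correct and follows essentially the same approach as the paper: split on $j$, observe $\fc=\fc'$ directly in the interior case, and in the boundary cases show the extra summand of $\fc$ sits strictly to one side of the strip and compare via a restriction map (the paper uses $\fri^{\C}_{\bS_{a_1-b^-,a_p-b^-}}$ concretely, together with $\bS_{0,b^+-b^-}\subset\bS_{a_1-b^-,a_p-b^-}$ and the fact that supports lie in $\bH$). The ``bookkeeping'' worry in your last paragraph is already settled by the remark following Definition \ref{defn:agreeing}, which identifies agreement on a contractible $\bT$ with equality of the restrictions $\fri^{\C}_{\bT}$; and you can shortcut the inequality $t_1\leq b$ by noting that once $w_0=0$ the number $b$ is a genuine convex combination of $a_1,\dots,a_p\geq a_1=t_1$, with no need to invoke $b^-<b^+$.
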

\begin{proof}
We use Notation \ref{nota:fc} and argue the statement differently, depending on the value of $j$.
 \begin{itemize}
  \item If $1\leq j\leq p-1$, then $\fc=\fc'$.
  \item If $j=0$, note that $a_1-b^-=a'_0-(b^-)'\ge0$ and $a_p-b^-=a'_{p-1}-(b^-)'\ge b^+-b^-$; then both $\fc$ and $\fc'$ can be regarded as configurations in
  \[
  \Hur\big(\mcR_{t_1}+a_0-b^-\sqcup\mcR_{a_p-a_1}+a_1-b^-\big),
  \]
  \[
  \Big(\mbox{respectively, }\Hur\big(\bcR_{t_1}+a_0-b^-\sqcup\bcR_{a_p-a_1}+a_1-b^-,\bdel\big)\Big),
  \]
  and the restriction map $\fri^{\C}_{\bS_{a_1-b^-,a_p-b^-}}$ from \cite[Definition 3.15]{Bianchi:Hur2} sends $\fc$ and $\fc'$ to the same configuration in the space
  \[
  \Hur^{\bS_{a_1-b^-,a_p-b^-}}(\mcR_{a_p-a_1}+a_1-b^-)\quad
  \Big(\mbox{respectively, }\Hur^{\bS_{a_1-b^-,\infty}}(\mcR_{a_p-a_1}+a_1-b^-)\Big);
  \]
  this common image is essentially the configuration $\fc'$.
  Now we use that $\bS_{0,b^+-b^-}\subset \bS_{a_1-b^-,a_p-b^-}$ to conclude that
  $\fc$ and $\fc'$ also agree on $\bS_{0,b^+-b^-}$ and, finally, we notice that agreeing on $\bS_{0,b^+-b^-}$ is equivalent to agreeing on $[0,b^+-b^-]\times\R$, as all configurations are supported in $\bH$.  
  \item If $j=p$, note that $a_0=a'_0$,  $b^-=(b^-)'$ and $b^+=(b^+)'\le a_{p-1}=a'_{p-1}$;
  then both $\fc$ and $\fc'$ can be regarded as configurations in
  \[
  \Hur\big(\mcR_{a_{p-1}}+a_0-b^-\sqcup\mcR_{t_p}+a_{p-1}-b^-\big),
  \]
  \[
  \Big(\mbox{respectively, }\Hur\big(\bcR_{a_{p-1}}+a_0-b^-\sqcup\bcR_{t_p}+a_{p-1}-b^-,\bdel\big)\Big),
  \]
  and $\fri^{\C}_{\bS_{a_0-b^-,a_{p-1}-b^-}}$ sends $\fc$ and $\fc'$
  to the same configuration in the space
  \[
  \Hur^{\bS_{-\infty,a_{p-1}-b^-}}(\mcR_{a_{p-1}}+a_0-b^-)
  \]
  \[
  \Big(\mbox{respectively, }\Hur^{\bS_{-\infty,a_{p-1}-b^-}}(\mcR_{a_{p-1}}+a_0-b^-)\Big);
  \]
  this common image is essentially the configuration $\fc'$.
  Now we use that $\bS_{0,b^+-b^-}\subset \bS_{a_0-b^-,a_{p-1}-b^-}$ to conclude that
  $\fc$ and $\fc'$ also agree on $\bS_{0,b^+-b^-}$, and again we note that agreeing on $\bS_{0,b^+-b^-}$ is equivalent to agreeing on $[0,b^+-b^-]\times\R$.
 \end{itemize}
\end{proof}

By applying $\Lambda_*\pa{-;\frac{1}{b^+-b^-}}=\Lambda_*\pa{-;\frac{1}{(b^+)'-(b^-)'}}$
we obtain that, in a similar way, $\hmu^b(\uw;\ut,\ufc)$ and $\hmu^b(\uw';\ut',\ufc')$
agree on the vertical strip $[0,1]\times\R$.
 
Note that the vertical strip $[0,1]\times\R$ is preserved by the homotopies $\kappa^-,\kappa^+$ and $\cHdiamo$ at all times,
and the homotopies $\kappa^-,\kappa^+$ restrict even to the identity on $[0,1]\times\R$.
This, together with Lemma \ref{lem:checksigmaindiamo}, implies that
$\hmu^{\diamo}_{\bdel\bdiamolr}\fc:=\tilde\sigma(w_i;t_i,\fc_i)$ and
$\tilde\fc':=\hmu^{\diamo}_{\bdel\bdiamolr}(w'_i;t'_i,\fc'_i)$
are both supported on the set $\tilde P:=\cHdiamo_1(P)\cup \bdel\bdiamolr$. Let $\tilde k=|\tilde P|$ and $\tilde l=|P\setminus\del\diamo|$.
 
Recall that $\tilde\fc=(\tilde P,\tilde\psi,\tilde\phi)$ and $\tilde\fc'=(\tilde P,\tilde\psi',\tilde\phi')$
are configurations in $\Hur(\bdiamolr,\bdel)_{\bdel\bdiamolr}$ (in $\Hur(\diamo,\del)_{\bdel\bdiamolr}$).
We can choose an lr-based admissible generating set $\tilde\gen_1,\dots,\tilde\gen_{\tilde k}$ of $\fG(\tilde P)$ (see \cite[Definition 6.10]{Bianchi:Hur2}) with the following properties:
\begin{itemize}
 \item two generators, denoted $\tilde\genleft$ and $\tilde\genright$, are represented by loops spinning clockwise around $\zdiamleft$ and $\zdiamright$ respectively;
 \item the other generators are represented by loops contained in the strip $[0,1]\times\R$;
 in particular, we assume that $\tilde\gen_1,\dots,\tilde\gen_{\tilde l}$ correspond
 to points in $P\setminus\del\diamo$.
\end{itemize}
Since $\tilde\fc$ and $\tilde\fc'$ agree on $[0,1]\times\R$, we have that $\tilde\psi$ and $\tilde\psi'$ agree on the admissible generators
$\tilde\gen_1,\dots,\tilde\gen_{\tilde l}$, whereas
$\tilde\phi$ and $\tilde\phi'$ agree on all admissible generators
$\tilde\gen_1,\dots,\tilde\gen_{\tilde k}$ except, possibly, the two generators $\tilde\genleft$ and $\tilde\genright$.
It follows that $\tilde\fc$ and $\tilde\fc'$ have the same image in the quotient
$\Hur(\bdiamolr,\bdel)_{G,G^{op}}$ (respectively, $\Hur(\diamo,\del)_{G,G^{op}}$),
i.e. $\check\sigma(\uw;\ut,\ufc)=\check\sigma(\uw';\ut',\ufc')$. This concludes the proof that $\check\sigma$ descends to a map $\sigma$ defined on $B\mHurm$ (on $B\bHurm$).

For the second statement of the proposition, let $(\uw;\ut,\ufc)$ be as in Notation \ref{nota:uwutufc} and assume $(t_j,\fc_j)=(0,(\emptyset,\one,\one))$ for a fixed $0\leq j\leq p$;
let $(\ut',\ufc')=d_j(\ut,\ufc)$, so that vice versa $(\ut,\ufc)=s_j(\ut',\ufc')$, and let $\uw'$ be obtained from $\uw$ by replacing the consecutive entries $w_j$ and $w_{j+1}$ with their sum $w_j+w_{j+1}$;
present $\uw'$ as $(w'_0,\dots,w'_{p-1})$, $\ut'$ as $(t'_1,\dots,t'_{p-1})$ and $\ufc'$ as $(\fc'_1,\dots,\fc'_{p-1})$.
We want to prove that $\check\sigma(\uw;\ut,\ufc)=\check\sigma(\uw';\ut',\ufc')$: this
implies that $\check\sigma$ descends to a map $\sigma$ defined on $B\mHurm$ (on $B\bHurm$).

For this it suffices to note that $\hmu(\uw;\ut,\ufc)=\hmu(\uw';\ut',\ufc')$ and also the barycentres
$b,b^-,b^+$ are equal when computed with respect to $(\uw;\ut,\ufc)$ or $(\uw';\ut',\ufc')$. The formula for $\check\sigma$ only depends on these parameters.

\bibliography{Bibliography3.bib}
\bibliographystyle{alpha}

\end{document}